\theoremstyle{plain}
  \newtheorem{theorem}{Theorem}
  \newtheorem{lemma}[theorem]{Lemma}
  \newtheorem{proposition}[theorem]{Proposition}
\theoremstyle{definition}
  \newtheorem{definition}[theorem]{Definition}
  \newtheorem{remark}[theorem]{Remark}
    \newtheorem{property}[theorem]{Property}
\numberwithin{theorem}{section}
\newcommand{\R}{\mathbb{R}}
\renewcommand{\Re}{\operatorname{Re}}
\DeclarePairedDelimiterX\newset[1]\lbrace\rbrace{\setaux #1||\endsetaux}
\def\setaux#1|#2|#3\endsetaux{\if\relax\detokenize{#2}\relax #1 \else #1 \;\delimsize\vert\; #2 \fi}
\let\epsilon\varepsilon
\let\phi\varphi
\let\rho\varrho
\newcommand{\dt}{\Delta t}
\renewcommand{\prod}{p}
\newcommand{\dest}{d}
\newcommand{\rest}{r}
\newcommand{\revff}[1]{#1}
\newenvironment{keywords}{\par\textbf{Keywords.}}{\par}
\newenvironment{AMS}{\par\textbf{AMS subject classification.}}{\par}
\definecolor{overColor}{rgb}{0.8, 0.4, 0.0}
\title{Issues with Positivity-Preserving Patankar-type Schemes}
\author{Davide Torlo\thanks{\href{mailto:davide.torlo@sissa.it}{davide.torlo@sissa.it},
		SISSA mathLab, Mathematics Area, SISSA, via Bonomea 265, Trieste, Italy. },
	    Philipp \"Offner\thanks{\href{mailto:poeffner@uni-mainz.de}{poeffner@uni-mainz.de}, Institut für Mathematik,
		Johannes Gutenberg Universität,
		Staudingerweg 9,
		55099 Mainz, Germany  }, Hendrik Ranocha\thanks{\href{mailto:mail@ranocha.de}{mail@ranocha.de}, Applied Mathematics, University of Münster, Orléans-Ring 10,
		48149 Münster, Germany.}}
\date{\today} %TODO date
\begin{document}

\maketitle

\begin{abstract}
	Patankar-type schemes are linearly implicit time integration methods designed to
	be unconditionally positivity-preserving. \revff{However, there are only little results on their stability or robustness.} We suggest two approaches to analyze the performance and robustness of these methods.
	In particular, we demonstrate problematic behaviors of these methods that, even on very simple linear problems, can
	lead to undesired oscillations and order reduction \revff{for vanishing initial condition}.
	Finally, we demonstrate in numerical simulations that our theoretical results for linear problems apply analogously to nonlinear stiff problems.
\end{abstract}

\begin{keywords}
	Patankar-type methods,
	Runge--Kutta methods,
	deferred correction methods,
	implicit-explicit methods,
	semi-implicit methods
\end{keywords}

%TODO: MSC
\begin{AMS}
  65L06,  % NA, ODEs: Multistep, Runge-Kutta and extrapolation methods
  65L20,  % NA, ODEs: Stability and convergence of numerical methods
  65L04   % NA, ODEs: Numerical methods for stiff equations
\end{AMS}

\section{Introduction}

Many differential equations in biology, chemistry, physics, and engineering
are naturally equipped with constraints such as the positivity of certain
solution components (e.g., density, energy, pressure) and conservation (e.g.,
total mass, momentum, energy). In particular, reaction equations are often of this
form. Typically, such reaction systems can also be stiff.
We consider such ordinary differential equations (ODEs)
\begin{equation}
	\label{eq:ode}
	u'(t)
	=
	f(u(t)),
	\quad
	u(0) = u_0,
\end{equation}
that can be written as a production destruction system (PDS) \citep{burchard2003high}
\begin{equation}
	\label{eq:pd-system}
	f_i(u)
	=
	\sum_{j\in I} (\prod_{ij}(u) - \dest_{ij}(u)),\quad \forall i \in I,
\end{equation}
where $\prod_{ij}, \dest_{ij} \geq 0$ are the production and destruction
terms, respectively. Sometimes, these terms are conveniently written as
matrices $\prod(u) = (\prod_{ij}(u))_{i,j}$ and
$\dest(u) = (\dest_{ij}(u))_{i,j}$.

\begin{definition}
	An ODE \eqref{eq:ode} is called \emph{positive}, if positive initial data $u_0 > 0$
	result in positive solutions $u(t) > 0, \forall t$. Here, inequalities for
	vectors are interpreted componentwise, i.e., $u(t) > 0$ means
	$\forall i \in I\colon u_i(t) > 0$.
	A production destruction system \eqref{eq:pd-system} is called \emph{conservative},
	if $\forall i,j \in I, \forall u\colon \prod_{ij}(u) = \dest_{ji}(u)$.
\end{definition}

A slight generalization of the PDS \eqref{eq:pd-system} is given by
the production destruction rest system (PDRS)
\begin{equation}
	\label{eq:pdr-system}
	f_i(u)
	=
	\rest_i(u) + \sum_{j \in I} (\prod_{ij}(u) - \dest_{ij}(u)),\quad \forall i \in I,
\end{equation}
where $\prod_{ij}$, $\dest_{ij}$ are as before and additional rest terms
$\rest_i$ are introduced. These can of course violate the conservative
nature of a PDS but can still result in a positive solution if
$\rest_i \geq 0$. The rest term can be interpreted as additional force/source term.

The existence, uniqueness and positivity of the solution of a PDS can be proven under the following assumptions \cite{formaggia2011positivity}.
	\begin{theorem}\label{th:ex_un_pos}
		The PDS with initial conditions $u^0 \geq 0$ has a unique solution $u\in [\mathcal{C}^1(\R^+)]^{|I|}$ and $u_i(t) >0$ if $u_i^0>0$, if
		\begin{enumerate}
			\item for all $i,j\in I$ $d_{ij}$ is locally Lipschitz continuous in $\R^{|I|}$,
			\item $d_{ij}(u)=0$ for all $i,j \in I$ if $u=0$,
			\item $d_{ij}(u)= \tilde{d}_{ij}(u)u_i$ with $\tilde{d}_{ij} \in \mathcal{C}((\overline{\R^+})^{|I|})$ and $\tilde{d}_{ij}(u)>0$ if $u>0$ and $\tilde{d}_{ij}(u)=0$ if $u=0$.
		\end{enumerate}
	\end{theorem}
In \cite{burchard2003high} the previous assumptions 2 and 3 are replaced by the condition $d_{ij}(u)\to 0$ if $u_i \to 0$.
It can be easily shown that this condition plus the Lipschitz continuity of the destruction terms lead to similar structures.
Let $C$ be the maximum of the Lipschitz continuity constants of the destruction terms and consider $u=v$ except for the $i$-th component for which $v_i=0$ and, hence, $d_{ij}(v)=0$ for the new condition. We have that
\begin{equation}
0\leq \, d_{ij}(u)=|d_{ij}(u)-d_{ij}(v) |  \leq C ||u-v||_2 = C u_i.
\end{equation}
Hence, we can define
\begin{equation}
	\tilde{d}_{ij}(u) := \frac{d_{ij}(u)}{u_i} \leq C.
\end{equation}
This condition is less restrictive and it does not guarantee the continuity of $\tilde{d}_{ij}$ in $u_i=0$. For the rest of the paper, we will consider assumptions of Theorem~\ref{th:ex_un_pos}. Also, all the physically/chemically/biologically relevant cases, of which we are aware, fall in this definition.

To ensure physically meaningful and robust numerical approximations, we would like
to preserve positivity and conservation discretely.
\begin{definition}
	A numerical method computing $u^{n+1} \approx u(t_{n+1})$ given
	$u^{n} \approx u(t_n)$ is called \emph{conservative}, if
	$\sum_i u^{n+1}_i = \sum_i u^{n}_i$.
	It is called \emph{unconditionally positive}, if $u^{n} > 0$ implies $u^{n+1} > 0$.
\end{definition}

There are several ways to study positivity of numerical methods
\cite{fekete2018positivity}, e.g., based on the concept of
strong stability preserving (SSP) \cite{gottlieb2011strong}
or adaptive Runge--Kutta (RK) methods \cite{nusslein2021positivity}.
However, general linear methods are restricted to
conditional positivity if they are at least second order accurate
\cite{bolley1978conservation}.
One way to circumvent such order restrictions is given by diagonally split
RK methods, which can be unconditionally positive
\cite{hout1996note, bellen1997unconditional, horvath1998positivity}.
However, they are less accurate
than the unconditionally positive implicit--Euler method for large step sizes in
practice \cite{macdonald2008numerical,bolley1978conservation}.

Another approach to unconditionally positivity-preserving methods is based on the
so-called Patankar trick \cite[Section~7.2-2]{patankar1980numerical}. First-
and second-order accurate conservative methods based thereon were introduced
in \cite{burchard2003high}. Later, these were extended to families of second-
and third-order accurate modified Patankar--Runge--Kutta (MPRK) methods based
on the Butcher coefficients \cite{kopecz2018order, kopecz2018unconditionally}
and the Shu--Osher form \cite{huang2019positivity, huang2019third}. Related
deferred correction (DeC) methods were proposed recently \cite{offner2019arbitrary}.
Positive but not conservative methods using the Patankar trick have been
proposed and studied in \cite{chertock2015steady}, although the connection to
Patankar methods seems to be unknown up to now.
Other related numerical schemes are inflow-implicit/outflow-explicit methods
\cite{mikula2011inflow, mikula2014inflow, frolkovic2016semi}. Ideas from
Patankar-type methods have also been used in numerical methods based on limiters
\cite{kuzmin2020entropy}.

The methods mentioned above are based on explicit RK methods. To guarantee
positivity, the schemes are modified to be linearly implicit, which seems to
introduce some stabilization mechanism. In fact, Patankar-type methods have been
applied successfully to some stiff systems \cite{kopecz2018order,
	kopecz2018unconditionally, kopecz2019comparison, chertock2015steady}.
Recently,  Patankar methods have been investigated using Lyapunov stability theory \cite{izgin2022lyapunov, izgin2021recent, izgin2022stability}. We will point out the relation between their approach and our investigations. Lately, BBKS and GeCo, two geometric integrators, have been introduce to simulate biochemistry models preserving not only positivity and conservation, but also all linear invariants of a system \cite{martiradonna2020geco,bruggeman2007second,avila2020comprehensive,broekhuizen2008improved}.

\subsection{Motivating example}
Consider the normal linear system
\begin{equation}
	\label{eq:motivating-example}
	u'(t)
	=
	10^2
	\begin{pmatrix}
		-1 & 1 \\
		1 & -1
	\end{pmatrix}
	u(t),
	\quad
	u(0) = u_0 = \begin{pmatrix} 0.1 \\ 0 \end{pmatrix},
\end{equation}
which can be written as a production destruction system with
\begin{equation}
	\prod(u)
	=
	\begin{pmatrix}
		0 & 10^2 u_2 \\
		10^2 u_1 & 0
	\end{pmatrix},
	\quad
	\dest(u)
	=
	\begin{pmatrix}
		0 & 10^2 u_1 \\
		10^2 u_2 & 0
	\end{pmatrix}.
\end{equation}
On \eqref{eq:motivating-example}, we can show different problematic behaviors.
We solve \eqref{eq:motivating-example} with several different methods. In detail, we apply
the second order method SI-RK2 of \cite{chertock2015steady},  the second- and third-order accurate modified Patankar--Runge--Kutta schemes MPRK(2,2,$\alpha$) and MPRK(4,3,$\alpha$,$\beta$) from \cite{kopecz2018order, kopecz2018unconditionally} with different parameter selections, the implicit Midpoint rule and
 fifth-order, three stage RadauIIA5 scheme \cite{hairer1999stiff}  implemented
in DifferentialEquations.jl \cite{rackauckas2017differentialequations}
in Julia \cite{bezanson2017julia}. The solutions are shown in Figure~\ref{fig:motivating-example}. It can be recognized that even for this simple test case, most of the methods are oscillating for the selected time step but with different amplitudes while RadauIIA5 results in an oscillation--free approximation. We will see that there is a connection between positivity and oscillation--free linear schemes. \\
Another problem rises if we use other Patankar schemes.
These methods are constructed for strictly positive PDS, therefore we have to substitute the zero initial condition with something very small, e.g.  $u_2(0)=10^{-250}$. We observe in  Figure~\ref{fig:inconsistency-motivational} that some of the methods replicate the initial condition for some time steps while others do not leave it at all in the considered time interval. On the other side classical implicit Runge--Kutta method as well as other modified Patankar schemes do not show this behavior and their first time step approaches quickly the steady state value. This issue is linked with a loss of accuracy in the limit for an initial condition approaching zero. \\
In our investigation, we want to find the Patankar methods that have those undesirable behaviors and avoid them.
\begin{figure}[!htb]
	\centering
	\begin{subfigure}{0.49\textwidth}
		\centering
		\includegraphics[width=\textwidth]{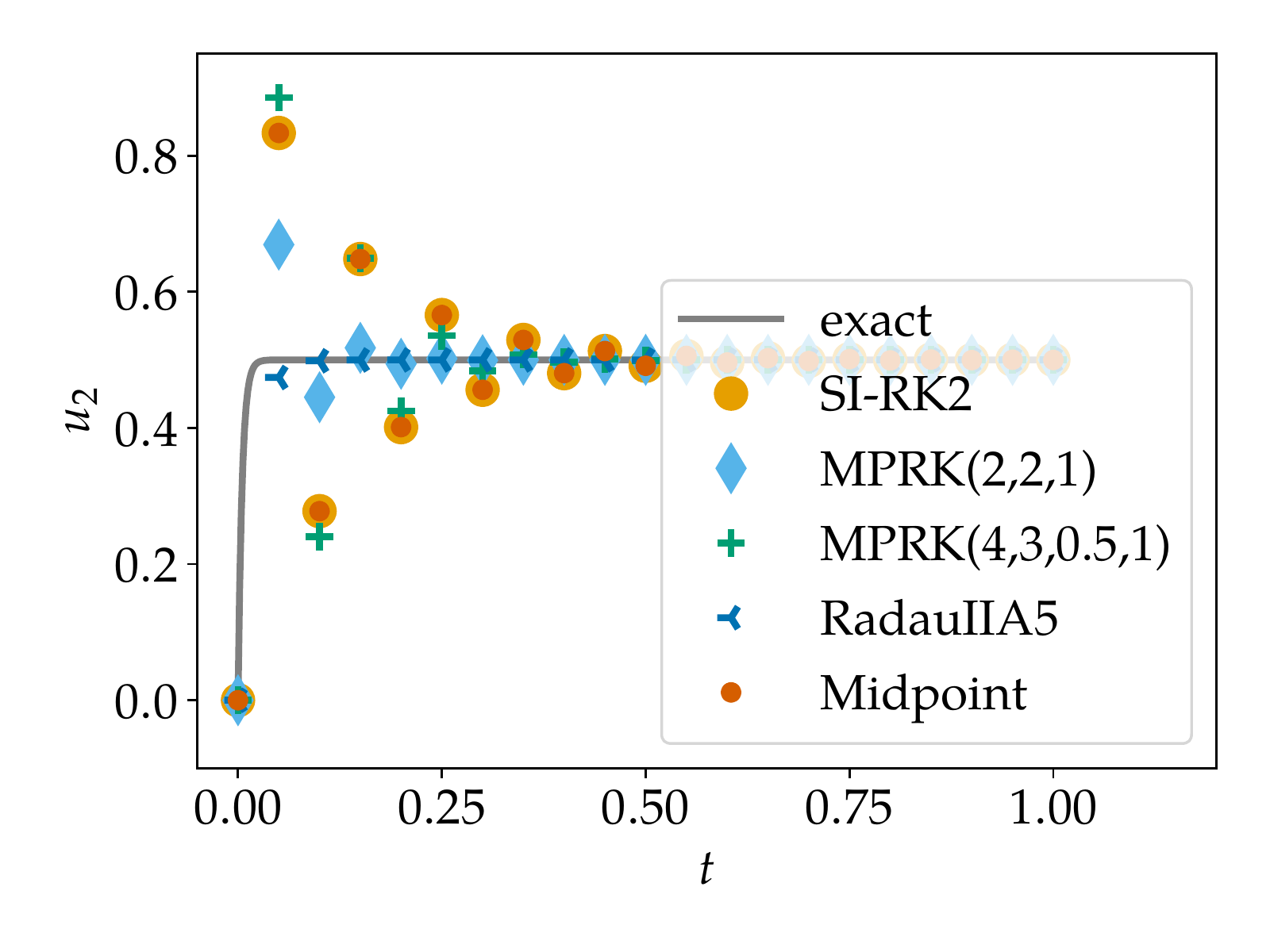}
		\caption{$\dt = 0.05$\label{fig:motivating-example}}
	\end{subfigure}%
	~
	\begin{subfigure}{0.49\textwidth}
		\centering
		\includegraphics[width=\textwidth]{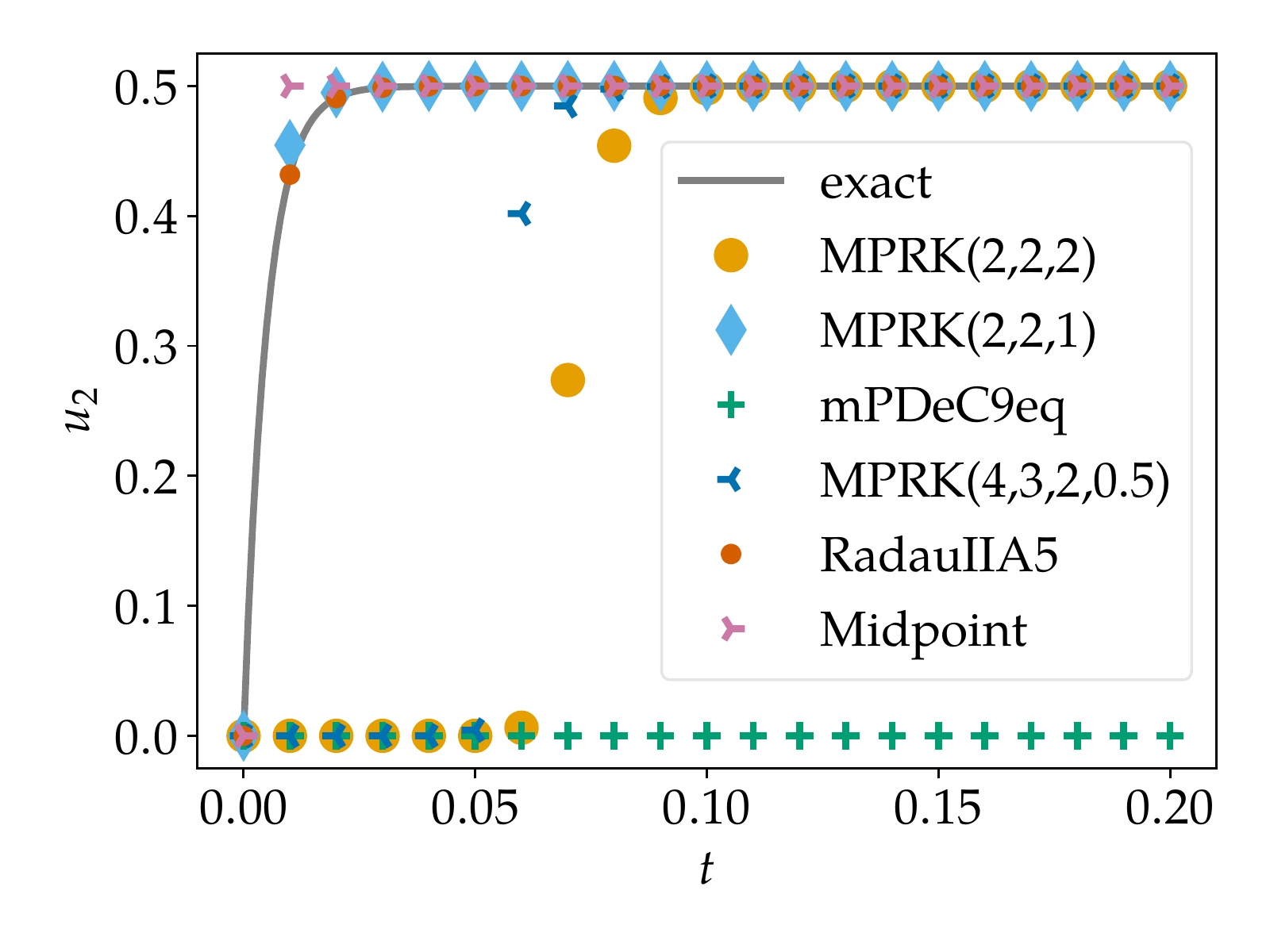}
		\caption{$\dt = 0.01$\label{fig:inconsistency-motivational}}
	\end{subfigure}%
	\caption{Numerical solutions of the normal linear system
		\eqref{eq:motivating-example} with real and non-positive
		eigenvalues obtained using different Patankar-type schemes as well as
		two implicit Runge--Kutta methods (only second component depicted)  with initial condition $u_0=(1,10^{-250})^T$. \label{fig:intro_motivation}}
\end{figure}

\begin{remark}\label{re_stability}
	A stability theory for Patankar type methods is still under development and only few preliminary results have been presented. Recently, in \cite{izgin2022lyapunov, izgin2021recent, izgin2022stability} a promising ansatz to investigate the behavior of conservative and positivity preserving methods has been proposed. In their work, the main idea is to use the center manifold theory corresponding to fixed-point investigations.  First applied on  $2\times 2$ systems in  \cite{izgin2022lyapunov}, the theory has been extended to general $n\times n$ systems in \cite{izgin2022stability}. The main idea is the following: a generic linear system $y'=Ay$ with $A\in \mathbb{R}^{n\times n}$ with initial condition $y_0>0$ possessing
	$k>0$ linear invariants is considered.
	In such a case, zero is always an eigenvalue of $A$ which implies the existence of nontrivial steady state solutions, cf. \cite{izgin2022stability}. The steady state solutions are fixed-points for any reasonable time integration method. Due to the nonlinear  character of Patankar-type schemes (actually for all higher-order positivity preserving schemes), a nonlinear iteration process is obtained. Here, additional techniques have to be used to investigate the stability properties. The authors of
	\cite{izgin2022lyapunov, izgin2021recent, izgin2022stability}  proved a theorem based on the central manifold theorem  which gives sufficient conditions for the stability of all such methods.  It is further demonstrated that MPRK22($\alpha)$ is stable for all $\Delta t>0$, i.e., it will converge to such fixed-points at any rate. \\
	As shown in \cite{izgin2022lyapunov}, we suspect that most of modified Patankar schemes are stable in the fixed-point sense. In our investigation, we do not deal with this type of stability, but, rather, we look for some more restricted schemes that show monotone character for monotone problems and that do not completely lose the high order accuracy. A stability analysis of all the considered methods with respect to the method proposed in \cite{izgin2022stability} is work in progress. Furthermore, the connection between our observations and the obtained eigenvalues of the iterative process will be considered and compared in the future.
\end{remark}

\subsection{Scope of the article}

%\sout{Positive and conservative schemes naturally satisfy some bounds on the
%maximum norm of the numerical solution, which is loosely related to the
%classical concept of $A$-stability. Motivated by the numerical example
%above, we are interested in a stability concept excluding the dominant
%appearance of spurious oscillations, which is loosely related to $L$-stability.
%Since Patankar-type methods are not compatible with an eigenvalue analysis,
%we propose to use a simple and generic linear system as test problem
%(instead of the classical scalar linear test equation).}
%\sout{We have focused on different types of systems (stiff, dissipative ones, etc.) and considered
%several quantities like the dissipation of some norms or Lyapunov functionals, cf.}
%\cite{ranocha2018L2stability,
%	ranocha2021strong, ranocha2020energy, sun2017stability, sun2019strong}.
%\sout{However, theses results have not been sufficient to describe the properties of the schemes
%in an adequate way. Thus, we will measure the amount of spurious oscillations directly.}  \todo{This has to be re-written}
% There is no accepted stability theory for positive and conservative schemes up to this point.
Motivated by our numerical examples above we are interested in concepts that detect the dominant appearance of spurious oscillations and the loss of accuracy in the limit of an initial condition going to zero.
%Since Patankar-type methods are not compatible with an eigenvalue analysis using Dahlquist test equation, we propose to use instead of the classical
%linear scalar  equation a simple and generic linear system as test problem.
We have focused on different types of systems (stiff, dissipative ones, etc.) and considered
several quantities like the dissipation of some norms or Lyapunov functionals, cf.
\cite{ranocha2018L2stability,
	ranocha2021strong, ranocha2020energy, sun2017stability, sun2019strong}.
However, the obtained results have not been sufficient for us to describe the properties of the schemes
in an adequate way. Thus, we will directly measure the amount of spurious oscillations using a generic $2\times 2$ linear system  as a test problem, and focus as well as on the loss of accuracy in the limit process.
Our investigation leads to a deeper understanding of the basic properties of Patankar-type methods.

The rest of the article is structured as follows. The numerical schemes studied
in this article are introduced in Section~\ref{sec:numerical-schemes}. In Section~\ref{sec:linearProblem}, we describe the linear problem on which the methods will be studied.
Thereafter, in Section~\ref{sec:stability-linear}, we show the connection between oscillations and positivity for linear problems and linear schemes, then we study the oscillation--free property for RK schemes and for a MPRK scheme.
We continue with an analytical investigation
on the loss of the order of accuracy in the limit of vanishing initial condition in Section~\ref{sec:spuriousSteadyState}. In
Section~\ref{sec:numerical-experiments}, a numerical study on linear systems derives the results on bounds on time step for oscillation--free schemes for all other Patankar schemes.
In Section~\ref{sec:nonlinear-experiements}, we extend the numerical study to
nonlinear and stiff problems. Finally, we summarize and discuss our results in
Section~\ref{sec:summary}.

\section{Numerical schemes}
\label{sec:numerical-schemes}

Here, we introduce Patankar-type methods proposed in the literature that
we will investigate later. In addition, we propose a new MPRK method and give
a heuristic on how to construct such schemes in general.

\subsection{Modified Patankar--Euler method}

The explicit Euler method
% \begin{equation}
% \label{eq:explicit-Euler}
$u^{n+1} = u^{n} + \dt f(u^{n})$
% \end{equation}
can be modified by the Patankar trick \cite[Section~7.2-2]{patankar1980numerical}
for a PDR system \eqref{eq:pdr-system} to get the positive
Patankar--Euler method
\begin{equation}
	\label{eq:PE}
	u^{n+1}_i = u^{n}_i + \dt \rest_i\bigl( u^{n} \bigr)
	+ \dt \sum_j \left(
	\prod_{ij}\bigl( u^{n} \bigr)
	- \dest_{ij}\bigl( u^{n} \bigr) \frac{u^{n+1}_i}{u^{n}_i}
	\right).
\end{equation}
Indeed, given $\rest,\prod,\dest \geq 0$, the new numerical solution $u^{n+1}$ is obtained by
solving a linear system with positive diagonal entries, vanishing off-diagonal
entries, and a positive right-hand side.

Since the Patankar--Euler method \eqref{eq:PE} is not conservative,
the modified Patankar--Euler method
\begin{equation}\tag{MPE}
	\label{eq:MPE}
	u^{n+1}_i = u^{n}_i + \dt \rest_i\bigl( u^{n} \bigr)
	+ \dt \sum_j \left(
	\prod_{ij}\bigl( u^{n} \bigr) \frac{u^{n+1}_j}{u^{n}_j}
	- \dest_{ij}\bigl( u^{n} \bigr) \frac{u^{n+1}_i}{u^{n}_i}
	\right)
\end{equation}
has been introduced in \cite{burchard2003high}
(with additional rest terms $\rest$ here).
The modification of the
production terms makes the method conservative if the rest terms $\rest$ vanish.
Nevertheless, the method is still positive, because the arising linear systems
has positive diagonal entries, negative off-diagonal entries, and is strictly
diagonally dominant. Hence, the system matrix is an $M$ matrix and, since the right-hand side is positive, the solution
$u^{n+1}$ is positive
\cite[Section~6.1]{axelsson1996iterative}.
We observe that, when dealing with the scalar linear test problem $u' = \lambda u$ with $\lambda < 0$,
the Patankar--Euler method coincides with the implicit--Euler method.
Similarly, \ref{eq:MPE} coincides with the implicit--Euler method if we deal with positive and conservative linear PDS. Indeed, the destruction terms $d_i(u)=\sum_j d_{ij}(u)$ must go to 0 if $u_i \to 0$ \cite{burchard2003high}.
Since the system is linear, $d_{ij}(u^n)=\tilde{d}_{ij} u_i^n$ with $\tilde{d}_{ij}\in \R^+_0$. Exploiting the conservation properties, we have $p_{ji}(u^n)=\tilde{d}_{ij} u^n_i$. Substituting these formulae in \ref{eq:MPE} leads to the implicit--Euler method.

\subsection{MPRK methods using Butcher coefficients}

A one-parameter family of MPRK schemes based on the Butcher
coefficients of a two stage, second-order RK method was introduced
in \cite{kopecz2018order}. Given a parameter $\alpha \in [1/2, \infty)$, the
method is
\begin{equation}
	\tag{MPRK(2,2,$\alpha$)}
	\label{eq:MPRK22-family}
	\begin{aligned}
		y^1 &= u^n,
		\\
		y^2_i &= u^n_i
		+ \alpha \dt \rest_i\bigl( y^1 \bigr)
		+ \alpha \dt \sum_j \left(
		\prod_{ij}\bigl( y^1 \bigr) \frac{y^2_j}{y^1_j}
		- \dest_{ij}\bigl( y^1 \bigr) \frac{y^2_i}{y^1_i}
		\right),
		\\
		u^{n+1}_i &= u^n_i
		+ \dt \left( \frac{2\alpha-1}{2\alpha} \rest_i\bigl( y^1 \bigr) + \frac{1}{2\alpha} \rest_i\bigl( y^2 \bigr) \right)
		\\&\qquad
		+ \dt \sum_j \Biggl(
		\left( \frac{2\alpha-1}{2\alpha} \prod_{ij}\bigl( y^1 \bigr) + \frac{1}{2\alpha} \prod_{ij}\bigl( y^2 \bigr) \right) \frac{u^{n+1}_j}{(y^2_j)^{1/\alpha} (y^1_j)^{1-1/\alpha}}
		\\&\qquad\qquad\qquad
		- \left( \frac{2\alpha-1}{2\alpha} \dest_{ij}\bigl( y^1 \bigr) + \frac{1}{2\alpha} \dest_{ij}\bigl( y^2 \bigr) \right) \frac{u^{n+1}_i}{(y^2_i)^{1/\alpha} (y^1_i)^{1-1/\alpha}}
		\Biggr).
	\end{aligned}
\end{equation}
The scheme for the choice $\alpha = 1$ is based
on Heun's method and has been proposed already in
\cite{burchard2003high}. Heun's method can be also written as a strong stability preserving Runge--Kutta method (SSPRK) and we will denote it by SSPRK(2,2) \cite{gottlieb2011strong}.

A similar two-parameter family MPRK(4,3,$\alpha$,$\beta$) of four stage, third-order accurate schemes was introduced and studied in \cite{kopecz2018unconditionally, kopecz2019existence}.
%Since the numerical stability properties of these schemes will be analyzed with respect to the two parameters ($\alpha,\beta$),
The  family under consideration  can be found in the \ref{sec:appendix} for completeness.

\subsection{MPRK methods using Shu--Osher coefficients}

A two-parameter family of MPRK schemes based on the Shu--Osher coefficients of
a two stage, second-order RK method was introduced in \cite{huang2019positivity}.
Given parameters $\alpha, \beta$, the method is
\begin{equation}
	\tag{MPRKSO(2,2,$\alpha$,$\beta$)}
	\label{eq:MPRKSO22-family}
	\begin{aligned}
		y^1 &= u^n,
		\\
		y^2_i &= y^1_i
		+ \beta \dt \rest_i\bigl( y^1 \bigr)
		+ \beta \dt \sum_j \left(
		\prod_{ij}\bigl( y^1 \bigr) \frac{y^2_j}{y^1_j}
		- \dest_{ij}\bigl( y^1 \bigr) \frac{y^2_i}{y^1_i}
		\right),
		\\
		u^{n+1}_i &= (1-\alpha) y^1_i + \alpha y^2_i
		+ \dt \left( (1-\frac{1}{2\beta}-\alpha\beta) \rest_i\bigl( y^1 \bigr) + \frac{1}{2\beta} \rest_i\bigl( y^2 \bigr) \right)
		\\&\qquad
		+ \dt \sum_j \Biggl(
		\left( (1-\frac{1}{2\beta}-\alpha\beta) \prod_{ij}\bigl( y^1 \bigr) + \frac{1}{2\beta} \prod_{ij}\bigl( y^2 \bigr) \right) \frac{u^{n+1}_j}{(y^2_j)^{\gamma} (y^1_j)^{1-\gamma}}
		\\&\qquad\qquad\qquad
		- \left( (1-\frac{1}{2\beta}-\alpha\beta) \dest_{ij}\bigl( y^1 \bigr) + \frac{1}{2\beta} \dest_{ij}\bigl( y^2 \bigr) \right) \frac{u^{n+1}_i}{(y^2_i)^{\gamma} (y^1_i)^{1-\gamma}}
		\Biggr),
	\end{aligned}
\end{equation}
where the parameters are restricted to $\alpha \in [0,1]$, $\beta \in (0,\infty)$,
$\alpha \beta + \frac{1}{2\beta} \leq 1$, and
\begin{equation}
	\gamma = \frac{1 - \alpha \beta + \alpha \beta^2}{\beta ( 1 - \alpha \beta)},
\end{equation}
in order to be positive. In our simulations, we will exchange the weights of production and destruction when the coefficients are negative. In the next section we will give an example of such inversion.
An extension to four stage, third-order accurate methods MPRKSO(4,3) was developed
in \cite{huang2019third} and can be found in the \ref{sec:appendix}.

\subsection{Modified Patankar deferred correction schemes}

Arbitrarily high order conservative and positive modified Patankar deferred
correction schemes (mPDeC) were introduced in \cite{offner2019arbitrary}.
A time step $[t^n, t^{n+1}]$ is divided into $M$ sub-intervals, where $t^{n,0}=t^n$
and $t^{n,M}=t^{n+1}$. For every sub-interval, the Picard-Lindel\"of theorem is mimicked. At each sub-time step $t^{n,m}$, an approximation $y^{m}$ is calculated.
In the formulation of  \cite{abgrall2017high} an iterative procedure of $K$ correction steps improves the approximation by one
order of accuracy at each iteration.
The modified Patankar trick is introduced inside the basic scheme to guarantee  positivity and conservation of the intermediate approximations.
Using the fact that initial states $y_i^{0,(k)}=u_i^n$
are identical for any correction $k$, the mPDeC correction steps
can be rewritten for $k=1,\dots,K$, $m =1,\dots, M$ and $\forall i\in I$ as
\begin{equation}
	\tag{mPDeC}
	\label{eq:explicit_dec_correction}
	\begin{split}
	&y_i^{m,(k)}-y^0_i -\sum_{r=0}^M \theta_r^m \Delta t \rest_i \bigl(y^{r,(k-1)} \bigr)-\\
	&\sum_{l=0}^M \theta_l^m \Delta t  \sum_{j=1}
	\left( \prod_{ij}(y^{l,(k-1)})
	\frac{y^{m,(k)}_{\gamma(j,i, \theta_r^m)}}{y_{\gamma(j,i, \theta_l^m)}^{m,(k-1)}}
	- \dest_{ij}(y^{l,(k-1)})  \frac{y^{m,(k)}_{\gamma(i,j, \theta_l^m)}}{y_{\gamma(i,j, \theta_l^m)}^{m,(k-1)}} \right)=0,
	\end{split}
\end{equation}
where $\theta_r^m$ are the correction weights and the $\gamma(j,i,\theta_r^m)$ takes value $j$ if $\theta_r^m>0$ and $i$ otherwise, see \cite{offner2019arbitrary} for details. This allows to obtain always positive terms in the diagonal terms and nonpositive in the offdiagonal terms of the system matrix. Finally, the new numerical solution
is $u_i^{n+1}=y_{i}^{M,(K)}$.

The choice of the distribution and the number of sub-time steps $M$ and the number of iterations $K$ determines the order of accuracy of the scheme. In the following, we will compare equispaced and Gauss--Lobatto points. To reach order $d$, we use $M=d-1$ sub-intervals and $K=d$ corrections.
We will denote the $p$th-order mPDeC method as \ref{eq:explicit_dec_correction}$p$.
Note that \ref{eq:explicit_dec_correction}1 is equivalent to \ref{eq:MPE} and
\ref{eq:explicit_dec_correction}2 is equivalent to
\hyperref[eq:MPRK22-family]{MPRK(2,2,1)}.

\subsection{A new MPRK method}

We propose the following new three stage, second-order MPRK method based on
SSPRK(3,3):
\begin{equation}
	\tag{MPRK(3,2)}
	\label{eq:MPRK32}
	\begin{aligned}
		y^1_i &= u^{n}_i,
		\\
		y^2_i &= u^{n}_i
		+ \dt \rest_i\bigl( y^1 \bigr)
		+ \dt \sum_j \left(
		\prod_{ij}\bigl( y^1 \bigr) \frac{y^2_j}{y^1_j}
		- \dest_{ij}\bigl( y^1 \bigr) \frac{y^2_i}{y^1_i}
		\right),
		\\
		y_i^3 &= u^{n}_i\\&
		+ \dt \frac{\rest_i\bigl( y^1 \bigr) + \rest_i\bigl( y^2 \bigr)}{4}
		+ \dt \sum_j \left(
		\frac{\prod_{ij}\bigl( y^1 \bigr) + \prod_{ij}\bigl( y^2 \bigr)}{4} \frac{y^3_j}{y^2_j}
		- \frac{\dest_{ij}\bigl( y^1 \bigr) + \dest_{ij}\bigl( y^2 \bigr)}{4} \frac{y^3_i}{y^2_i}
		\right),
		\\
		u^{n+1}_i &= u^{n}_i
		+ \dt \frac{\rest_i\bigl( y^1 \bigr) + \rest_i\bigl( y^2 \bigr) + 4 \rest_i\bigl( y^3 \bigr)}{6}
		\\&
		+ \dt \sum_j \Biggl(
		\frac{\prod_{ij}\bigl( y^1 \bigr) + \prod_{ij}\bigl( y^2 \bigr) + 4 \prod_{ij}\bigl( y^3 \bigr)}{6} \frac{u^{n+1}_j}{y^2_j}
		\\&
		\qquad\qquad- \frac{\dest_{ij}\bigl( y^1 \bigr) + \dest_{ij}\bigl( y^2 \bigr) + 4 \dest_{ij}\bigl( y^3 \bigr)}{6} \frac{u^{n+1}_i}{y^2_i}
		\Biggr).
	\end{aligned}
\end{equation}
For explicitly time-dependent problems, the abscissae are the ones of
SSPRK(3,3) \cite{gottlieb2011strong}, i.e., $c = (0, 1, 0.5)$. As will be seen later, this scheme
has some desirable robustness.
\ref{eq:MPRK32} is second-order accurate. We will not provide a formal proof of the accuracy of the scheme. Nevertheless we summarize the reasons of the accuracy of each stage. The second stage \\$y_i^2 = u_i(t^{n+1}) + \mathcal{O}(\dt^2)$ is an approximation of order one and we can observe that the ratios $\frac{y^2_i}{y^1_i}=1+\mathcal{O}(\dt)$ do not further decrease the accuracy since they are multiplied by $\dt$. The third stage is as well a first order approximation
	$
	y^3_i = u_i(t^n+\dt/2) + \mathcal{O}(\dt^2).
	$
	Indeed, even if the midpoint rule is a second order quadrature formula, the ratios $\frac{y_i^3}{y_i^2}=\frac{u_i(t^n+\dt/2)+\mathcal{O}(\dt^2)}{u_i(t^n+\dt)+\mathcal{O}(\dt^2)}=1+\mathcal{O}(\dt).$
In the final stage, the Simpson rule is applied, where we get only second order
accuracy since $y^2$ and $y^3$ carry a first order error with them. Hence, $$u^{n+1}_i = u_i(t^{n+1}) +\mathcal{O}(\dt^3),$$ and this gives us ratios $\frac{u^{n+1}_i}{y^2_i}=\frac{u_i(t^{n+1}) +\mathcal{O}(\dt^3)}{u_i(t^{n+1}) +\mathcal{O}(\dt^2)}=1+\mathcal{O}(\dt^2)$ which are multiplied by $\dt$.
At the end, the scheme is second-order accurate.
\begin{remark}
	The construction of higher-order MPRK schemes can be done in a similar way.
	The basic idea is to create a method with increasing stage order, similar
	to the construction of mPDeC.
	Starting from a high order RK scheme, by applying the modified Patankar trick
	in the substeps in combination with quadrature rules should lead to high order
	modified Patankar RK schemes. Essential in the construction is the fact that
	more stages have to be applied compared to classical RK schemes. This is in
	accordance with the result of \cite{kopecz2019existence} on the existence of
	third-order, three stages MPRK schemes.
	There is work in progress to describe a general recipe to construct MPRK schemes
	of arbitrary order and to study the properties of these schemes.
\end{remark}

\subsection{Semi-implicit methods}

The semi-implicit methods of \cite{chertock2015steady} are also based on
the Shu--Osher representation of SSPRK methods, which can be decomposed
into convex combinations of the previous step value and explicit Euler steps.
Instead of introducing Patankar weights multiplying all destruction terms for
a step/stage update, a Patankar weight is introduced for the destruction terms
of each Euler stage which is used to compute the new value. Since this procedure
limits the order of accuracy of the resulting scheme to first order, an
additional function evaluation is used to correct the final solution and get
second order of accuracy.

The two methods proposed in \cite{chertock2015steady} are
\begin{equation}
\tag{SI-RK2}
\label{eq:SI-RK2}
\begin{aligned}
  y^1 &= u^n,
  \\
  y^2_i &= \frac{ u^n_i + \dt \rest_i(y^1) + \dt \sum_j \prod_{ij}(y^1) }
                { 1 + \dt \sum_j \dest_{ij}(y^1) / y^1_i },
  \\
  y^3_i &= \frac{1}{2} u^n_i + \frac{1}{2}
           \frac{ y^2_i + \dt \rest_i(y^2) + \dt \sum_j \prod_{ij}(y^2) }
                { 1 + \dt \sum_j \dest_{ij}(y^2) / y^2_i },
  \\
  u^{n+1}_i &= \frac{ y^3_i + \dt^2 \bigl(\rest_i(y^3) + \sum_j \prod_{ij}(y^3) \bigr) \sum_j \dest_{ij}(y^3) / y^3_i }
                { 1 + \bigl( \dt \sum_j \dest_{ij}(y^3) / y^3_i \bigr)^2 },
\end{aligned}
\end{equation}
which uses three stages and is based on SSPRK(2,2), and
\begin{equation}
\tag{SI-RK3}
\label{eq:SI-RK3}
\begin{aligned}
  y^1 &= u^n,
  \\
  y^2_i &= \frac{ u^n_i + \dt \rest_i(y^1) + \dt \sum_j \prod_{ij}(y^1) }
                { 1 + \dt \sum_j \dest_{ij}(y^1) / y^1_i },
  \\
  y^3_i &= \frac{3}{4} u^n_i + \frac{1}{4}
           \frac{ y^2_i + \dt \rest_i(y^2) + \dt \sum_j \prod_{ij}(y^2) }
                { 1 + \dt \sum_j \dest_{ij}(y^2) / y^2_i },
  \\
  y^4_i &= \frac{1}{3} u^n_i + \frac{2}{3}
           \frac{ y^3_i + \dt \rest_i(y^3) + \dt \sum_j \prod_{ij}(y^3) }
                { 1 + \dt \sum_j \dest_{ij}(y^3) / y^3_i },
  \\
  u^{n+1}_i &= \frac{ y^4_i + \dt^2 \bigl(\rest_i(y^4) + \sum_j \prod_{ij}(y^4) \bigr) \sum_j \dest_{ij}(y^4) / y^4_i }
                { 1 + \bigl( \dt \sum_j \dest_{ij}(y^4) / y^4_i \bigr)^2 },
\end{aligned}
\end{equation}
which uses four stages and is based on SSPRK(3,3).

The relation to Patankar schemes becomes obvious by rewriting the computation
of the stage $y^2$ of \eqref{eq:SI-RK2} as
\begin{equation}
	y^2_i
	=
	u^n_i
	+ \dt \rest_i(y^1)
	+ \dt \sum_j \left(
	\prod_{ij}(y^1)
	- \dest_{ij}(y^1) \frac{y^2_i}{y^1_i}
	\right),
\end{equation}
which is the Patankar--Euler method \eqref{eq:PE}.
As for the Patankar--Euler method, the semi-implicit methods of \cite{chertock2015steady} are not conservative, i.e., it is not guaranteed that $\sum_i u^n_i = \sum_i u^{n+1}_i$ when the system is conservative.

\subsection{ Steady state preservation}
\label{sec:steady-state}

Motivated by the investigations of \cite{chertock2015steady}, steady state
preservation for (modified) Patankar methods will be studied
here. Except for the SI-RK2 and SI-RK3 methods \cite{chertock2015steady},
such investigations cannot be found in the literature.

\begin{definition}%[Steady State Preserving]
	A method is steady state preserving if, given a time step $\dt$ and $u^n = u^*$
	with $r_i(u^*) + \sum_j p_{ij}(u^*)-d_{ij}(u^*)=0$, then $u^{n+1} = u^n = u^*$.
\end{definition}

\begin{proposition}
	All (modified) Patankar methods described above are steady state
	preserving.
\end{proposition}
\begin{proof}
	The solution to each stage and the new step value are unique. If the
	initial condition is a steady state, this steady state is also a valid
	solution to all stage and step equations. Indeed, the Patankar weights reduce to 1 and the simple rest-production-destruction forms remains and their sum is 0 in the steady state. Hence, the steady state is
	preserved.
\end{proof}

This theorem is important, since some related modifications of explicit
Runge--Kutta methods such as IMEX methods are not necessarily steady
state preserving \cite{chertock2015steady}. For (stiff) systems with an
initial condition near a steady state, the ability to preserve this steady
state exactly is desirable and usually results in a better approximation
of solutions nearby or decaying to steady state.

In our discussion, it will be useful to check not only the preservation of the steady state, but also how this state is approached, for example, if in a monotone manner or not.

\section{The simplest production destruction system}\label{sec:linearProblem}
In order to study the issues observed in Figure~\ref{fig:intro_motivation}, we will consider the simplest production destruction system that one can build. For ODE solvers, it is always useful to study Dahlquist's  equation as any linearized (and diagonalizable) system can be recast into several of these equations. Unfortunately, Dahlquist's equation is not a PDS. We propose to use a $2\times 2$ linear system similar to
\eqref{eq:motivating-example} as test problem. This is the simplest PDS that can be considered. More precisely, we consider the general $2 \times 2$
production-destruction linear system as also done lately in similar form in \cite{izgin2022lyapunov}
\begin{equation}\label{eq:linearSystemReallyGeneral}
	\begin{pmatrix}
		u_1'\\u_2'
	\end{pmatrix} = \begin{pmatrix}
		-a & b \\ a & -b
	\end{pmatrix}
	\begin{pmatrix}
		u_1\\u_2
	\end{pmatrix}.
\end{equation}
Rescaling the time, we can simplify this system to a one parameter system setting
$a+b=1$ and $0\leq\theta = a\leq 1$, i.e.,
\begin{equation}\label{eq:linearsystem2general}
	\begin{pmatrix}
		u_1'\\u_2'
	\end{pmatrix} = \begin{pmatrix}
		-\theta & (1-\theta) \\ \theta & -(1-\theta)
	\end{pmatrix}
	\begin{pmatrix}
		u_1\\u_2
	\end{pmatrix}.
\end{equation}
We can also rescale any initial condition $u^0=(u^0_1,u^0_2)^T$ to sum up to one
(scaling by a factor $\frac{1}{u_1^0+u_2^0}$). Thus, we consider the initial condition
\begin{equation}\label{eq:linearsystemIC}
	\begin{pmatrix}
		u_1^0\\u_2^0
	\end{pmatrix} =
	\begin{pmatrix}
		1-\varepsilon\\\varepsilon
	\end{pmatrix}
\end{equation}
with $0<\varepsilon<1$.
The exact solution of the problem is
\begin{equation}
	\begin{pmatrix}
		u_1(t)\\u_2(t)
	\end{pmatrix} =
	\begin{pmatrix}
		(1-\theta)+(\theta-\varepsilon)e^{-t}\\
		\theta+(\varepsilon-\theta)e^{-t}
	\end{pmatrix},
\end{equation}
and the steady state of the system is $u^*=(1-\theta,\theta)^T$.

It is interesting to rewrite the system \eqref{eq:linearsystem2general} in its diagonal form to highlight its connection with Dahlquist's equation. To do so, let us put it into a matrix formulation\begin{equation}\label{eq:systemMatrix}
	u'=Mu=\begin{pmatrix}
		-\theta & (1-\theta)\\
		\theta & -(1-\theta)
	\end{pmatrix}u,
\end{equation}
we can obtain the diagonal form
$M = L^{-1} \Lambda L$
of the system, i.e.,
\begin{equation}\label{eq:eigenStructure}
\Lambda = \begin{pmatrix}
	-1 & 0 \\ 0& 0
\end{pmatrix}; \quad L = \begin{pmatrix}
\theta  & -(1-\theta) \\ 1 & 1
\end{pmatrix}; \quad L^{-1} = \begin{pmatrix}
1 &1-\theta \\ -1& \theta
\end{pmatrix}.
\end{equation}
So for $v=Lu$ we can write an always positive (or always negative) exact solution for the first component
\begin{equation}\label{eq:v1_equation}
v_1=\theta u_1 - (1-\theta) u_2; \qquad v_1'=-v_1; \quad v_1=e^{-t}v_1^0.
\end{equation}
Indeed, this component is the solution of Dahlquist's equation, while the second component $v_2=u_1+u_2$ fulfills $v_2' = 0$ and corresponds to the conservation property.

The positivity of $v_1=\theta u_1 - (1-\theta) u_2$ in case $\theta u_1^0 - (1-\theta) u_2^0>0$, i.e., $\frac{u_1^0}{u_2^0}>\frac{1-\theta}{\theta}$, is equivalent to say that
\begin{equation}
 	\frac{u_1(t)}{u_2(t)}>\frac{1-\theta}{\theta} \Longleftrightarrow u_2(t) <\theta = u_2^*
\end{equation}
holds true for all times. This condition means that the solution does not overshoot the asymptotic steady state. This property guarantees the monotonicity of the solution. In the next section, we will see how violating this condition leads to oscillations around the asymptotic steady state.

\section{Oscillation--free schemes for linear problems}
\label{sec:stability-linear}

Now, let us reconsider the system \eqref{eq:linearsystem2general}.
In this section we try to find schemes that do not show oscillatory behavior as the ones presented in Figure \ref{fig:motivating-example}.
This reduces to finding schemes that for every $u^n$ and every system defined through
$0<\theta<1$ have a monotone behavior and do not overshoot/undershoot the steady state solution.
In particular, we define two properties that the schemes have to fulfill not to oscillate. We focus on the case $\varepsilon<\theta$ as the opposite one can be obtained switching the two components of the system \eqref{eq:linearsystem2general}.
\begin{property}[Not overshooting the steady state]\label{prop:not_over}
	A method is not overshooting the steady state of \eqref{eq:linearsystem2general} if $u^1_2<\theta$ and  $u_1^1 >(1-\theta)$ given any initial state $u^0=(1-\varepsilon,\varepsilon)$ with $\varepsilon <\theta$, while when $\varepsilon >\theta$ the method is not overshooting the steady state if $u^1_2>\theta$ and  $u_1^1 <(1-\theta)$.
\end{property}
\begin{property}[Correct direction]\label{prop:correct}
	A method is evolving in the correct direction for system \eqref{eq:linearsystem2general} if $u^1_2>\varepsilon$ and  $u_1^1 <(1-\varepsilon)$ given any initial state $u^0=(1-\varepsilon,\varepsilon)$ with $\varepsilon <\theta$, while when $\varepsilon >\theta$ the method is evolving in the correct direction if $u^1_2<\varepsilon$ and  $u_1^1 >(1-\varepsilon)$.
\end{property}

In the following we will focus mainly on Property \ref{prop:not_over}. Indeed, a similar analysis can be conduct to check when Property~\ref{prop:correct} is preserved and we put it in \ref{app:prove_direction}. Moreover, we have observed that in very few occasions the approximation moves in the \textit{wrong} direction, i.e., if $\varepsilon<\theta$ we rarely have that $u_2^1 < \varepsilon$. The interesting condition is $u_2 < \theta$, or, equivalently $\frac{u_2}{u_1}< \frac{\theta}{(1-\theta)}$. We have already shown that this condition is equivalent to preserving the positivity of the first component of the diagonalized system \eqref{eq:v1_equation}.

\begin{proposition}[Oscillation-free and positive Runge-Kutta methods]\label{prop:equiOscPosRK}
	Consider the linear system \eqref{eq:linearsystem2general} with $\varepsilon<\theta$.
	For a linear method such as RK methods, the positivity of $v_1^n=\theta u_1^n-(1-\theta) u_2^n$ is equivalent to not overshooting Property \ref{prop:not_over}, i.e., $\theta=u_2^*> u_2^n\Longleftrightarrow (1-\theta)=u_1^* <  u_1^0$. Similarly, in case $\varepsilon>\theta,$ the negativity of $v_1^n$ is equivalent to the   Property \ref{prop:not_over} condition, i.e., $\theta=u_2^*< u_2^n\Longleftrightarrow (1-\theta)=u_1^*>  u_1^0$.
\end{proposition}
\begin{proof}
	First of all, let us notice that in case $\varepsilon<\theta$, we have that $v_1^0 = \theta u_1^0 - (1-\theta)u_2^0 = \theta (1-\varepsilon) - (1-\theta)\varepsilon >0$.
Let us consider a RK scheme in Einstein's notation, denoting the $s$th RK stage with $y_i^{(s)}$ and with $A,\,b,\,c$ the usual RK matrix and vectors \cite{hairer2000solving}. The RK method  for the system \eqref{eq:linearsystem2general} can be written as
\begin{align}
	&y^{(s)}_i = u_i^n+\dt M^j_iA^s_ky_j^{(k)}\\
	&u^{n+1}_i =  u_i^n+\dt b_s M^j_i y_j^{(s)},
\end{align}
where $M$ is the matrix in \eqref{eq:systemMatrix}.
Now, premultiplying by $L$ defined in \eqref{eq:eigenStructure} we obtain the same RK method for the diagonalized system, i.e.,
\begin{align}
	&w^{(s)}_\ell := L_\ell^i y^{(s)}_i = L_\ell^i \left( u_i^n+\dt  M^j_iA^s_ky_j^{(k)}\right) = v^n_\ell + \dt \Lambda^j_\ell A^s_k w_j^{(k)}  \\
	&v_\ell^{n+1} := L_\ell^i u^{n+1}_i = L_\ell^i  \left( u_i^n+\dt b_s M^j_i u_j^{(s)} \right) =   v_\ell^n+\dt b_s \Lambda^j_\ell w_j^{(s)} .
\end{align}
Hence, if for a certain $\dt$ we have that $v_1^1>0$ for all $\varepsilon < \theta$, then, $u_2^1< \theta$ and would not overshoot the asymptotic steady state.
The other case is proved analogously.
\end{proof}
As an example, the implicit--Euler method is unconditionally positive
and thus also unconditionally oscillations--free.\\
It is also clear how to check the positivity and, hence, \revff{Property~\ref{prop:not_over}} for all RK schemes.
\begin{proposition}
	Consider the problem \eqref{eq:linearsystem2general} and a RK method. For a given $\dt$ the method \revff{fulfills Property~\ref{prop:not_over}} if
	\begin{equation}
		R(-\dt) >0,
	\end{equation}
with $R(z):=(1+z b^T(I-zA)^{-1}\mathbbm{1})$ the stability function of the RK method.
\end{proposition}
\begin{proof}
	From Proposition~\ref{prop:equiOscPosRK} we know that we can check the positivity of $v_1^1$ for the equation $v_1'=-v_1$, with initial condition $v_1^0>0$. We have then,
	\begin{align}
		w=&v_1^0 \mathbbm{1} - \dt A w \Longleftrightarrow (I+\dt A) w = v_1^0 \mathbbm{1}  \Longleftrightarrow  w = (I+\dt A)^{-1} \mathbbm{1}v_1^0 \\
		v^1_1=&v_1^0 + \dt b^T w =v_1^0 + \dt b^T(I+\dt A)^{-1} \mathbbm{1}v_1^0 =(1+\dt b^T(I+\dt A)^{-1} \mathbbm{1})v_1^0\\
		=&R(-\dt)v_1^0.
	\end{align}
Hence, having $R(-\dt)>0$ guarantees the positivity of the scheme for $v_1$ and the condition $u_2<\theta$ on system \eqref{eq:linearsystem2general}.
\end{proof}
To check this condition is quite straightforward for most RK schemes. Indeed, $R$ is a ratio of two polynomials and checking its positivity corresponds to finding roots of some polynomials.
\begin{remark}[Positivity of RK schemes]
	One should notice that a positive RK method is not usually defined such that $R(-\dt)>0$. Indeed, it is important in many contexts that also all the stages stay positive. For this definition one should require that $(I+\dt A)^{-1}$ is a positive matrix. It has been proven \cite{bolley1978conservation,gottlieb2011strong} that among linear implicit schemes only first order schemes can be unconditionally (for all $\dt>0$) positive, while all high order schemes cannot. Nevertheless, some schemes can be unconditionally positive only in the final update. An example of such schemes is RadauIIA5, which, being fifth order accurate cannot be positive for all stages \cite{bolley1978conservation}, but it is in the final update, see Table~\ref{tab:dtImplicit}.
\end{remark}

For explicit schemes it is known that explicit Euler is positive for $\dt<1$ and for all strong-stability-preserving RK (SSPRK) schemes, which are convex combination of explicit Euler steps, the positivity is obtained for $\dt<\mathcal{C}$, where $\mathcal{C}$ is their CFL condition \cite{gottlieb2011strong}. For all these scheme the CFL coefficient is well known in literature and we do not further discuss it.
For implicit schemes this conditions seems not to have been thoroughly studied to the authors' knowledge. In Table~\ref{tab:dtImplicit} we summarize the restrictions for some of the implicit RK methods obtained with a \texttt{Mathematica} notebook available in \cite{torlo2021stabilityGit}.
\begin{table}
	\centering
	\begin{tabular}{|c|c||c|c|}\hline
		Method& Condition & Method & Condition\\ \hline\hline
		Radau IA3 & $\dt<3$ & Radau IA5 & Always \\ 
		Radau IIA3 & $\dt <3$ & Radau IIA5 & Always \\
		Lobatto IIIA2 & $\dt<2$  & Lobatto IIIA4 & Always\\
		Lobatto IIIB2 & $\dt<2$ & Lobatto IIIB4 & Always	\\
		Lobatto IIIC2 & Always & Lobatto IIIC4 & $\dt<4$ \\
		 Gauss--Legendre 4& Always & Gauss--Legendre 6& $\dt\lesssim 4.32$ \\
		implicit--Euler & Always & 	Midpoint & $\dt<2$	\\
		Trapezoid &$\dt<2$ &	Qin-Zhang DIRK2 & $\dt\neq 4$ \\
			TRBDF2 & $\dt< 1+\sqrt{2}$ &Kraaijevanger-Spijker DIRK2 & Always \\
			 \hline
	\end{tabular}
\caption{List of methods and condition on $R(-\dt)>0$}\label{tab:dtImplicit}
\end{table}

Similarly, we state a proposition for Property~\ref{prop:correct}.
	\begin{proposition}
		Consider the problem \eqref{eq:linearsystem2general} and a RK method. For a given $\dt$ the method fulfills Property~\ref{prop:correct} if
		\begin{equation}
			1-R(-\dt) >0 \Longleftrightarrow b^T(I+\dt A)^{-1}\mathbbm{1}>0,
		\end{equation}
		with $R(z):=(1+z b^T(I-zA)^{-1}\mathbbm{1})$ the stability function of the RK method.
	\end{proposition}
All the schemes presented in Table~\ref{tab:dtImplicit} enjoy Property~\ref{prop:correct} unconditionally. Moreover, every A-stable scheme enjoy Property~\ref{prop:correct}. Indeed, A-stability means that
\begin{equation}
	|R(z)|<1 \text{ for } \Re(z)<0 \Longrightarrow R(-\dt)<1 \text{ for } \dt>0.
\end{equation}

Modified Patankar methods are not linear schemes. Hence, the equivalence in Proposition~\ref{prop:equiOscPosRK} does not hold. So, even if they are unconditionally positivity preserving, they are not unconditionally oscillation--free. It is not straightforward to derive an analysis for all of them. In next section, we study the \ref{eq:MPRK22-family} with $\alpha=1$, for which it is possible to derive a condition on the time step to obtain the oscillation-free condition. For all other schemes we have to perform some numerical studies, see Section~\ref{sec:numerical-experiments}.

\subsection{Oscillatory-free restrictions of MPRK(2,2,1)}

The method \ref{eq:MPRK22-family} with $\alpha=1$ is equivalent to
\ref{eq:explicit_dec_correction}2. Since it is simple enough, a detailed analysis
for the simplified linear systems \eqref{eq:linearsystem2general} is feasible.

\begin{theorem}[Time restriction for \ref{eq:explicit_dec_correction}2 for $2\times 2$ linear systems]
	Consider the system \eqref{eq:linearsystem2general} with the initial conditions \eqref{eq:linearsystemIC}. \ref{eq:explicit_dec_correction}2 enjoys Properties \ref{prop:not_over} and \ref{prop:correct} for any initial condition $0< \varepsilon< 1$ and any system $0\leq \theta\leq 1$ under the time step restriction $\Delta t \leq 2$. For the general linear system \eqref{eq:linearSystemReallyGeneral} the time restriction is $\Delta t \leq \frac{2}{a+b}$.
	\begin{proof}
		First of all, the cases $\theta=0$ and $\theta=1$ are trivially verified as the steady state solutions are $(1,0)^T $ and $(0,1)^T$, respectively. Since the scheme is positive, $ 0 < u_1^n,u_2^n < 1$ holds for any possible initial condition and time step, verifying the \textit{oscillation-free} condition.

		Secondly, the case $\varepsilon = \theta$ implies that the initial condition is the steady state. Since all modified Patankar schemes are able to unconditionally preserve the steady state, the solution will be steady.

		In the general case, we can write the solution at the first time step as ratio of polynomials that are of degree three in $\Delta t$, and degree two in $\theta$ and $\varepsilon$. Here, for brevity we write one of the two component $u_2^1=\frac{N}{D}$, where
		\begin{align*}
			N=&2 (1-\epsilon ) \epsilon ^2+2 \dt \epsilon  (\epsilon  (1-\theta )+2 (1-\epsilon ) \theta )\\
			&+\dt^2 \left((1-\epsilon ) \epsilon \theta +3 \epsilon  (1-\theta ) \theta +2 (1-\epsilon ) \theta ^2\right)+\dt^3 \left((1-\epsilon ) \theta ^2+(1-\theta ) \theta ^2\right)>0,\\
				D=&	2 (1-\epsilon ) \epsilon+\dt (2 (1-\epsilon ) \epsilon +2 \epsilon  (1-\theta
				)+2 (1-\epsilon ) \theta )\\
				&+\dt^2 ((1-\theta ) (2 \epsilon +\theta )+(1-\epsilon ) (\epsilon +2 \theta ))  +\dt^3 (\epsilon  (1-\theta )+(1-\epsilon ) \theta )>0.
		\end{align*}
		Properties \ref{prop:not_over} and \ref{prop:correct} simplifies to $\varepsilon \geq u^1_2 \geq \theta$
		in the case $\varepsilon> \theta$ and to $\varepsilon \leq u^1_2 \leq \theta$ if
		$\varepsilon<\theta$. The inequality regarding $u^1_2$ and $\varepsilon$, i.e., Property \ref{prop:correct}, is proven in \ref{app:prove_direction} in Theorem~\ref{th:directionMPRK22} for all \ref{eq:MPRK22-family} schemes with $\alpha \leq 1$.
		To prove Property~\ref{prop:not_over} we analyze the sign of $D \theta - N$, where $N$ and $D$ are the numerator and the denominator of $u_2^1 $ respectively, clearly both positive.
		For $\varepsilon > \theta$ we want to have $D \theta -N <0$ not to overshoot the steady state, while for   $\varepsilon < \theta$ we should have $D \theta -N>0$ or, in other words, $\frac{D \theta -N}{\varepsilon -\theta}<0$.
		We have that
		\begin{equation}
		\frac{D \theta -N}{\epsilon -\theta }=	-2\epsilon (1-\epsilon )-\dt 2( \theta (1- \epsilon )+\epsilon (1-\theta ))-\dt^2
		\theta (1-\theta )+\dt^3 \theta (1-\theta )<0,
		\end{equation}
		which is a third degree polynomial inequality for $\dt$ and can be rewritten as
		\begin{equation}
			p_{\varepsilon,\theta}(\dt)=\dt^3 -\dt^2 -2 \left(\frac{\varepsilon}{\theta}+\frac{1-\varepsilon}{1-\theta}\right) \dt -2 \frac{\varepsilon(1-\varepsilon)}{\theta(1-\theta)}<0.
		\end{equation}
	There are two options for real coefficients cubic polynomials. If the discriminant $\Delta \geq 0$ then the roots are all real, while if $\Delta <0$ there are two complex conjugated roots and a real one \cite{orson1866cubic}. Only if $\Delta =0$ the roots are multiple.
	Let us consider first the case $\Delta \geq 0$.
		Denoting with $y\leq w \leq  z$ the three real roots of $p_{\varepsilon,\theta}(x)$, we see that they have to satisfy
		\begin{equation}\label{eq:polynomialSystem}
			\begin{cases}
				y+w+z = 1,\\
				yz+wz+yw = -2 \left(\frac{\varepsilon}{\theta}+\frac{1-\varepsilon}{1-\theta}\right) < -2,\\
				ywz=2 \frac{\varepsilon(1-\varepsilon)}{\theta(1-\theta)} > 0.
			\end{cases}
		\end{equation}
		Since $ywz$ is positive and $yz+wz+yw$ is negative, it is clear that only one root is positive, while the other two are negative, w.l.o.g. $y\leq w<0< z$.
		From the second equation of \eqref{eq:polynomialSystem}, we see that
		\begin{align}
			&z(w+y)< z(w+y) +wy= yz+wz+yw <-2,\\
			&w+y < -\frac{2}{z}.
		\end{align}
		Using then the first equation of \eqref{eq:polynomialSystem}, we have that
		\begin{align}
			0=z+y+w -1 < z -\frac{2}{z} -1, \quad  0 < z^2 -z -2,
		\end{align}
		which has positive solutions only for $z>2.$
		Hence, $\Delta t \leq 2$ in order to avoid oscillations for all systems \eqref{eq:linearsystem2general}.
		The bound is sharp in the sense that it can be reached for the limit polynomial
		$\lim_{\theta \to 0}\lim_{\varepsilon\to 0} p_{\varepsilon,\theta}(x)$. We can observe that when $\varepsilon\to 0$, the first and third equations in \eqref{eq:polynomialSystem} tell us that $w \to 0^-$. Hence, from the second equation we can see that $y \to -2\frac{1}{(1-\theta)z}$. Finally, the third zero will converge to
		$$
		z\to \frac{1+ \sqrt{1+\frac{8}{1-\theta}}}{2}.
		$$
		For $\theta\to 0$, $z$ goes to 2.

		If $\Delta < 0$ then there are one real root  $z$ and two complex conjugated roots $y=a+ib,\bar y=a-ib$ \cite{orson1866cubic}. These roots must verify
		\begin{equation}\label{eq:polynomialSystem2}
			\begin{cases}
				2a+z = 1,\\
				2az+a^2+b^2 = -2 \left(\frac{\varepsilon}{\theta}+\frac{1-\varepsilon}{1-\theta}\right) < -2,\\
				(a^2+b^2)z=2 \frac{\varepsilon(1-\varepsilon)}{\theta(1-\theta)} > 0.
			\end{cases}
		\end{equation}
		Since $(a^2+b^2)z$ is positive, $z$ is positive.
		From the second equation of \eqref{eq:polynomialSystem2}, we see that
		\begin{align}
			&2az< 2az+a^2+b^2 <-2,\\
			&a < -\frac{1}{z}.
		\end{align}
		Using then the first equation of \eqref{eq:polynomialSystem2}, we have that
		\begin{align}
			0=z+2a -1 < z -\frac{2}{z} -1, \quad  0 < z^2 -z -2,
		\end{align}
		which has positive solutions only for $z>2.$
	\end{proof}
\end{theorem}
\begin{remark}
			The discriminant of $p_{\varepsilon,\theta}$
	\begin{equation}
		\begin{split}
			\Delta =& 4\theta (1-\theta) \big [\epsilon
			^2 (1-\theta )^3 \theta+(1-\epsilon )^2 (1-\theta ) \theta ^3+ 8 \epsilon ^3 (1-\theta )^3 +8 (1-\epsilon )^3 \theta ^3 \\
			&+6 (1-\epsilon ) \epsilon ^2 (1-\theta )^2 \theta +6 (1-\epsilon )^2 \epsilon  (1-\theta ) \theta ^2-27 (1-\epsilon )^2 \epsilon ^2 (1-\theta ) \theta \big ]
		\end{split}
	\end{equation}
	is positive in the square $0<\varepsilon,\theta <1$. This has been verified in \texttt{MPRK\_2\_2\_1\_generalSystem.nb} in \cite{torlo2021stabilityGit}. Hence, the case $\Delta <0$ never happens for $0< \varepsilon, \theta < 1$.
\end{remark}
%All the computations performed in the proof are also available  in  \texttt{MPRK\_2\_2\_1\_generalSystem.nb} in the reproducibility repository \cite{torlo2021stabilityGit}. \todo{Should we not bring this in the remark or delete it. It is completely proven now?}

Unfortunately, the computational complexity increases significantly for all
other schemes considered in this article.
Thus, we will perform numerical studies for all methods, using different initial conditions
($\varepsilon$), systems ($\theta$), and step sizes ($\Delta t$) to find the
largest possible time step without oscillations in Section \ref{sec:numerical-experiments}.

\section{Loss of the order of accuracy for vanishing initial conditions}\label{sec:spuriousSteadyState}
Another particular behavior we observe for some modified Patankar schemes is the loss of accuracy when one component of the initial condition tends to zero.
In this case, available analytical results on accuracy of the schemes do not
hold as they require $u_i^0\geq \varepsilon>0$ with fixed $\varepsilon$. Nevertheless, the condition $u_i^0=\varepsilon$ with $\varepsilon\to 0$ is of general interest in many applications, where physical/chemical/biological constituents might be zero and choosing the initial condition $\varepsilon\gg 0$ might ruin the accuracy of the solution. In particular when dealing with high order schemes and expecting an error of $\mathcal{O}(\dt^r)$, we might need to require the initial error to be the same order or less than the expected precision, i.e., $\varepsilon \lesssim \dt^r$, in order not to let the initial error dominate the final error.

In this section, we show for which Patankar and modified Patankar schemes there is an order reduction for a very simple linear problem. Here, we understand the
phenomenon of order reduction similarly to what happens for stiff problems,
where two parameters are coupled in a limit process
\cite[Chapter IV.15]{hairer1999stiff}. For stiff problems, these parameters are
the time step and a stiffness parameter. In our case, these two parameters are
the time step $\dt$ and the minimum of the initial data $\varepsilon$.
We will see that the order of accuracy decreases in a certain regime
$\varepsilon \ll \dt$. Consider the order of accuracy of the
first time step, defined as the largest $r$ such that
\begin{equation}
  ||u^1 - u(t^1)|| \leq K \dt ^ {r+1}
\end{equation}
as $\dt \to 0$ while $\frac{\varepsilon}{\dt}\to 0$, similar to the stiff case \cite{hairer2000solving,hairer1999stiff}. In the first time step of the simulations, this situation is very common and will result in a loss of the order of accuracy for the first time step. As soon as $u_2\gg \varepsilon$, the classical accuracy will be restored, but the final error will be anyway influenced by this initial step or some initial steps.
\begin{remark}[Order and error at the final time]
	We have seen that a method of order $r$ has an error of $\mathcal{O}(\dt^{r+1})$ at the first time step. These errors accumulate till the final time $T$ for all time steps which are $\mathcal{O}(\frac{T}{\dt})$. This results in an error at the final time of the order of $\mathcal{O}(\dt^r)$. In the situation of order reduction at one or some of the time steps, it can happen that the error produced at the first time step dominates the final error and ruins the accuracy also at the final time.
\end{remark}
Some numerical experiments validate this study in Sections \ref{sec:numerical-experiments} and \ref{sec:nonlinear-experiements}.

\subsection{Strong loss of order accuracy for vanishing initial conditions}
	Different modified Patankar schemes behave differently for vanishing initial condition, some are not affected, some become second order accurate, some first order accurate. We tested different modified Patankar schemes and the method Rodas4, as a benchmark, on \eqref{eq:linearsystem2general} with $\theta=0.5$ comparing $\varepsilon=0.01$, $\varepsilon=10^{-16}$ and $\varepsilon = 10^{-250}$. In Figures~\ref{fig:errorDecay23} and \ref{fig:errorDecay4} we plot the error decay for these test with the error defined as
	\begin{equation}
		\texttt{err}:= \frac{1}{N_t}\sum_{n=1}^{N_t} ||u^{ex}(t^n)-u^n||_2.
	\end{equation}
 We see that some \ref{eq:MPRK22-family} and \ref{eq:MPRK43-family} fall into typical first order accuracy behaviors, while other third and fourth order schemes behave like second order ones in this situation. Moreover, the order depends on the relation between $\varepsilon$ and $\dt$.
%	\begin{figure}
%		\includegraphics[width=0.48\textwidth,trim={10 0 30 40},clip]{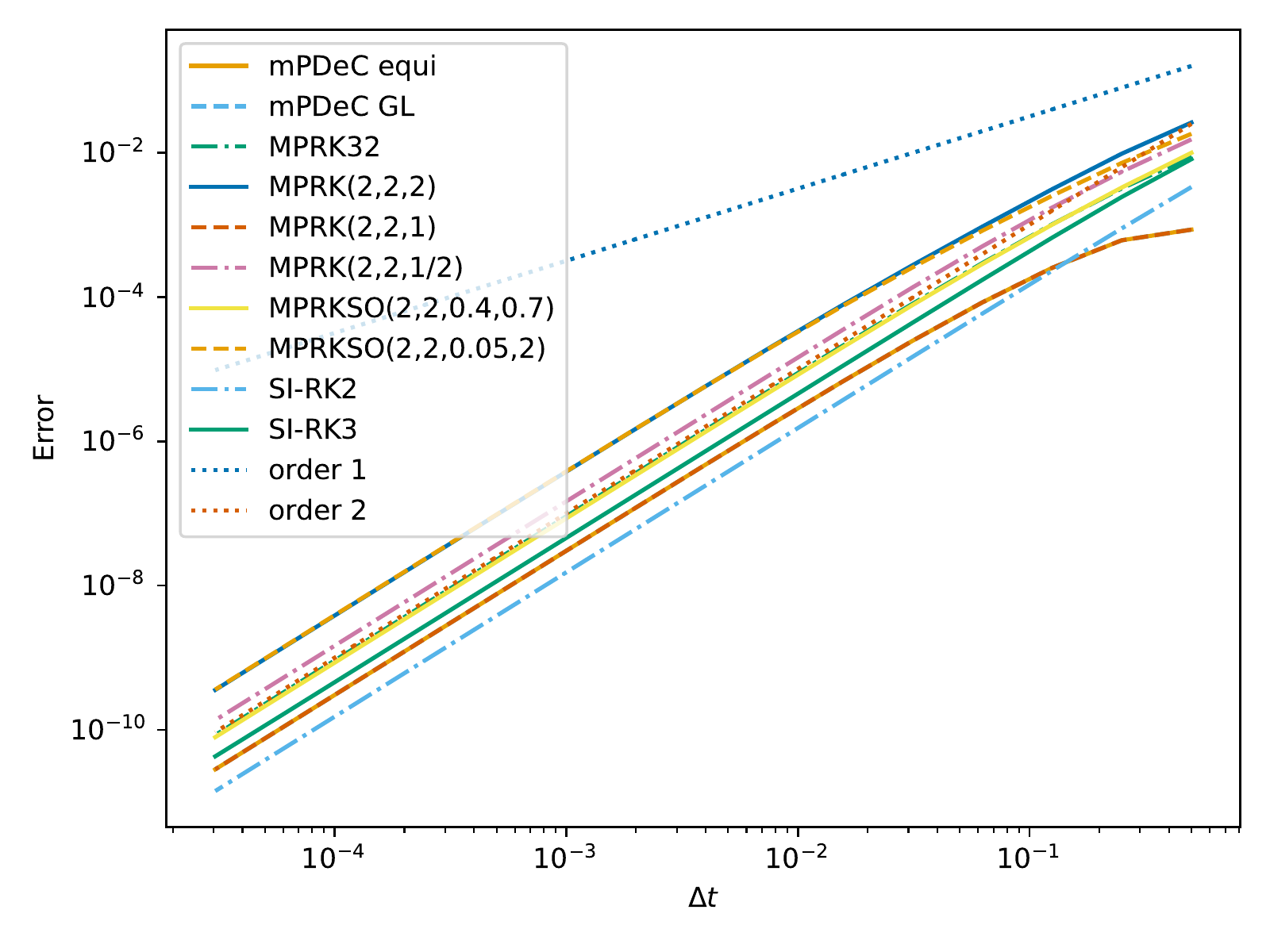}
%		\includegraphics[width=0.48\textwidth,trim={10 0 30 40},clip]{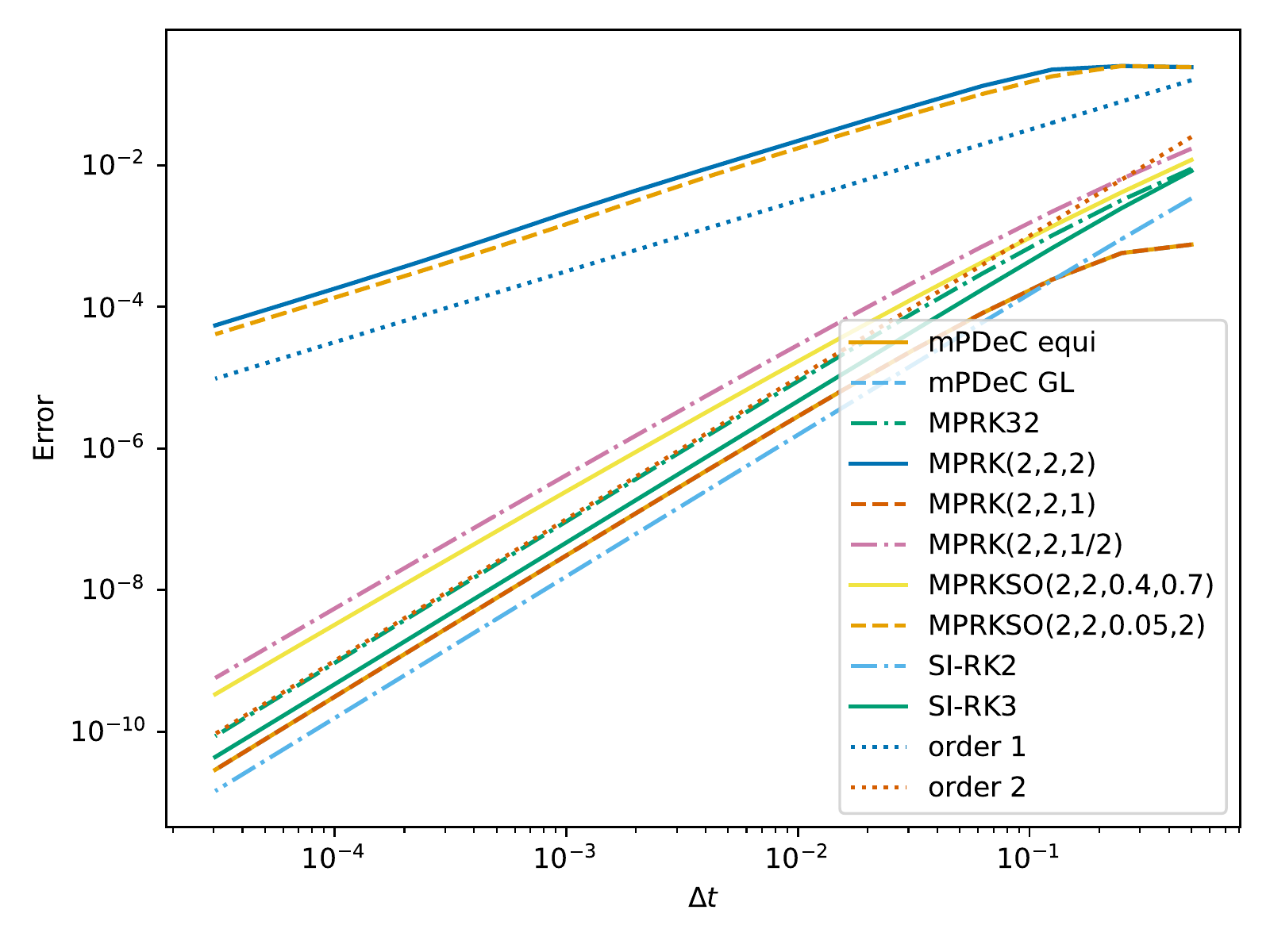}\\
%		\includegraphics[width=0.48\textwidth,trim={10 0 30 40},clip]{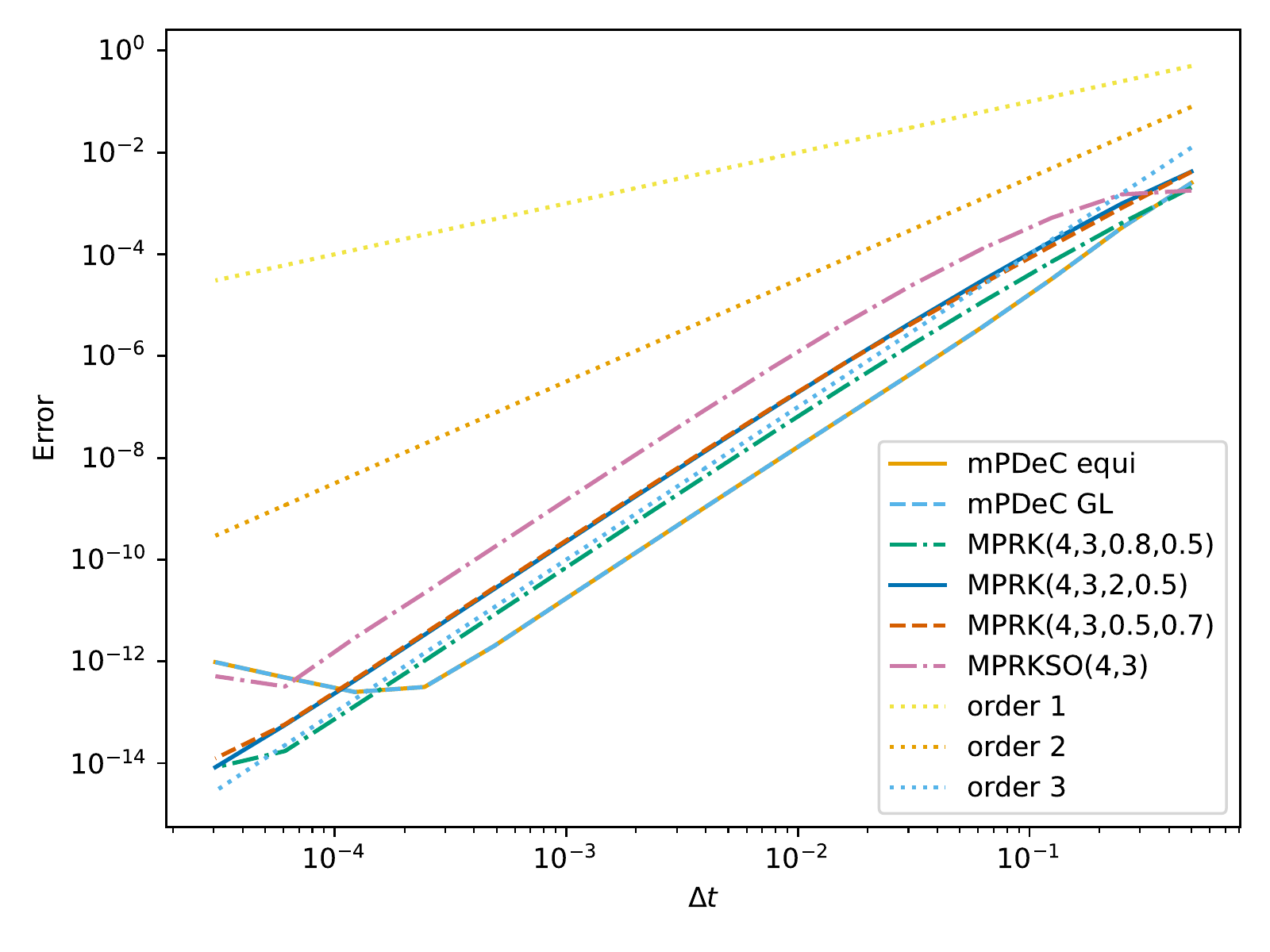}
%		\includegraphics[width=0.48\textwidth,trim={10 0 30 40},clip]{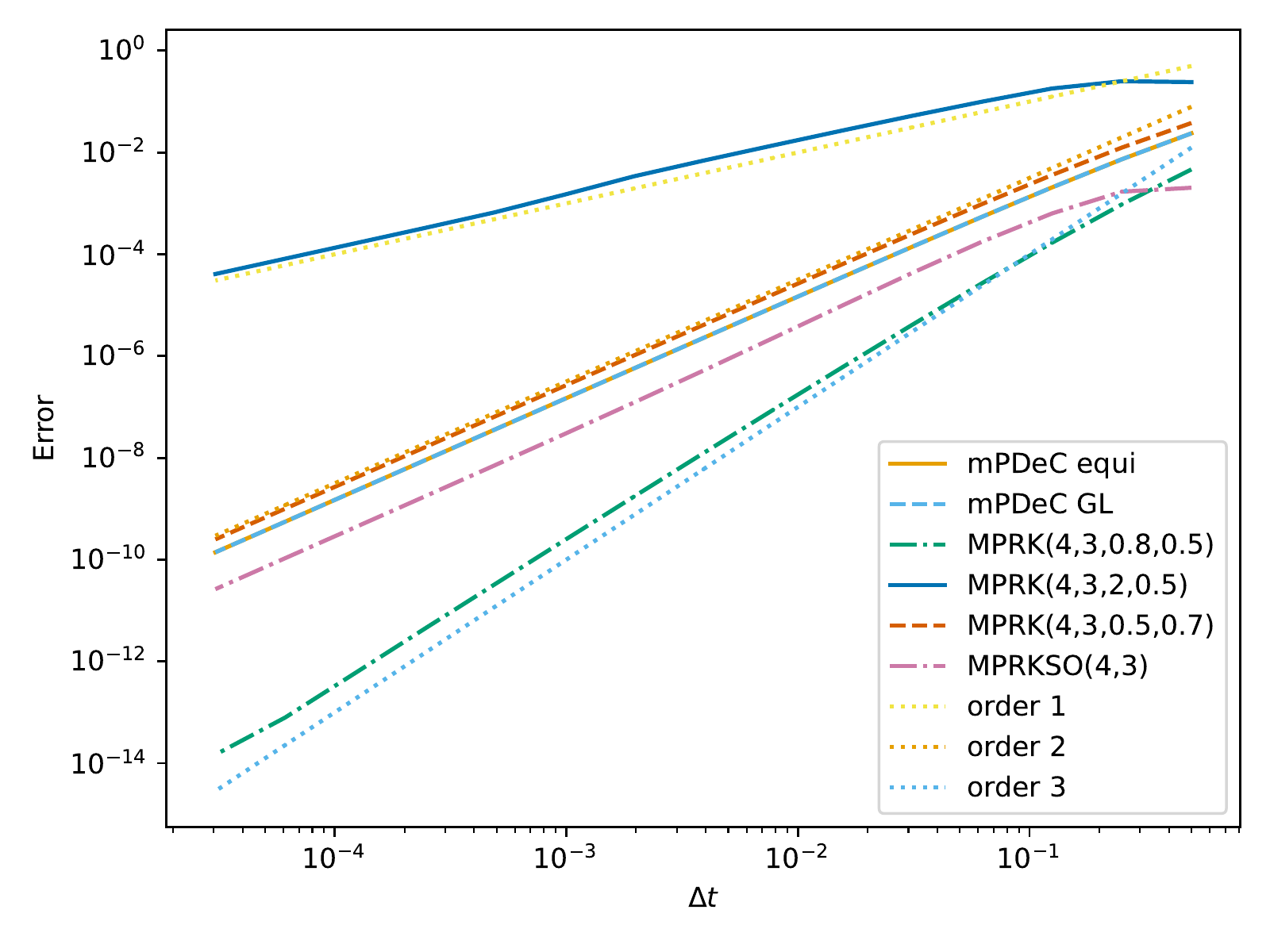}\\
%		\includegraphics[width=0.48\textwidth,trim={10 0 30 40},clip]{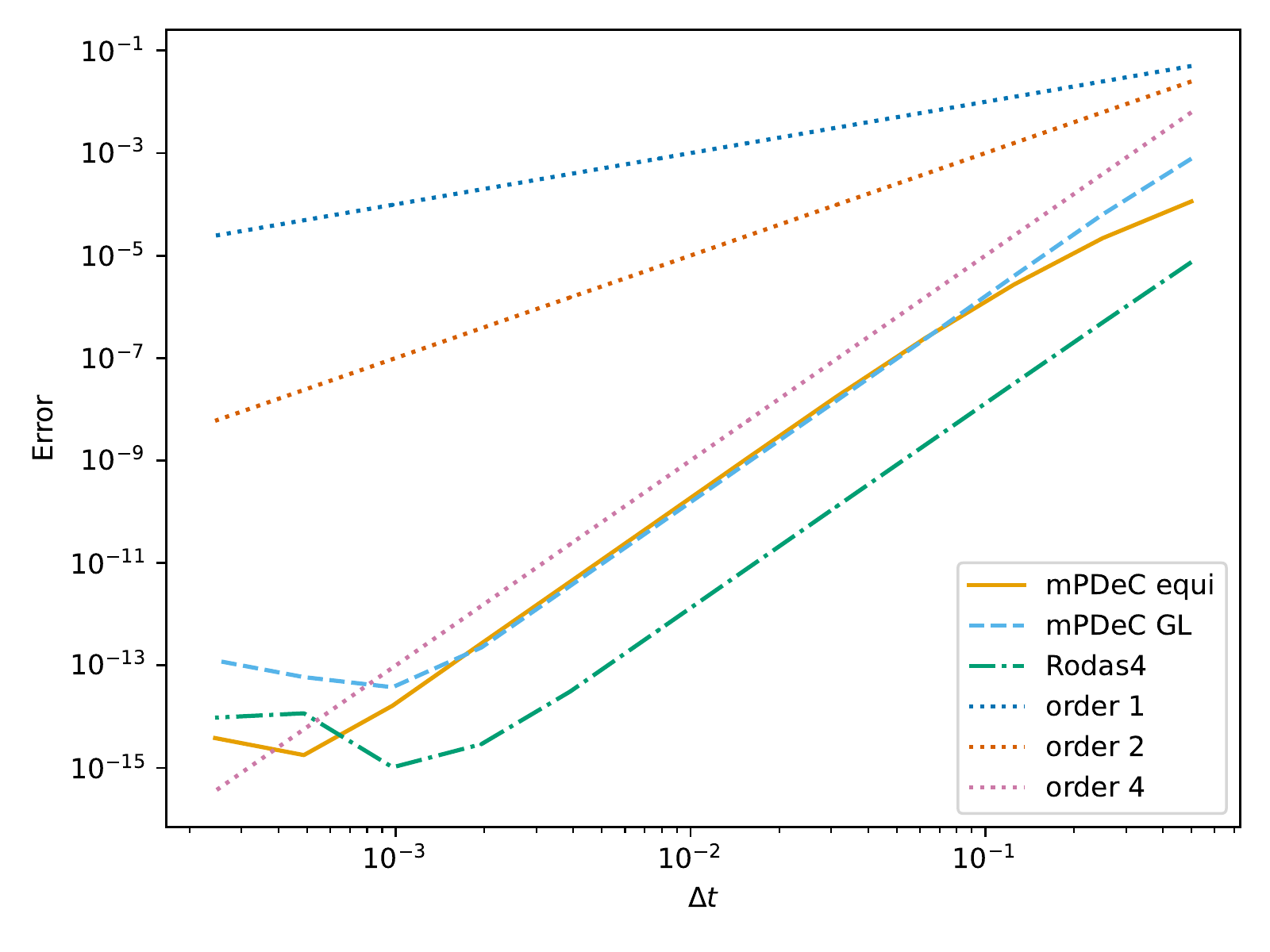}
%		\includegraphics[width=0.48\textwidth,trim={10 0 30 40},clip]{}
%		\caption{Error decay for the system \eqref{eq:linearsystem2general} with $\theta=0.5$, $T=1$ and $\varepsilon = 10^{-2}$ on the left and $\varepsilon = 10^{-250}$ on the right. Various scheme and different order of accuracy from top to bottom \label{fig:errorDecay}}
%	\end{figure}\\
	\begin{figure}
		\centering
		\begin{subfigure}{0.48\textwidth}
			\includegraphics[width=\textwidth]{figures/system/convergence_system_order2.pdf}
			\caption{Order 2 schemes, $\varepsilon=10^{-2}$}
		\end{subfigure}
	\begin{subfigure}{0.48\textwidth}
	\includegraphics[width=\textwidth]{figures/system/convergence_system_order3.pdf}
	\caption{Order 3 schemes, $\varepsilon=10^{-2}$}
\end{subfigure}\\
		\begin{subfigure}{0.48\textwidth}
			\includegraphics[width=\textwidth]{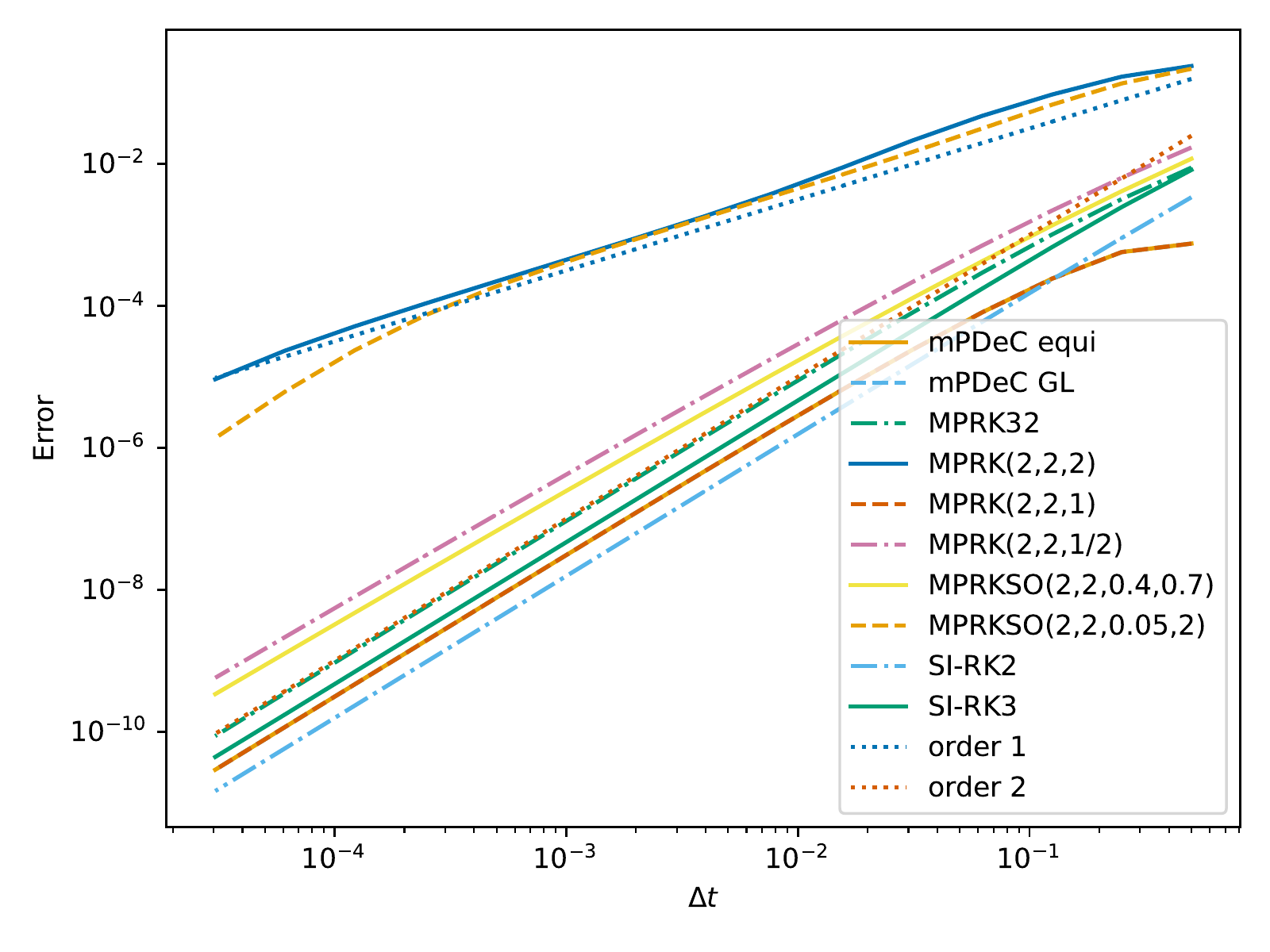}
			\caption{Order 2 schemes, $\varepsilon=10^{-16}$}
		\end{subfigure}
	\begin{subfigure}{0.48\textwidth}
	\includegraphics[width=\textwidth]{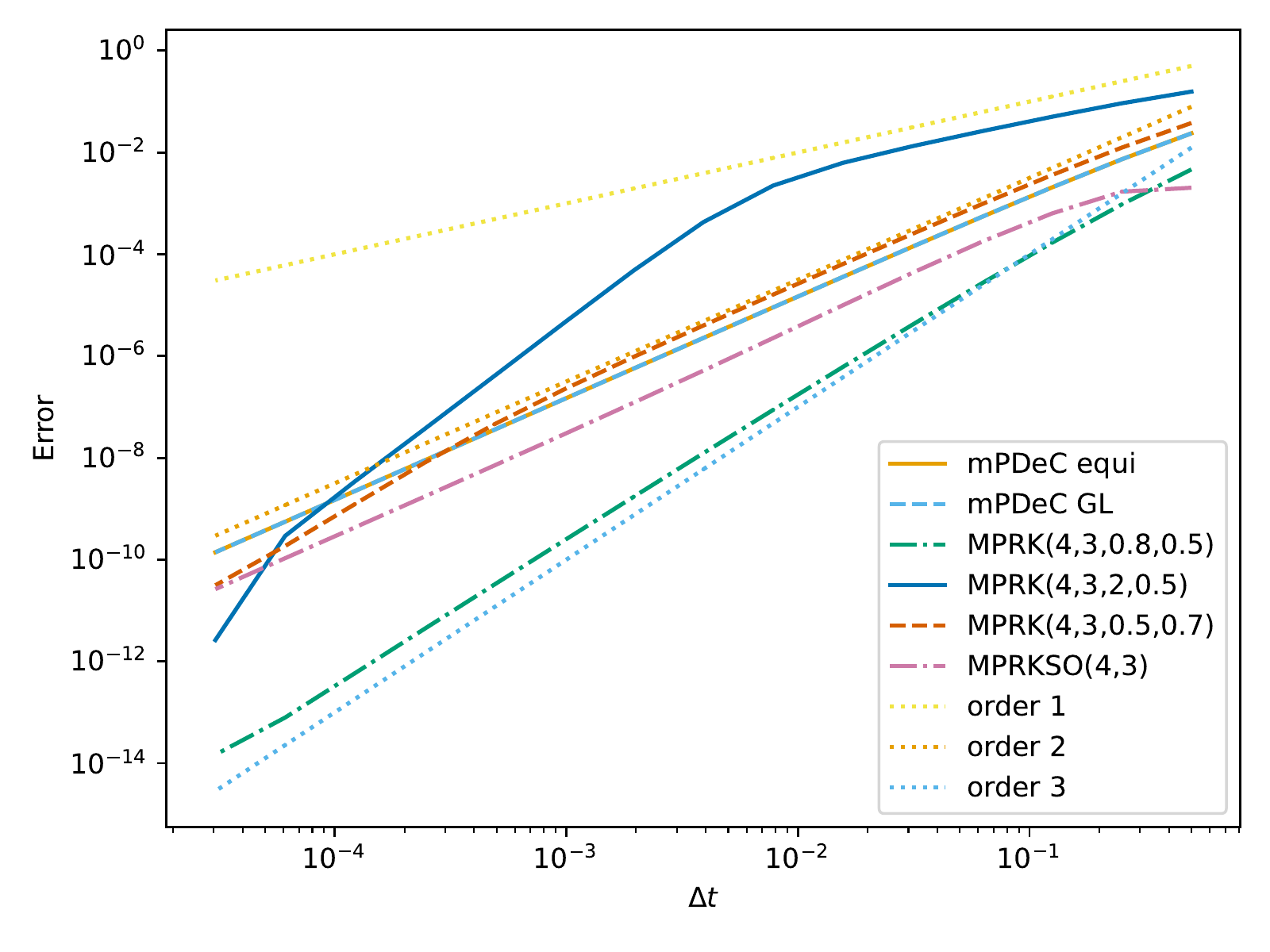}
	\caption{Order 3 schemes, $\varepsilon=10^{-16}$}
\end{subfigure}\\
\begin{subfigure}{0.48\textwidth}
\includegraphics[width=\textwidth]{figures/system_vanishing/convergence_system_vanishing_order2.pdf}
\caption{Order 2 schemes, $\varepsilon=10^{-250}$}
\end{subfigure}
\begin{subfigure}{0.48\textwidth}
\includegraphics[width=\textwidth]{figures/system_vanishing/convergence_system_vanishing_order3.pdf}
\caption{Order 3 schemes, $\varepsilon=10^{-250}$}
\end{subfigure}
	\caption{Error decay for the system \eqref{eq:linearsystem2general} with $\theta=0.5$, at time $T=1$ with second and third order methods and different $\varepsilon$.\label{fig:errorDecay23}}
\end{figure}
	\begin{figure}
	\centering
	\begin{subfigure}{0.48\textwidth}
	\includegraphics[width=\textwidth]{figures/system/convergence_system_order4.pdf}
	\caption{Order 4 schemes, $\varepsilon=10^{-2}$}
\end{subfigure}
\begin{subfigure}{0.48\textwidth}
\includegraphics[width=\textwidth]{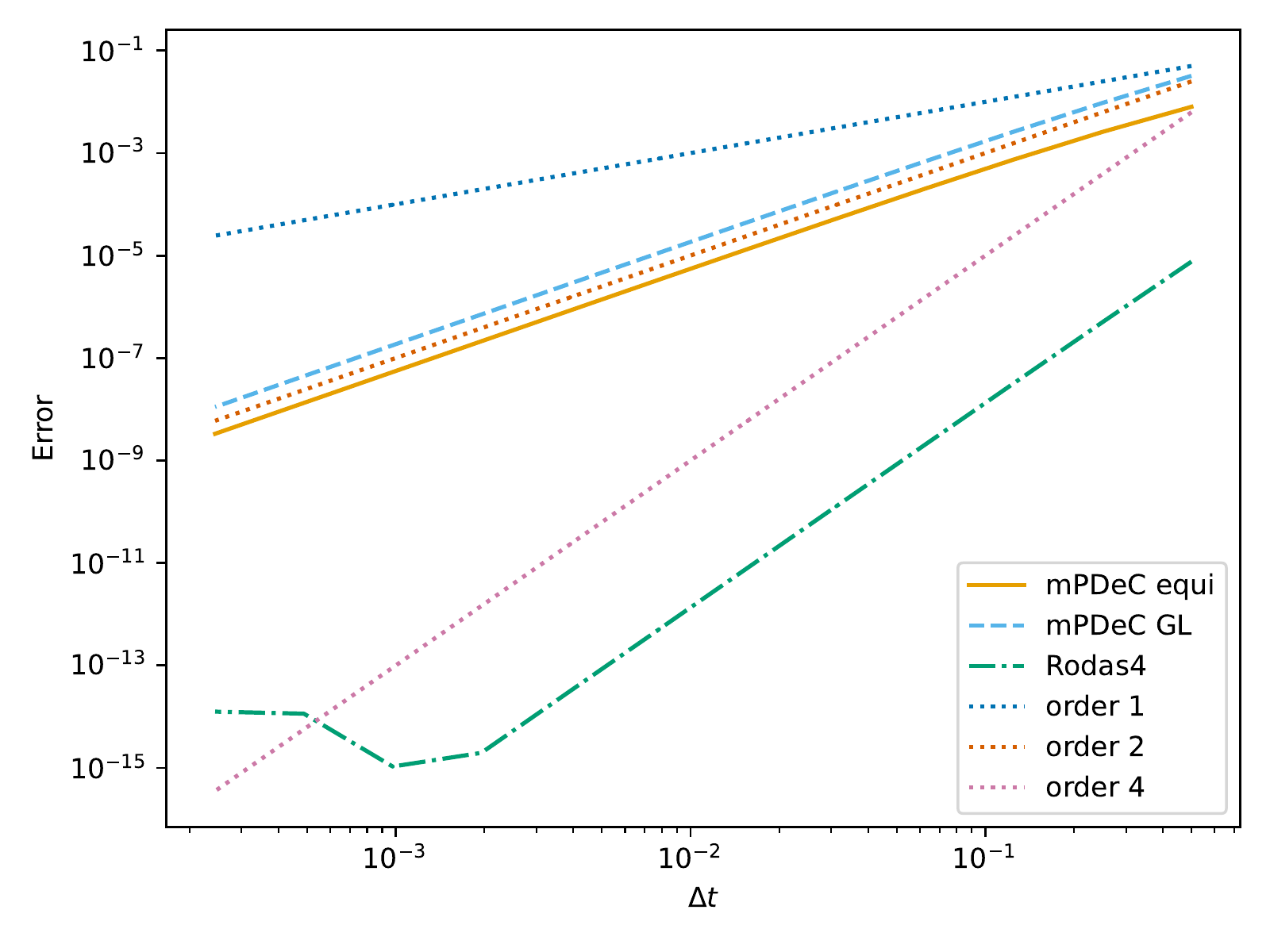}
\caption{Order 4 schemes, $\varepsilon=10^{-16}$}
\end{subfigure}
%\begin{subfigure}{0.48\textwidth}
%\includegraphics[width=\textwidth,trim={5 0 30 20},clip]{figures/system_vanishing/convergence_system_vanishing_order4.pdf}
%\caption{Order 4 schemes, $\varepsilon=10^{-250}$}
%\end{subfigure}
	\caption{Error decay for the system \eqref{eq:linearsystem2general} with $\theta=0.5$, at time $T=1$ with fourth order methods and different $\varepsilon$.\label{fig:errorDecay4}}
\end{figure}\\
	The fall back to first and second order is due to an error in the first time steps when one initial condition is close to 0. As soon as this component becomes large enough the error goes back to the expected one. This leaves either a shift of some $\dt$ on the solution or a first time step with a second order error. To grasp why we lose order of accuracy, we need to understand what happens in the limit of our schemes for $\varepsilon \to 0$ for the first time step.
	We remark that, in the linear system case, $\tilde{p}_{ij}$ and $\tilde{d}_{ij}$ defined in Theorem~\ref{th:ex_un_pos} as $\tilde{d}_{ij}(u)=d_{ij}(u)/u_i$ and $\tilde{p}_{ij}(u)=p_{ij}(u)/u_j$ are positive and constant.
	As an example, we can see the role of these production/destruction rates in the \ref{eq:MPE}
	\begin{align}
		&u^{1}_i = u^0_i +\dt \sum_j \left(  \tilde{p}_{ij}(u^0)\cancel{u_j^0} \frac{ u^{1}_j}{\cancel{u_j^0}} - \tilde{d}_{ij}(u^0) \cancel{u_i^0}  \frac{u^{1}_i}{\cancel{u_i^0}}\right),\\
		&u^{1}_i = \frac{u^0_i +\dt \sum_j \tilde{p}_{ij}(u^0) u^{1}_j}{1+\dt \sum_j\tilde{d}_{ij}(u^0)} =u^0_i+ \dt \sum_{j\in I} \left(\tilde{p}_{ij}(u^0)u_j^1 -\tilde{d}_{ij}(u^0)u_i^0 \right)+ \mathcal{O}(\Delta t^2).\label{eq:firstStepMPE}
	\end{align}
	Hence, we see that the method that we obtain for vanishing initial condition $\varepsilon\to 0$ is well defined and, in this case, leads to a consistent and first order scheme.

	This is not true for \ref{eq:MPRK22-family} for all $\alpha$. The first stage of the scheme is a \ref{eq:MPE} step and it does not introduce issues. The second stage depends on the coefficient $\alpha$. Let us define $\omega = \frac{1}{2\alpha}$, the second stage reads
	\begin{equation}
		u_i^{1} = u_i^0 +\dt \sum_j \left[\left( \frac{ (1-\omega) p_{ij}(y^1)+\omega p_{ij}(y^2)}{(y^2_j)^{1/\alpha}(y^1_j)^{1-1/\alpha}} \right) u^{1}_j-\left( \frac{ (1-\omega) d_{ij}(y^1)+\omega d_{ij}(y^2)}{(y^2_i)^{1/\alpha}(y^1_i)^{1-1/\alpha}} \right) u^{1}_i\right].
	\end{equation}
	Here, we cannot simplify as before the linear terms of destructions and productions. If we focus on the destruction term for the vanishing constituent, i.e., $u_i^0=\varepsilon=y^1_i\to 0$, and if we suppose that the first step is such that $y^2_i \geq C_2 \Delta t$, this is true as we have seen in the \ref{eq:MPE} step, we have that
	\begin{equation}\label{eq:limit_first_step}
		\lim_{y^1_i \to 0} \frac{ (1-\omega) d_{ij}(y^1)+\omega d_{ij}(y^2)}{(y^2_i)^{1/\alpha}(y^1_i)^{1-1/\alpha}} = \begin{cases}
			0, & \text{if } 1-1/\alpha <0\Leftrightarrow \alpha < 1,\\
			\omega \tilde{d}_{ij}(y^2), & \text{if } 1-1/\alpha =0 \Leftrightarrow \alpha = 1,\\
			\infty, & \text{if } 1-1/\alpha > 0 \Leftrightarrow \alpha >1,
		\end{cases}
	\end{equation}
	where $\tilde{d}_{ij}$ is defined in Theorem~\ref{th:ex_un_pos}.

Hence, for $\alpha >1$, when collecting the term $u_i^1$ on the left--hand side, we have that $\lim_{\varepsilon\to 0}u_i^1 = 0$. This is a zero-th order error step. Nevertheless, as one can see also in Figure~\ref{fig:errorDecay23} and Figure~\ref{fig:inconsistency-motivational}, after some steps the regime $u_2 \ll \dt $ is abandoned and the classical accuracy is restored, leading to an error of the order of $\mathcal{O}(\dt)$ at a final time $T$.

For $\alpha <1$ we have that the contribution of the destruction terms to this equation tends to 0 as $\varepsilon	\to 0$, while they where expected to give, for \eqref{eq:linearsystem2general}, a contribution of the order of $\dt$ (as $d_{21}(y^1) = (1-\theta) y^1_2 = \mathcal{O}(\dt)$). This leads to an error of $\mathcal{O}(\dt^2)$ for the first time step, i.e., a first order error. At the second step the regime $u_2 \ll \dt$ is already left, hence, the formal second order of accuracy is then restored. So, at a final time we have an error of $\mathcal{O}(\dt^2)+ \mathcal{O}(\dt^2)=\mathcal{O}(\dt^2)$. Even if in this case we do not observe an order reduction at a final time, the first time step shows order reduction and this is very common also in other higher order methods and this type of reduction would lead to a $\mathcal{O}(\dt^2)$ at the final time.

Finally, for $\alpha=1$ none of these behaviors happen, no order reduction is observed and an error of $\mathcal{O}(\dt^3)$ is formally obtained at the first time step.

We can generalize the two problematic cases that we have just explained into two lemmas. These configurations are common to many MP schemes. Hence, it will be easy then to recast each scheme to one of these cases. For the mPDeC schemes a similar issue arises from the negative coefficients and it will be discussed later.

In general, to obtain a certain order of accuracy, the RK methods build stages of increasing order of accuracy, so that the final step can perform a linear combination of functions of enough accurate stages. If the expected order of accuracy is lost in any of these stages, we might have an order reduction in that timestep update. This is why we need to study all the stages of the MP schemes to check in which of those there is an order reduction and up to which order this happens. This will lead to the understanding of the final order reduction of the method. In the following, we study a general stage and how the order reduction can happen and, then, we check which MPRK is affected in which stage by this behavior.
	 
First of all, let us write a general step of an MPRK scheme for the second component of the ODE \eqref{eq:linearsystem2general} at a certain stage $s$, exploiting the conservation property, as
	\begin{equation}\label{eq:troubled_step}
		y_2^s = u_2^0 + \dt \sum_{j<s} \gamma_j^s  \left(  p_{21}(y^j)\frac{y_1^s}{\sigma_1^j}  - d_{21}(y^j) \frac{y_2^s}{\sigma_2^j}  \right) = u_2^0 + \dt \sum_{j<s} \gamma_j^s  \left(  \theta \frac{y^j_1}{\sigma_1^j}(1-y_2^s)  - (1-\theta) \frac{y^j_2}{\sigma_2^j}y_2^s  \right),
	\end{equation}
with $\gamma_j^s$ some nonnegative RK coefficients and $\sigma_i^j$ the different denominator of the various MPRK schemes.
Now, the troubles come when there are some $\sigma_2^j$ that are an $\mathcal{O}(\varepsilon)$ or when $1/\sigma_2^j = \mathcal{O}(\varepsilon)$ and they do not match the destruction terms. These cases correspond to what observed in \ref{eq:MPRK22-family} for $\alpha >1$ and $\alpha<1$ respectively, while it is not the case of \ref{eq:MPE} where cancellation leads to a consistent approximation. To be more general, let us consider $\sigma_2^j=\mathcal{O}(\eta)$ or $1/\sigma_2^j=\mathcal{O}(\eta)$, where $\eta$ can be a power of $\varepsilon$ or a ratio between $\varepsilon$ and $\dt$. As an example, you can refer to the \ref{eq:MPRK22-family}, where at the last stage the denominator is $\sigma_2^1=\sigma_2^2=(y_2^2)^{1/\alpha}(y_2^1)^{1-1/\alpha} = \mathcal{O}(\dt^{1/\alpha}\varepsilon^{1-1/\alpha})$. This will be the case in many situations.
It will be useful to use the Big Theta Landau symbol $f(x)=\Theta(g(x))$ to indicate that $$
0<\liminf_{x\to 0} \frac{|f(x)|}{g(x)} \leq \limsup_{x\to 0} \frac{|f(x)|}{g(x)}<\infty.
$$
	First, we study the case where $\sigma^j_2=\Theta(\eta)$ which corresponds to the \ref{eq:MPRK22-family} for $\alpha>1$.
\begin{lemma}\label{lem:term_infty}
	Consider the problem \eqref{eq:linearsystem2general} with $0<\theta < 1$ and initial condition $(1-\varepsilon, \varepsilon)$ with $0<\varepsilon$. Consider the update step at the first time step given by \eqref{eq:troubled_step}. Suppose there is an $\ell<s$ with $\gamma_\ell^s >0$ such that $\sigma_2^\ell = \Theta(\eta)$, $y_2^\ell=\Theta(\dt)$ and consider the limit for $\dt\to 0$, $\frac{\eta}{\dt^2}\to 0$ and $\frac{\varepsilon}{\dt}\to 0$.  Moreover, suppose that for all stages $j$: $\frac{y_2^j}{\sigma_2^j}=\mathcal{O}\left(\frac{y_2^\ell}{\sigma_2^\ell}\right)$. Then, $$y_2^s= \Theta\left(\frac{\eta}{\dt}\right) = u_2^{ex} + \Theta(\dt),$$
	where $u_2^{ex}$ is the exact solution after the first time step.
\end{lemma}
\begin{proof}
	First of all, let us observe that $\frac{\eta}{\dt} = \frac{\eta}{\dt^2} \dt \to 0$ as both $\frac{\eta}{\dt} \to 0$ and $\dt \to 0 $.
	From \eqref{eq:troubled_step} we can write the definition of $y_2^s$ as
	\begin{equation}\label{eq:troubled_step2}
		\Bigg[ 1 + \underbrace{\dt \sum_{j<s} \gamma_j^s\theta \frac{y_1^j}{\sigma_1^j} }_{\Theta(\dt)}+ \underbrace{\dt \sum_{j<s}\gamma_j^s (1-\theta) \frac{y_2^j}{\sigma_2^j}}_{\Theta(\frac{\dt^2}{\eta})} \Bigg] y_2^s = \underbrace{y_2^0}_{=\varepsilon} + \underbrace{\dt \sum_{j<s} \gamma_j^s\theta \frac{y_1^j}{\sigma_1^j} }_{\Theta(\dt)}.
	\end{equation}
The scaling indicated by Landau symbols can be explained from hypotheses, using also $y_1=\mathcal{O}(1)$ for all stages and consequently $\sigma_1=\mathcal{O}(1)$; the initial value is $y_2^0=\varepsilon$ and all the coefficients are constant.
Then, the dominating term on the left-hand side is the $\Theta(\frac{\dt^2}{\eta})$, the only one going to infinity, and on the right side it is the $\Theta(\dt)$ as $\frac{\varepsilon}{\dt} \to 0$. So, we can write that
	\begin{align}
y_2^s =& \frac{\varepsilon + \dt\displaystyle \sum_{j<s} \gamma_j^s\theta \frac{y_1^j}{\sigma_1^j} }{ 1 +\dt \displaystyle\sum_{j<s} \gamma_j^s\theta \frac{y_1^j}{\sigma_1^j} + \dt\displaystyle \sum_{j<s}\gamma_j^s (1-\theta) \frac{y_2^j}{\sigma_2^j}}
= \frac{\frac{\eta}{\dt^2}\varepsilon + \frac{\eta}{\dt} \displaystyle\sum_{j<s} \gamma_j^s\theta \frac{y_1^j}{\sigma_1^j} }{ \frac{\eta}{\dt^2} +\underbrace{\frac{\eta}{\dt} \displaystyle\sum_{j<s} \gamma_j^s\theta \frac{y_1^j}{\sigma_1^j}}_{\Theta(\frac{\eta}{\dt})} + \underbrace{\frac{\eta}{\dt}\displaystyle \sum_{j<s}\gamma_j^s (1-\theta) \frac{y_2^j}{\sigma_2^j}}_{\Theta(1)}} \\
=&\frac{\frac{\eta}{\dt}\displaystyle \sum_{j<s} \gamma_j^s\theta \frac{y_1^j}{\sigma_1^j}}{\frac{\eta}{\dt} \displaystyle\sum_{j<s}\gamma_j^s (1-\theta) \frac{y_2^j}{\sigma_2^j}} + \Theta(1)\!\! \left( \frac{\eta \varepsilon}{\dt^2}\!-\!\frac{\eta}{\dt^2}\frac{\eta}{\dt}\displaystyle\sum_{j<s} \gamma_j^s\theta \frac{y_1^j}{\sigma_1^j}\!-\! \frac{\eta^2}{\dt^2}\left(\displaystyle\sum_{j<s} \gamma_j^s\theta \frac{y_1^j}{\sigma_1^j} \right)^2\right)\!\!+\dots\\
=&\frac{\displaystyle\sum_{j<s} \gamma_j^s\theta \frac{y_1^j}{\sigma_1^j}}{\displaystyle\sum_{j<s}\gamma_j^s (1-\theta) \frac{y_2^j}{\sigma_2^j}}
+ \mathcal{O}\left(\frac{\eta}{\dt}\frac{\varepsilon}{\dt} \right)
+\mathcal{O}\left( \frac{\eta}{\dt} \frac{\eta}{\dt^2}\right)
+\mathcal{O}\left( \frac{\eta}{\dt} \frac{\eta}{\dt}\right)= \Theta\left( \frac{\eta}{\dt} \right).
\end{align}
To obtain the previous formula is convenient to multiply numerator and denominator of $y_2^s$ by $\sigma_2^\ell$ and then, after having simplified $\dt$, at the numerator there is a $\Theta(1)$ and at the denominator the term $\frac{y_2^\ell}{\sigma_2^\ell}$ dominates the sum.
\end{proof}
This lemma shows that in the stages where the hypotheses are verified we obtain a 0-th order accurate update. Still, the value of $y_2^j = \Theta \left( \frac{\eta}{\dt}\right)$ and it is larger than $\eta$ (for small $\dt$). So, if this operation is repeated and we consider the result after a time step as a new initial condition, the new initial value $\varepsilon$ will keep increasing (and consequently $\eta$ which is proportional to $\varepsilon$), the regime $y_2 \ll \dt$ is abandoned after some time steps and the classical accuracy is restored for following time steps. Usually, in these cases an error of $\mathcal{O}(\dt)$ at a final step is observed, as a first order method.

The second situation we encounter is the opposite, when the exponents of the schemes are such that $1/\sigma_2^j $ is an $\mathcal{O}(\varepsilon)$ or one of its powers, as for \ref{eq:MPRK22-family} for $\alpha <1$.
\begin{lemma}\label{lem:term_zero}
	Consider the problem \eqref{eq:linearsystem2general} with $0<\theta < 1$ and initial condition $(1-\varepsilon, \varepsilon)$ with $0<\varepsilon$. Consider the update step at the first time step given by \eqref{eq:troubled_step}. Suppose that exists $\ell < s$ with $\gamma_\ell^s >0$ such that $1/\sigma_2^\ell = \Theta(\eta)$ and $y_2^\ell=\Theta(\dt)$  and consider the limit for $\dt\to 0$, $\frac{\eta}{\dt}\to 0$ and $\frac{\varepsilon}{\dt}\to 0$. Moreover, suppose that for all $j<s$, $\frac{y_2^j}{\sigma_2^j} =\mathcal{O}(1)$. Then, $y_2^s$ is at most an approximation of order 1 of the exact solution and the error is an $\mathcal{O}(\dt^2).$
\end{lemma}
\begin{proof}
	First we prove that $y^s_2=\Theta(\dt)$ and afterwards,  we show that it cannot be a second order approximation.
From \eqref{eq:troubled_step2} it follows that
\begin{equation}
	\Bigg[ 1 + \underbrace{\dt \sum_{j<s} \gamma_j^s\theta \frac{y_1^j}{\sigma_1^j} }_{\Theta(\dt)}+ \underbrace{\dt \sum_{j<s}\gamma_j^s (1-\theta) \frac{y_2^j}{\sigma_2^j}}_{\mathcal{O}(\dt)} \Bigg] y_2^s = \underbrace{y_2^0}_{=\varepsilon} + \underbrace{\dt \sum_{j<s} \gamma_j^s\theta \frac{y_1^j}{\sigma_1^j} }_{\Theta(\dt)},
\end{equation}
so that the dominant term on the LHS is 1 and on the RHS is the $\Theta(\dt)$. Hence, we obtain
\begin{equation}
	y_2^s = \dt \sum_{j<s}\gamma_j^s\theta \frac{y_1^j}{\sigma_1^j}  +  \mathcal{O}(\varepsilon)+  \mathcal{O}(\dt^2) = \Theta(\dt).
\end{equation}
Consider again the update equation \eqref{eq:troubled_step} and supposed that it is of order of accuracy $p>1$ in the nonvanishing initial condition regime. We see that
	\begin{align}
			y_2^s &= u_2^0 + \dt \sum_{j<s} \gamma_j^s   p_{21}(y^j)\frac{y_1^s}{\sigma_1^j}  -\dt \sum_{j<s} \gamma_j^s  d_{21}(y^j) \frac{y_2^s}{\sigma_2^j} \\
			&=u_2^0 + \dt \sum_{j<s} \gamma_j^s   p_{21}(y^j)  -\dt \sum_{j\neq \ell} \gamma_j^s  d_{21}(y^j) \frac{y_2^s}{\sigma_2^j} -\dt \gamma_\ell^s  d_{21}(y^\ell) \frac{y_2^s}{\sigma_2^\ell},
	\end{align}
Focusing on the last term we observe that
\begin{equation}
	\dt \gamma_\ell^s  d_{21}(y^\ell) \frac{y_2^s}{\sigma_2^\ell} = \dt \gamma_\ell^s  \theta y^\ell_2 \frac{y_2^s}{\sigma_2^\ell} = \dt \gamma_\ell^s \theta \underbrace{\frac{ y^\ell_2}{\sigma_2^\ell}}_{\Theta(\eta \dt )} \underbrace{y_2^s}_{\Theta(\dt)} = \Theta(\dt^3 \eta),
\end{equation}
while for the unweighted term of the original highly accurate RK method we have 
\begin{equation}
	\dt \gamma_\ell^s  d_{21}(y^\ell) = \Theta(\dt^2),
\end{equation}
hence, the difference of the two is $\Theta(\dt^2)$. Hence, the destruction term contribution related to stage $\ell$ is approximated with an error of $\Theta(\dt^2)$.
So that the error for the stage $s$ is affected mainly by this error, i.e.,
\begin{equation}
	y_2^s = y_2^{ex} + \Theta(\dt^2).
\end{equation}
\end{proof}
This proof shows that when a scheme falls in the hypotheses of this lemma, we have a first step with only accuracy order of 1, but, immediately after, the value of $u_2$ is far away from zero and the classical order of accuracy is restored. Then, at a final time the error will be a $\Theta(\dt^2)$.
We remark that the hypothesis $\frac{y_2^j}{\sigma_2^j}=\mathcal{O}(1)$ is not restrictive as it discriminates the first lemma case and second lemma case. Indeed, when this hypothesis is not fulfilled, there exists an $\ell$ such that $\sigma_2^\ell = o(y_2^\ell)$, and by an opportune definition of $\eta$ such that $\sigma_\ell^2 = \Theta(\eta)$ fulfills the hypotheses of Lemma~\ref{lem:term_infty}.

Now we can use these results to show the accuracy of all the modified Patankar schemes with positive Runge--Kutta coefficients.
\begin{theorem}[Accuracy of Patankar schemes with nonnegative RK coefficients for vanishing initial data]
	Consider the system of ODEs \eqref{eq:linearsystem2general} with $u_0=(1-\varepsilon,\varepsilon)$ with vanishing initial condition, i.e., $\frac{\varepsilon}{\dt^r}\to 0$ as $\dt \to 0$ with $r$ large enough depending on the scheme so that hypotheses of previous lemmas are met. Then, the modified Patankar schemes with positive coefficients have errors in the first time steps and at a final time as  shown in Table~\ref{tab:ordersMP} (for \ref{eq:explicit_dec_correction} we refer to Theorem~\ref{th:accuracyMPDEC}).
	\begin{table}
		\centering
		\begin{tabular}{|c|c|c|c|}\hline
			Method& Parameters & First time step error  & Final time error\\ \hline
			\ref{eq:MPRK22-family} &$\alpha=1$ & $\Theta(\dt^3) $& $\Theta(\dt^2)$\\
			\ref{eq:MPRK22-family} &$\alpha>1$ & $\Theta(\dt) $& $\Theta(\dt)$\\
			\ref{eq:MPRK22-family} &$\frac12 \leq\alpha<1$ & $\Theta(\dt^2) $& $\Theta(\dt^2)$\\
			\ref{eq:MPRK43-family} &$q>1$ & $\Theta(\dt) $& $\Theta(\dt)$\\
			\ref{eq:MPRK43-family} &$p>1$ and $q\leq 1$ & $\Theta(\dt^2) $& $\Theta(\dt^2)$\\
			\ref{eq:MPRK43-family} &$p\leq 1$ and $q\leq 1$ and $pq\neq 1$ & $\Theta(\dt^3) $& $\Theta(\dt^3)$\\
			\ref{eq:MPRK43-family} &$p=q= 1$  & $\Theta(\dt^4) $& $\Theta(\dt^3)$\\
			\ref{eq:MPRKSO22-family} &$\gamma <1$ & $\Theta(\dt) $& $\Theta(\dt)$\\
			\ref{eq:MPRKSO22-family} &$\gamma = 1$ & $\Theta(\dt^3) $& $\Theta(\dt^2)$\\
			\ref{eq:MPRKSO22-family} &$\gamma > 1$ & $\Theta(\dt^2) $& $\Theta(\dt^2)$\\
			\ref{eq:MPRKSO(4,3)} & & $\Theta(\dt^2) $& $\Theta(\dt^2)$\\
			\ref{eq:MPRK32} & & $\Theta(\dt^3) $& $\Theta(\dt^2)$\\
			\ref{eq:SI-RK2} & & $\Theta(\dt^3) $& $\Theta(\dt^2)$\\
			\ref{eq:SI-RK3} & & $\Theta(\dt^3) $& $\Theta(\dt^2)$\\
			\ref{eq:explicit_dec_correction} & Equispaced\tablefootnote{mPDeC negative $\theta^M_j$ are present only for order higher than 8.}, nonnegative $\theta^M_j$ & $\Theta(\dt^2) $& $\Theta(\dt^2)$\\
			\ref{eq:explicit_dec_correction} & Equispaced, negative $\theta^M_j$ & $\Theta(\dt) $& $\Theta(\dt)$\\
			\ref{eq:explicit_dec_correction} & Gauss-Lobatto any order & $\Theta(\dt^2) $& $\Theta(\dt^2)$\\
			\hline
		\end{tabular}
	\caption{Accuracy of  Patankar methods for vanishing initial conditions with parameters defined at the definition of each scheme, see \ref{eq:MPRK22-family}, \ref{eq:MPRK43-family}, \ref{eq:MPRKSO22-family} and \ref{eq:explicit_dec_correction} \label{tab:ordersMP}}
	\end{table}
\end{theorem}
\begin{proof}
	We analyze all the methods stage by stage.
	\begin{itemize}
		\item Let us start with \ref{eq:MPRK22-family}. The first stage is an \ref{eq:MPE} step, which coincides with an implicit--Euler step for this problem and gives that $y_2^2 = u_2(\alpha \dt) + \mathcal{O}(\dt^2) = \Theta(\dt)$ for all parameters. In the last stage, we have that the critical factor is $\sigma_2^2= (y_2^2)^{1/\alpha}(y_2^1)^{1-1/\alpha} =\Theta(\dt^{1/\alpha}(\varepsilon)^{1-1/\alpha}) $ at the denominator, while $y_2^2=\Theta(\dt)$ being at the numerator.
		\begin{itemize}
			\item When $\alpha=1$, then $\sigma_2^2=\Theta(\dt)$ and this does not arise problems, hence the classical accuracy is restored and we have an error of $\mathcal{O}(\dt^3)$ for the first time step.
			\item For $\alpha>1$ we have that $1-1/\alpha>0$ and Lemma~\ref{lem:term_infty} applies with $\eta = \varepsilon^{1-1/\alpha} \dt^{1/\alpha}$ when
			$$\frac{\eta}{\dt^2} = \varepsilon^{1-1/\alpha} \dt^{1/\alpha -2} \to 0 \text{ and } \frac{\varepsilon}{\dt} \to 0$$ as $\dt \to 0$. Hence, $u^{1}_2=\Theta\left(\left(\frac{\varepsilon}{\dt}\right)^{1-1/\alpha}\right)= u_2(t^1)+\Theta(\dt)$. It must be noticed that $\varepsilon=o\left(\left(\frac{\varepsilon}{\dt}\right)^{1-1/\alpha}\right) $, hence, each time step is moving away from the region $u_2^n \ll \dt$. After a certain number of time steps the regime $\eta \ll \dt$ will be lost and classical accuracy will be restored. The first errors of $\Theta(\dt)$ will dominate the final error.
			\item For $\frac12\leq \alpha <1$ we have that $-1\leq 1-1/\alpha<0$ and Lemma~\ref{lem:term_zero} applies with $\eta = \Theta(\dt^{1/\alpha}\varepsilon^{1/\alpha-1})$ when $\frac{\eta}{\dt} = \varepsilon^{1/\alpha-1}\dt^{1/\alpha-1} \to 0$ as $\dt\to 0$. This means that for the first time step it holds that $u^{1}_2 = u_2^{ex}+\Theta(\dt^2) = \Theta(\dt)$. So, from the second time step classical error $\mathcal{O}(\dt^3)$ accumulates at each time step, leading to an error of $\mathcal{O}(\dt^2)$ at a final time.
		\end{itemize}
	\item \ref{eq:MPRK43-family} has a first stage of \ref{eq:MPE}, so $y^2_2=y_2^{ex}(\alpha\dt)+\mathcal{O}(\dt^2)=\Theta(\dt)$. Again, according to $p$ and $q$, exactly as for the \ref{eq:MPRK22-family} we have three situations.
	\begin{equation}
		\begin{cases}
			y_2^3 = y_2(\beta\dt)+ \mathcal{O}(\dt^3) = \Theta (\dt), & p=1,\\
			y_2^3 = y_2(\beta\dt)+ \mathcal{O}(\dt^2) = \Theta (\dt), & p<1,\\
			y_2^3 = y_2(\beta\dt)+ \mathcal{O}(\dt) = \Theta \left(\left(\frac{\varepsilon}{\dt}\right)^{1-1/p}\right), &p>1,
		\end{cases}
	\end{equation}
and
\begin{equation}
	\begin{cases}
		\sigma_2 = y_2(\dt)+ \mathcal{O}(\dt^3) = \Theta (\dt), & q=1,\\
		\sigma_2 = y_2(\dt)+ \mathcal{O}(\dt^2) = \Theta (\dt), & q<1,\\
		\sigma_2 = y_2(\dt)+ \mathcal{O}(\dt) = \Theta \left(\left(\frac{\varepsilon}{\dt}\right)^{1-1/p}\right), &q>1.
	\end{cases}
\end{equation}
These are obtained with the previous lemmas exactly as in the case of \ref{eq:MPRK22-family}.
\begin{itemize}
\item Now, if $q>1$ and $\sigma_2=  \Theta \left(\left(\frac{\varepsilon}{\dt}\right)^{1-1/q}\right)$ and it verifies the hypotheses of Lemma~\ref{lem:term_infty}, i.e. $\varepsilon^{1-1/q} \dt^{1/q-2}\to 0$ and $\frac{\varepsilon}{\dt} \to 0 $ as $\dt \to 0$, then, for Lemma~\ref{lem:term_infty}, we have that $u_2^1 = \Theta \left(\frac{\varepsilon^{1-1/q}}{\dt^{2-1/q}}\right)$. This means that at the first time step we have an error of $\Theta(\dt)$. Again, we see that $\varepsilon=o \left(\frac{\varepsilon^{1-1/q}}{\dt^{2-1/q}}\right)$ and this means that only few time steps will verify the hypotheses of Lemma~\ref{lem:term_zero}. Afterwards, the original third order accuracy will be restored, leading to an overall error of $\mathcal{O}(\dt)$ at a final time. 
\item If $q< 1$ then $\sigma_2 = u_2(\dt)+ \mathcal{O}(\dt^2)$ for Lemma~\ref{lem:term_zero} with $\eta=\dt^{1/q}\varepsilon^{1-1/q}$ when $(\dt\varepsilon)^{1/q-1}\to 0$ and $\frac{\varepsilon}{\dt}\to 0$ for $\dt \to 0$. Then, $\frac{u_2(t^1)}{\sigma_2} =1+ \Theta (\dt^2)$ which leads to an error of $\Theta(\dt^3)$ at the first time step.
\item If $q=1$ none of the lemmata apply  and the weighting factor should be of the expected third order accuracy.
\item If $p>1$ then $y_2^3=u_2(\beta\dt) + \Theta(\dt)$ for Lemma~\ref{lem:term_zero} when  $\varepsilon^{1-1/p} \dt^{1/p-2}\to 0$ and $\frac{\varepsilon}{\dt} \to 0 $ as $\dt \to 0$, hence this brings in the production and destruction terms of the final update an error of $\Theta(\dt^2)$.
\item If $p<1$ then $y_2^3=u_2(\beta\dt) + \Theta(\dt^2)$ for Lemma~\ref{lem:term_zero} when $\frac{\varepsilon}{\dt}\to 0$, which brings in the final update an error of $\Theta(\dt^3)$.
\item If $p=1$ this would not contribute to errors larger than the accuracy order of the scheme.
\end{itemize}
Putting all the information together we obtain the errors in Table~\ref{tab:ordersMP}, recalling that, except for $q>1$, only the first time step falls in the hypotheses of the lemmas, so, it is the only time step affected by these errors, while for $q>1$ some time steps will be effected. Anyway, the error at the final solution is bounded by the third order accuracy of the scheme itself.
\item For \ref{eq:MPRKSO22-family} the same arguments of \ref{eq:MPRK22-family} apply with $\gamma$ in place of $\alpha$.
\item For \ref{eq:MPRKSO(4,3)} we have that the first stage is again an \ref{eq:MPE}  step and $y_2^1=\Theta(\dt)$. Then, $1/\rho_2 = \Theta(\frac{\varepsilon}{\dt^2})$. For the equation of $y_2^3$ Lemma~\ref{lem:term_zero} applies with $\eta = \frac{\varepsilon}{\dt^2}$ when $\frac{\eta}{\dt}=\frac{\varepsilon}{\dt^3}\to 0$ as $\dt \to 0$, hence $y_2^3 = u_2((\alpha_{20}+\alpha_{21}) \dt) + \Theta(\dt^2).$ Then, $\mu_2 = \Theta\left(\frac{\dt^s}{\varepsilon^{s-1}}\right)$ which makes the equation for $\tilde{a}_2$ fall in the hypotheses of Lemma~\ref{lem:term_zero} with $\eta = \frac{\varepsilon^{s-1}}{\dt^s}$ when $\frac{\eta}{\dt}= \frac{\varepsilon^{s-1}}{\dt^{s+1}}\to 0 $ as $\dt \to 0$. Hence, $\tilde{a}_2=u_2((\eta_1+\eta_2)\dt) +\Theta(\dt^2).$ Then, $\sigma_2 = \tilde{a}_2 +\Theta(\frac{\varepsilon^2}{\dt^2}) = u_2(\dt)+ \Theta(\dt)$. This means that $\frac{u_2(\dt)}{\sigma_2}=1+\Theta(\dt)$, which sums up to a first order error $\Theta(\dt^2)$ for the first step. From the second step on, the third order accuracy is restored. Hence, at a final time an error of a $\Theta(\dt^2)$  is observable.
\item In \ref{eq:MPRK32} the first stage exploits the cancellation between the destruction and production of the same constituents as for all the \ref{eq:MPE} steps. All the other stages never present $y_2^1$ at the denominator of the MP weights, hence, none of the cases of the previous lemmas is met. So no order reduction phenomena appear.
\item In \ref{eq:SI-RK2} and \ref{eq:SI-RK3} the cancellation $d_{21}(y)/y_2 = (1-\theta)$ is always exploited, so there is no troubled term at the denominators. Hence, no order reduction is observed.
\end{itemize}
\end{proof}
	\begin{figure}
		\centering
		\begin{subfigure}{0.48\textwidth}
			\includegraphics[width=\textwidth]{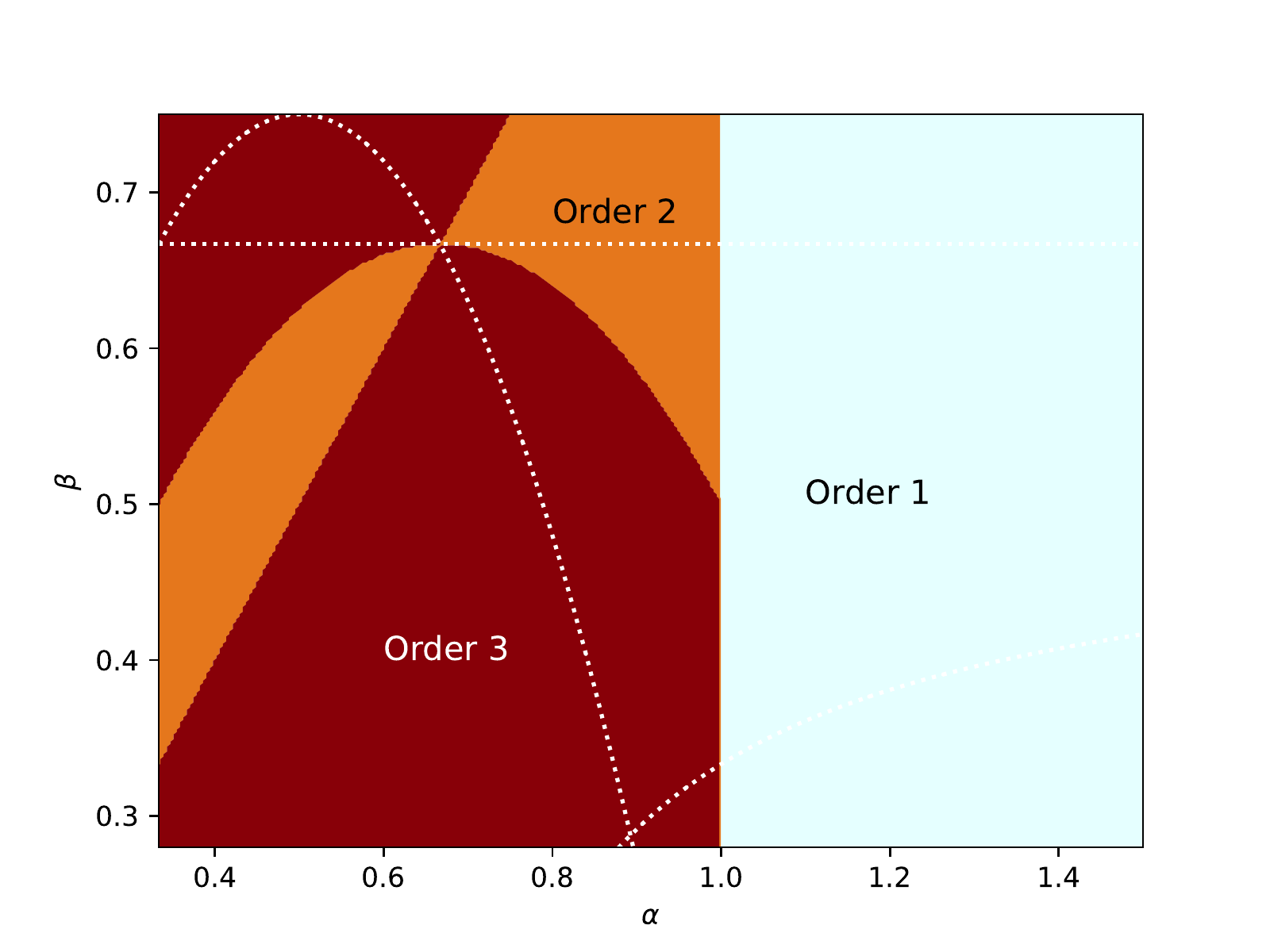}
			\caption{\ref{eq:MPRK43-family} orders: light blue first order, orange second order, brown third order\label{fig:orderMPRK43}}
		\end{subfigure}\hfill
		\begin{subfigure}{0.48\textwidth}
			\includegraphics[width=\textwidth]{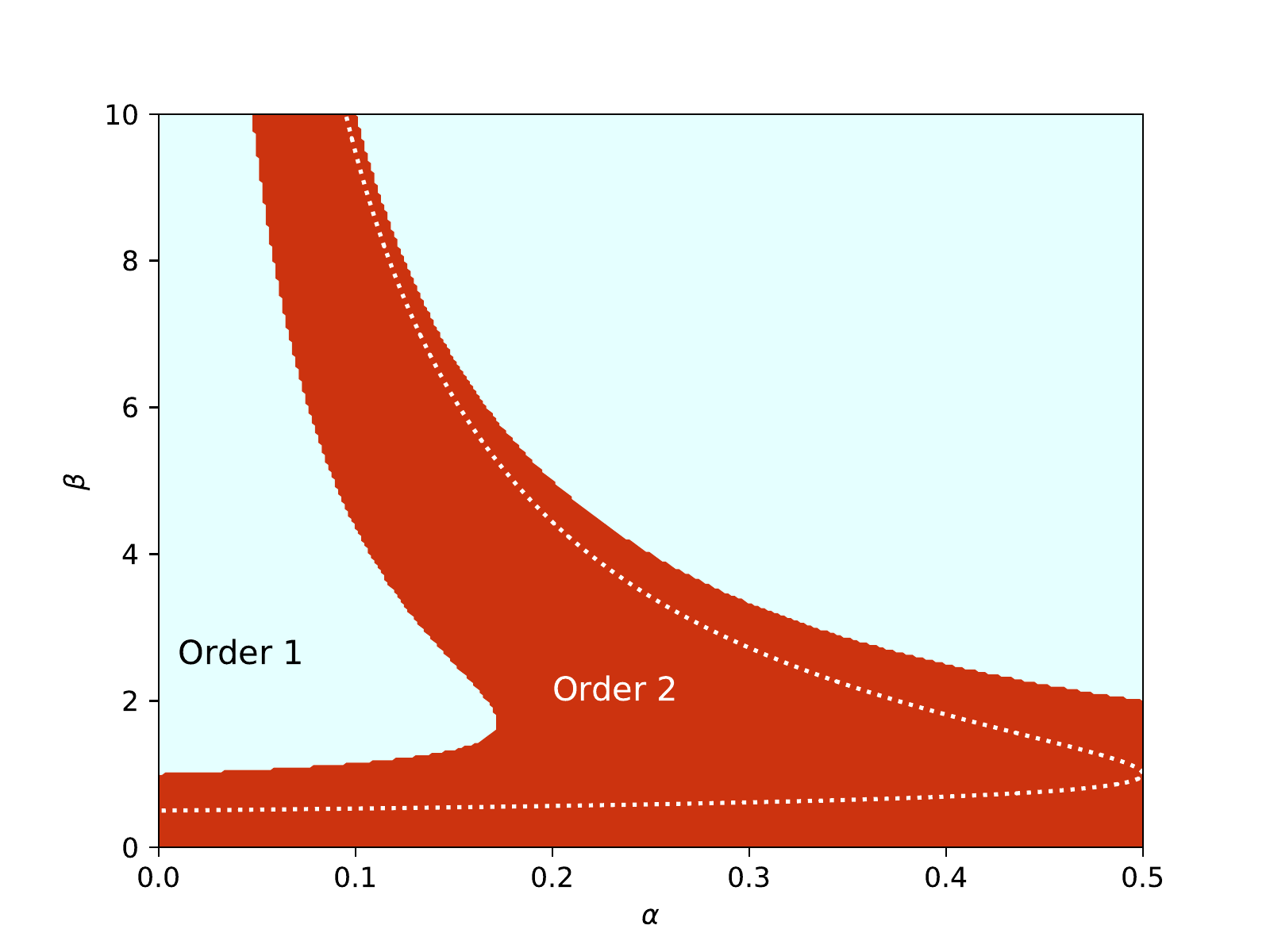}
			\caption{\ref{eq:MPRKSO22-family} orders: light blue first order, red second order \label{fig:orderMPRKSO2}}
		\end{subfigure}
		\caption{Order of accuracy of some schemes for vanishing initial conditions. The white dashed lines bound the positive RK coefficients area \cite{kopecz2018unconditionally,huang2019positivity}.}
	\end{figure}
As an example we want to focus on MPRK(3,4,2,0.5) plotted in Figure~\ref{fig:errorDecay23}. For this scheme $p=3$ and $q=2$. To verify the hypotheses of the lemmata, we need to have $\frac{\varepsilon^{1-1/2}}{\dt^{2-1/2}}\to 0$ as $\dt \to 0$, which is equivalent to $\frac{\varepsilon}{\dt^3}\to 0$ as $\dt \to 0$. Indeed, in the simulation in Figure~\ref{fig:errorDecay23}, we see that for $\dt \lesssim \varepsilon^{1/3}\approx 10^{-3.3}$ the error decays much faster than for $\dt \gtrsim 10^{-3}$. 

In Figure~\ref{fig:orderMPRK43} the order observable at a final time for \ref{eq:MPRK43-family} is summarized, while in  Figure~\ref{fig:orderMPRKSO2} it is summarized for \ref{eq:MPRKSO22-family}. For the \ref{eq:explicit_dec_correction} the order reduction comes from the negative DeC coefficients in the update formulae. In the following theorem we described the order reduction for vanishing IC.

\begin{theorem}[Loss of accuracy of \ref{eq:explicit_dec_correction} for vanishing initial data]\label{th:accuracyMPDEC}
		Consider the linear problem \eqref{eq:linearsystem2general} with IC $(1-\varepsilon, \varepsilon)^T$. For vanishing IC, i.e., $\frac{\varepsilon}{\dt}\to 0$ as $\dt \to 0$, the mPDeC is of order 2 if $\exists \theta^m_r <0$ with $m\in [\![1,M-1]\!]=\lbrace 1,\dots,M-1\rbrace$. If $\exists \theta^M_r <0$ the method is of order 1.
	\end{theorem}
	To prove the theorem let us introduce an useful proposition.
	\begin{proposition}[Carry over of the vanishing state]\label{prop:carry}
		Consider the linear problem \eqref{eq:linearsystem2general} with IC $y^{0}=(1-\varepsilon, \varepsilon)^T$. If $\exists \theta^m_r <0$ with $m\geq 1$ and if $y_2^{(k-1),m} =  \Theta(\varepsilon)$ with $\frac{\varepsilon}{\dt}\to 0$ as $\dt\to 0$, then $y_2^{(k),m} = \Theta(\varepsilon)$.
	\end{proposition}
	\begin{proof}
		Let us define $\theta^m_{-}$ the set of the negative coefficients among the $\theta^m_r$ and $\theta^m_{+}$ the set of the positive ones. We know that both sets are not empty, by hypothesis and by definition of $\theta_r^m$.
		\begin{align}
			\begin{split}
				y_i^{m,(k)}-y^0_i&-\sum_{l \in \theta^m_+} \theta_{l}^m \Delta t  \sum_{j}
				\left( \prod_{ij}(y^{l,(k-1)})
				\frac{y^{m,(k)}_{j}}{y_{j}^{m,(k-1)}}
				- \dest_{ij}(y^{l,(k-1)})  \frac{y^{m,(k)}_{i}}{y_{i}^{m,(k-1)}} \right)\\
				&-\sum_{l \in \theta^m_-} \theta_{l}^m \Delta t  \sum_{j}
				\left( \prod_{ij}(y^{l,(k-1)})
				\frac{y^{m,(k)}_{i}}{y_{i}^{m,(k-1)}}
				- \dest_{ij}(y^{l,(k-1)})  \frac{y^{m,(k)}_{j}}{y_{j}^{m,(k-1)}} \right)=0,
			\end{split}\\
			\begin{split}
				y_2^{m,(k)}-\varepsilon&-\sum_{l \in \theta^m_+} \theta_{l}^m \Delta t  		\left( \theta {y^{l,(k-1)}_1}
				\frac{y^{m,(k)}_{1}}{y_{1}^{m,(k-1)}}
				- (1-\theta) y^{l,(k-1)}_2  \frac{y^{m,(k)}_{2}}{y_{2}^{m,(k-1)}} \right)\\
				&-\sum_{l \in \theta^m_-} \theta_{l}^m \Delta t
				\left( \theta {y^{l,(k-1)}_1}
				\frac{y^{m,(k)}_{2}}{y_{2}^{m,(k-1)}}
				- (1-\theta) y^{l,(k-1)}_2 \frac{y^{m,(k)}_{1}}{y_{1}^{m,(k-1)}} \right)=0.
			\end{split}
		\end{align}
		We remind that for the conservation property of the scheme $y^{(k),r}_1=1-y^{(k),r}_2$. So, if we collect all the unknown terms in the left-hand side, we obtain
		\begin{align}
			\begin{split}
				&\left[ 1 + \Delta t \sum_{l \in \theta^m_+} \theta^m_l \left( \theta \frac{y^{l,(k-1)}_1}{y^{m,(k-1)}_1} +(1-\theta)\frac{y^{l,(k-1)}_2}{y^{m,(k-1)}_2} \right) - \Delta t \sum_{l \in \theta^m_-} \theta^m_l \left( \theta \frac{y^{l,(k-1)}_1}{y^{m,(k-1)}_2} +(1-\theta)\frac{y^{l,(k-1)}_2}{y^{m,(k-1)}_1} \right)
				\right]
				y_2^{m,(k)}\\
				&=\varepsilon+\sum_{l \in \theta^m_+} \theta_{l}^m \Delta t  \left( \theta
				\frac{y^{l,(k-1)}_{1}}{y_{1}^{m,(k-1)}} \right)
				-\sum_{l \in \theta^m_-} \theta_{l}^m \Delta t
				\left( (1-\theta) \frac{ y^{l,(k-1)}_2}{y_{1}^{m,(k-1)}} \right).
			\end{split}
		\end{align}
		Now, let us multiply the whole expression by the positive $y_2^{m,(k-1)}= \mathcal{O}(\varepsilon)$ and recalling that $y_1^{r,(k-1)}=1+ \mathcal{O}(\dt)+\mathcal{O}(\varepsilon)$. We obtain
		\begin{align}
			\begin{split}
				&\left[ \mathcal{O}(\varepsilon) + \Delta t \sum_{l \in \theta^m_+} \theta^m_l  (1-\theta){y^{l,(k-1)}_2} - \Delta t \sum_{l \in \theta^m_-} \theta^m_l  \theta y^{l,(k-1)}_1
				\right]
				y_2^{m,(k)}\\
				&=y^{m,(k-1)}_2 \left( \varepsilon+\sum_{l \in \theta^m_+} \theta_{l}^m \Delta t  \left( \theta
				\frac{y^{l,(k-1)}_{1}}{y_{1}^{m,(k-1)}} \right)
				-\sum_{l \in \theta^m_-} \theta_{l}^m \Delta t
				\left( (1-\theta) \frac{ y^{l,(k-1)}_2}{y_{1}^{m,(k-1)}} \right) \right).
			\end{split}
		\end{align}
		Now, the term $ \Delta t \sum_{l \in \theta^m_-} \theta^m_l \left( \theta y^{l,(k-1)}_1 \right)$ is the dominant in the left hand side, since $y_2^{l,(k-1)}= \mathcal{O}(\dt)$. Similarly the right hand side is dominated  by the $y_1$ terms.
		Hence, we obtain
		\begin{align}
			y^{m,(k)}_2  &= \frac{y_2^{m,(k-1)}\sum_{l \in \theta^m_+}\dt \theta_l^m \theta \frac{y_1^{l,(k-1)}}{y_1^{m,(k-1)}}  + \mathcal{O}(\varepsilon^2) + \mathcal{O}(\varepsilon \Delta t^2 )}{- \dt \sum_{l \in \theta^m_-} \theta^m_l \theta y_1^{l,(k-1)} + \mathcal{O}(\varepsilon)+ \mathcal{O}(\Delta t^2)}\\
			&= y_2^{m,(k-1)} \frac{\sum_{l \in \theta^m_+ }\theta_l^m \frac{y_1^{l,(k-1)}}{y_1^{m,(k-1)}}}{-\sum_{l \in \theta^m_-} \theta^m_l y_1^{l,(k-1)}} + \mathcal{O}(\varepsilon^2) + \mathcal{O}(\varepsilon \Delta t^2) = \Theta(\varepsilon),
		\end{align}
		because all $y_1^{l,(k-1)}$ are $\mathcal{O}(1)$. Hence, the proposition is proven.
	\end{proof}
	The proof of the theorem follows directly from this proposition.
	\begin{proof}
		If $\exists \theta_r^m <0$ with $m \in [\![ 1, M-1]\!]$, we have at the initial step all $y_2^{l,(0)}=\varepsilon$ for all $l$. Hence, by induction and using Proposition~\ref{prop:carry} we have that $y_2^{m,(K-1)}=\mathcal{O}(\varepsilon)=y_2(\beta^m \dt) + \mathcal{O}(\dt)$. Hence, computing the final update
		\begin{equation}
			y_i^{M,(K)}-y^0_i -\sum_{l} \theta_{l}^M \Delta t  \sum_{j}
			\left( \prod_{ij}(y^{l,(K-1)})
			\frac{y^{M,(K)}_{j}}{y_{j}^{m,(K-1)}}
			- \dest_{ij}(y^{l,(K-1)})  \frac{y^{M,(K)}_{i}}{y_{i}^{M,(K-1)}} \right)=0,
		\end{equation}
		the terms $d_{ij}(y^{m,(K-1)})=d_{ij}(y^{m,*}) + \mathcal{O}(\dt)$, hence an error of $\mathcal{O}(\dt^2)$ is obtained  in $y^{M,(K)}$. So, the solution at the next time iteration will be no longer a $\mathcal{O}(\varepsilon)$ and from the next time step high order errors will be restored. Hence, the approximation $\hat{y}_T$ at a certain time $T$ will be $\hat{y}_T-y(T) = \mathcal{O}(\dt^2)$.\\
		In case where $\exists \theta_r^M <0$, then $y^{M,(K)}=\mathcal{O}(\varepsilon) = y(\dt) + \mathcal{O}(\dt)$, from Proposition~\ref{prop:carry}. This condition will be left after some time steps, having brought to the method an error of $\mathcal{O}(\dt)$ at a final time $T$.
	\end{proof}
All the results in the theorems are in agreement with the motivational simulations in Figures~\ref{fig:errorDecay23} and \ref{fig:errorDecay4}.

For an automatic detection of such order reduction in the first step of the scheme, one can use symbolic tools and write, for specific problems and methods the Taylor expansion of the solution at the first time step first in $\varepsilon$ and then in $\dt$. As an example, we  show here the Taylor expansion for the error $\mathcal{E}(\varepsilon,\dt):= u_1^1(\varepsilon,\dt)-u_1^{ex}(\varepsilon,\dt)$ for \ref{eq:explicit_dec_correction}3.
Expanding first $\dt$ and then $\varepsilon$ in 0 we obtain
\begin{align*}
	\mathcal{E}(\varepsilon,\Delta t) =& \left(
	-\frac{1}{13824\varepsilon^2}
	-\frac{5}{1152\varepsilon}+
	\frac{1789}{13824}
	-\frac{1697\varepsilon}{6912}+
	\frac{7\varepsilon^2}{1536} +\mathcal{O}(\varepsilon^3) \right)\Delta t^4+\mathcal{O}(\Delta t^5),
\end{align*}
which means third order of accuracy for non vanishing $\varepsilon$, while, letting $\varepsilon\to 0$ first, we obtain
\begin{align*}
	\mathcal{E}(\varepsilon,\Delta t) =& \left(-\frac{\Delta t^2}{6} +  \mathcal{O}(\Delta t^3)\right)+ \left(112\Delta t  + \mathcal{O}(\Delta t^2)\right)\varepsilon -74880 \varepsilon^2  +\mathcal{O}(\Delta t \varepsilon^2) + \mathcal{O}(\varepsilon^3),
\end{align*}
and, hence, we have an error of $\mathcal{O}(\dt^2)$ for the first step and a global second order of accuracy.
More Taylor expansions can be found in the supplementary material \cite{torlo2021newStability} and the computations for these tests can be found in Mathematica notebooks in the accompanying reproducibility repository \cite{torlo2021stabilityGit}.

\section{Numerical experiments for simplified linear systems}
\label{sec:numerical-experiments}

As described in Section~\ref{sec:stability-linear}, we consider the simplified
$2 \times 2 $ system \eqref{eq:linearsystem2general}
with initial condition $u^0=(1-\varepsilon, \varepsilon)^T$. The goal of this
study is to find the largest time step $\Delta t$ for all possible systems
parameterized by $0\leq \theta \leq 1$ and initial conditions $0<\varepsilon<1$,
such that the properties \ref{prop:not_over} and \ref{prop:correct} are  satisfied.
To detect when the properties are fulfilled, as RadauIIA5 is doing in Figure~\ref{fig:motivating-example}, we consider the \textit{oscillation measure}
\begin{equation}\label{eq:measure}
	\texttt{osc}(u_1^0,u_1^1,u_1^*):=\begin{cases}
		\max \left \lbrace (u^1_1-u^0_1)^+, (u^*_1-u^1_1)^+\right\rbrace & \text{if } 1-\varepsilon=u_1^0>u^*_1=1-\theta,\\
		\max \left \lbrace (u^0_1-u^1_1)^+, (u^1_1-u^*_1)^+\right\rbrace & \text{if } 1-\varepsilon=u_1^0<u^*_1=1-\theta.
	\end{cases}
\end{equation}
Here, $(\cdot)^+$ denotes the positive part of a real number. This oscillation measure
vanishes for monotone schemes and increases with the amplitude of oscillations. When the initial conditions and the system taken in consideration are arbitrary, i.e., checking for all $0<\varepsilon, \theta<1$, we can use this measure to find oscillation--free schemes.
Hence, the measure \eqref{eq:measure} helps us in obtaining a very simple criterion on oscillation-free solutions studying just one time step.

Since we are interested in non-oscillatory behavior,
we need to check whether
\begin{equation}\label{eq:stability condition}
	\texttt{osc}(u_1^0,u_1^1,u_1^*) =0
\end{equation}
for every initial condition (IC) $0< \varepsilon<1$ and for every system defined through
$0\leq \theta \leq 1$.

We exploit the symmetry of the system studying only the $\varepsilon<0.5$ case,
as the other can be obtain substituting $\tilde{\varepsilon}=1-\varepsilon$ and
$\tilde{\theta}=1-\theta$.

In the following tests, we compare different methods and families presented above: \ref{eq:MPRK22-family}, \ref{eq:MPRK43-family}, \ref{eq:MPRKSO22-family}, \ref{eq:MPRKSO(4,3)}, \ref{eq:explicit_dec_correction} both for equispaced and Gauss--Lobatto sub-time steps, \ref{eq:MPRK32}, \ref{eq:SI-RK2}, and \ref{eq:SI-RK3}.

We apply all methods to a variety of $\varepsilon \in [0,0.5]$ and $\theta \in [0,0.5]$,
which are uniformly distributed in a logarithmic scale. For $\theta$, we also
consider the symmetrized values for $[0.5, 1]$.
We run the simulations for all these schemes and initial conditions for one time step $\Delta t$ of varying size, uniformly distributed in a logarithmic scale between $2^{-6}$ and $2^6$.
The maximum $\Delta t$ that gives no oscillations in the sense of \eqref{eq:stability condition} will be denoted as our bound.

\begin{figure}
	\centering
	\begin{subfigure}{0.49\textwidth}
		\centering
		\includegraphics[width=\textwidth]{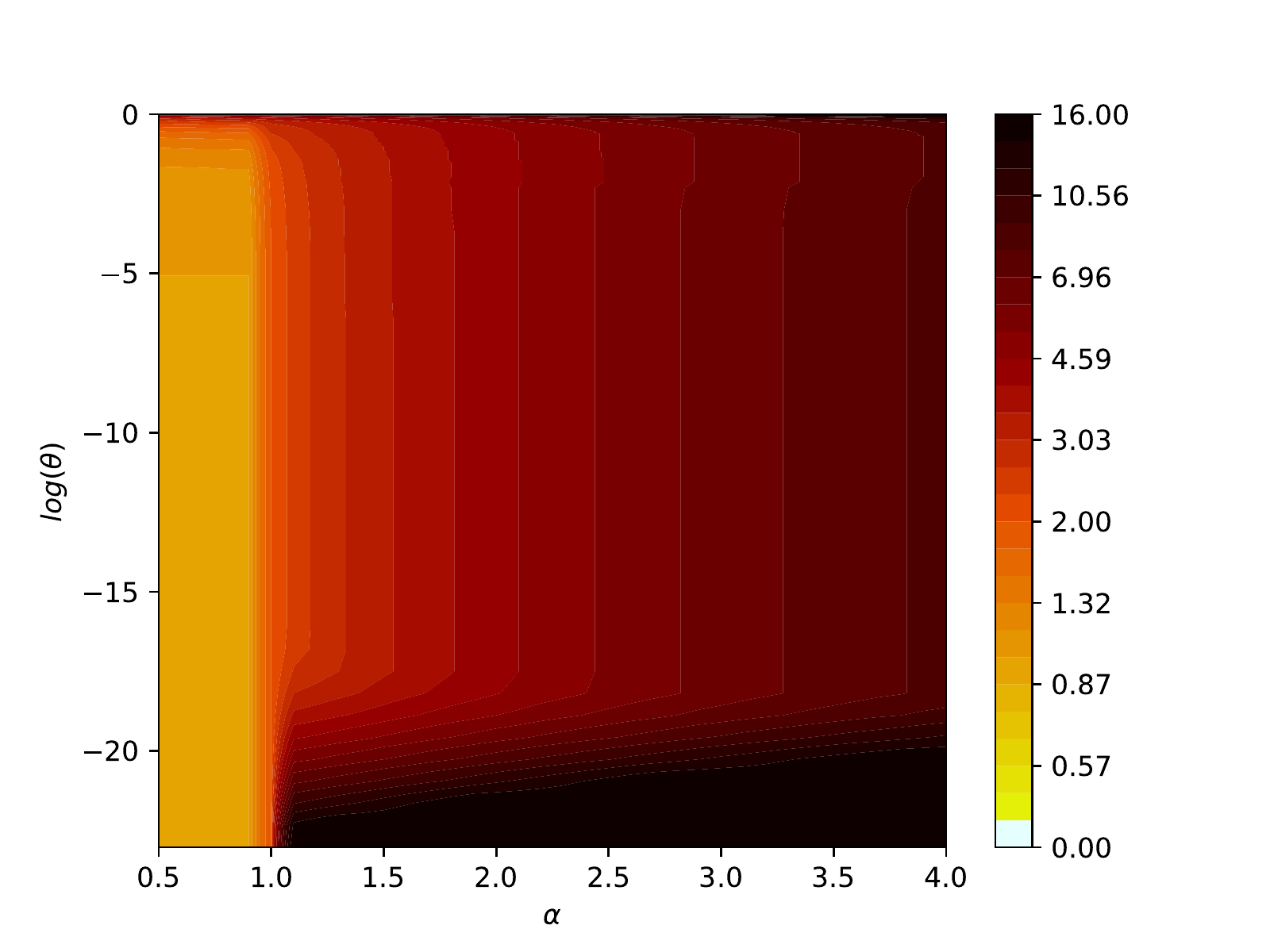}
		\caption{\ref{eq:MPRK22-family}: $\dt$ bound varying the system through $\theta$ and the method with $\alpha$.} \label{fig:systemOscillationsMPRK222}
	\end{subfigure}\hfill
	\begin{subfigure}{0.49\textwidth}
		\centering
		\includegraphics[width=\textwidth]{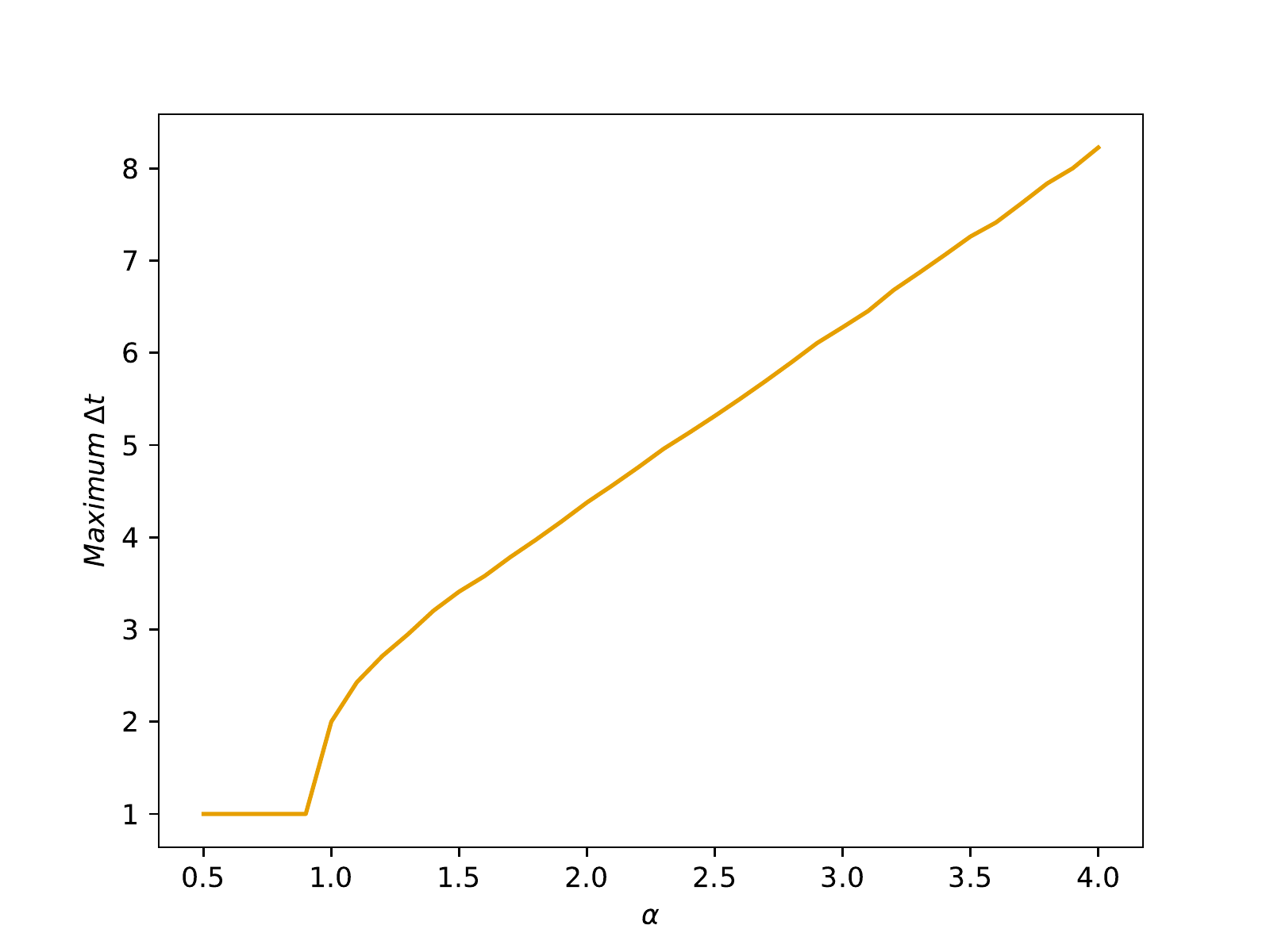}
		\caption{\ref{eq:MPRK22-family}: $\dt$ bound for all systems and initial condition varying $\alpha$.} \label{fig:systemCFLMPRK22all}
	\end{subfigure}\,
	\begin{subfigure}{0.49\textwidth}
		\centering
		\includegraphics[width=\textwidth]{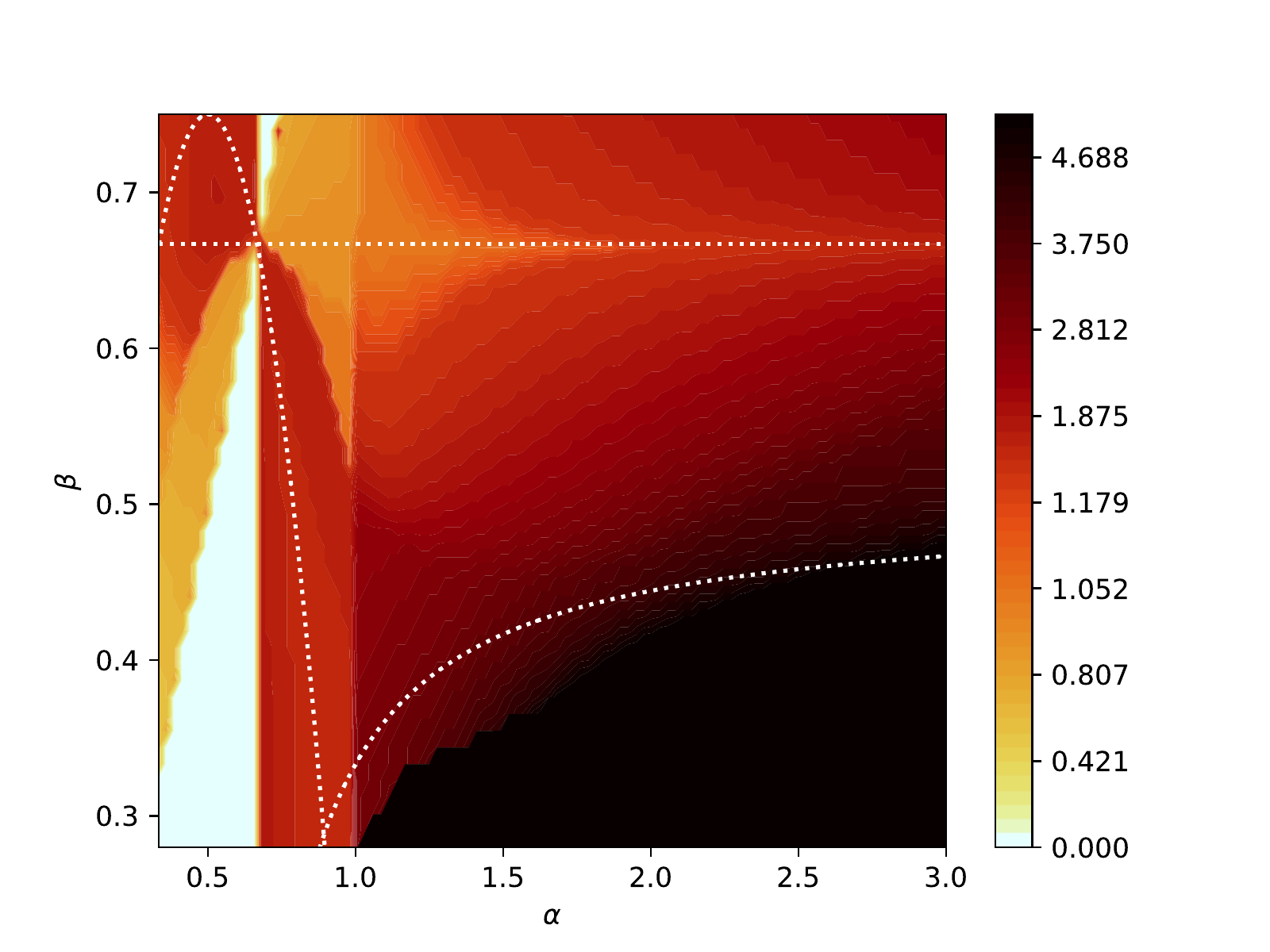}
		\caption{$\dt$ bound for \ref{eq:MPRK43-family} varying $\alpha$ and $\beta$. The white dashed lines bound the positive RK coefficients area \cite{kopecz2018unconditionally}.  \label{fig:systemCFLMPRK34allEps}}
	\end{subfigure}\,
	\begin{subfigure}{0.49\textwidth}
		\centering
		\includegraphics[width=\textwidth]{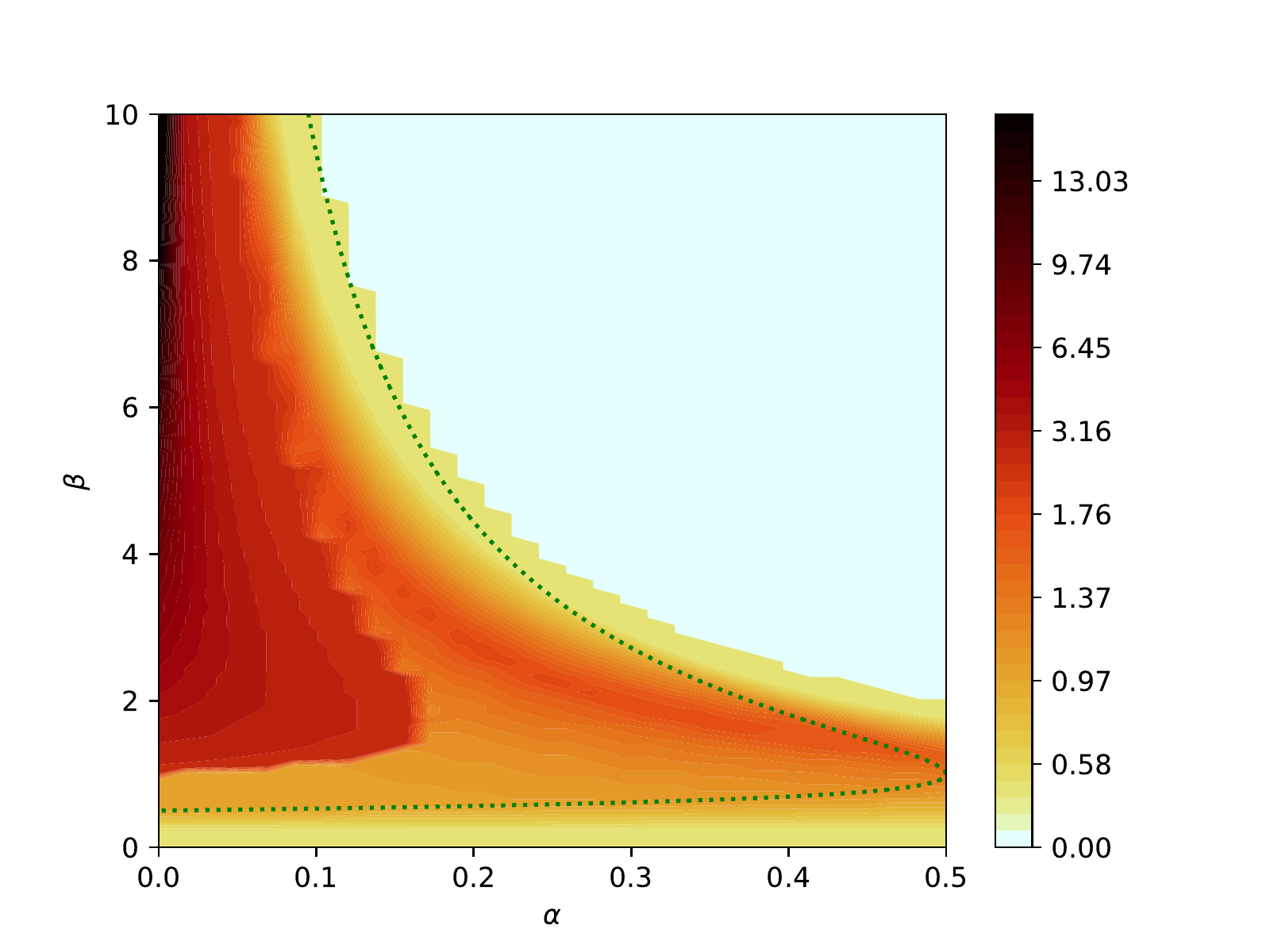}
		\caption{$\dt$ bound for \ref{eq:MPRKSO22-family} varying $\alpha$ and $\beta$. The green dashed lines bound the positive RK coefficients area \cite{huang2019positivity}. \label{fig:systemCFLMPSO2allEps}}
	\end{subfigure}	\caption{Numerical search of the $\dt$ bound for having an oscillation-free first time step, in the sense of \eqref{eq:stability condition}, for problem \eqref{eq:linearsystem2general} varying IC and system parameter $\theta$: \ref{eq:MPRK22-family}, \ref{eq:MPRK43-family} and \ref{eq:MPRKSO22-family}. \label{fig:stabilitySystemMethods}}
\end{figure}

\begin{figure}
	\begin{subfigure}{0.48\textwidth}
		\centering
		{\small
		\begin{tabular}{c}
			mPDeC\\
		\end{tabular}\\
		\begin{minipage}{0.48\textwidth}
			\begin{tabular}{|c|l|}\hline
				\multicolumn{2}{|c|}{Equispaced}\\ \hline
				\input{figures/system2/CFLDeCeqAllSystemsShort.tex}\hline
			\end{tabular}
		\end{minipage}
		\begin{minipage}{0.48\textwidth}
			\begin{tabular}{|c|l|}\hline
				\multicolumn{2}{|c|}{Gauss-Lobatto}\\ \hline
				\input{figures/system2/CFLDeCglbAllSystemsShort.tex}\hline
			\end{tabular}
		\end{minipage}
	}
		\caption{$\dt$ bound for mPDeC of order $p$ with equispaced and Gauss--Lobatto sub-time steps. In red the schemes with first order accuracy for vanishing initial conditions. \label{fig:systemCFLmPDeCallEps}}\vspace{3mm}
		\begin{tabular}{|c|l|}\hline
			Method & $\Delta t$ bound \\ \hline
			\ref{eq:MPRKSO(4,3)} & 1.31  \\
			\ref{eq:SI-RK2}        & 1.41  \\
			\ref{eq:SI-RK3}           & 1.27  \\
			\ref{eq:MPRK32}           & 16.56  \\\hline
		\end{tabular}
		\caption{Nonparamteric Patankar schemes and their $\dt$ bounds.\label{tab:osccilationsNonParam}}
	\end{subfigure} \,
%		\includegraphics[width=\textwidth]{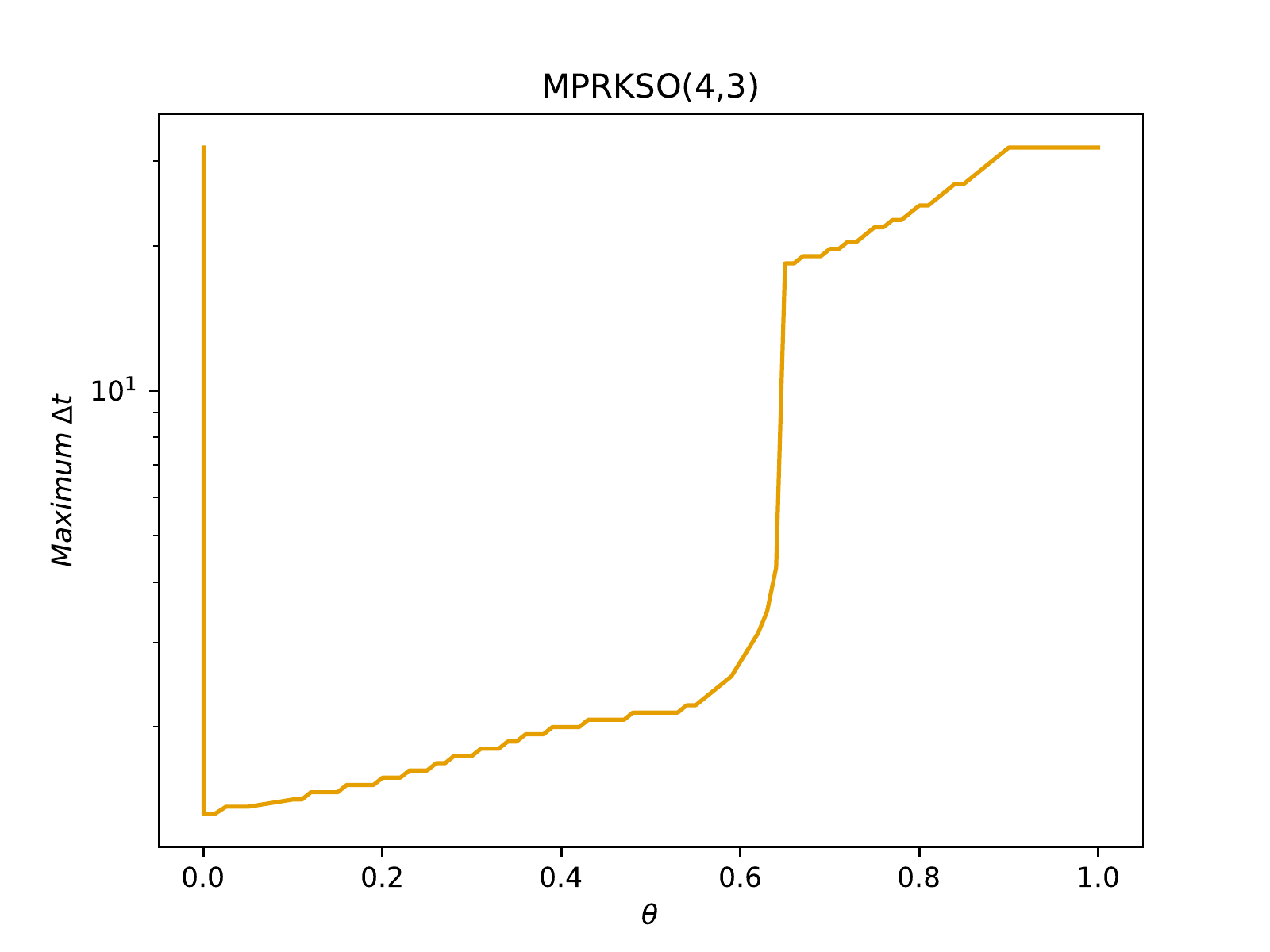}
%		\caption{$\dt$ bound for \ref{eq:MPRKSO(4,3)} varying the system through $\theta$. Minimum $\dt$ is 1.31. \label{fig:systemCFLMPSO3}}
\begin{subfigure}{0.48\textwidth}
	\centering
		\includegraphics[width=\textwidth]{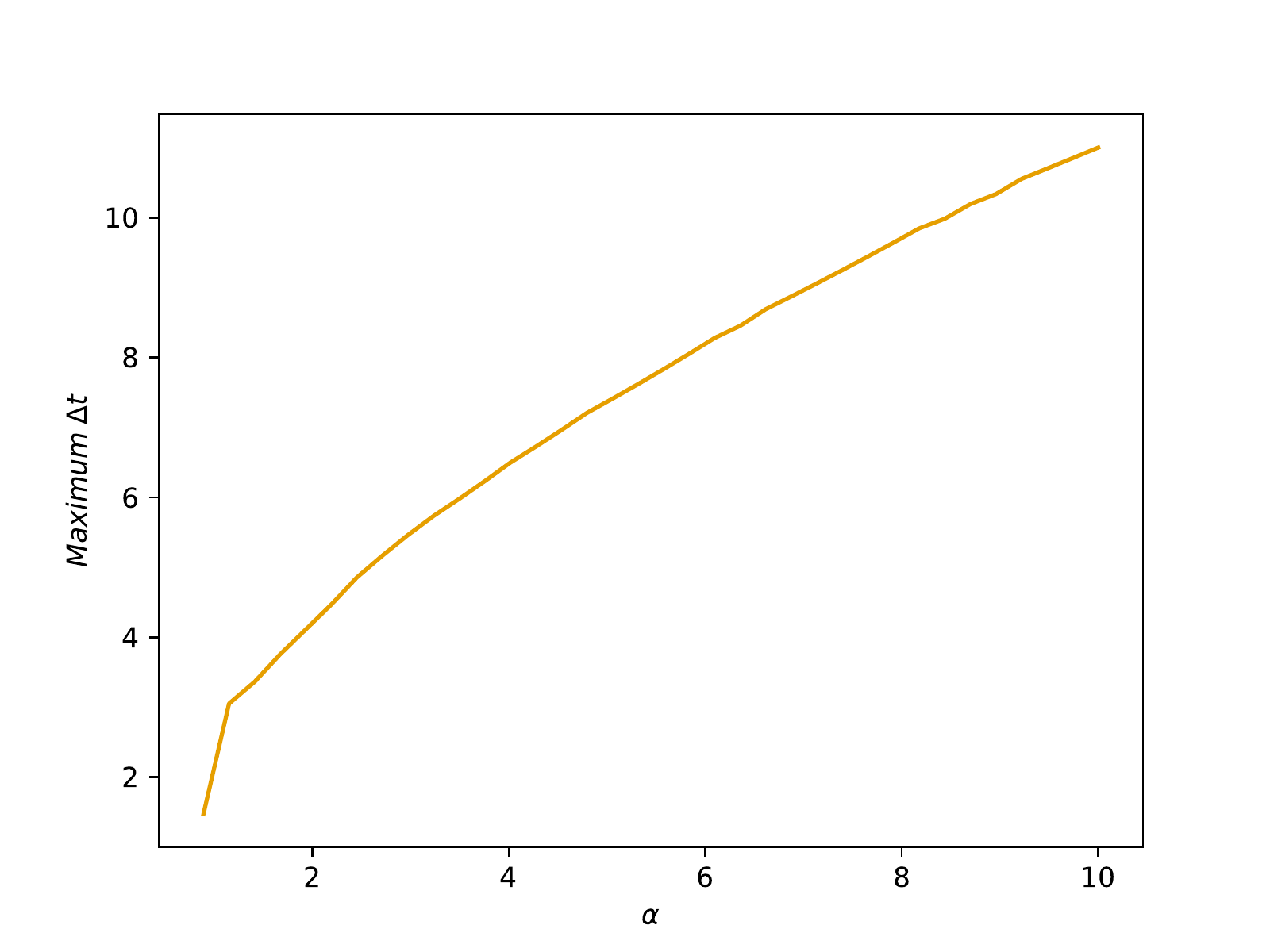}
	\caption{$\dt$ bound varying $\alpha$ for the family \ref{eq:MPRK43-family} on the curve $\beta(6\alpha-3)=3\alpha-2$ for all the systems through $\theta$  of the method. \label{fig:systemCFLMPSO3purple}}\vspace{5mm}
		\begin{tabular}{|c|l|}\hline
					\input{figures/system2/manyRK_CFL.tex}\hline
		\end{tabular}
			\caption{Other methods and their $\Delta t$ bounds.} \label{tab:systemCFL_otherRK}
	\end{subfigure}
	\caption{Numerical search of the $\dt$ bound for having an oscillation-free first time step, in the sense of \eqref{eq:stability condition}, for problem \eqref{eq:linearsystem2general} varying IC and system parameter $\theta$.}\label{fig:stabilitySystemMethods2}
\end{figure}

In Figure~\ref{fig:stabilitySystemMethods} and \ref{fig:stabilitySystemMethods2}, we present the results for the all the modified Patankar methods and for the semi-implicit Runge-Kutta methods. We highlight that the evaluation of condition \eqref{eq:stability condition} is done with a tolerance of $5  \, \times$ machine epsilon. Some tests can be sensitive to this tolerance, in particular for \eqref{eq:explicit_dec_correction} equispaced schemes with high odd order of accuracy, when the $\dt$ bound is large. There the number of stages is large and the machine error can sum up to non-negligible errors.

The second investigation of this section aims at validating the loss of accuracy of the schemes when they fall back to first order methods for $\varepsilon \to 0$. For this, we consider the system \eqref{eq:linearsystem2general} with $\theta=0.5$, and $\varepsilon=10^{-300}$ and we run the schemes for one large time step $\Delta t=1$. The exact solution at time 1 is $u_1(1)\approx 0.56$. If the approximation is such that $u_1^1 >0.999$ we say that the scheme is at most first order accurate.
By numerical experiments, we can say that this definition is robust with respect the system chosen and the tolerance on $u_1^1$. The interested reader can try different parameters in the repository code \cite{torlo2021stabilityGit}.

For \ref{eq:MPRK22-family}, we see in Figures~\ref{fig:systemOscillationsMPRK222} and \ref{fig:systemCFLMPRK22all} that the bound on $\dt$  is 1 for $\alpha < 1$,
2 for $\alpha=1$, and is increasing with $\alpha > 1$. We recall that the methods
with $\alpha > 1$ lose the order of accuracy in the limit $\varepsilon\to 0$, preserving the
initial condition as spurious steady state for few time steps. This must be kept in mind when choosing
the scheme one wants to use.
Varying the system parameter $\theta$ influences the bound on the time step,
as shown in Figure~\ref{fig:systemOscillationsMPRK222}.

For \ref{eq:MPRK43-family}, we observe areas where the $\dt$ bound reaches very low values ($\ll 1$) and other areas where it is larger than one, independently on the positivity of the RK coefficients. It must be noted that in the areas where the $\dt$ bound is large, we observe only first order accuracy for problems with $\varepsilon \to 0$ as one can compare with figure~\ref{fig:orderMPRK43}. It is noticeable that around the curve $\beta(6\alpha-3)=3\alpha-2$, which is a boundary for nonnegative coefficients \cite{kopecz2018unconditionally}, the $\dt$ bound is particularly large. Hence, in Figure~\ref{fig:systemCFLMPSO3purple} we plot the values for that specific curve, and indeed they are larger than other methods. On the other side, all the schemes given by these parameters show are only first order accurate for vanishing initial conditions.

For \ref{eq:MPRKSO22-family}, we observe that a large area of the $\alpha, \beta$
plane has $\dt$ bound around unity. The bounds increase close to the line $\alpha=0$.
For this family of methods, we also recall that as $\varepsilon \to 0$ we lose the order of accuracy for small $\alpha$ and large $\beta$. The precise area where this happens is denoted in brown in
figure~\ref{fig:orderMPRKSO2}. In the area of negative RK coefficients we observe very low $\dt$ bounds for the oscillation-free condition.

For \ref{eq:explicit_dec_correction}, we observe very different behaviors between equispaced and Gauss--Lobatto points. The two formulations coincide up to third order.
The second order \ref{eq:explicit_dec_correction} shows the $\dt=2$ bound that was
derived analytically in Section~\ref{sec:stability-linear}.
The methods based on Gauss--Lobatto nodes have a time step restriction of unity
for orders four and higher. Moreover, all the schemes reduce to order 2 when $\varepsilon\to 0$.
For equispaced nodes, we obtain larger $\dt$ bounds, in particular for schemes
with odd order of accuracy. In contrast to Gauss--Lobatto nodes, we observe also
order reduction to first order for high order schemes, more precisely for order 9 and order greater or equal to 11, when there are some negative $\theta^M_l$.

The \ref{eq:MPRKSO(4,3)} scheme has a $\dt$ bound of 1.31, as shown in
Figure~\ref{tab:osccilationsNonParam}. Moreover, it does show a reduction only to order 2 for
the numerical tests with vanishing initial conditions.
\ref{eq:MPRK32} has maybe the best conditions of all
the schemes, see Figure~\ref{tab:osccilationsNonParam}. Its $\dt$ bound is around
16 and it keeps its second order accuracy.

In Figures~\ref{tab:osccilationsNonParam}, the semi-implicit schemes are presented.
Both show similar behaviors with $\dt$ slightly larger than unity. For these
methods, there is no loss of accuracy.

In Figure~\ref{tab:systemCFL_otherRK}, we report the $\dt$ bound for some other standard time discretizations. Their implementation is available in the DifferentialEquations.jl \cite{rackauckas2017differentialequations} package in Julia \cite{bezanson2017julia}. We observe that some classical implicit schemes have a bound of around 2, while RadauIIA5 is unconditionally monotone, as predicted in Table~\ref{tab:dtImplicit}. Clearly all these methods do not suffer of order reduction for vanishing initial conditions.

A similar analysis on the $\dt$ bounds for a scalar nonlinear problem is reproduced in the supplementary material and available in \cite{torlo2021newStability}.

\section{Validation on nonlinear problems}\label{sec:nonlinear-experiements}

\subsection{Robertson problem}

The Robertson problem \cite[Section~II.10]{mazzia2008test}
with parameters
$k_1 = 0.04$,
$k_2 = 3 \cdot 10^7$,
and
$k_3 = 10^4$
is a stiff system of three nonlinear ODEs.
It can be written as a PDS \cite{kopecz2018order} with non-zero components
\begin{equation}
	\label{eq:robertson-pd}
	\begin{aligned}
		p_{12}(u)\! &=\! d_{21}(u) \!=\! k_3 u_2 u_3,
		&
		p_{21}(u)\! &=\! d_{12}(u)\! =\! k_1 u_1,
		&
		p_{32}(u)\! &=\! d_{23}(u)\! =\! k_2 u_2,
	\end{aligned}
\end{equation}
with initial conditions $u(0)=(1,0,0)^T.$
Reactions in this problem scale with different orders of magnitudes. To reasonably capture the behavior of the solution, it is necessary to use exponentially increasing time steps \cite{kopecz2018order}. To apply generic modified Patankar schemes, we have to modify the initial condition $u^0$ slightly, replacing $0$ by $\varepsilon>0$;
here, we use $\varepsilon= 10^{-180}$.

For this problem, oscillations are not so clearly defined, because the steady state $u^*=(0, 0, 1)^T$ cannot be exceeded since all the schemes are positive (and the modified Patankar also conservative).
Nevertheless, we might encounter the loss of accuracy problem  as some constituents are not present as initial conditions.
In Figure~\ref{fig:simulationsRobertson}, we observe that many methods do not catch the behavior of $u_2$ and remain close to zero. In some cases, even $u_3$ stays close to zero. All these phenomena are in accordance with the results found for the linear problem. Indeed, among the computed tests we see that \ref{eq:MPRK22-family} for $\alpha >1$, \hyperref[eq:MPRK43-family]{MPRK(4,3,10,0.5)}, \hyperref[eq:MPRKSO22-family]{MPRKSO(2,2,0.001,10)} and \ref{eq:explicit_dec_correction}11 with equispaced sub-time steps had order reduction to 1 for $\varepsilon \to 0$ and in this problem, they cannot properly describe the behavior of $u_2$ (and $u_3$).
Both semi-implicit methods \ref{eq:SI-RK2} and \ref{eq:SI-RK3} go to infinity as they do not conserve the total sum of the constituents. Hence, we are not showing their simulations.

\begin{figure}
	\centering
	\includegraphics[width=\textwidth]{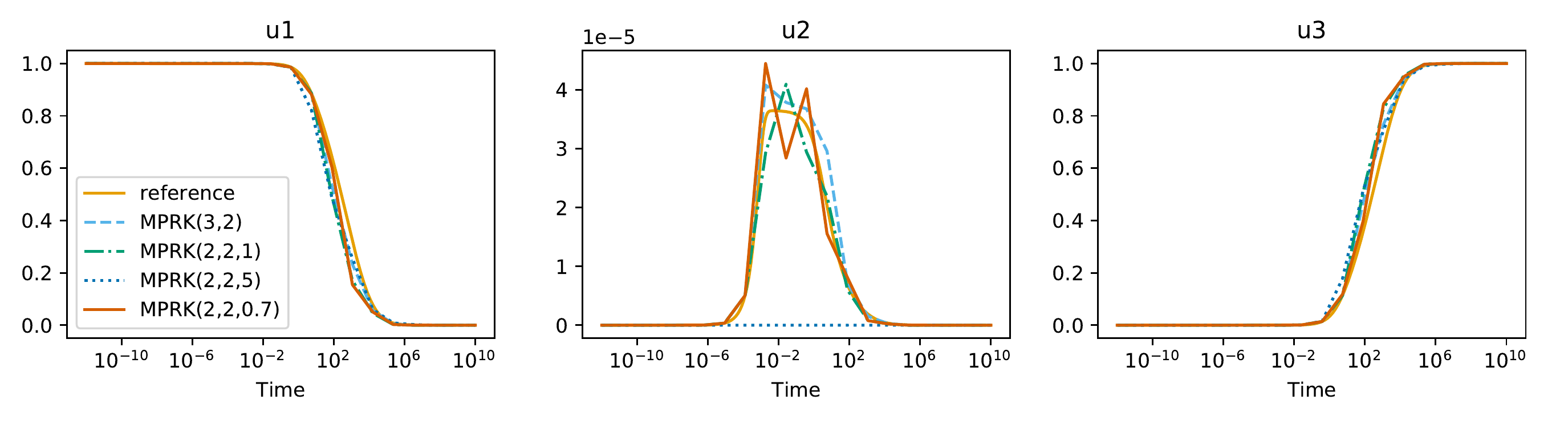}
	\includegraphics[width=\textwidth]{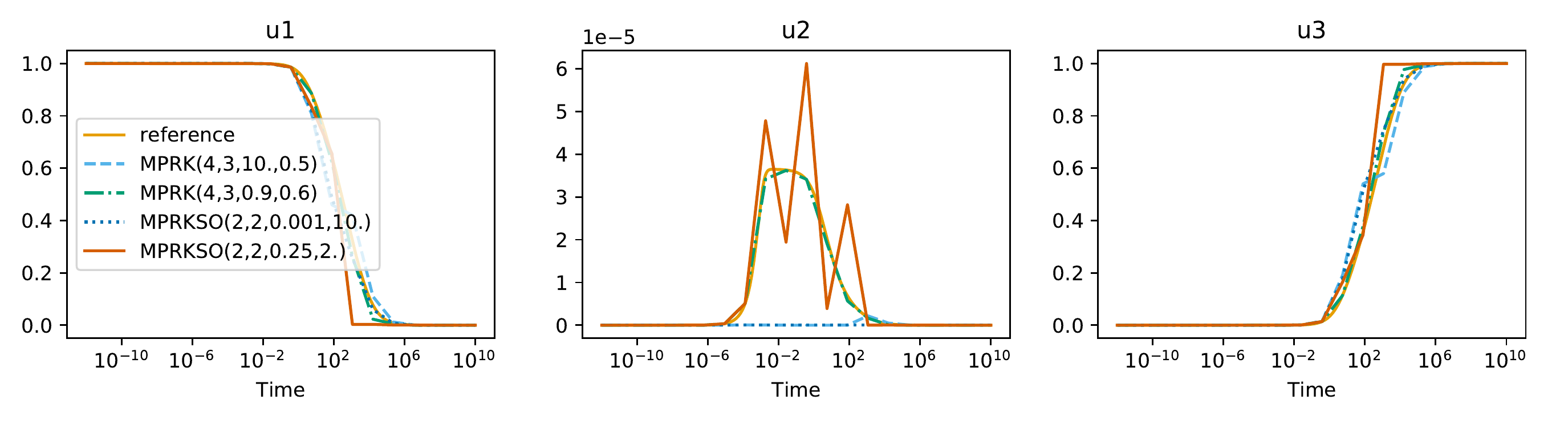}
	\includegraphics[width=\textwidth]{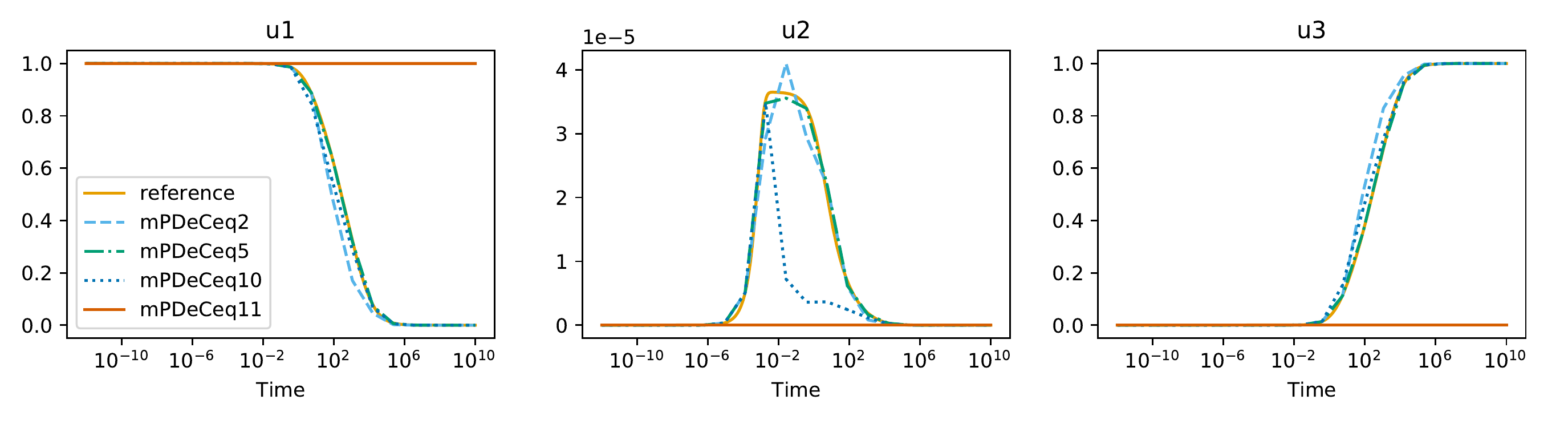}
	\includegraphics[width=\textwidth]{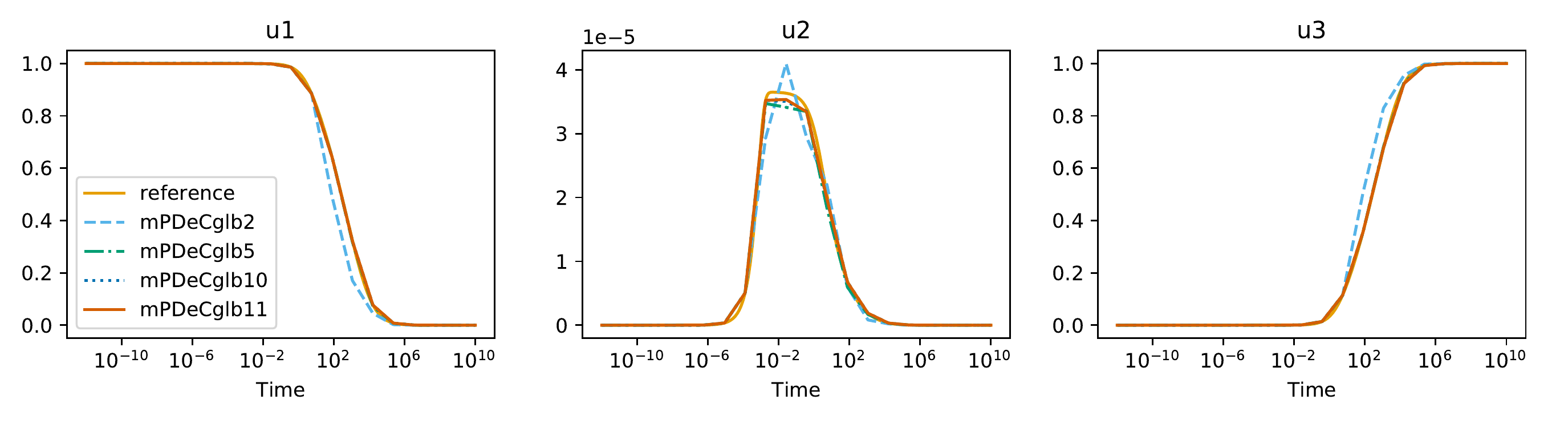}
	\caption{Robertson problem with different methods and 20 time steps.}\label{fig:simulationsRobertson}
\end{figure}

\subsection{HIRES}

We consider the ``High Irradiance RESponse'' problem (HIRES) \cite{hairer1999stiff}.
The original problem HIRES \cite[Section~II.1]{mazzia2008test} can be rewritten
as a nine-dimensional production--destruction system with
\begin{equation}
	\label{eq:HIRES-pdr}
	\begin{aligned}
		&r_1(u) = \sigma,
		&
		&d_{12}(u) = k_1 u_1,
		&
		&d_{21}(u) = k_2 u_2,
		\\
		&d_{24}(u) = k_3 u_2,
		&
		&d_{34}(u) = k_1 u_3,
		&
		&d_{31}(u) = k_6 u_3,
		\\
		& d_{43}(u) = k_2 u_4,
		&
		&  d_{46}(u) = k_4 u_4,
		&
		&  d_{56}(u) = k_1 u_5,
		\\
		&  d_{53}(u) = k_5 u_5,
		&
		&  d_{65}(u) = k_2 u_6,
		&
		&  d_{75}(u) = \frac{k_2}{2} u_7,
		\\
		&  d_{76}(u) = \frac{k_-}{2} u_7,
		&
		&  d_{79}(u) = \frac{k_*}{2} u_7,
		&
		&  d_{67}(u) = k_+ u_6 u_8,
		\\
		&  d_{87}(u) = k_+ u_6 u_8,
		&
		&  d_{78}(u) = \frac{k_- + k_*+k_2}{2} u_7,
	\end{aligned}
\end{equation}
$p_{ij}(u)=d_{ji}\, \forall \, i,j$
and parameters
\begin{equation}
	\begin{aligned}
		k_1 &= 1.71,
		&
		k_2 &= 0.43,
		&
		k_3 &= 8.32,
		&
		k_4 &= 0.69,
		&
		k_5 &= 0.035,
		\\
		k_6 &= 8.32,
		&
		k_+ &= 280,
		&
		k_- &= 0.69,
		&
		k_* &= 0.69,
		&
		\sigma &= 0.0007.
	\end{aligned}
\end{equation}
The initial condition is
$u(0) =
(
1,
0,
0,
0,
0,
0,
0,
0.0057,
0
)^T$,
where numerically we used $10^{-35}$ instead of zero for vanishing initial
constituents. The time interval is $t \in [0, 321.8122]$.

\begin{figure}
	\centering
	\ref{eq:MPRK22-family} with $\alpha=1$\\
	\includegraphics[width=\textwidth]{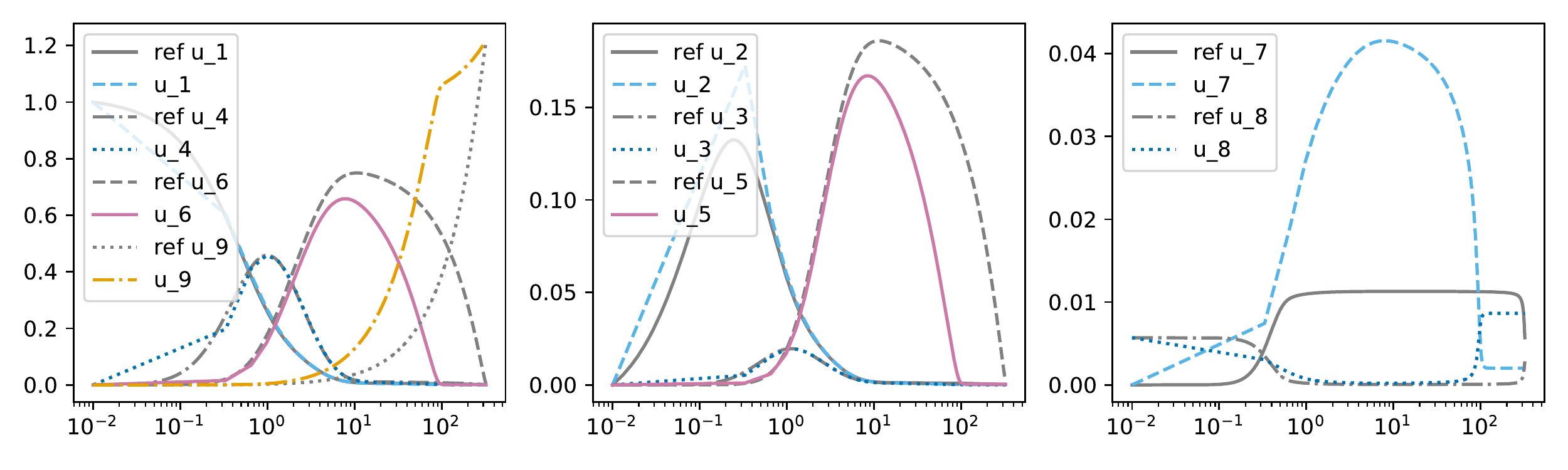}\\
	\ref{eq:MPRK22-family} with $\alpha=5$\\
	\includegraphics[width=\textwidth]{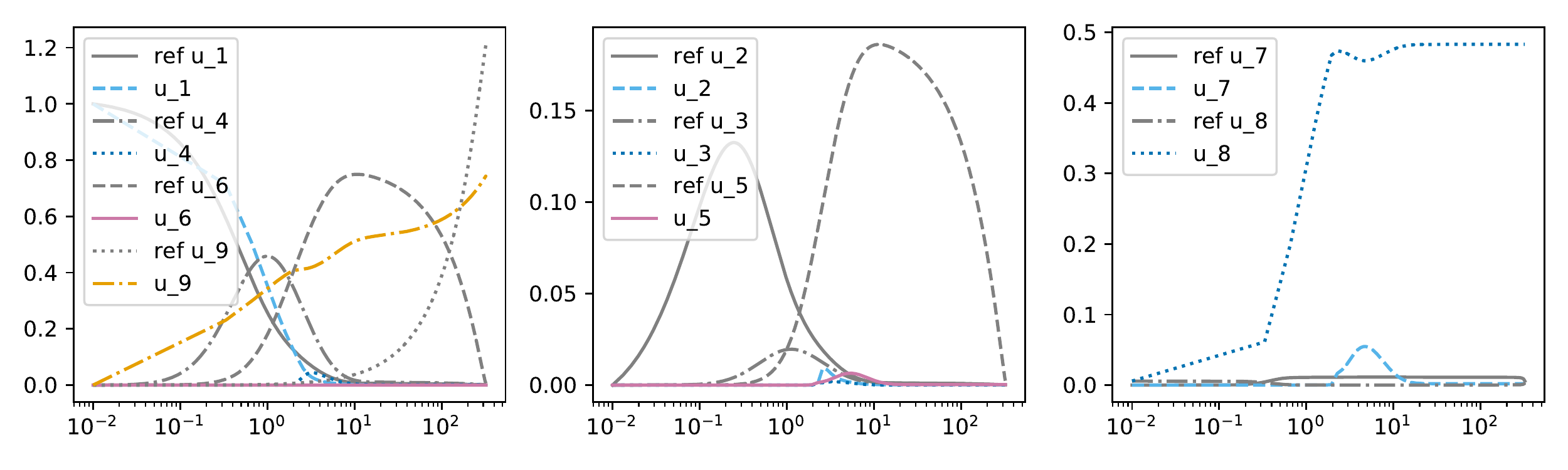}\\
	\ref{eq:explicit_dec_correction}6 with Gauss--Lobatto points\\
	\includegraphics[width=\textwidth]{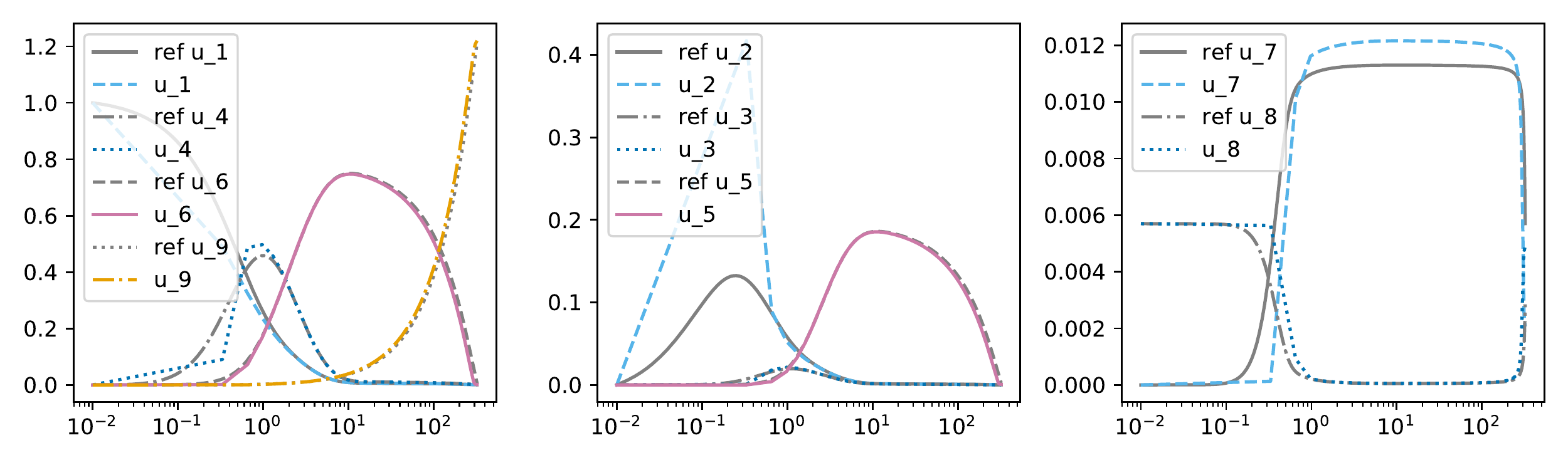}\\
	\ref{eq:MPRKSO22-family} with $\alpha=0.3$ and $\beta=2$\\
	\includegraphics[width=\textwidth]{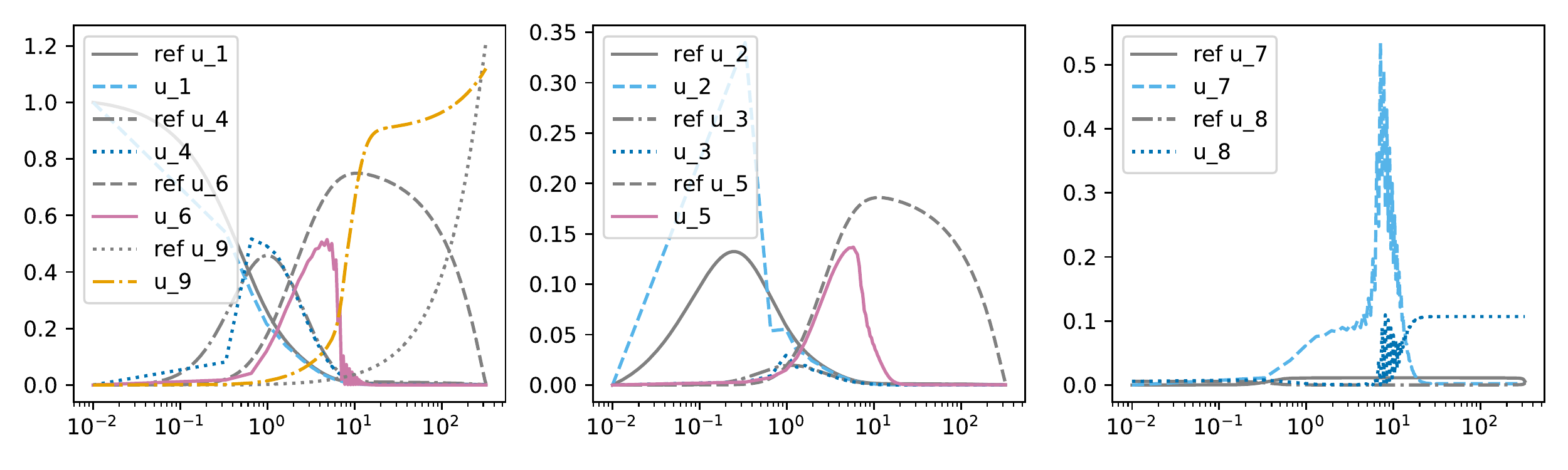}
	\caption{Simulations of HIRES problem run with different schemes with $N=10^3$ time steps, plot in logarithmic scale in time.}\label{fig:HIRES_simul}
\end{figure}
For this test, the concept of oscillation is not clear as well. Nevertheless, we can
observe inaccuracy of some methods also for this problem as some constituents are close to 0.
We compute the reference solution with $10^5$ uniform time steps using \ref{eq:explicit_dec_correction}5 with equispaced sub-time steps, which is in
accordance with the reference solution \cite{mazzia2008test} up to the fourth significant digit for all constituents.

Testing with $N=10^3$ uniform time steps, we spot troubles with the \textit{inconsistent} methods found in Section \ref{sec:numerical-experiments}.  We test the problem with many schemes presented above and we include the relative plots in the supplementary material \cite{torlo2021newStability}. For brevity, we plot in Figure~\ref{fig:HIRES_simul} just a sample.

For \ref{eq:explicit_dec_correction}, we observe the loss of accuracy only for equispaced time steps for high odd orders (9,\,11,\,13 and so on). In Figure~\ref{fig:HIRES_simul}, we see the simulation for \ref{eq:explicit_dec_correction}6 with Gauss--Lobatto points. We observe that the high accuracy helps in obtaining a good result at the end of the simulation, when $u_7$ and $u_8$ react. The moment at which this change happens is hard to catch and only high order methods are able to obtain it within this number of time steps.

We run the \ref{eq:MPRK22-family} with $\alpha\in \lbrace 1, 5\rbrace $. As for the linear case, we observe great loss of accuracy only for $\alpha>1$. This is demonstrated in
Figure~\ref{fig:HIRES_simul} for $\alpha=5$, where the evolution of some constituents is completely missed, e.g., $u_2, u_3, u_5, u_9$, while for $\alpha=1$ we obtain better results.

We test \ref{eq:MPRKSO22-family} with $\alpha=0.3,\,\beta=2$ and $\alpha=0,\,\beta=8$.
As expected, the second one shows the spurious steady state. An oscillatory behavior can be observed, though, also in the first simulation, which is shown in Figure~\ref{fig:HIRES_simul}. This is probably due to the CFL condition; refining the time discretization, the oscillations disappear.

For \ref{eq:MPRK43-family}, we test $\alpha=0.9,\,\beta= 0.6$ and $\alpha=5,\,\beta=0.5$, observing loss of accuracy only for the second one, in accordance with the linear tests. For \ref{eq:MPRKSO(4,3)}, \ref{eq:MPRK32}, \ref{eq:SI-RK2} and \ref{eq:SI-RK3}, we do not observe significant loss of accuracy, as in the linear test, nor other particular behaviors.

\section{Summary and discussion}
\label{sec:summary}

We proposed an analysis for Patankar-type schemes focused on two issues that some of these schemes present: oscillations around the steady state and loss of accuracy when a constituent is not present at the initial state. The oscillations are a property strongly linked to the positivity for linear problems and it is equivalent for linear methods. On the other side, the positivity preserving Patankar-type methods are not linear, hence, they oscillate around steady states.
Focusing on
a generic $2\times 2$ linear test problem, we introduced an oscillation measure.
Based thereon, we derived a CFL-like time step restriction avoiding oscillations
for all methods under consideration, either analytically (whenever
feasible) or numerically. Moreover, we investigated these methods near
vanishing components, discovering order reduction phenomena in many of the modified Patankar methods, even up to first order of accuracy.
Finally, we applied the methods to more challenging problems including stiff
nonlinear ones. We observed that our proposed oscillation-free and accuracy analysis
generalizes reasonably well to these other problems.

From our point of view, this is a first step toward further investigations on
Patankar-type schemes. Extensions could be based on various Lyapunov functionals instead of our oscillation measure. Moreover, different test
systems could be considered. Nevertheless, we would like to stress that our
current approach seems promising and generalizes well to other demanding problems.

As mentioned in Remark \ref{re_stability}, a stability analysis of all the considered methods with respect to  \cite{izgin2022stability} is work in progress. Furthermore, the connection between our observations and the obtained eigenvalues of the iterative process will be considered and compared in the future.

We plan also to extend our investigation to hyperbolic
conservation laws. After a spatial semidiscretizations,
we obtain ODEs that can be written as a production--destruction--rest system
\cite{meister2016positivity, huang2019third, ciallela2021}. Here, the relation
between the time step restrictions derived in this work and classical CFL conditions
will be the major focus of research.

\section*{Acknowledgments}
D. T. was funded by Team CARDAMOM in Inria--Bordeaux Sud--Ouest, France and by a SISSA Mathematical Fellowship, Italy.
P.\"O. gratefully acknowledge support of the Gutenberg Research College,  JGU Mainz and the UZH Postdoc Scholarship  (Number FK-19-104).
H. R. was supported by the German Research Foundation (DFG, Deutsche Forschungsgemeinschaft) under Germany's Excellence Strategy EXC 2044-390685587,	Mathematics M\"unster: Dynamics-Geometry-Structure.
We would like to thank Stefan Kopecz and David Ketcheson for fruitful discussion
at the beginning of this project.
This project has started with the visit by H.R.	in Zurich in 2019 which was supported by the  SNF project (Number 175784) and the	King Abdullah University of Science and Technology (KAUST).

{\small
	\appendix
	\section{Third order modified Patankar Runge--Kutta methods}\label{sec:appendix}
	In the following part, the third order accurate \ref{eq:MPRK43-family} from \cite{kopecz2018unconditionally, kopecz2019existence}
	is repeated for completeness. Please note that the investigated version is called $MPRK43I(\alpha,\beta)$ in their papers.
	It is given by
	\begin{equation}
		\tag{MPRK(4,3,$\alpha, \beta$)}
		\label{eq:MPRK43-family}
		\begin{aligned}
			y^1 &= u^n,
			\\
			y^2_i &= u^n_i
			+ a_{21} \dt \rest_i\bigl( y^1 \bigr)
			+  a_{21} \dt \sum_j \left(
			\prod_{ij}\bigl( y^1 \bigr) \frac{y^2_j}{y^1_j}
			- \dest_{ij}\bigl( y^1 \bigr) \frac{y^2_i}{y^1_i}
			\right),
			\\
			y^3_i &= u^n_i+ \Delta t \left(a_{31} \rest_i\bigl( y^1 \bigr)+
			a_{32} \rest_i\bigl( y^2 \bigr)
			\right)
			\\ & \qquad
			+\Delta t \sum_j
			\Biggl(\left(a_{31} \prod_{ij}\bigl(y^1\bigr)+ a_{32} \prod_{ij} \bigl(y^2\bigr) \right)  \frac{  y_j^3
			}{\bigl(y_j^2\bigr)^{1/p } \bigl(y_j^1\bigr)^{1-1/p} }
			\\
			& \qquad\qquad\qquad
			-\left(a_{31} \dest_{ij}\bigl(y^1\bigr)+ a_{32} \dest_{ij} \bigl(y^2\bigr) \right)  \frac{  y_i^3
			}{\bigl(y_i^2\bigr)^{1/p } \bigl(y_i^1\bigr)^{1-1/p} }
			\Biggr),
			\\
			\sigma_i &= u_i^n + \Delta t \sum_j
			\Biggl( \left( \beta_1 \prod_{ij} \bigl( y^1\bigr) +\beta_2 \prod_{ij} \bigl(y^2\bigr)  \right) \frac{\sigma_j}{\bigl(y_j^2 \bigr)^{1/q}
				\bigl(y_j^1\bigr)^{1-1/q}}
			\\ & \qquad
			-\left( \beta_1 \dest_{ij} \bigl( y^1\bigr) +\beta_2 \dest_{ij} \bigl(y^2\bigr)  \right) \frac{\sigma_i}{\bigl(y_i^2 \bigr)^{1/q}
				\bigl(y_i^1\bigr)^{1-1/q}} \Biggr)
			\\
			u^{n+1}_i &= u^n_i
			+ \dt \left( b_1 \rest_i\bigl( y^1 \bigr) + b_2\rest_i\bigl( y^2 \bigr)  + b_3\rest_i\bigl( y^3 \bigr)\right)
			\\&\qquad
			+ \dt \sum_j \Biggl(
			\left( b_1 \prod_{ij}\bigl( y^1 \bigr) +b_2\prod_{ij}\bigl( y^2 \bigr)
			+ b_3 \prod_{ij}\bigl( y^3 \bigr)
			\right) \frac{u^{n+1}_j}{\sigma_j}
			\\&\qquad\qquad\qquad
			- \left( b_1 \dest_{ij}\bigl( y^1 \bigr) +b_2\dest_{ij}\bigl( y^2 \bigr)
			+ b_3 \dest_{ij}\bigl( y^3 \bigr)
			\right) \frac{u^{n+1}_i}{\sigma_i}
			\Biggr),
		\end{aligned}
	\end{equation}
	where $p=3 a_{21}\left(a_{31}+a_{32} \right)b_3,\; q=a_{21},\;\beta_2=\frac{1}{2a_{21}}$ and $\beta_1= 1-\beta_2$. The Butcher tableaus in respect to the two parameters
	\begin{equation}
		\begin{aligned}
			\def\arraystretch{1.2}
			\begin{array}{c|ccc}
				0 &  & & \\
				\alpha & \alpha & & \\
				\beta & \frac{3\alpha\beta (1-\alpha)-\beta^2}{\alpha(2-3\alpha)}& \frac{\beta (\beta-\alpha)}{\alpha(2-3\alpha)}& \\
				\hline
				& 1+\frac{2-3(\alpha+\beta)}{6 \alpha \beta } &\frac{3 \beta-2}{6\alpha (\beta-\alpha)} & \frac{2-3\alpha}{6\beta(\beta-\alpha)}
			\end{array}
		\end{aligned}
	\end{equation}
	with positive coefficients for
	\begin{equation*}
		\begin{rcases}
			2/3 \leq \beta \leq 3\alpha(1-\alpha)\\
			3\alpha(1-\alpha)\leq\beta \leq 2/3 \\
			(3\alpha-2)/(6\alpha-3)\leq \beta \leq 2/3
		\end{rcases}
		\text{ for }
		\begin{cases}
			1/2 \leq \alpha<\frac{2}{3},\\
			2/3 \leq \alpha<\alpha_0,\\
			\alpha>\alpha_0,
		\end{cases}
	\end{equation*}
	and $\alpha_0\approx 0.89255.$ When the coefficients are negative we swap the weights of production and destruction terms as for \eqref{eq:explicit_dec_correction}.\\
	Next, also the MPRKSO(4,3) from \cite{huang2019third} is repeated.
	It is given by
	\begin{equation}
		\tag{MPRKSO(4,3)}
		\label{eq:MPRKSO(4,3)}
		\begin{aligned}
			y^1 &= u^n,
			\\
			y^2_i &= y^1_i
			+ a_{10} \dt \rest_i\bigl( y^1 \bigr)
			+  \dt \sum_j b_{10 }\left(
			\prod_{ij}\bigl( y^1 \bigr) \frac{y^2_j}{y_j^1}
			-\dest_{ij}\bigl( y^1 \bigr) \frac{y^2_i}{y_i^1}
			\right),
			\\
			\rho_i&= n_1y_i^2 +n_2 y_i^1 \left(\frac{y_i^2}{y_i^1} \right)^2
			\\
			y^3_i &= (a_{20}y^1_i +a_{21}y^2_i)
			+ \dt \left( b_{20} \rest_i  \bigl(  y^1 \bigr)+  b_{21} \rest_i  \bigl(  y^2 \bigr) \right)
			\\ \qquad
			&+  \dt \sum_j  \left(
			\left( b_{20} \prod_{ij}\bigl( y^1 \bigr)  +  b_{21} \prod_{ij}\bigl( y^2 \bigr)  \right) \frac{y^2_j}{\rho_j}
			-  \left( b_{20} \dest_{ij}\bigl( y^1 \bigr)  +  b_{21} \dest_{ij}\bigl( y^2 \bigr)  \right) \frac{y^2_i}{\rho_i}
			\right),\\
			\mu_i &=y_i^1\left( \frac{y_i^2}{y_i^1} \right)^s \\
			\tilde{a}_i&= \eta_1 y_i^1 + \eta_2 y_i^2 + \dt \sum_j \left( \left( \eta_3 \prod_{ij}\bigl( y^1 \bigr)  +\eta_4 \prod_{ij} \bigl( y^2 \bigr)  \right)  \frac{\tilde{a}_j}{\mu_j} -
			\left( \eta_3 \dest_{ij}\bigl( y^1 \bigr)  +\eta_4 \dest_{ij} \bigl( y^2 \bigr)  \right)  \frac{\tilde{a}_i}{\mu_i}  \right) \\
			\sigma_i &=\tilde{a}_i+zy_i^1 \frac{y_i^2}{\rho_i}
			\\
			u^{n+1}_i &=  \left(a_{30} y^1_i + a_{31}y^2_i +a_{32} y^3_i \right)
			+  \dt \left( b_{30}  \rest_i\bigl( y^1 \bigr) + b_{31} \rest_i \bigl( y^2 \bigr)+ b_{32} \rest_i \bigl( y^3 \bigr) \right)
			\\&\qquad
			+ \dt \sum_j \Biggl(
			\left(  b_{30}    \prod_{ij}\bigl( y^1 \bigr) +   b_{31}  \prod_{ij}\bigl( y^2 \bigr)
			+   b_{32}  \prod_{ij}\bigl( y^3 \bigr)\right) \frac{u^{n+1}_j}{\sigma_j}
			\\&\qquad\qquad\qquad
			- \left(  b_{30}    \dest_{ij}\bigl( y^1 \bigr) +   b_{31}  \dest_{ij}\bigl( y^2 \bigr)
			+   b_{32}  \dest_{ij}\bigl( y^3 \bigr)\right) \frac{u^{n+1}_i}{\sigma_i}
			\Biggr).
		\end{aligned}
	\end{equation}
	Here, the optimal SSP coefficients determined in \cite{huang2019third} will be used. They are given by
	\begin{align*}
		n_1&=2.569046025732011E-01,& n_2 &= 7.430953974267989E-01,\\
		a_{10} &=1, & a_{20} &=  9.2600312554031827E-01, \\
		a_{21} &=  7.3996874459681783E-02, &a_{31} &=2.0662904223744017E-10, \\
		b_{10} &=  4.7620819268131703E-01, &a_{30} &= 7.0439040373427619E-01, \\
		a_{32}& = 2.9560959605909481E-01, &b_{20} &= 7.7545442722396801E-02, \\
		b_{21}& =5.9197500149679749E-01, & b_{31} & = 6.8214380786704851E-10, \\
		b_{30} & = 2.0044747790361456E-01, & b_{32} &= 5.9121918658514827E-01, \\
		\eta_1 &= 3.777285888379173E-02, & \eta_2 & = 1/3, \\
		\eta_3 & = 1.868649805549811E-01,  & \eta_3 & = 2.224876040351123, \\
		z &= 6.288938077828750E-01, & s &= 5.721964308755304.
	\end{align*}

\section{Initial correct direction of Patankar schemes}\label{app:prove_direction}
As seen in Section~\ref{sec:stability-linear}, we are looking for schemes that do not oscillate.
To check this, there are two properties that must be verified. Given an arbitrary initial condition, the first step should go towards the steady state, Property~\ref{prop:correct}, and should not overshoot the steady state, Property~\ref{prop:not_over}.
In this section we investigate the direction of the first step of a method, i.e.,  Property~\ref{prop:correct}. In particular, if we know that the direction of the first step is always towards the steady state, for any initial condition, we know that oscillations are possible only around the steady state.
We will first present some theoretical results for very few schemes, then we summarize some numerical results we obtained varying $\varepsilon$ and $\theta$.

For symmetry we will check only Property~\ref{prop:correct} on the whole range of $0<\varepsilon \leq  \theta <1$.

\begin{theorem}[Direction of \ref{eq:MPE}]\label{th:directionMPE}
\ref{eq:MPE} enjoys Property~\ref{prop:correct} unconditionally, i.e., if the initial condition is above the steady state, then the first step will be below the initial condition, or, in other words,
\begin{equation}
	u_1^0>(1-\theta) \Longrightarrow u_1^0>u_1^1.
\end{equation}
\end{theorem}
\begin{proof}
	We write the \ref{eq:MPE} for the system \eqref{eq:linearsystem2general} in the first equation, making use of the conservation property and we collect all the implicit terms.
\begin{subequations}
	\begin{align}
		u^{1}_1 &= u^0_1
		+ \dt \left(
		(1-\theta)(1- u^0_1 ) \frac{1-u^{1}_1}{1-u^0_1}
		- \theta u^0_1 \frac{u^{1}_1}{u^0_1}
		\right),\\
		u^1_1 &= u^0_1
		+  \dt \left(
		(1-\theta)(1- u^{1}_1 )
		- \theta u^{1}_1
		\right),\\
		u^{1}_1 (1+\dt  ) &= y^1_1
		+  \dt
		(1-\theta),\\
		u^{1}_1  &= \frac{u^0_1
			+ \dt
			(1-\theta)}{(1+\dt  )}<\frac{ u^0_i(1
			+  \dt)
		}{(1+\dt  )}=u^0_1.
	\end{align}
	Here, we have simply used the hypothesis on $u_1^0>(1-\theta)$ and we obtain the thesis of the theorem.
\end{subequations}
\end{proof}

\begin{theorem}[Direction of \ref{eq:MPRK22-family} with $\alpha \leq 1$]\label{th:directionMPRK22}
\ref{eq:MPRK22-family} for $\alpha\leq 1$ applied on the simplified system \eqref{eq:linearsystem2general} \textit{has the correct direction }of the first time step for any $\dt> 0$.
\end{theorem}
\begin{proof}
The first stage consists in a first \ref{eq:MPE} step with time step $\alpha\dt$. So we obtain that $y^2_1<y^1_1=u^0_1$.
\begin{subequations}
For the second stage we can proceed analogously, exploiting the conservation property, the system \eqref{eq:linearsystem2general}, collecting all the implicit terms and using the hypothesis $u^0_1>(1-\theta)$.
\begin{align}
	\begin{split}
		u^{1}_1 &= u^0_1
+ \dt  \Biggl(
\left( \frac{2\alpha-1}{2\alpha} (1-\theta) \bigl( 1-y^1_1 \bigr) + \frac{1}{2\alpha} (1-\theta) \bigl( 1-y^2_1 \bigr) \right) \frac{1-u^{1}_1}{(1-y^2_1)^{1/\alpha} (1-y^1_1)^{1-1/\alpha}}
\\&\qquad\qquad\qquad
- \left( \frac{2\alpha-1}{2\alpha} \theta y^1_1  + \frac{1}{2\alpha} \theta y^2_1\right) \frac{u^{1}_1}{(y^2_1)^{1/\alpha} (y^1_1)^{1-1/\alpha}}
\Biggr),
\end{split}\\
	\begin{split}
u^{1}_1 &= u^0_1
+ \dt  \Biggl(
\left( \frac{2\alpha-1}{2\alpha} (1-\theta) \left(\frac{ 1-y^1_1 }{1-y_1^2}\right)^{1/\alpha}  + \frac{1}{2\alpha} (1-\theta) \left(\frac{ 1-y^2_1 }{1-y_1^1}\right)^{1-1/\alpha} \right) (1-u^{1}_1)
\\&\qquad\qquad\qquad
- \left( \frac{2\alpha-1}{2\alpha} \theta\left(\frac{ y^1_1}{y^2_1}\right)^{1/\alpha}  + \frac{1}{2\alpha} \theta \left(\frac{ y^2_1}{y^1_1}\right)^{1-1/\alpha}\right)u^{1}_1
\Biggr),
\end{split}\\
\begin{split}
	\Biggl( 1+&\dt \left( \frac{2\alpha-1}{2\alpha} (1-\theta) \left(\frac{ 1-y^1_1 }{1-y_1^2}\right)^{1/\alpha}  + \frac{1}{2\alpha} (1-\theta) \left(\frac{ 1-y^2_1 }{1-y_1^1}\right)^{1-1/\alpha} \right)+\\
	&\dt	\left( \frac{2\alpha-1}{2\alpha} \theta\left(\frac{ y^1_1}{y^2_1}\right)^{1/\alpha}  + \frac{1}{2\alpha} \theta \left(\frac{ y^2_1}{y^1_1}\right)^{1-1/\alpha}\right)
	\Biggr)u^{1}_1=\\
	u^0_1 +&\dt  \Biggl(
	\left( \frac{2\alpha-1}{2\alpha} (1-\theta) \left(\frac{ 1-y^1_1 }{1-y_1^2}\right)^{1/\alpha}  + \frac{1}{2\alpha} (1-\theta) \left(\frac{ 1-y^2_1 }{1-y_1^1}\right)^{1-1/\alpha} \right)<\\
	u^0_1&\left(1 +\dt  \Biggl(
	\left( \frac{2\alpha-1}{2\alpha} \left(\frac{ 1-y^1_1 }{1-y_1^2}\right)^{1/\alpha}  + \frac{1}{2\alpha}  \left(\frac{ 1-y^2_1 }{1-y_1^1}\right)^{1-1/\alpha} \right) \right). \label{eq:direction_eq_last_step}
\end{split}
\end{align}
So we have that
\begin{align}
	\begin{split}
		u^{1}_1 < u_1^0\frac{N}{D}
	\end{split}
\end{align}
with $N>0$ and $D>0$ deducible from \eqref{eq:direction_eq_last_step}. If $N<D$ we have our result, or, in other words, if $N-D<0$. So, let us compute
\begin{align}
	\begin{split}
		\frac{N-D}{\dt}=& \frac{2\alpha-1}{2\alpha} \left(\frac{ 1-y^1_1 }{1-y_1^2}\right)^{1/\alpha}  + \frac{1}{2\alpha}  \left(\frac{ 1-y^2_1 }{1-y_1^1}\right)^{1-1/\alpha} -\\
		&\frac{2\alpha-1}{2\alpha} (1-\theta) \left(\frac{ 1-y^1_1 }{1-y_1^2}\right)^{1/\alpha}  - \frac{1}{2\alpha} (1-\theta) \left(\frac{ 1-y^2_1 }{1-y_1^1}\right)^{1-1/\alpha}- \\
		&\frac{2\alpha-1}{2\alpha} \theta\left(\frac{ y^1_1}{y^2_1}\right)^{1/\alpha}  - \frac{1}{2\alpha} \theta \left(\frac{ y^2_1}{y^1_1}\right)^{1-1/\alpha},
	\end{split}\\
	\begin{split}
	\frac{N-D}{\dt}=& \frac{2\alpha-1}{2\alpha}\theta \left(\frac{ 1-y^1_1 }{1-y_1^2}\right)^{1/\alpha}  + \frac{1}{2\alpha} \theta \left(\frac{ 1-y^2_1 }{1-y_1^1}\right)^{1-1/\alpha} - \\
	&\frac{2\alpha-1}{2\alpha} \theta\left(\frac{ y^1_1}{y^2_1}\right)^{1/\alpha}  - \frac{1}{2\alpha} \theta \left(\frac{ y^2_1}{y^1_1}\right)^{1-1/\alpha}=\\
	&\frac{2\alpha-1}{2\alpha} \theta\left(\left(\frac{ 1-y^1_1 }{1-y_1^2}\right)^{1/\alpha} -\left(\frac{ y^1_1}{y^2_1}\right)^{1/\alpha} \right) + \frac{1}{2\alpha} \theta \left(\left(\frac{ 1-y^2_1 }{1-y_1^1}\right)^{1-1/\alpha}- \left( \frac{ y^2_1}{y^1_1}\right)^{1-1/\alpha} \right).
\end{split}
\end{align}
Now, we know that $y_1^1>y^2_1$, hence
$$
\frac{y_1^1}{y_1^2}>1>\frac{1-y_1^1}{1-y_1^2},
$$
so, considering $0<\alpha\leq1$, we have that $1/\alpha>0$ and $1-1/\alpha \leq 0$, we have
$$
\left(\left(\frac{ 1-y^1_1 }{1-y_1^2}\right)^{1/\alpha} -\left(\frac{ y^1_1}{y^2_1}\right)^{1/\alpha} \right)<0 \text{  and } \left(\left(\frac{ 1-y^2_1 }{1-y_1^1}\right)^{1-1/\alpha}- \left( \frac{ y^2_1}{y^1_1}\right)^{1-1/\alpha} \right)<0.
$$
Hence, $\frac{N-D}{\dt}<0$ and the proof is complete.
\end{subequations}
\end{proof}
For the case with $\alpha>1$ it is not so easy to derive an estimation as the two terms have opposite signs.

\begin{theorem}[Direction of \ref{eq:MPRKSO22-family} with $\gamma \geq 1$]
	\ref{eq:MPRKSO22-family} applied on the simplified system \eqref{eq:linearsystem2general} for positive RK coefficients and for $$\gamma=\frac{1-\alpha\beta + \alpha\beta^2}{\beta(1-\alpha\beta)}\geq 1$$ has the correct direction of the first time step.
\end{theorem}
\begin{proof}
	The proof follows the same step of proof of Theorem \ref{th:directionMPRK22}. The condition on the exponent of the weights here is precisely $\gamma\geq 1$.
\end{proof}
\begin{remark}[Accuracy area]
	We want to remark that the area in the $(\alpha,\beta)$ plane where $\gamma\geq 1$ and the RK coefficients are positive is defined by
	$$
	\alpha \leq \frac{\beta-1}{2\beta^2-\beta}\text{ with }\beta\geq 1,
	$$
	and this area coincide with the second order area for vanishing IC of \ref{eq:MPRKSO22-family} found in Figure~\ref{fig:orderMPRKSO2}.
\end{remark}

\subsection{Initial direction of other schemes}
For all other schemes it is not so easy to prove directly that the direction of the first step is the correct one. Nevertheless, we checked symbolically (when feasible) and numerically (otherwise) this property. The numerical computations are included in \texttt{CheckingDirection.ipynb} in the repository \cite{torlo2021stabilityGit}, while the only theoretical result is in \texttt{MPRK\_3\_2.nb}. We summarize in the following the results we obtained.
\begin{figure}
	\begin{center}
		\includegraphics[width=0.5\textwidth]{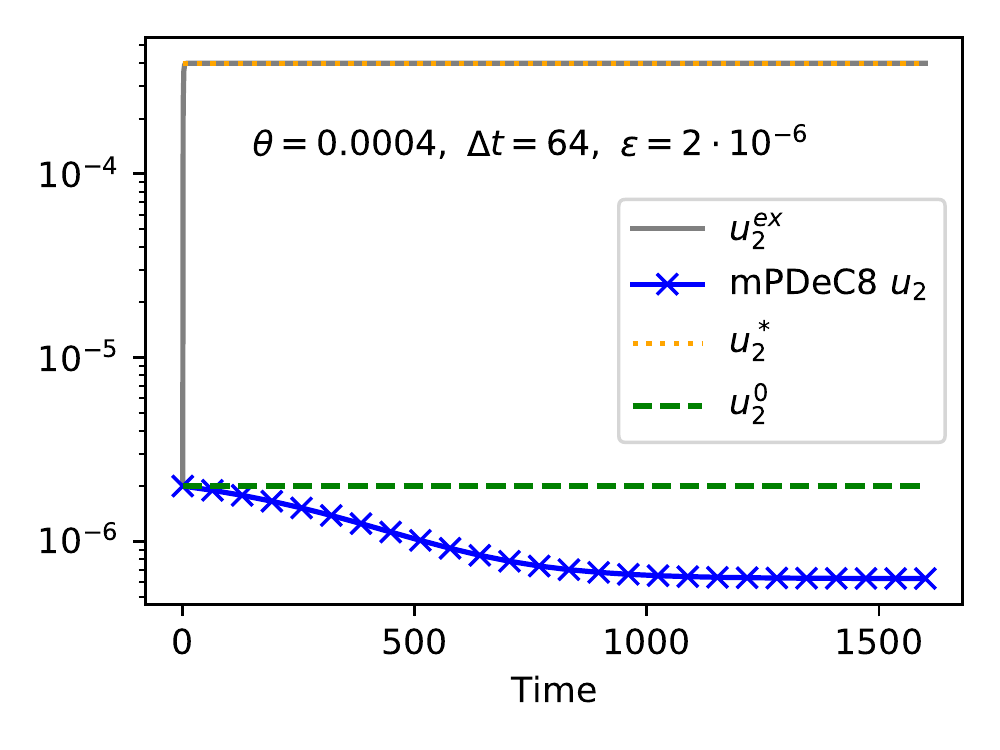}
		\caption{Simulation of \eqref{eq:linearsystem2general} with $\theta=4 \cdot 10^{-4}$ and $u_2^0=\varepsilon=2 \cdot 10^{-6}$ with \ref{eq:explicit_dec_correction}8 with equispaced points for $\dt=64$}\label{fig:wrongDirection}
	\end{center}
\end{figure}
\begin{itemize}
	\item \ref{eq:MPRK32} has the correct direction and we proved it in the Mathematica notebook \texttt{MPRK\_3\_2.nb};
	\item \ref{eq:MPRK22-family} have the correct direction for all $1/2\leq\alpha\leq 4$;
	\item \ref{eq:MPRKSO22-family} have the correct direction in an area slightly larger than the positive RK weights area displayed in Figure \ref{fig:systemCFLMPSO2allEps}, which coincide with the strictly positive $\dt$ bound area there;
	\item \ref{eq:MPRK43-family} have the correct direction except in a small area around $\alpha=2/3$ where the RK coefficients are negative;
	\item \ref{eq:MPRKSO(4,3)} has the correct direction;
	\item \ref{eq:explicit_dec_correction} with Gauss--Lobatto points have the correct direction (tested up to order 16);
	\item \ref{eq:explicit_dec_correction}  with equispaced points have the correct direction up to order 7, for order 8,~9 and 15 we found wrong directions for large $\dt(\geq 30)$ and very small initial conditions and $\theta$, all other mPDeC with orders up to 16 have the correct direction;
	\item \ref{eq:SI-RK2} and \ref{eq:SI-RK3} have the correct direction.
\end{itemize}
In Figure \ref{fig:wrongDirection}, we show an example for \ref{eq:explicit_dec_correction}8 where the correct direction is not followed. We see that even if we go away from the steady state, the scheme does not oscillate.

\printbibliography

\end{document}

% --- supplement: supplement.tex ---

\maketitle
\section{Patankar Methods}
In order to make the document self-contained, we list again the used methods.
\begin{itemize}
	\item Modified Patankar Euler method \cite{burchard2003high}
\begin{equation}\tag{MPE}
	\label{eq:MPE}
	u^{n+1}_i = u^{n}_i + \dt \rest_i\bigl( u^{n} \bigr)
	+ \dt \sum_j \left(
	\prod_{ij}\bigl( u^{n} \bigr) \frac{u^{n+1}_j}{u^{n}_j}
	- \dest_{ij}\bigl( u^{n} \bigr) \frac{u^{n+1}_i}{u^{n}_i}
	\right),
\end{equation}
\item Modified Patankar Runge--Kutta(2,2,$\alpha$) methods \cite{burchard2003high}
\begin{equation}
	\tag{MPRK(2,2,$\alpha$)}
	\label{eq:MPRK22-family}
	\begin{aligned}
		y^1 &= u^n,
		\\
		y^2_i &= u^n_i
		+ \alpha \dt \rest_i\bigl( y^1 \bigr)
		+ \alpha \dt \sum_j \left(
		\prod_{ij}\bigl( y^1 \bigr) \frac{y^2_j}{y^1_j}
		- \dest_{ij}\bigl( y^1 \bigr) \frac{y^2_i}{y^1_i}
		\right),
		\\
		u^{n+1}_i &= u^n_i
		+ \dt \left( \frac{2\alpha-1}{2\alpha} \rest_i\bigl( y^1 \bigr) + \frac{1}{2\alpha} \rest_i\bigl( y^2 \bigr) \right)
		\\&\qquad
		+ \dt \sum_j \Biggl(
		\left( \frac{2\alpha-1}{2\alpha} \prod_{ij}\bigl( y^1 \bigr) + \frac{1}{2\alpha} \prod_{ij}\bigl( y^2 \bigr) \right) \frac{u^{n+1}_j}{(y^2_j)^{1/\alpha} (y^1_j)^{1-1/\alpha}}
		\\&\qquad\qquad\qquad
		- \left( \frac{2\alpha-1}{2\alpha} \dest_{ij}\bigl( y^1 \bigr) + \frac{1}{2\alpha} \dest_{ij}\bigl( y^2 \bigr) \right) \frac{u^{n+1}_i}{(y^2_i)^{1/\alpha} (y^1_i)^{1-1/\alpha}}
		\Biggr)
		\Biggr).
	\end{aligned}
\end{equation}
\item Modified Patankar Shu--Osher Runge--Kutta(2,2,$\alpha,\beta$) methods \cite{huang2019positivity}
\begin{equation}
	\tag{MPRKSO(2,2,$\alpha$,$\beta$)}
	\label{eq:MPRKSO22-family}
	\begin{aligned}
		y^1 &= u^n,
		\\
		y^2_i &= y^1_i
		+ \beta \dt \rest_i\bigl( y^1 \bigr)
		+ \beta \dt \sum_j \left(
		\prod_{ij}\bigl( y^1 \bigr) \frac{y^2_j}{y^1_j}
		- \dest_{ij}\bigl( y^1 \bigr) \frac{y^2_i}{y^1_i}
		\right),
		\\
		u^{n+1}_i &= (1-\alpha) y^1_i + \alpha y^2_i
		+ \dt \left( (1-1/2\beta-\alpha\beta) \rest_i\bigl( y^1 \bigr) + \frac{1}{2\beta} \rest_i\bigl( y^2 \bigr) \right)
		\\&\qquad
		+ \dt \sum_j \Biggl(
		\left( (1-1/2\beta-\alpha\beta) \prod_{ij}\bigl( y^1 \bigr) + \frac{1}{2\beta} \prod_{ij}\bigl( y^2 \bigr) \right) \frac{u^{n+1}_j}{(y^2_j)^{\gamma} (y^1_j)^{1-\gamma}}
		\\&\qquad\qquad\qquad
		- \left( (1-1/2\beta-\alpha\beta) \dest_{ij}\bigl( y^1 \bigr) + \frac{1}{2\beta} \dest_{ij}\bigl( y^2 \bigr) \right) \frac{u^{n+1}_i}{(y^2_i)^{\gamma} (y^1_i)^{1-\gamma}}
		\Biggr),
	\end{aligned}
\end{equation}where the parameters are restricted to $\alpha \in [0,1]$, $\beta \in (0,\infty)$,
$\alpha \beta + 1 / 2\beta \leq 1$, and
\begin{equation}
\gamma = \frac{1 - \alpha \beta + \alpha \beta^2}{\beta ( 1 - \alpha \beta)}.
\end{equation}
\item Modified Patankar Deferred Correction methods \cite{offner2019arbitrary}
\begin{equation}
	\tag{mPDeC}
	\label{eq:explicit_dec_correction}
	y_i^{m,(k)}-y^0_i -\sum_{r=0}^M \theta_r^m \Delta t \rest \bigl(y^{r,(k-1)} \bigr)-\sum_{r=0}^M \theta_r^m \Delta t  \sum_{j=1}
	\left( \prod_{ij}(y^{r,(k-1)})
	\frac{y^{m,(k)}_{\gamma(j,i, \theta_r^m)}}{y_{\gamma(j,i, \theta_r^m)}^{m,(k-1)}}
	- \dest_{ij}(y^{r,(k-1)})  \frac{y^{m,(k)}_{\gamma(i,j, \theta_r^m)}}{c_{\gamma(i,j, \theta_r^m)}^{m,(k-1)}} \right)=0,
\end{equation}
where $\theta_r^m$ are the correction weights and the $\gamma(j,i,\theta_r^m)$ are
the indicator functions depending on  $\theta$ if the values are positive or negative, see \cite{offner2019arbitrary} for details. Finally, the new numerical solution
is $u_i^{n+1}=y^{M,(K)}$.
\item The new Modified Patankar Runge--Kutta(3,2) method based on the SSPRK(3,3)
\begin{equation}
	\tag{MPRK(3,2)}
	\label{eq:MPRK32}
	\begin{aligned}
		y^1 &= u^{n},
		\\
		y^2_i &= u^{n}
		+ \dt \rest_i\bigl( y^1 \bigr)
		+ \dt \sum_j \left(
		\prod_{ij}\bigl( y^1 \bigr) \frac{y^2_j}{y^1_j}
		- \dest_{ij}\bigl( y^1 \bigr) \frac{y^2_i}{y^1_i}
		\right),
		\\
		y^3 &= u^{n}
		+ \dt \frac{\rest_i\bigl( y^1 \bigr) + \rest_i\bigl( y^2 \bigr)}{4}
		+ \dt \sum_j \left(
		\frac{\prod_{ij}\bigl( y^1 \bigr) + \prod_{ij}\bigl( y^2 \bigr)}{4} \frac{y^3_j}{y^2_j}
		- \frac{\dest_{ij}\bigl( y^1 \bigr) + \dest_{ij}\bigl( y^2 \bigr)}{4} \frac{y^3_i}{y^2_i}
		\right),
		\\
		u^{n+1} &= u^{n}
		+ \dt \frac{\rest_i\bigl( y^1 \bigr) + \rest_i\bigl( y^2 \bigr) + 4 \rest_i\bigl( y^3 \bigr)}{6}
		\\&
		+ \dt \sum_j \left(
		\frac{\prod_{ij}\bigl( y^1 \bigr) + \prod_{ij}\bigl( y^2 \bigr) + 4 \prod_{ij}\bigl( y^3 \bigr)}{6} \frac{u^{n+1}_j}{y^2_j}
		- \frac{\dest_{ij}\bigl( y^1 \bigr) + \dest_{ij}\bigl( y^2 \bigr) + 4 \dest_{ij}\bigl( y^3 \bigr)}{6} \frac{u^{n+1}_i}{y^2_i}
		\right).
	\end{aligned}
\end{equation}
\item (Patankar) Semi Implicit Runge--Kutta(2,2) methods \cite{chertock2015steady}
\begin{equation}
	\tag{SI-RK2}
	\label{eq:SI-RK2}
	\begin{aligned}
		y^1 &= u^n,
		\\
		y^2_i &= \frac{ u^n_i + \dt \rest_i(y^1) + \dt \sum_j \prod_{ij}(y^1) }
		{ 1 + \dt \sum_j \dest_{ij}(y^1) / y^1_i },
		\\
		y^3_i &= \frac{1}{2} u^n_i + \frac{1}{2}
		\frac{ y^2_i + \dt \rest_i(y^2) + \dt \sum_j \prod_{ij}(y^2) }
		{ 1 + \dt \sum_j \dest_{ij}(y^2) / y^2_i },
		\\
		u^{n+1}_i &= \frac{ y^3_i + \dt^2 \bigl(\rest_i(y^3) + \dt \sum_j \prod_{ij}(y^3) \bigr) \sum_j \dest_{ij}(y^3) / y^3_i }
		{ 1 + \bigl( \dt \sum_j \dest_{ij}(y^3) / y^3_i \bigr)^2 }.
	\end{aligned}
\end{equation}
\item Modified Patankar Runge--Kutta(4,3,$\alpha,\beta$) methods \cite{kopecz2018unconditionally}
\begin{equation}
	\tag{SI-RK3}
	\label{eq:SI-RK3}
	\begin{aligned}
		y^1 &= u^n,
		\\
		y^2_i &= \frac{ u^n_i + \dt \rest_i(y^1) + \dt \sum_j \prod_{ij}(y^1) }
		{ 1 + \dt \sum_j \dest_{ij}(y^1) / y^1_i },
		\\
		y^3_i &= \frac{3}{4} u^n_i + \frac{1}{4}
		\frac{ y^2_i + \dt \rest_i(y^2) + \dt \sum_j \prod_{ij}(y^2) }
		{ 1 + \dt \sum_j \dest_{ij}(y^2) / y^2_i },
		\\
		y^4_i &= \frac{1}{3} u^n_i + \frac{2}{3}
		\frac{ y^3_i + \dt \rest_i(y^3) + \dt \sum_j \prod_{ij}(y^3) }
		{ 1 + \dt \sum_j \dest_{ij}(y^3) / y^3_i },
		\\
		u^{n+1}_i &= \frac{ y^4_i + \dt^2 \bigl(\rest_i(y^4) + \dt \sum_j \prod_{ij}(y^4) \bigr) \sum_j \dest_{ij}(y^4) / y^4_i }
		{ 1 + \bigl( \dt \sum_j \dest_{ij}(y^4) / y^4_i \bigr)^2 }.
	\end{aligned}
\end{equation}
\item (Patankar) Semi Implicit Runge--Kutta(2,2) methods \cite{chertock2015steady}
\begin{equation}
	\tag{MPRK(4,3,$\alpha, \beta$)}
	\label{eq:MPRK43-family}
	\begin{aligned}
		y^1 &= u^n,
		\\
		y^2_i &= u^n_i
		+ a_{21} \dt \rest_i\bigl( y^1 \bigr)
		+  a_{21} \dt \sum_j \left(
		\prod_{ij}\bigl( y^1 \bigr) \frac{y^2_j}{y^1_j}
		- \dest_{ij}\bigl( y^1 \bigr) \frac{y^2_i}{y^1_i}
		\right),
		\\
		y^3_i &= u^n_i+ \Delta t \left(a_{31} \rest_i\bigl( y^1 \bigr)+
		a_{32} \rest_i\bigl( y^2 \bigr)
		\right)
		\\ & \qquad
		+\Delta t \sum_j
		\Bigg(\left(a_{31} \prod_{ij}\bigl(y^1\bigr)+ a_{32} \prod_{ij} \bigl(y^2\bigr) \right)  \frac{  y_j^3
		}{\bigl(y_j^2\bigr)^{1/p } \bigl(y_j^1\bigr)^{1/p-1} } \\
		& \qquad\qquad\qquad
		-\left(a_{31} \dest_{ij}\bigl(y^1\bigr)+ a_{32} \dest_{ij} \bigl(y^2\bigr) \right)  \frac{  y_i^3
		}{\bigl(y_i^2\bigr)^{1/p } \bigl(y_i^1\bigr)^{1/p-1} }
		\Bigg),
		\\
		\sigma_i &= u_i^n + \Delta t \sum_j
		\left( \beta_1 \prod_{ij} \bigl( y^1\bigr) +\beta_2 \prod_{ij} \bigl(y^2\bigr)  \right) \frac{\sigma_j}{\bigl(y_j^2 \bigr)^{1/q}
			\bigl(y_j^1\bigr)^{1/q-1}}
		\\ & \qquad
		\left( \beta_1 \dest_{ij} \bigl( y^1\bigr) +\beta_2 \dest_{ij} \bigl(y^2\bigr)  \right) \frac{\sigma_i}{\bigl(y_i^2 \bigr)^{1/q}
			\bigl(y_i^1\bigr)^{1/q-1}}
		\\
		u^{n+1}_i &= u^n_i
		+ \dt \left( b_1 \rest_i\bigl( y^1 \bigr) + b_2\rest_i\bigl( y^2 \bigr)  + b_3\rest_i\bigl( y^3 \bigr)\right)
		\\&\qquad
		+ \dt \sum_j \Biggl(
		\left( b_1 \prod_{ij}\bigl( y^1 \bigr) +b_2\prod_{ij}\bigl( y^2 \bigr)
		+ b_3 \prod_{ij}\bigl( y^3 \bigr)
		\right) \frac{u^{n+1}_j}{\sigma_j}
		\\&\qquad\qquad\qquad
		-\left( b_1 \dest_{ij}\bigl( y^1 \bigr) +b_2\dest_{ij}\bigl( y^2 \bigr)
		+ b_3 \dest_{ij}\bigl( y^3 \bigr)
		\right) \frac{u^{n+1}_i}{\sigma_i}
		\Biggr),
	\end{aligned}
\end{equation}
where $p=3 a_{21}\left(a_{31}+a_{32} \right)b_3,\; q=a_{21},\;\beta_2=\frac{1}{2a_{21}}$ and $\beta_1= 1-\beta_2$. The Butcher tableaus in respect to the two parameters
\begin{equation}
	\begin{aligned}
		\def\arraystretch{1.2}
		\begin{array}{c|ccc}
			0 &  & & \\
			\alpha & \alpha & & \\
			\beta & \frac{3\alpha\beta (1-\alpha)-\beta^2}{\alpha(2-3\alpha)}& \frac{\beta (\beta-\alpha)}{\alpha(2-3\alpha)}& \\
			\hline
			& 1+\frac{2-3(\alpha+\beta)}{6 \alpha \beta } &\frac{3 \beta-2}{6\alpha (\beta-\alpha)} & \frac{2-3\alpha}{6\beta(\beta-\alpha)}
		\end{array}
	\end{aligned}
\end{equation}
with
\begin{equation*}
	\begin{rcases}
		2/3 \leq \beta \leq 3\alpha(1-\alpha)\\
		3\alpha(1-\alpha)\leq\beta \leq 2/3 \\
		(3\alpha-2)/(6\alpha-3)\leq \beta \leq 2/3
	\end{rcases}
	\text{ for }
	\begin{cases}
		1/3 \leq \alpha<\frac{2}{3},\\
		2/3 \leq \alpha<\alpha_0,\\
		\alpha>\alpha_0,
	\end{cases}
\end{equation*}
and $\alpha_0\approx 0.89255.$
\item Modified Patankar Shu--Osher Runge--Kutta(4,3) method \cite{huang2019third}
\begin{equation}
	\tag{MPRKSO(4,3)}
	\label{eq:MPRKSO(4,3)}
	\begin{aligned}
		y^1 &= u^n,
		\\
		y^2_i &= y^1_i
		+ a_{10} \dt \rest_i\bigl( y^1 \bigr)
		+  \dt \sum_j b_{10 }\left(
		\prod_{ij}\bigl( y^1 \bigr) \frac{y^2_j}{y_j^1}
		-\dest_{ij}\bigl( y^1 \bigr) \frac{y^2_i}{y_i^1}
		\right),
		\\
		\rho_i&= n_1y_i^2 +n_2 y_i^1 \left(\frac{y_i^2}{y_i^1} \right)^2
		\\
		y^3_i &= (a_{20}y^1_i +a_{21}y^2_i)
		+ \dt \left( b_{20} \rest_i  \bigl(  y^1 \bigr)+  b_{21} \rest_i  \bigl(  y^2 \bigr) \right)
		\\ \qquad
		&+  \dt \sum_j  \left(
		\left( b_{20} \prod_{ij}\bigl( y^1 \bigr)  +  b_{21} \prod_{ij}\bigl( y^2 \bigr)  \right) \frac{y^2_j}{\rho_j}
		-  \left( b_{20} \dest_{ij}\bigl( y^1 \bigr)  +  b_{21} \dest_{ij}\bigl( y^2 \bigr)  \right) \frac{y^2_i}{\rho_i}
		\right),\\
		\mu_i &=y_i^1\left( \frac{y_i^2}{y_i^1} \right)^s \\
		\tilde{a}_i&= \eta_1 y_i^1 + \eta_2 y_i^2 + \dt \sum_j \left( \left( \eta_3 \prod_{ij}\bigl( y^1 \bigr)  +\eta_4 \prod_{ij} \bigl( y^2 \bigr)  \right)  \frac{\tilde{a}_j}{\mu_j} -
		\left( \eta_3 \dest_{ij}\bigl( y^1 \bigr)  +\eta_4 \dest_{ij} \bigl( y^2 \bigr)  \right)  \frac{\tilde{a}_i}{\mu_i}  \right) \\
		\sigma_i &=\tilde{a}_i+zy_i^1 \frac{y_i^2}{\rho_i}
		\\
		u^{n+1}_i &=  \left(a_{30} y^1_i + a_{31}y^2_i +a_{32} y^3_i \right)
		+  \dt \left( b_{30}  \rest_i\bigl( y^1 \bigr) + b_{31} \rest_i \bigl( y^2 \bigr)+ b_{32} \rest_i \bigl( y^3 \bigr) \right)
		\\&\qquad
		+ \dt \sum_j \Biggl(
		\left(  b_{30}    \prod_{ij}\bigl( y^1 \bigr) +   b_{31}  \prod_{ij}\bigl( y^2 \bigr)
		+   b_{32}  \prod_{ij}\bigl( y^2 \bigr)\right) \frac{u^{n+1}_j}{\sigma_j}
		\\&\qquad\qquad\qquad
		- \left(  b_{30}    \dest_{ij}\bigl( y^1 \bigr) +   b_{31}  \dest_{ij}\bigl( y^2 \bigr)
		+   b_{32}  \dest_{ij}\bigl( y^2 \bigr)\right) \frac{u^{n+1}_i}{\sigma_i}
		\Biggr).
	\end{aligned}
\end{equation}
Here, the optimal SSP coefficients determined in \cite{huang2019third} will be used. They are given by
\begin{align*}
	n_1&=2.569046025732011E-01,& n_2 &= 7.430953974267989E-01,\\
	a_{10} &=1, & a_{20} &=  9.2600312554031827E-01, \\
	a_{21} &=  7.3996874459681783E-02, &a_{31} &=2.0662904223744017E-10, \\
	b_{10} &=  4.7620819268131703E-01, &a_{30} &= 7.0439040373427619E-01, \\
	a_{32}& = 2.9560959605909481E-01, &b_{20} &= 7.7545442722396801E-02, \\
	b_{21}& =5.9197500149679749E-01, & b_{31} & = 6.8214380786704851E-10, \\
	b_{30} & = 2.0044747790361456E-01, & b_{32} &= 5.9121918658514827E-01, \\
	\eta_1 &= 3.777285888379173E-02, & \eta_2 & = 1/3, \\
	\eta_3 & = 1.868649805549811E-01,  & \eta_3 & = 2.224876040351123, \\
	z &= 6.288938077828750E-01, & s &= 5.721964308755304.
\end{align*}
\end{itemize}
\section{Vanishing initial condition}

Here, we consider the linear initial value problem
\begin{equation}
	\label{eq:stability_system}
	u'(t)
	=
	f(u(t))
	=
	\frac{1}{2}\begin{pmatrix}
		-1 & 1 \\
		1 & -1
	\end{pmatrix}
	u(t),
	\quad
	u(0) = u^0=
	\begin{pmatrix}
		1 - \epsilon \\
		\epsilon
	\end{pmatrix}.
\end{equation}
To use the modified Patankar schemes with a generic implementation,
$\epsilon$ must be strictly positive to avoid division by zero. As recommended
in the literature \cite{kopecz2018order}, we set $\epsilon$ to the smallest
positive number that can be
represented as floating point number with given precision (usually 64 bit) whenever
we are interested in the limit $\epsilon \to 0$.
In the following we first study the behavior of some of the previously presented schemes for
$\epsilon\to 0 $, then we show where this study is meaningful and where it is less.

\subsection{Loss of accuracy for \texorpdfstring{\ref{eq:MPRK22-family}}{{MPRK(2,2,a)}}}
In order to explain the kind of computations used in the following, we start with
a simple example using again \ref{eq:MPRK22-family} with $\alpha=1$ to show how we study the accuracy of a method in the limit of $\varepsilon\to 0$.
Recall that  the system \eqref{eq:stability_system} conserves the sum of the constituents
$u_1(t) + u_2(t) = 1$ and can be formulated as
\begin{align*}
	u_1'(t)&=\frac{-u_1(t)+u_2(t)}{2}=\frac{1}{2}-u_1(t), \\
	u_2'(t)&= \; \frac{u_1(t)-u_2(t)}{2}=\frac{1}{2}-u_2(t),
\end{align*}
with steady state solution $u_1^*=u_2^*=\frac{1}{2}$ and exact solutions $$u_1(t)=\frac{1}{2}(1+e^{-t}(1-2\varepsilon)) \text{ and }u_2(t)=1-u_1(t).$$

\begin{example}[Accuracy of MPRK(2,2,1)]
	We investigate the behavior of MPRK(2,2,1) applied to \eqref{eq:stability_system}.
	Due to the conservation property and the symmetry of the problem, it suffices to
	focus on the first component $u_1$. For the first non-trivial stage, we obtain
	\begin{equation}
		y_1^2
		=
		u_1^0 + \frac{\dt}{2} \left( u_2^0 \frac{y_2^2}{u_2^0}- u_1^0 \frac{y_1^2}{u_1^0} \right)
		=
		u_1^0 + \frac{\dt}{2} \left( 1 - 2 y_1^2 \right)
		\quad\Longleftrightarrow\quad
		y_1^2
		=
		\frac{ u_1^0 + \frac{\dt}{2} }{ 1 + \dt }.
	\end{equation}
	Using this expression for $y_1^2$, the new numerical solution satisfies
	\begin{equation}
		\begin{aligned}
			u^{1}_1
			&=
			u^0_1 + \frac{\dt}{4} \Biggl(
			\left((1-u_1^0)+(1-y_1^2) \right) \frac{1-u_1^{1}}{1-y_1^2}
			- \left( u_1^0 +y_1^2\right)\frac{u_1^{1}}{y_1^2}
			\Biggr)
			\\
			&=
			u^0_1 + \frac{\dt}{4} \left(
			\frac{(1-u_1^0) (1-u_1^{1})}{1-y_1^2}
			+ 1-u_1^{1} -\frac{u_1^0u_1^{1}}{y_1^2} -u_1^{1}
			\right).
		\end{aligned}
	\end{equation}
	This can be reformulated as
	\begin{equation}
		\left(
		1 + \frac{\dt}{4} \frac{1-u_1^0}{1-y_1^2}
		+ \frac{\dt}{2} + \frac{\dt}{4} \frac{u_1^0}{y_1^2}
		\right) u_1^{1}
		=
		u_1^0 + \frac{\dt}{4}\frac{1-u_1^0}{1-y_1^2} +\frac{\dt}{4}.
	\end{equation}
	Passing to the limit $\varepsilon\to 0$ with $u_1^0=1$ and
	$\lim_{\varepsilon\to 0} y_1^2 = (1+ \dt / 2) / (1+\dt)$
	yields
	\begin{equation}
		\begin{aligned}
			& \left(1+\frac{\dt}{2} + \frac{\dt}{4} \frac{(1+\dt)}{1+\dt/2} \right)\lim_{\varepsilon\to 0} u_1^{1}
			= 1+\frac{\dt}{4}\\
			\Longleftrightarrow \qquad &\lim_{\varepsilon\to 0} u_1^{1}
			= \frac{8+6\dt+\dt^2}{8+10\dt+4\dt^2} = 1-\frac{\dt}{2}+\frac{\dt^2}{4}-\frac{\dt^3}{16}+\mathcal{O}(\dt^4).
		\end{aligned}
	\end{equation}
	This tells us that the solution is consistent with the exact one and that, as expected, the second order error is an  $\mathcal{O}(\dt^3)$ for the first time step, indeed
	\begin{equation}\label{eq:exact_sol}
		u_1(\dt) = 1-\frac{\dt}{2}+\frac{\dt^2}{4}-\frac{\dt^3}{12}+\mathcal{O}(\dt^4).
	\end{equation}
%	Since no oscillations should appear, one demands that $u_1^{n+1}\geq 1/2$.
%	This is guaranteed by a CFL-like condition on $\dt$; in this case,
%	$\dt\leq (1+\sqrt{17})/2\approx 2.56$. We see that this is only slightly larger than
%	the bound found in the previous study for general systems.
\end{example}

In general this is not true. We apply now for different $\alpha$ the same procedure on \ref{eq:MPRK22-family} for the symmetric problem \eqref{eq:stability_system}. We observe the following behaviors.
\begin{itemize}
	\item
	For $\alpha > 1$, taking the limit $\epsilon \to 0$ results in $u^1 = u^0$ and, by induction, $u^n \equiv u^0$. This can also be observed numerically if
	sufficiently high accuracy is used, e.g.\ \texttt{BigFloat} in Julia.
	For \texttt{Float64}, the first few steps do almost nothing and later steps
	result in the desired behavior. The number of steps necessary to actually do
	something increases for $\alpha \gg 1$.

	\item
	For $\alpha \in [1/2, 1)$, the schemes are consistent also for $\varepsilon \to 0$, but we lose one order of accuracy.
\end{itemize}
These are analyzed in detail in the following.

\begin{theorem}
	\label{thm:MPRK22-family-test-problem-steady-state}
	For the test problem \eqref{eq:stability_system} in the limit $\epsilon \to 0$ the initial state becomes a spurious steady state for
	\ref{eq:MPRK22-family} with $\alpha > 1$ .
\end{theorem}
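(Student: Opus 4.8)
The plan is to use the conservation and symmetry of \eqref{eq:stability_system} to reduce everything to a scalar recursion, exactly as in the $\alpha=1$ example, and then to isolate the one mechanism that produces the degeneracy when $\alpha>1$: a weight carrying a \emph{positive} power of $y^1_2=u^n_2$ in the denominator of the final stage.

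First I would record the Patankar terms of \eqref{eq:stability_system}: $\rest_i\equiv 0$, $\prod_{12}(u)=\dest_{21}(u)=u_2/2$, $\prod_{21}(u)=\dest_{12}(u)=u_1/2$. Summing \ref{eq:MPRK22-family} over $i$ shows that $u^n_1+u^n_2$, $y^2_1+y^2_2$ and $u^{n+1}_1+u^{n+1}_2$ are all equal to $1$, so it suffices to follow the first component. Solving the (linear) second stage as in the example gives $y^2_1=(u^n_1+\alpha\dt/2)/(1+\alpha\dt)$ and $y^2_2=(u^n_2+\alpha\dt/2)/(1+\alpha\dt)$; the only division by $y^1_2=u^n_2$ occurring there cancels identically for $\epsilon>0$, so this stage is a closed-form, continuous function of $u^n$. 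Substituting $u^{n+1}_2=1-u^{n+1}_1$ into the last stage turns it into a single linear equation in $u^{n+1}_1$ whose solution is
\[
  u^{n+1}_1=\frac{u^n_1+\tfrac{\dt}{2}\,A/D_2}{1+\tfrac{\dt}{2}\,A/D_2+\tfrac{\dt}{2}\,B/D_1},
\]
where $A=\tfrac{2\alpha-1}{2\alpha}u^n_2+\tfrac1{2\alpha}y^2_2$, $B=\tfrac{2\alpha-1}{2\alpha}u^n_1+\tfrac1{2\alpha}y^2_1$, $D_1=(y^2_1)^{1/\alpha}(u^n_1)^{1-1/\alpha}$ and $D_2=(y^2_2)^{1/\alpha}(u^n_2)^{1-1/\alpha}$; the denominator is strictly positive for $\epsilon>0$, so the step is well defined.

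Now comes the crux. For $\alpha>1$ the exponent $1-1/\alpha$ is strictly positive, so $(u^n_2)^{1-1/\alpha}\to 0$ whenever $u^n_2\to 0$, while $y^2_2\to\tfrac{\alpha\dt/2}{1+\alpha\dt}>0$; hence $D_2\to 0$. On the other hand $A$, $B$, $D_1$ all converge to strictly positive constants --- here $\tfrac{2\alpha-1}{2\alpha}>0$ excludes cancellation in $A$ and $B$, and positivity preservation of \ref{eq:MPRK22-family} keeps $u^n_2>0$ so that the powers make sense. Dividing numerator and denominator of the displayed formula by $A/D_2$ and letting $\epsilon\to 0$ gives $u^{n+1}_1\to 1$ and $u^{n+1}_2=1-u^{n+1}_1\to 0$; equivalently the one-step map $\Phi$ of \ref{eq:MPRK22-family} (with $\dt$ fixed) extends continuously to the vertex $(1,0)$ with $\Phi(1,0)=(1,0)$.

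An induction on $n$ then finishes the argument: the base case is $u^0(\epsilon)=(1-\epsilon,\epsilon)\to(1,0)$, the inductive step is precisely the limit just computed (applied with $u^n_2\to 0$ as the hypothesis), and continuity of $\Phi$ at $(1,0)$ propagates to every iterate, so $\lim_{\epsilon\to 0}u^n(\epsilon)=\Phi^n(1,0)=(1,0)=u^0$ for all $n$, i.e.\ the initial state is a spurious steady state in the limit. The main obstacle is making this induction airtight rather than any single computation: one must verify that $u^n_2(\epsilon)\to 0$ (not merely that the \emph{first} step is frozen) and that none of $A$, $B$, $D_1$ degenerates along the iteration, both of which reduce to positivity preservation of \ref{eq:MPRK22-family} together with $\tfrac{2\alpha-1}{2\alpha}>0$; the $0/0$ appearance of $y^2_2/y^1_2$ in the second stage is harmless because that stage is solved in closed form before any limit is taken.
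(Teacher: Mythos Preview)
Your argument is correct and follows the same overall strategy as the paper: compute the one-step map on the conserved simplex $u_1+u_2=1$ and show that $(1,0)$ is a fixed point in the limit $\epsilon\to 0$. The execution differs, however. The paper delegates the first step to a Mathematica notebook, writing $u^1_2=\tilde h_1(\epsilon)/\tilde h_2(\epsilon)$ and reporting that $\tilde h_1$ stays finite while $\tilde h_2\to\infty$ for $\alpha>1$; you instead isolate by hand the single factor $(u^n_2)^{1-1/\alpha}$ in the Patankar weight $D_2$, which is precisely the mechanism the symbolic output obscures and which makes the threshold $\alpha=1$ transparent. You are also more careful about the induction over $n$: the paper effectively stops at $\lim_{\epsilon\to 0}u^1=(1,0)$ and appeals to conservation, whereas you explicitly argue continuity of the extended map $\Phi$ at $(1,0)$ to propagate the limit to every iterate. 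Both proofs are valid; yours is self-contained and explains \emph{why} $\alpha>1$ is the critical value.
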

\begin{proof}
	This proof makes use of explicit calculations using Mathematica
	\cite{mathematica12}. All calculations can be found in the notebook
	\texttt{MPRK\_2\_2\_alpha.nb} in the accompanying reproducibility repository \cite{torlo2021stabilityGit}.

	For $\alpha > 1$ and $\epsilon > 0$, the first step of \eqref{eq:stability_system}
	can be computed explicitly. The second component after the first step is of
	the form $u^1_2 = h_1(\epsilon) / h_2(\epsilon)$, where
	$\lim_{\epsilon \to 0} h_1(\epsilon) = \lim_{\epsilon \to 0} h_2(\epsilon) = 0$.
	Defining $\tilde{h}_i(\varepsilon):=\frac{h_i(\varepsilon)}{\varepsilon}$ for $i=1,2$, we can rewrite $u^1_2=\tilde{h}_1(\varepsilon)/\tilde{h}_2(\varepsilon)$, where
	\begin{equation}
		\label{eq:h1p}
		\lim_{\epsilon \to 0} \tilde{h}_1(\epsilon)
		= 2^{\frac{1}{\alpha}} \dt (\dt-4-4\dt\alpha)\Bigl(\frac{\dt\alpha}{1+\dt \alpha}\Bigr)^{\frac{1}{\alpha}}
	\end{equation}
	and this hold for all $\alpha>\frac{1}{2}$, while the denominator is
	\begin{equation}
		\lim_{\epsilon \to 0} \tilde{h}_2(\epsilon)
		=
		\infty, \quad \alpha > 1,
	\end{equation}
	results in $\lim_{\epsilon \to 0} u^1_2 = 0 = \lim_{\epsilon \to 0} u^0_2$
	for $\alpha > 1$.
	Since the sum of all components of $u$ is conserved,
	$\lim_{\epsilon \to 0} u^0 = (1, 0)$ is a spurious steady state.
\end{proof}

We remark that numerically this is appreciable also because we reduce to first order when $\varepsilon \to 0$. Indeed, we will have that $\lim_{\epsilon \to 0} u_1^1 = 1$, which is brings in an error of $\mathcal{O}(\dt)$ for some initial steps. Hence, the global order reduces to 1.

\begin{theorem}
	\label{thm:MPRK22-family-test-problem-CFL}
	Consider the application of \ref{eq:MPRK22-family} with $\alpha \in [0.5, 1]$
	to the test problem \eqref{eq:stability_system}  in the limit $\epsilon \to 0$.
	The order of accuracy for $\alpha<1$ reduces to 1.
%	\begin{equation}
%		\label{eq:MPRK22-family-test-problem-CFL}
%		\begin{dcases}
%			\frac{ 4 \dt + 3 \dt^2 }{ 8 + 10 \dt + 4 \dt^2 }
%			< \frac{1}{2},
%			& \alpha = 1,
%			\\
%			\frac{ 4 \dt + (4 \alpha - 1) \dt^2 }
%			{ 8 + (4 + 8 \alpha) \dt + (4 \alpha - 1) \dt^2 }
%			< \frac{1}{2},
%			& \alpha \in [0.5, 1).
%		\end{dcases}
%	\end{equation}
\end{theorem}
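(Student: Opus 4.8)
The plan is to follow the route of the Example above and of the proof of Theorem~\ref{thm:MPRK22-family-test-problem-steady-state}: use the conservation $u_1+u_2\equiv1$ and the symmetry of \eqref{eq:stability_system} to work only with $u_1$, write one step of \ref{eq:MPRK22-family} as an explicit rational function of $\epsilon$ and $\dt$, and pass to the limit $\epsilon\to0$. The first non-trivial stage is, exactly as in the Example but keeping $\alpha$ general, $y_1^2=(u_1^0+\alpha\dt/2)/(1+\alpha\dt)\to Y:=(1+\alpha\dt/2)/(1+\alpha\dt)$ and $y_2^2=1-y_1^2$. The decisive point is the exponent attached to $y^1=u^0$ in the final stage of \ref{eq:MPRK22-family}: the denominators there are $(y_j^2)^{1/\alpha}(y_j^1)^{1-1/\alpha}$, so the factor multiplying the vanishing component $j=2$ is $(y_2^2)^{1/\alpha}\epsilon^{\,1-1/\alpha}$. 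Since $1-1/\alpha<0$ for $\alpha<1$, this tends to $\infty$, and since \ref{eq:MPRK22-family} is unconditionally positive and conservative, $u^1\in[0,1]^2$ is bounded; hence the ``gain'' term feeding $u_1$ (and, symmetrically, the loss term of $u_2$) disappears in the limit --- in contrast to $\alpha=1$, where this exponent vanishes and the term survives, cf.\ the Example.

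Passing to the limit in the surviving (loss) terms collapses the update, for $\alpha\in[1/2,1)$, to the scalar equation $\bigl(1+\dt\,D\,Y^{-1/\alpha}\bigr)\lim_{\epsilon\to0}u_1^1=1$ with $D=(2\alpha-1+Y)/(4\alpha)$. Expanding its solution in $\dt$ gives
\begin{equation}
	\label{eq:wrong-first-step}
	\lim_{\epsilon\to0}u_1^1=\frac{1}{1+\dt\,D\,Y^{-1/\alpha}}=1-\frac{\dt}{2}+\frac{\dt^2}{8}+\O(\dt^3),
\end{equation}
which --- independently of $\alpha$ at this order --- already disagrees with $u_1(\dt)=1-\dt/2+\dt^2/4+\O(\dt^3)$ from \eqref{eq:exact_sol} at second order. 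Hence the first step carries an $\O(\dt^2)$, rather than the expected $\O(\dt^3)$, local error.

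A single defective step would only spoil the error constant, so the reduction to first order must come from this defect recurring throughout a transient, and establishing that is the core of the proof. After one step, $u_2^1=1-\lim_{\epsilon\to0}u_1^1=\dt/2+\O(\dt^2)$ is still only $\O(\dt)$, so the same suppression is active at the next step, now with $y_2^1=u_2^1$ in the role of $\epsilon$; iterating the limit-and-expand computation with $u_2^{n-1}=v_{n-1}\dt$ shows that \ref{eq:MPRK22-family} reduces at leading order to a perturbed forward-Euler-type recursion
\begin{equation*}
	u_1^{n}=u_1^{n-1}+g(v_{n-1})\,\dt\,u_2^{n-1}-\tfrac12\dt\,u_1^{n-1}+\O(\dt^2),\qquad
	g(v)=\frac{v/2+1/8}{(v+\alpha/2)^{1/\alpha}\,v^{1-1/\alpha}},
\end{equation*}
whose effective production rate satisfies $g(v)\neq\tfrac12$, with $g(v)-\tfrac12=-\tfrac{1}{8v}+\O(v^{-2})$ as $v\to\infty$. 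Since $u_2^n$ needs $\Theta(1/\dt)$ steps to grow from $\O(\dt)$ to $\Theta(1)$, the minority component stays ``small'' over a long transient during which each step carries an $\Theta(\dt^2)$ defect of a fixed sign instead of the nominal $\O(\dt^3)$; accumulating these over $\Theta(1/\dt)$ steps --- with propagation factors bounded by $1$, as \eqref{eq:stability_system} is dissipative on the invariant line $u_1+u_2=1$ --- yields a global error $\Theta(\dt)$. Since the $-\dt/2$ term in \eqref{eq:wrong-first-step} is correct, the order is exactly one. As for Theorem~\ref{thm:MPRK22-family-test-problem-steady-state}, the explicit symbolic manipulations would be carried out in Mathematica \cite{mathematica12}, with the notebook in the repository \cite{torlo2021stabilityGit}.

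The hard part is precisely this last step, turning ``$\Theta(1/\dt)$ contaminated steps, each $\O(\dt^2)$'' into a rigorous estimate. It requires (i) a remainder bound for the one-step map that is uniform in $n$ over the transient and that quantifies the non-uniformity of the usual $\O(\dt^3)$ local-error estimate for \ref{eq:MPRK22-family} as $u_2^{n-1}\to0$, and (ii) a matched-asymptotics-type argument showing $u_2^n\approx\tfrac12(1-e^{-t_n})$ throughout the transient, so that it indeed lasts $\Theta(1/\dt)$ steps and the per-step defects neither decay too fast nor cancel. The scalar reductions above make both tractable, but tedious; the bookkeeping is what the Mathematica notebook handles.
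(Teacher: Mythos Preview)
Your first-step computation is correct and reproduces the paper's result: both arrive at $\lim_{\epsilon\to0}u_1^{1}=1-\dt/2+\dt^2/8+\O(\dt^3)$ for $\alpha\in[1/2,1)$, disagreeing with the exact $1-\dt/2+\dt^2/4+\O(\dt^3)$ at second order. The paper obtains this via the Mathematica expressions $\tilde h_1,\tilde h_2$ carried over from Theorem~\ref{thm:MPRK22-family-test-problem-steady-state}; you obtain it by hand, by observing that the exponent $1-1/\alpha<0$ on $y_2^1=\epsilon$ sends the production denominator to infinity and kills that term in the limit. That is precisely the mechanism, and it also explains the discontinuity at $\alpha=1$ that the paper notes.

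The difference is scope. The paper's proof \emph{stops} at the first step: it reads ``first-order accurate'' directly off the $\O(\dt^2)$ local defect in the limiting expansion, and the supporting numerics in Figure~\ref{fig:MPRK22-family-test-problem} are likewise errors of the first step only. You correctly flag that one contaminated step does not by itself force a global first-order error, and your transient analysis --- tracking $u_2^{n-1}=v_{n-1}\dt$, deriving the effective production coefficient $g(v)$, showing $g(v)-\tfrac12=-\tfrac{1}{8v}+\O(v^{-2})$, and accumulating $\Theta(1/\dt)$ many $\Theta(\dt^2)$ local defects --- is a genuine strengthening beyond what the paper actually proves. In the sense intended in the paper, your first paragraph already constitutes the proof.

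One point to tighten in your transient sketch: after the first step you no longer have $\epsilon\to0$ but $u_2^{n-1}=\Theta(\dt)$, so the production term does not vanish --- it reappears at the same $\O(\dt)$ level as the destruction term, only with the wrong coefficient $g(v_{n-1})$. Your formula for $g$ captures this correctly, but the phrase ``the same suppression is active'' overstates it; the mechanism at later steps is a coefficient error, not a dropped term. This distinction matters for your item (i): the per-step defect is $(g(v_{n-1})-\tfrac12)\,\dt\,u_2^{n-1}=\Theta(\dt^2)$ only while $v_{n-1}$ stays bounded away from both $0$ and $\infty$, which is precisely what your matched-asymptotics item (ii) must supply.
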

\begin{proof}
	This proof makes use of explicit calculations using Mathematica
	\cite{mathematica12}. All calculations can be found in the notebook
	\texttt{MPRK\_2\_2\_alpha.nb} in the accompanying reproducibility repository \cite{torlo2021stabilityGit}.

	As in the proof of Theorem~\ref{thm:MPRK22-family-test-problem-steady-state},
	we evaluate the limit $\epsilon \to 0$ of $u^1_2 = \tilde{h}_1(\epsilon) / \tilde{h}_2(\epsilon)$. The expression of $\lim_{\varepsilon\to 0}\tilde{h}_1(\varepsilon)$ is given in \eqref{eq:h1p}, while for $\tilde{h}_2$ we have a different expression.
	In case $\alpha<1$, we have
	\begin{equation}
		\lim_{\epsilon \to 0} \tilde{h}_2(\epsilon)=\Bigl(\frac{\dt\alpha}{1+\dt \alpha}\Bigr)^{\frac{1}{\alpha}} \Bigl( -8(1+\dt\alpha)(2+\dt \alpha)^{\frac{1}{\alpha}} - 2^{\frac{1}{\alpha}} \dt (4-\dt+4\alpha \dt) \Bigr),
%		=
%		\begin{cases}
%			%\infty,
%			% & \alpha > 1,
%			%\\
%			\bigl( \dt + \frac{9}{4} \dt^2 + \frac{7}{4} \dt^3 + \frac{1}{2} \dt^4 \bigr),
%			& \alpha = 1,
%			\\
%			%\bigl( u^0_1 \bigr)^{1 + 2/\alpha}
%			\bigl( \alpha \dt (1 + \alpha \dt)/2 \bigr)^{1/\alpha}
%			\bigl( 2 + (1 + 2 \alpha) \dt + (\alpha - \frac{1}{4}) \dt^2 \bigr),
%			& \alpha \in [0.5, 1),
%		\end{cases}
	\end{equation}
 	while for $\alpha=1$ we have the extra term
 	$
 	-2\frac{2+\dt}{1+\dt}\dt^2.
 	$
	Using \eqref{eq:h1p} from before and Taylor expansion in $\dt$,
	\begin{equation}
		\lim_{\epsilon \to 0} u^1_2(\epsilon)
		=
		\frac{ \lim_{\epsilon \to 0}\tilde{h}_1(\epsilon) }{ \lim_{\epsilon \to 0} \tilde{h}_2(\epsilon) }
		=
		\begin{dcases}
			1-\frac{\dt}{2}+\frac{\dt^2}{4}-\frac{\dt^3}{16}+\mathcal{O}(\dt^4),
			& \alpha = 1,
			\\
			1-\frac{\dt}{2}+\frac{\dt^2}{8}+\frac{\alpha\dt^3}{16}+\mathcal{O}(\dt^4),
			& \alpha \in [0.5, 1).
		\end{dcases}
	\end{equation}
	Hence, the term in $\dt^2$ for $\alpha<1$ does not coincide with the Taylor expansion of the exact solution \eqref{eq:exact_sol}, resulting in a first order accurate scheme for $\varepsilon\to 0$.
	Note the discontinuity at $\alpha = 1$ of $\lim_{\epsilon \to 0} u^1_2(\epsilon)$.
\end{proof}

%\begin{theorem}
%	\label{thm:MPRK22-family-test-problem-CFL-opt-alpha}
%	Consider the application of \ref{eq:MPRK22-family} with $\alpha \in [0.5, 1]$
%	to the test problem \eqref{eq:stability_system}  in the limit $\epsilon \to 0$.
%	The timestep restriction preventing oscillations in the first step
%	given in Theorem~\ref{thm:MPRK22-family-test-problem-CFL} is least restrictive
%	for $\alpha = 1$.
%\end{theorem}
%\begin{proof}
%	This proof makes use of explicit calculations using Mathematica
%	\cite{mathematica12}. All calculations can be found in the notebook
%	\texttt{MPRK\_2\_2\_alpha.nb} in the accompanying reproducibility repository \cite{torlo2021stabilityGit}.
%\end{proof}

\begin{remark}
	\label{rem:MPRK22-family-test-problem}
	This result does not demonstrate that there is an error in the proofs of the
	order of accuracy of \ref{eq:MPRK22-family} \cite{kopecz2018order}. Indeed,
	studies of the order of accuracy focus on fixed $\epsilon > 0$ and the limit
	$\dt \to 0$. Numerical experiments with $\alpha >1$ suggest that the numerical solutions stays
	approximately constant for a certain number of steps determined by $\epsilon$
	(and with less sensitivity also by $\dt$) until small changes have accumulated
	and the exponential decay of $u_1$ becomes visible. In particular, the limits
	$\dt \to 0$ and $\epsilon \to 0$ are not interchangeable.
\end{remark}

Expanding Remark~\ref{rem:MPRK22-family-test-problem}, a careful error analysis
can be conduced by constructing Taylor expansions of the error after the first
step for $\dt \to 0$ and $\epsilon \to 0$ in both possible orders.
Expanding at first around $\dt = 0$ shows that the leading order errors contain
terms proportional to $\epsilon^{-1}$ for $\alpha \neq 1$, both for $\alpha < 1$
and for $\alpha > 1$. However, these leading order terms are in agreement with the
analysis of \cite{kopecz2018order}, i.e., they are proportional to $\dt^3$.

More insights can be gained by studying the expansions first for $\epsilon \to 0$,
expanding around $\dt = 0$ afterwards. Then, the leading order terms in $\epsilon$
are $\O(\dt^3)$ for $\alpha = 1$, $\O(\dt^2)$ for $\alpha = 0.5$, and $\O(\dt)$
for $\alpha = 2$, see \texttt{MPRK\_2\_2\_alpha.nb} in \cite{torlo2021stabilityGit}. This can also be observed in numerical experiments using
\texttt{BigFloat} in Julia \cite{bezanson2017julia} as shown in
Figure~\ref{fig:MPRK22-family-test-problem}.

\begin{figure}
	\centering
	\includegraphics[width=0.5\textwidth]{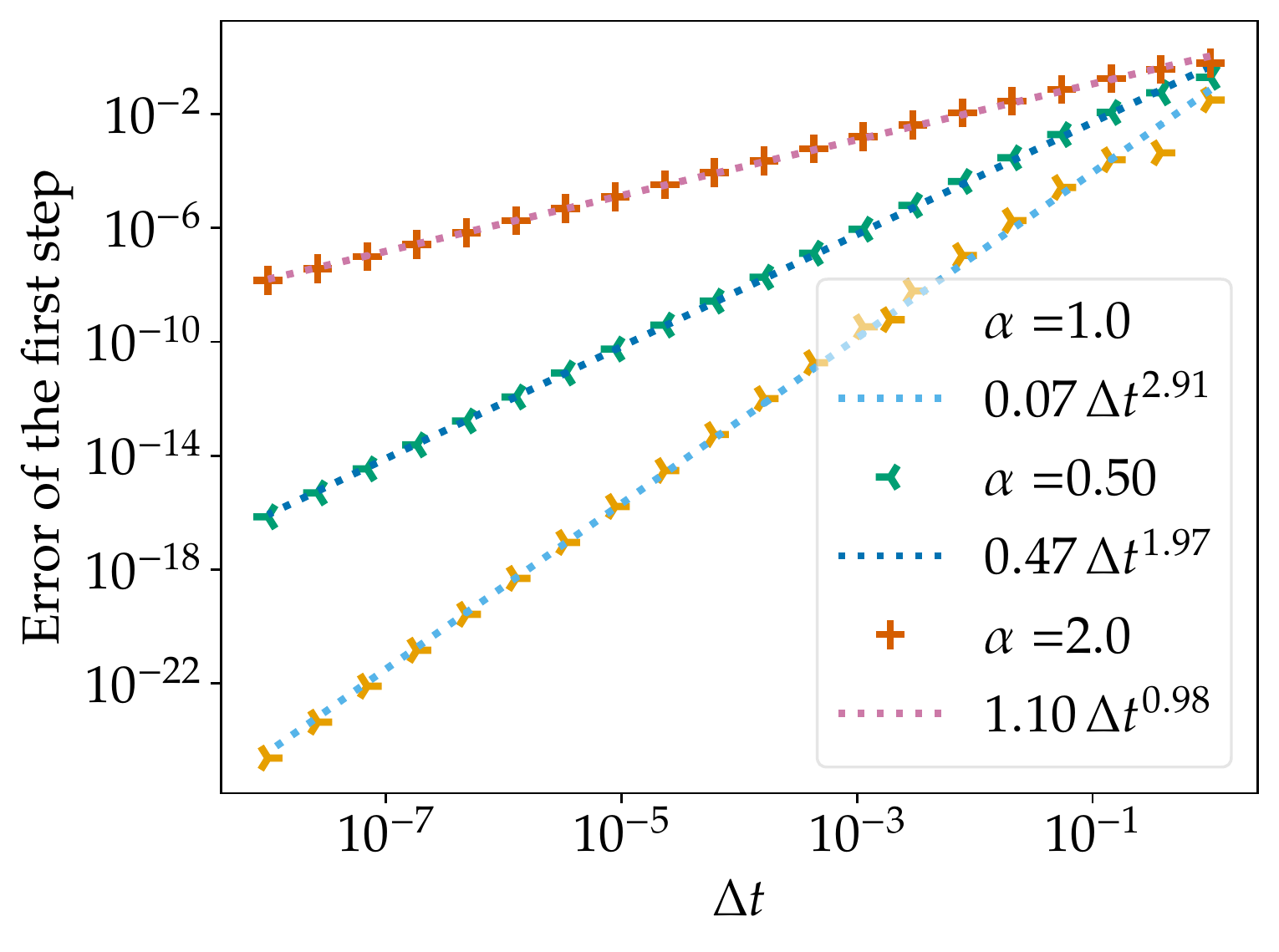}
	\caption{Convergence study of the error of the first step for different
		members of the family \ref{eq:MPRK22-family} for the test problem
		\eqref{eq:stability_system} with
		$\epsilon =$ \texttt{eps(BigFloat)}.}
	\label{fig:MPRK22-family-test-problem}
\end{figure}

\subsection{Accuracy study for other MP methods}\label{sec:orderReduction}
A similar analysis can be conducted for the \ref{eq:explicit_dec_correction} algorithm.
However, the approach we used with Mathematica was only able to give results up to third order schemes. We study the multivariate function $\eta(\varepsilon,\Delta t):=u^1_1-u_1(\Delta t)$, where the initial conditions depend on $\varepsilon$ and the time step is $\Delta t$, for different limits procedure. Letting $\dt$ go to zero faster than $\varepsilon$  and \textit{vice versa}. In practice, we compute a Taylor expansion first in $\dt $ and than in $\varepsilon$ and then the opposite in the Mathematica notebooks \texttt{mPDeC.nb} and \texttt{MPRK\_3\_2.nb} \cite{torlo2021stabilityGit}.

Since \ref{eq:explicit_dec_correction}2 is equivalent to MPRK(2,2,1), we get the
same results as before. In particular, expanding $\dt$ around 0 first and then $\varepsilon$ we obtain
\begin{align*}
	\eta(\varepsilon,\Delta t) =& \left(\frac{1}{12} - \frac{\varepsilon}{6} + \mathcal{O}(\varepsilon^3)\right)\Delta t^3+ \mathcal{O}(\Delta t^4),
\end{align*}
while, doing the opposite, we obtain
\begin{align*}
	\eta(\varepsilon,\Delta t) =& \left(\frac{\Delta t^3}{48} +  \mathcal{O}(\Delta t^4)\right)+ \left(\frac{\Delta t^2}{8} - \frac{11 \Delta t^3}{48} + \mathcal{O}(\Delta t^4)\right)\varepsilon + \\& \left(-\frac{\Delta t}{4}+\frac{\Delta t^2}{16}+\frac{3\Delta t^3}{32}+ \mathcal{O}(\Delta t^4)\right) \varepsilon^2+\mathcal{O}(\varepsilon^3).
\end{align*}
Note that $\mathcal{O}(\varepsilon)$ terms can be ignored when evaluating the order
of accuracy.
Hence, we see that in both cases we have an error of $\mathcal{O}(\Delta t^3)$
for the first step, i.e., a second-order accurate method.

For the third-order algorithm \ref{eq:explicit_dec_correction}3, we have a
different behavior and an order reduction for small $\varepsilon$: expanding first $\dt$ and then $\varepsilon$ in 0 we obtain
\begin{align*}
	\eta(\varepsilon,\Delta t) =& \left(
	-\frac{1}{13824\varepsilon^2}
	-\frac{5}{1152\varepsilon}+
	\frac{1789}{13824}
	-\frac{1697\varepsilon}{6912}+
	\frac{7\varepsilon^2}{1536} +\mathcal{O}(\varepsilon^3) \right)\Delta t^4\\
	&+\mathcal{O}(\Delta t^5),
\end{align*}
while, doing the opposite, we obtain
\begin{align*}
	\eta(\varepsilon,\Delta t) =& \left(-\frac{\Delta t^2}{6} +  \mathcal{O}(\Delta t^3)\right)+ \left(112\Delta t  + \mathcal{O}(\Delta t^2)\right)\varepsilon -74880 \varepsilon^2  \\
	&+\mathcal{O}(\Delta t \varepsilon^2) + \mathcal{O}(\varepsilon^3).
\end{align*}
If we let $\varepsilon\to 0$ before $\Delta t \to 0$, we have a reduction to first order accuracy.
The computations for these tests can be found in \texttt{MPDEC.nb} in the accompanying reproducibility repository \cite{torlo2021stabilityGit}.

For the second-order \ref{eq:MPRK32} proposed in this article, we observe a consistent
second order accuracy in the limit case $\varepsilon\to 0$, i.e., expanding first $\dt$ and then $\varepsilon$ in 0 we obtain
\begin{align*}
	\eta(\varepsilon,\Delta t) =& \left(\frac{1}{4} - \frac{\varepsilon}{2} + \mathcal{O}(\varepsilon^3)\right)\Delta t^3+ \mathcal{O}(\Delta t^4),
\end{align*}
while, doing the opposite, we obtain
\begin{align*}
\eta(\varepsilon,\Delta t) =& \left(\frac{\Delta t^3}{6} +  \mathcal{O}(\Delta t^4)\right)+ \left(\frac{\Delta t^2}{6} - \frac{71 \Delta t^3}{96} + \mathcal{O}(\Delta t^4)\right)\varepsilon + \\& \left(-\frac{\Delta t}{3}+\frac{35\Delta t^2}{96}+\frac{139\Delta t^3}{1152}+ \mathcal{O}(\Delta t^4)\right) \varepsilon^2+\mathcal{O}(\varepsilon^3).
\end{align*}
This results are computed Mathematica and the related notebook \texttt{MPRK\_3\_2.nb} is in the accompanying reproducibility repository \cite{torlo2021stabilityGit}.

\begin{remark}
	We have also analyzed MPRKSO$(2,2,\alpha,\beta)$ with selected parameters
	$\alpha$, $\beta$. We do not present these analyses here; in general, they all
	agree with the numerical studies presented in the following.
\end{remark}
\section{Oscillations on linear problems for varying systems}
In this section we briefly present the results obtained for nonparametric Patankar schemes. We plot the maximum $\dt$ for which no oscillations appear as a function of $\theta$ the parameter which governs the linear system. The study is performed checking all the possibile initial conditions. The results for \ref{eq:MPRKSO(4,3)} are depicted in Figure~\ref{fig:systemCFLMPSO3}, for \ref{eq:SI-RK2} in Figure~\ref{fig:systemOscillationsSIRK2}, for \ref{eq:SI-RK3} in Figure~\ref{fig:systemOscillationsSIRK3} and for \ref{eq:MPRK32} in Figure~\ref{fig:systemOscillationsRK32}.
\begin{figure}\centering
	\begin{subfigure}{0.48\textwidth}
		\includegraphics[width=\textwidth]{figures/system2/RKSO43CFLallEpsVaryTheta.pdf}
	\caption{$\dt$ bound for \ref{eq:MPRKSO(4,3)} varying the system through $\theta$. Minimum $\dt$ is 1.31. \label{fig:systemCFLMPSO3}}
\end{subfigure}\,
	\begin{subfigure}{0.49\textwidth}
	\centering
	\includegraphics[width=\textwidth]{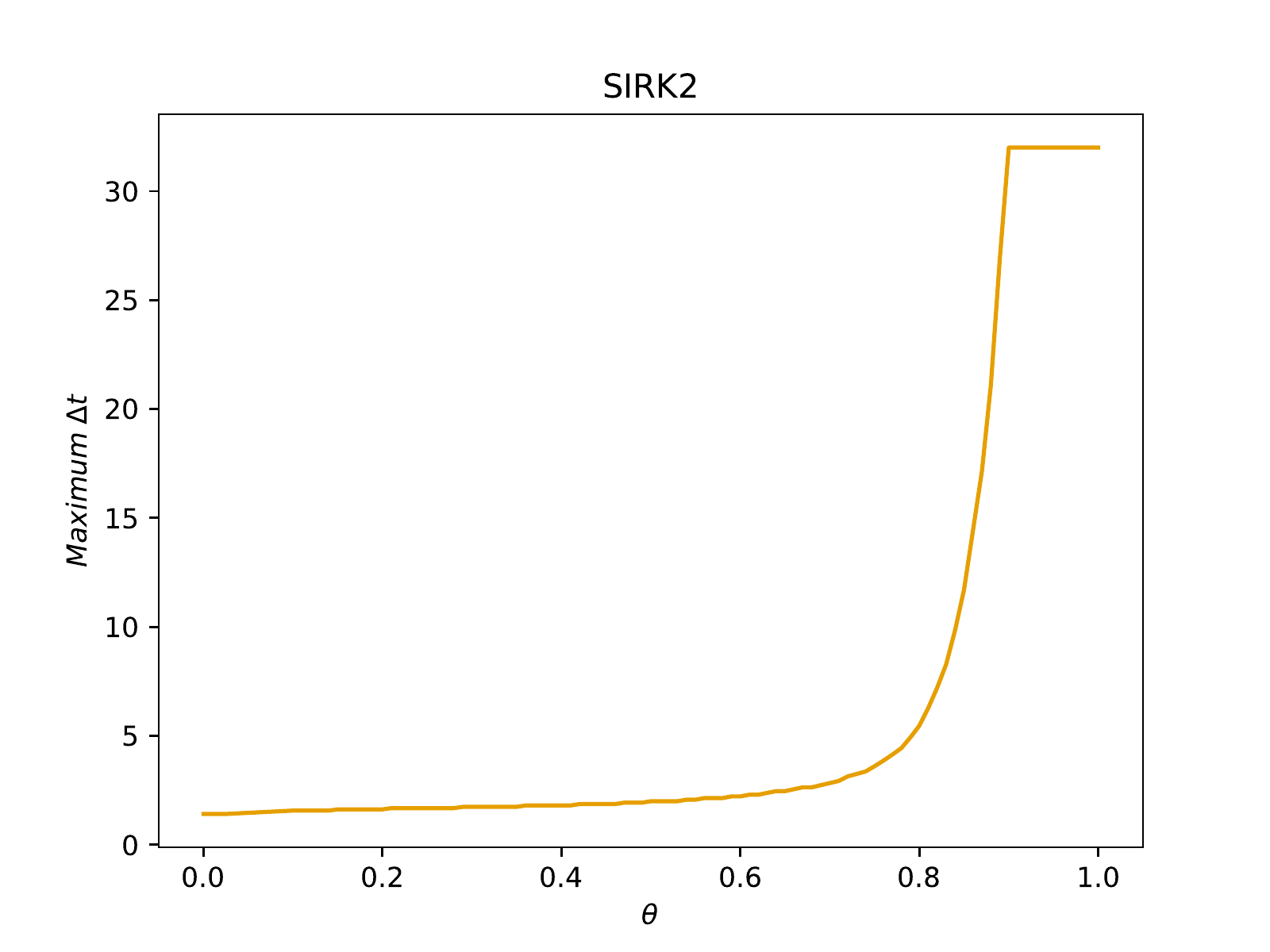}
	\caption{\ref{eq:SI-RK2}: $\dt$ bound varying $\theta$, minimum $\dt$ is  1.41.} \label{fig:systemOscillationsSIRK2}
\end{subfigure}\hfill
\begin{subfigure}{0.49\textwidth}
	\centering
	\includegraphics[width=\textwidth]{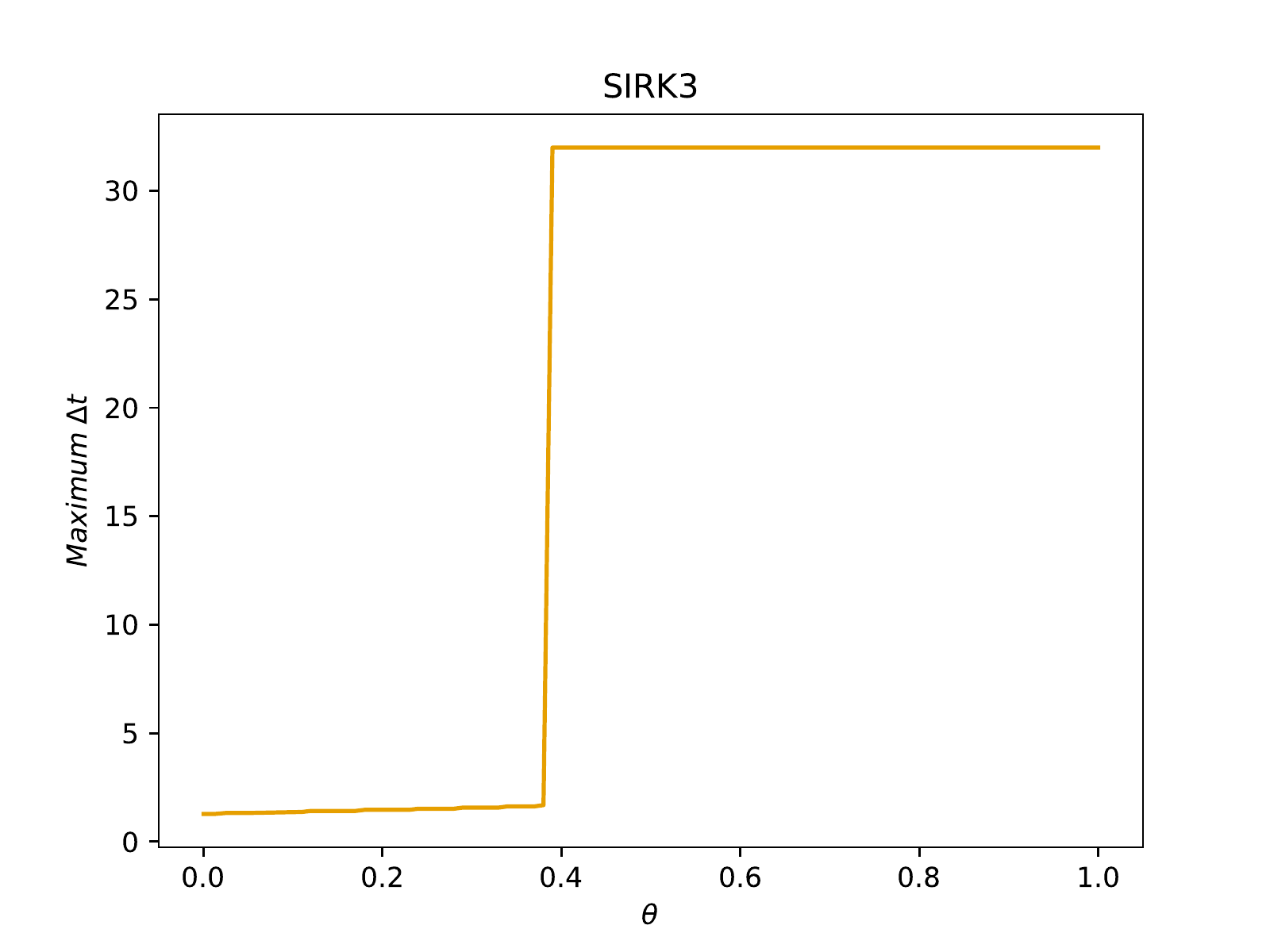}
	\caption{\ref{eq:SI-RK3}: $\dt$ bound varying $\theta$, minimum $\dt$ is 1.27. \label{fig:systemOscillationsSIRK3}}
\end{subfigure}\,
\begin{subfigure}{0.49\textwidth}
	\centering
	\includegraphics[width=\textwidth]{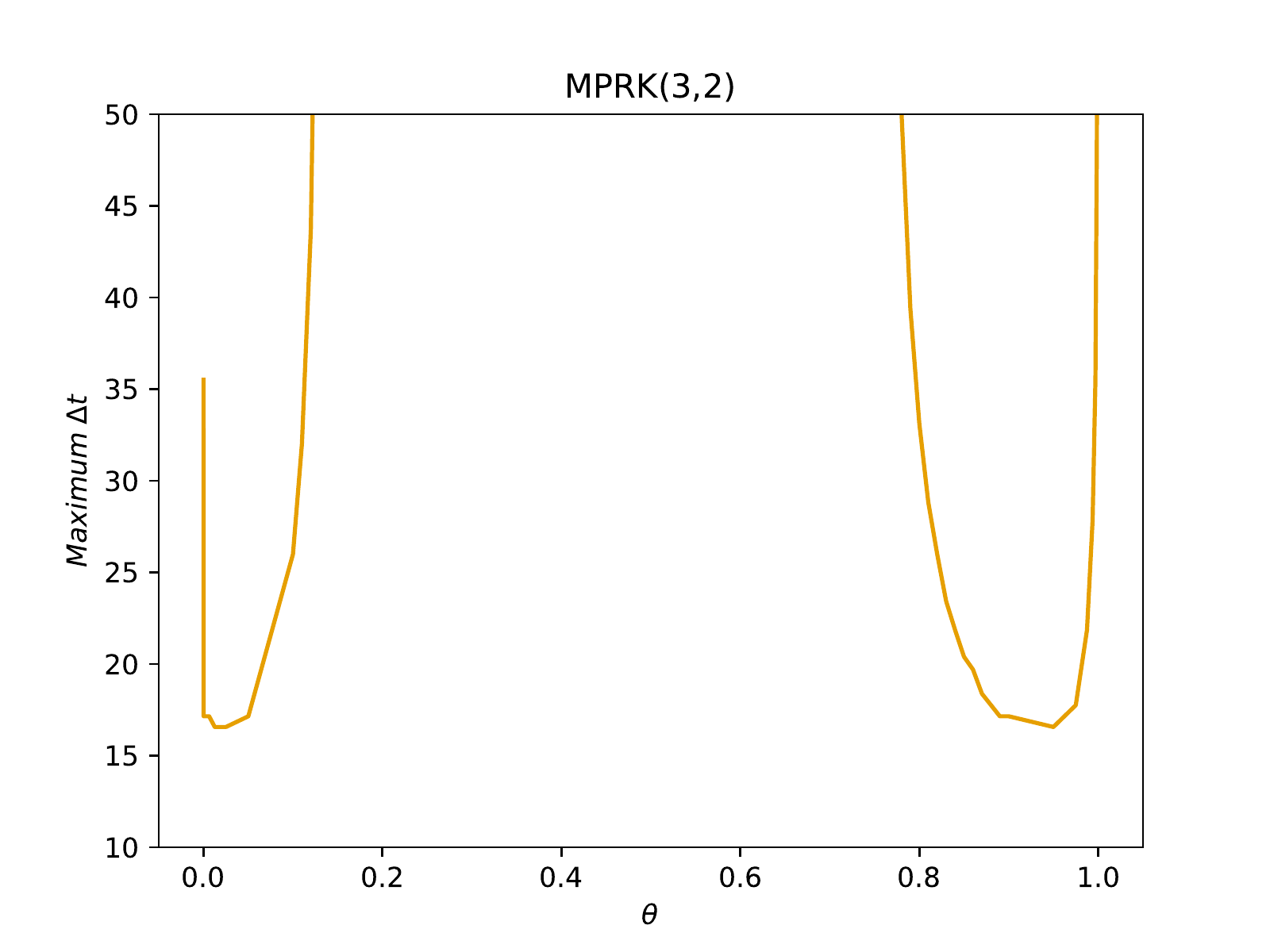}
	\caption{\ref{eq:MPRK32}: $\dt$ bound varying $\theta$. Minimum $\dt$ is 16.56. \label{fig:systemOscillationsRK32}}
\end{subfigure}\,
\end{figure}

\section{Validation on nonlinear problems}\label{sec:nonlinear-experiements}
\subsection{Scalar nonlinear problem}

The second problem on which we are testing our methods on is a scalar ODE with a
source term \cite{chertock2015steady}. Find $u:[0,0.15] \to \R$, with $u(0)=1.1 \sqrt{1/k}$, where $k>0$ is a coefficient of the problem, and
\begin{equation}\label{eq:ode-scalar}
u'=-k \vert u \vert u +1.
\end{equation}
The solution for this problem is monotonically decreasing and converging to $u_\infty = \sqrt{1/k}$.

The schemes can be applied to this problem following simple prescriptions.
\begin{itemize}
\item The source shall be integrated in time without considering the Patankar trick, simply using the coefficients of the original schemes.
\item The productions and destruction terms must be rewritten as $d_{11}=k \vert u \vert u$ and $p_{11}=0$.
\end{itemize}
We can see oscillations around the steady state produced by the schemes.

In this section, we want to validate the analysis done in the linear case, trying to understand if the $\dt$ bound we found in the previous section can be useful in the nonlinear case as well. Aiming at that, we check the first time step, which often shows overshoots with respect to the steady state, for different time steps.

In particular, we can observe that the Lipschitz constant of the right-hand side of \eqref{eq:ode-scalar} is $C(k):=\max_u k|u| = k|u_0|=1.1\sqrt{k}$. Hence, inspired by the theory for numerical PDEs, we use a CFL number in $\R^+$ through which we set the $\dt$ step as
\begin{equation}\label{eq:CFLrule}
	\dt := \frac{\text{CFL}}{C(k)}.
\end{equation}
In this way, we study the bound on $\dt$ setting a condition on th CFL number instead. Doing so, we essentially get rid of the dependence on $k$, through a rescaling factor both for time and amplitude on the solution. Hence, the CFL number should be comparable with the $\dt$ bound found in the previous sections.
We fix $k=10^4$ for the following simulations, but proportional results can be obtained for different $k$.

\begin{figure}
	\includegraphics[width=0.32\textwidth]{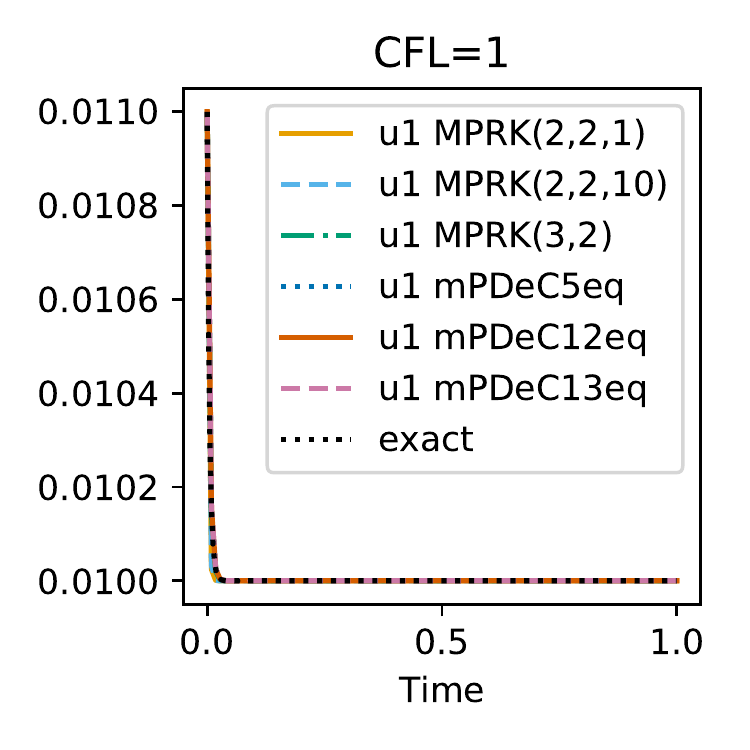}
	\includegraphics[width=0.32\textwidth]{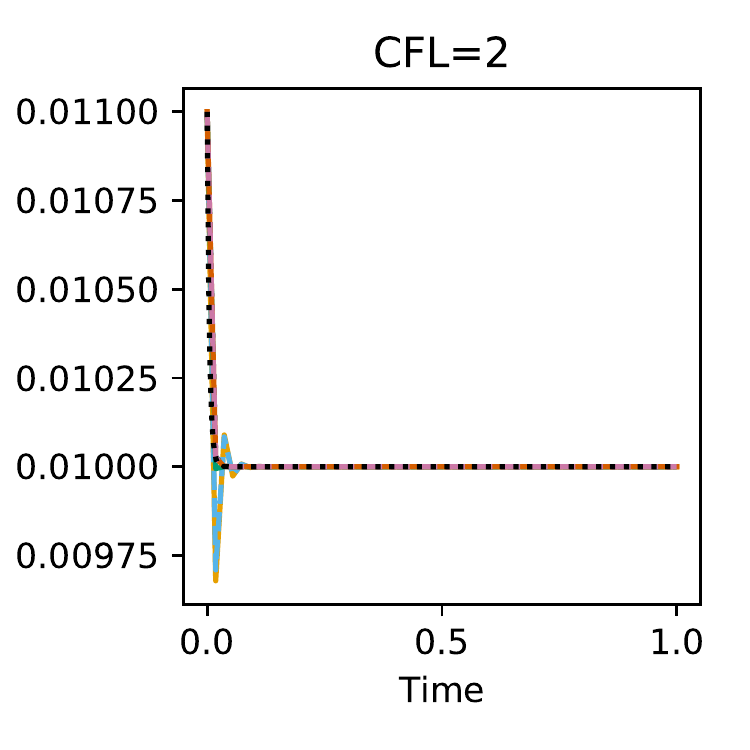}
	\includegraphics[width=0.32\textwidth]{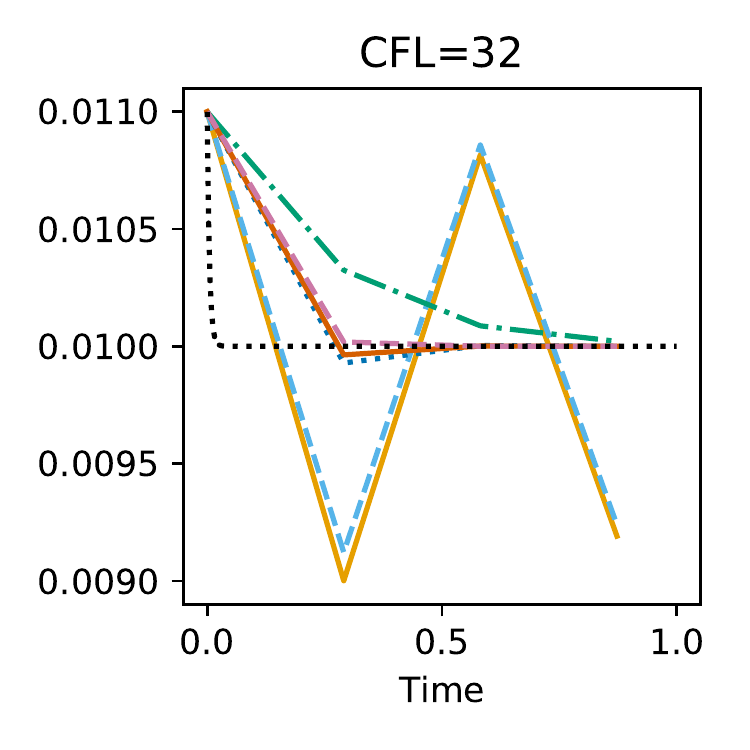}
	\caption{Simulations of \eqref{eq:ode-scalar} at different CFLs for some schemes.}\label{fig:simulationScalar}
\end{figure}

Figure~\ref{fig:simulationScalar} shows the simulations for different CFLs.
For low CFLs, we observe no oscillations for essentially all methods. Increasing
the CFL number, we observe that most of the schemes go below $u_\infty$ for the first timestep.

\begin{table}
	\small
	\centering
	\caption{Oscillation measure for problem \eqref{eq:ode-scalar} with mPDeC schemes with equispaced subtimesteps.}\label{tab:scalarMPDeCeq}
	\begin{tabular}{|l|l|l|l|l|l|l|l|l|}\hline
		CFL& 0.5& 1.0& 2.0& 4.0& 8.0& 16.0& 32.0\\ \hline 
\hline
		MPDeC1eq & 0 & 0 & {\color{overColor}2.7e-04} & {\color{overColor}5.3e-04} & {\color{overColor}7.0e-04} & {\color{overColor}8.0e-04} & {\color{overColor}8.5e-04} & {\color{overColor}8.8e-04}\\ 
MPDeC2eq & 0 & 0 & {\color{overColor}3.2e-04} & {\color{overColor}6.1e-04} & {\color{overColor}8.1e-04} & {\color{overColor}9.3e-04} & {\color{overColor}1.0e-03} & {\color{overColor}1.0e-03}\\ 
MPDeC3eq & 0 & 0 & 0 & 0 & 0 & 0 & 0 & 0\\ 
MPDeC4eq & 0 & 0 & 0 & 0 & 0 & {\color{overColor}2.8e-05} & {\color{overColor}8.5e-05} & {\color{overColor}1.3e-04}\\ 
MPDeC5eq & 0 & 0 & 0 & 0 & 0 & {\color{overColor}2.0e-05} & {\color{overColor}7.1e-05} & {\color{overColor}1.1e-04}\\ 
MPDeC6eq & 0 & 0 & 0 & {\color{overColor}2.6e-06} & {\color{overColor}4.4e-05} & {\color{overColor}1.1e-04} & {\color{overColor}1.6e-04} & {\color{overColor}1.7e-04}\\ 
MPDeC7eq & 0 & 0 & 0 & 0 & 0 & 0 & 0 & 0\\ 
MPDeC8eq & 0 & 0 & 0 & {\color{overColor}5.9e-07} & {\color{overColor}9.5e-06} & {\color{overColor}7.4e-06} & {\color{overColor}5.0e-07} & {\color{overColor}8.9e-06}\\ 
MPDeC9eq & 0 & 0 & 0 & 0 & 0 & 0 & 0 & 0\\ 
MPDeC10eq & 0 & 0 & 0 & 0 & {\color{overColor}1.2e-06} & {\color{overColor}8.2e-06} & {\color{overColor}5.4e-05} & {\color{overColor}1.2e-04}\\ 
MPDeC11eq & 0 & 0 & 0 & 0 & 0 & 0 & 0 & 0\\ 
MPDeC12eq & 0 & 0 & 0 & 0 & 0 & {\color{overColor}7.5e-06} & {\color{overColor}3.7e-05} & {\color{overColor}5.7e-05}\\ 
MPDeC13eq & 0 & 0 & 0 & 0 & 0 & 0 & 0 & 0\\ 
\hline
	\end{tabular}
\end{table}

\begin{table}
	\small
	\centering
	\caption{Oscillation measure for problem \eqref{eq:ode-scalar} with mPDeC schemes with Gauss--Lobatto subtimesteps.}\label{tab:scalarMPDeCgl}
	\begin{tabular}{|l|l|l|l|l|l|l|l|l|}\hline
		\hline
		MPDeC1GL & 0 & 0 & {\color{overColor}2.7e-04} & {\color{overColor}5.3e-04} & {\color{overColor}7.0e-04} & {\color{overColor}8.0e-04} & {\color{overColor}8.5e-04} & {\color{overColor}8.8e-04}\\ 
MPDeC2GL & 0 & 0 & {\color{overColor}3.2e-04} & {\color{overColor}6.1e-04} & {\color{overColor}8.1e-04} & {\color{overColor}9.3e-04} & {\color{overColor}1.0e-03} & {\color{overColor}1.0e-03}\\ 
MPDeC3GL & 0 & 0 & 0 & 0 & 0 & 0 & 0 & 0\\ 
MPDeC4GL & 0 & 0 & 0 & 0 & 0 & 0 & 0 & 0\\ 
MPDeC5GL & 0 & 0 & 0 & {\color{overColor}2.8e-06} & {\color{overColor}6.2e-05} & {\color{overColor}2.0e-04} & {\color{overColor}3.4e-04} & {\color{overColor}4.4e-04}\\ 
MPDeC6GL & 0 & 0 & 0 & {\color{overColor}2.4e-05} & {\color{overColor}1.3e-04} & {\color{overColor}3.2e-04} & {\color{overColor}5.0e-04} & {\color{overColor}6.2e-04}\\ 
MPDeC7GL & 0 & 0 & 0 & 0 & {\color{overColor}1.7e-05} & {\color{overColor}2.8e-05} & {\color{overColor}1.1e-05} & 0\\ 
MPDeC8GL & 0 & 0 & 0 & 0 & 0 & 0 & 0 & 0\\ 
MPDeC9GL & 0 & 0 & 0 & 0 & 0 & 0 & 0 & 0\\ 
MPDeC10GL & 0 & 0 & 0 & 0 & 0 & 0 & 0 & 0\\ 
MPDeC11GL & 0 & 0 & 0 & 0 & {\color{overColor}1.4e-06} & {\color{overColor}1.9e-05} & {\color{overColor}9.0e-05} & {\color{overColor}1.9e-04}\\ 
MPDeC12GL & 0 & 0 & 0 & 0 & {\color{overColor}1.6e-06} & {\color{overColor}2.5e-05} & {\color{overColor}1.1e-04} & {\color{overColor}2.1e-04}\\ 
MPDeC13GL & 0 & 0 & 0 & 0 & 0 & {\color{overColor}2.2e-06} & {\color{overColor}9.9e-06} & {\color{overColor}3.2e-06}\\ 
\hline
	\end{tabular}
\end{table}

\begin{table}
	\small
	\centering
	\caption{Oscillation measure for problem \eqref{eq:ode-scalar} with \ref{eq:MPRK22-family} for few $\alpha$.}\label{tab:scalarMPRK22}
	\begin{tabular}{|l|l|l|l|l|l|l|l|l|}\hline
		\hline
		MPRK(2,2,0.5) & 0 & 0 & {\color{overColor}2.7e-04} & {\color{overColor}5.3e-04} & {\color{overColor}7.0e-04} & {\color{overColor}8.0e-04} & {\color{overColor}8.5e-04} & {\color{overColor}8.8e-04}\\ 
MPRK(2,2,0.8) & 0 & 0 & {\color{overColor}3.1e-04} & {\color{overColor}6.0e-04} & {\color{overColor}8.0e-04} & {\color{overColor}9.2e-04} & {\color{overColor}9.9e-04} & {\color{overColor}1.0e-03}\\ 
MPRK(2,2,1.0) & 0 & 0 & {\color{overColor}3.2e-04} & {\color{overColor}6.1e-04} & {\color{overColor}8.1e-04} & {\color{overColor}9.3e-04} & {\color{overColor}1.0e-03} & {\color{overColor}1.0e-03}\\ 
MPRK(2,2,1.5) & 0 & 0 & {\color{overColor}3.3e-04} & {\color{overColor}6.1e-04} & {\color{overColor}8.0e-04} & {\color{overColor}9.2e-04} & {\color{overColor}9.8e-04} & {\color{overColor}1.0e-03}\\ 
MPRK(2,2,2.0) & 0 & 0 & {\color{overColor}3.2e-04} & {\color{overColor}6.0e-04} & {\color{overColor}7.9e-04} & {\color{overColor}9.0e-04} & {\color{overColor}9.6e-04} & {\color{overColor}9.9e-04}\\ 
MPRK(2,2,10.0) & 0 & 0 & {\color{overColor}2.9e-04} & {\color{overColor}5.5e-04} & {\color{overColor}7.2e-04} & {\color{overColor}8.2e-04} & {\color{overColor}8.8e-04} & {\color{overColor}9.1e-04}\\ 
\hline
	\end{tabular}
\end{table}

In Tables \ref{tab:scalarMPDeCeq} and \ref{tab:scalarMPDeCgl}, we list the oscillation measure for all \ref{eq:explicit_dec_correction} methods with equispaced and Gauss--Lobatto subtimesteps, respectively. Increasing the CFL, we see that many schemes overshoot the steady states. In particular, whenever we are below the $\dt$ bound found in the linear case, we do not observe oscillations. In some cases, also above this bound we do not have oscillations, but this might depend on the problem itself.

In Table~\ref{tab:scalarMPRK22}, we show similar results for \ref{eq:MPRK22-family} for some $\alpha$.  In contrast to the previous case, we observe oscillations even
if the bound is higher than the CFL tested.

In Table~\ref{tab:scalarMPRK43}, we test \ref{eq:MPRK43-family}, with some
interesting values and then on the curve $\beta(6\alpha-3)=3\alpha-2$. The first
values show oscillations according to the $\dt$ bound found in
the linear case, while, on the bottom curve, we observe no
oscillations starting from $\alpha=1$, which is slightly better then expected,
considering the (large but not so large) $\dt$ bounds of the linear case.

\begin{table}
	\small
	\centering
	\caption{Oscillation measure for problem \eqref{eq:ode-scalar} with \ref{eq:MPRK43-family} for some interesting $\alpha,\beta$. The second half of the table is on the curve $\beta(6\alpha-3)=3\alpha-2$ }\label{tab:scalarMPRK43}
	\begin{tabular}{|l|l|l|l|l|l|l|l|l|}\hline
		\hline
		MPRK(4,3,0.50,0.70) & 0 & 0 & {\color{overColor}6.4e-05} & {\color{overColor}1.3e-04} & {\color{overColor}1.1e-04} & {\color{overColor}5.8e-05} & {\color{overColor}1.8e-05} & 0\\ 
MPRK(4,3,0.90,0.65) & 0 & 0 & {\color{overColor}7.5e-05} & {\color{overColor}1.6e-04} & {\color{overColor}1.6e-04} & {\color{overColor}1.4e-04} & {\color{overColor}1.2e-04} & {\color{overColor}1.1e-04}\\ 
MPRK(4,3,1.00,0.67) & 0 & 0 & {\color{overColor}7.9e-05} & {\color{overColor}1.7e-04} & {\color{overColor}1.8e-04} & {\color{overColor}1.6e-04} & {\color{overColor}1.5e-04} & {\color{overColor}1.3e-04}\\ 
MPRK(4,3,1.00,0.33) & 0 & 0 & 0 & 0 & 0 & 0 & 0 & 0\\ 
MPRK(4,3,1.25,0.60) & 0 & 0 & {\color{overColor}5.7e-05} & {\color{overColor}1.2e-04} & {\color{overColor}9.3e-05} & {\color{overColor}4.6e-05} & {\color{overColor}7.5e-06} & 0\\ 
MPRK(4,3,2.00,0.60) & 0 & 0 & {\color{overColor}4.5e-05} & {\color{overColor}9.4e-05} & {\color{overColor}5.7e-05} & {\color{overColor}2.5e-07} & 0 & 0\\ 
MPRK(4,3,10.00,0.60) & 0 & 0 & {\color{overColor}2.1e-05} & {\color{overColor}5.2e-05} & 0 & 0 & 0 & 0\\ 
\hline\hline
		MPRK(4,3,0.50,0.75) & 0 & 0 & {\color{overColor}6.8e-05} & {\color{overColor}1.4e-04} & {\color{overColor}1.3e-04} & {\color{overColor}1.0e-04} & {\color{overColor}7.0e-05} & {\color{overColor}5.1e-05}\\ 
MPRK(4,3,0.70,0.63) & 0 & 0 & {\color{overColor}1.0e-04} & {\color{overColor}2.2e-04} & {\color{overColor}2.7e-04} & {\color{overColor}2.9e-04} & {\color{overColor}2.9e-04} & {\color{overColor}2.9e-04}\\ 
MPRK(4,3,0.80,0.48) & 0 & 0 & {\color{overColor}8.4e-05} & {\color{overColor}1.8e-04} & {\color{overColor}1.9e-04} & {\color{overColor}1.7e-04} & {\color{overColor}1.5e-04} & {\color{overColor}1.3e-04}\\ 
MPRK(4,3,0.89,0.29) & 0 & 0 & {\color{overColor}1.8e-05} & 0 & 0 & 0 & 0 & 0\\ 
MPRK(4,3,0.90,0.29) & 0 & 0 & {\color{overColor}1.4e-05} & 0 & 0 & 0 & 0 & 0\\ 
MPRK(4,3,1.00,0.33) & 0 & 0 & 0 & 0 & 0 & 0 & 0 & 0\\ 
MPRK(4,3,1.25,0.39) & 0 & 0 & 0 & 0 & 0 & 0 & 0 & 0\\ 
MPRK(4,3,2.00,0.44) & 0 & 0 & 0 & 0 & 0 & 0 & 0 & 0\\ 
MPRK(4,3,10.00,0.49) & 0 & 0 & 0 & 0 & 0 & 0 & 0 & 0\\ 
\hline
	\end{tabular}
\end{table}

\begin{table}
	\small
	\centering
	\caption{Oscillation measure for problem \eqref{eq:ode-scalar} with \ref{eq:MPRKSO22-family} for some interesting $\alpha,\beta$}\label{tab:scalarMPRKSO22}
	\begin{tabular}{|l|l|l|l|l|l|l|l|l|}\hline
		\hline
		MPRKSO(2,2,0.0,0.5) & 0 & 0 & {\color{overColor}2.7e-04} & {\color{overColor}5.3e-04} & {\color{overColor}7.0e-04} & {\color{overColor}8.0e-04} & {\color{overColor}8.5e-04} & {\color{overColor}8.8e-04}\\ 
MPRKSO(2,2,0.0,1.0) & 0 & 0 & {\color{overColor}3.2e-04} & {\color{overColor}6.1e-04} & {\color{overColor}8.1e-04} & {\color{overColor}9.3e-04} & {\color{overColor}1.0e-03} & {\color{overColor}1.0e-03}\\ 
MPRKSO(2,2,0.0,2.0) & 0 & 0 & {\color{overColor}3.2e-04} & {\color{overColor}6.0e-04} & {\color{overColor}7.9e-04} & {\color{overColor}9.0e-04} & {\color{overColor}9.6e-04} & {\color{overColor}9.9e-04}\\ 
MPRKSO(2,2,0.0,5.0) & 0 & 0 & {\color{overColor}3.0e-04} & {\color{overColor}5.7e-04} & {\color{overColor}7.4e-04} & {\color{overColor}8.5e-04} & {\color{overColor}9.0e-04} & {\color{overColor}9.3e-04}\\ 
MPRKSO(2,2,0.0,10.0) & 0 & 0 & {\color{overColor}2.9e-04} & {\color{overColor}5.5e-04} & {\color{overColor}7.2e-04} & {\color{overColor}8.2e-04} & {\color{overColor}8.8e-04} & {\color{overColor}9.1e-04}\\ 
MPRKSO(2,2,0.1,1.5) & 0 & 0 & {\color{overColor}3.6e-04} & {\color{overColor}6.7e-04} & {\color{overColor}8.8e-04} & {\color{overColor}1.0e-03} & {\color{overColor}1.1e-03} & {\color{overColor}1.1e-03}\\ 
MPRKSO(2,2,0.1,6.0) & 0 & {\color{overColor}3.1e-04} & {\color{overColor}9.7e-04} & {\color{overColor}1.6e-03} & {\color{overColor}2.1e-03} & {\color{overColor}2.5e-03} & {\color{overColor}2.7e-03} & {\color{overColor}2.8e-03}\\ 
MPRKSO(2,2,0.2,2.0) & 0 & {\color{overColor}5.9e-05} & {\color{overColor}5.0e-04} & {\color{overColor}9.1e-04} & {\color{overColor}1.2e-03} & {\color{overColor}1.4e-03} & {\color{overColor}1.5e-03} & {\color{overColor}1.6e-03}\\ 
MPRKSO(2,2,0.3,1.5) & 0 & {\color{overColor}2.8e-05} & {\color{overColor}4.5e-04} & {\color{overColor}8.3e-04} & {\color{overColor}1.1e-03} & {\color{overColor}1.3e-03} & {\color{overColor}1.4e-03} & {\color{overColor}1.5e-03}\\ 
MPRKSO(2,2,0.5,1.0) & 0 & 0 & {\color{overColor}2.7e-04} & {\color{overColor}5.3e-04} & {\color{overColor}7.0e-04} & {\color{overColor}8.0e-04} & {\color{overColor}8.5e-04} & {\color{overColor}8.8e-04}\\ 
\hline
	\end{tabular}
\end{table}

\begin{table}
	\small
	\centering
	\caption{Oscillation measure for problem \eqref{eq:ode-scalar} with \ref{eq:MPRK32}, \ref{eq:MPRKSO(4,3)}, \ref{eq:SI-RK2}  and \ref{eq:SI-RK3}}\label{tab:scalarOthers}
	\begin{tabular}{|l|l|l|l|l|l|l|l|l|}\hline
		\hline
		MPRK(3,2) & 0 & 0 & {\color{overColor}5.2e-06} & 0 & 0 & 0 & 0 & 0\\ 
\hline
		MPRKSO(4,3) & 0 & 0 & {\color{overColor}5.1e-05} & {\color{overColor}7.7e-05} & 0 & 0 & 0 & 0\\ 
\hline
		SIRK2 & 0 & 0 & {\color{overColor}2.9e-04} & {\color{overColor}5.4e-04} & {\color{overColor}7.0e-04} & {\color{overColor}8.0e-04} & {\color{overColor}8.5e-04} & {\color{overColor}8.8e-04}\\ 
\hline
		SIRK3 & 0 & 0 & {\color{overColor}1.0e-04} & {\color{overColor}3.3e-05} & 0 & 0 & 0 & 0\\ 
\hline
	\end{tabular}
\end{table}

Another disappointing result comes from the schemes \ref{eq:MPRKSO22-family} in Figure~\ref{tab:scalarMPRKSO22}, where even on the line $\alpha = 0$ we do not have oscillation-free simulations with large $\dt$ as predicted by the study of the linear case. Conversely, for the other parameters we expected the oscillations for almost all CFL numbers larger than 1.

Finally, in Table~\ref{tab:scalarOthers}, we have different behaviors, except for \ref{eq:SI-RK2}. The oscillations appear for CFL neither too small nor too large. This is surprising, first of all for \ref{eq:MPRK32} of which we expected no oscillations up to CFL $\approx 16$, which shows anyway a very small oscillation (only very high order schemes have comparable oscillation amplitudes) only for CFL=2. For \ref{eq:MPRKSO(4,3)} and \ref{eq:SI-RK3} we have slightly better results than expected for large CFLs and for \ref{eq:SI-RK2} the results are exactly following the $\dt$ bounds found in the linear case.

\begin{conclusion}
For this test, most of the schemes behaves as predicted based on the linear example, with few exceptions for second-order methods. The bounds of the linear case can mostly be transferred to the considered nonlinear problem.
The linear analysis gives some meaningful results also for more challenging problems.
\end{conclusion}

\subsection{HIRES}

This problem is called HIRES after Hairer and Wanner \cite{hairer1999stiff},
referring to \glqq High Irradiance RESponse``.
The original problem HIRES \cite[Section~II.1]{mazzia2008test} can be rewritten into a nine-dimensional production--destruction system with
%\begin{equation}
%  \od{}{t}
%  \begin{pmatrix}
%    y_1 \\
%    y_2 \\
%    y_3 \\
%    y_4 \\
%    y_5 \\
%    y_6 \\
%    y_7 \\
%    y_8 \\
%    y_9
%  \end{pmatrix}
%  =
%  \begin{pmatrix}
%    -k_1 y_1 + k_2 y_2 + k_6 y_3 + \sigma \\
%    k_1 y_1 - (k_2 + k_3) y_2 \\
%    -(k_1 + k_6) y_3 + k_2 y_4 + k_5 y_5 \\
%    k_3 y_2 + k_1 y_3 - (k_2 + k_4) y_4 \\
%    -(k_1 + k_5) y_5 + k_2 y_6  + k_2 y_7  \\ %\text{Here there was an extra }+ k_2 y_2
%    -k_+ y_6 y_8 + k_4 y_4 + k_1 y_5 - k_2 y_6 + k_- y_7 \\
%    k_+ y_6 y_8 - (k_2 + k_- + k_*) y_7 \\
%    -k_+ y_6 y_8 + (k_2 + k_- + k_*) y_7 \\
%    k_* y_7
%  \end{pmatrix}
%  \qquad
%  y(0) =
%  \begin{pmatrix}
%    1 \\
%    0 \\
%    0 \\
%    0 \\
%    0 \\
%    0 \\
%    0 \\
%    0.0057 \\
%    0
%  \end{pmatrix},
%\end{equation}
\begin{equation}
	\label{eq:HIRES-pdr}
	\begin{aligned}
		r_1(u) &= \sigma,
		&
		p_{21}(u) &= d_{12}(u) = k_1 u_1,
		&
		p_{12}(u) &= d_{21}(u) = k_2 u_2,
		\\
		p_{42}(u) &= d_{24}(u) = k_3 u_2,
		&
		p_{43}(u) &= d_{34}(u) = k_1 u_3,
		&
		p_{13}(u) &= d_{31}(u) = k_6 u_3,
		\\
		p_{34}(u) &= d_{43}(u) = k_2 u_4,
		&
		p_{64}(u) &= d_{46}(u) = k_4 u_4,
		&
		p_{65}(u) &= d_{56}(u) = k_1 u_5,
		\\
		p_{35}(u) &= d_{53}(u) = k_5 u_5,
		&
		p_{56}(u) &= d_{65}(u) = k_2 u_6,
		&
		p_{57}(u) &= d_{75}(u) = \frac{k_2}{2} u_7,
		\\
		p_{67}(u) &= d_{76}(u) = \frac{k_-}{2} u_7,
		&
		p_{97}(u) &= d_{79}(u) = \frac{k_*}{2} u_7,
		&
		p_{76}(u) &= d_{67}(u) = k_+ u_6 u_8,
		\\
		p_{78}(u) &= d_{87}(u) = k_+ u_6 u_8,
		&
		p_{87}(u) &= d_{78}(u) = \frac{k_- + k_*+k_2}{2} u_7.
	\end{aligned}
\end{equation}

with parameters
\begin{equation}
\begin{aligned}
  k_1 &= 1.71,
  &
  k_2 &= 0.43,
  &
  k_3 &= 8.32,
  &
  k_4 &= 0.69,
  &
  k_5 &= 0.035,
  \\
  k_6 &= 8.32,
  &
  k_+ &= 280,
  &
  k_- &= 0.69,
  &
  k_* &= 0.69,
  &
  \sigma &= 0.0007,
\end{aligned}
\end{equation}
%is a stiff system of nine nonlinear ODEs that is
%is based on the chemical reaction scheme
%\begin{equation}
%\begin{aligned}
%  & \text{source} && \stackrel{\sigma}{\longrightarrow} && y_1, &\quad
%  & y_1 && \stackrel{k_1}{\longrightarrow} && y_2, &\quad
%  & y_2 && \stackrel{k_2}{\longrightarrow} && y_1, \\
%  & y_2 && \stackrel{k_3}{\longrightarrow} && y_4, &
%  & y_3 && \stackrel{k_1}{\longrightarrow} && y_4, &
%  & y_3 && \stackrel{k_6}{\longrightarrow} && y_1, \\
%  & y_4 && \stackrel{k_2}{\longrightarrow} && y_3, &
%  & y_4 && \stackrel{k_4}{\longrightarrow} && y_6, &
%  & y_5 && \stackrel{k_1}{\longrightarrow} && y_6, \\
%  & y_5 && \stackrel{k_5}{\longrightarrow} && y_3, &
%  & y_6 && \stackrel{k_2}{\longrightarrow} && y_5, &
%  & y_7 && \stackrel{k_2}{\longrightarrow} && y_5 + y_8, \\
%  & y_7 && \stackrel{k_-}{\longrightarrow} && y_6 + y_8, &
%  & y_7 && \stackrel{k_*}{\longrightarrow} && y_8 + y_9, &
%  & y_6 + y_8 && \stackrel{k_+}{\longrightarrow} && y_7.
%\end{aligned}
%\end{equation}
%To rewrite it as a conservative PDS, the components have to be scaled
%according to
%\begin{equation}
%  \begin{pmatrix}
%    u_1 \\
%    u_2 \\
%    u_3 \\
%    u_4 \\
%    u_5 \\
%    u_6 \\
%    u_7 \\
%    u_8 \\
%    u_9
%  \end{pmatrix}
%  =
%  \begin{pmatrix}
%    y_1 \\
%    y_2 \\
%    y_3 \\
%    y_4 \\
%    y_5 \\
%    y_6 \\
%    2 y_7 \\
%    y_8 \\
%    y_9
%  \end{pmatrix}.
%\end{equation}
%The transformed system
%\begin{equation}
%\label{eq:HIRES-ode}
%  \od{}{t}
%  \begin{pmatrix}
%    u_1 \\
%    u_2 \\
%    u_3 \\
%    u_4 \\
%    u_5 \\
%    u_6 \\
%    u_7 \\
%    u_8 \\
%    u_9
%  \end{pmatrix}
%  =
%  \begin{pmatrix}
%    -k_1 u_1 + k_2 u_2 + k_6 u_3 + \sigma \\
%    k_1 u_1 - (k_2 + k_3) u_2 \\
%    -(k_1 + k_6) u_3 + k_2 u_4 + k_5 u_5 \\
%    k_3 u_2 + k_1 u_3 - (k_2 + k_4) u_4 \\
%    -(k_1 + k_5) u_5 + k_2 u_6  + \frac{k_2}{2} u_7 \\ %+ k_2 u_2
%    -k_+ u_6 u_8 + k_4 u_4 + k_1 u_5 - k_2 u_6 + \frac{k_-}{2} u_7 \\
%    2 k_+ u_6 u_8 - (k_2 + k_- + k_*) u_7 \\
%    -k_+ u_6 u_8 + \frac{k_2 + k_- + k_*}{2} u_7 \\
%    \frac{k_*}{2} u_7
%  \end{pmatrix}
%  \qquad
%  u(0) =
%  \begin{pmatrix}
%    1 \\
%    0 \\
%    0 \\
%    0 \\
%    0 \\
%    0 \\
%    0 \\
%    0.0057 \\
%    0
%  \end{pmatrix},
%\end{equation}
%can be written as a PDRS with non-zero components

The time interval is $t \in [0, 321.8122]$.

For this test the concept of oscillation is not clear, nevertheless, we can observe loss of accuracy also for this problem.
We compute the reference solution with 100,000 uniform timesteps. We use the \ref{eq:explicit_dec_correction}5 with equispaced subtimesteps to obtain this reference solution and we see that is in accordance with the reference solution \cite{mazzia2008test} up to the fourth significant digits for all constituents.

\begin{figure}
	\includegraphics[width=\textwidth]{figures/HIRES/MPRK221_logx.pdf}\\
	\includegraphics[width=\textwidth]{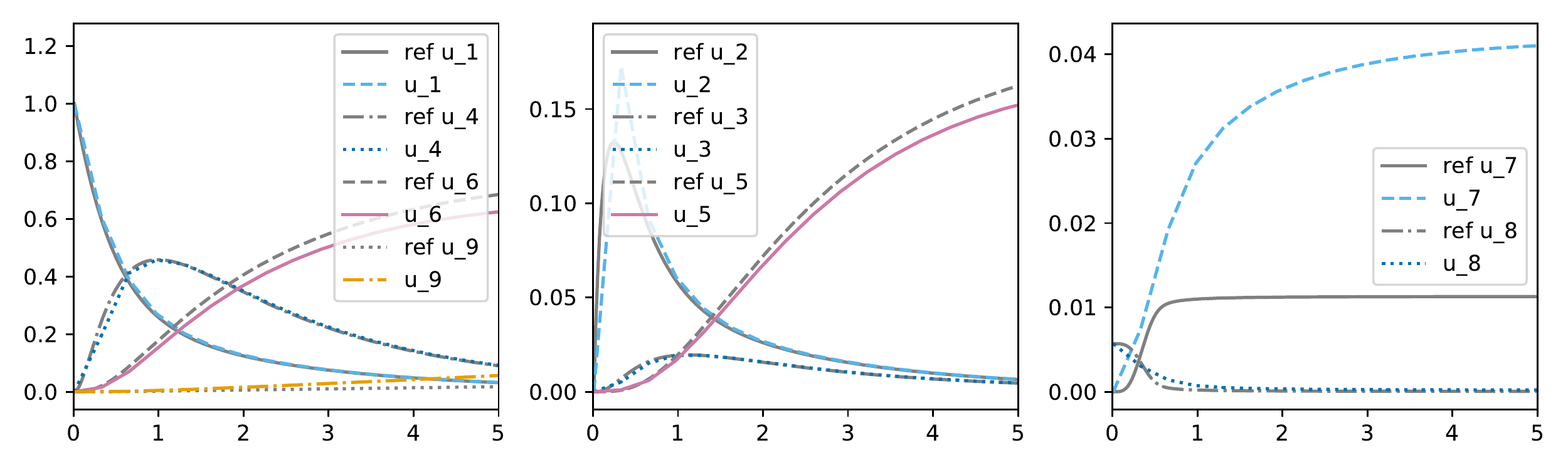}
	\caption{Simulations run with \ref{eq:MPRK22-family} with $\alpha=1$ with $N=1000$ timesteps, top logarithmic scale in time, bottom zoom on $t\in [0,5]$}\label{fig:HIRES_MPRK221}
\end{figure}

\begin{figure}
\includegraphics[width=\textwidth]{figures/HIRES/MPRK225_logx.pdf}\\
\includegraphics[width=\textwidth]{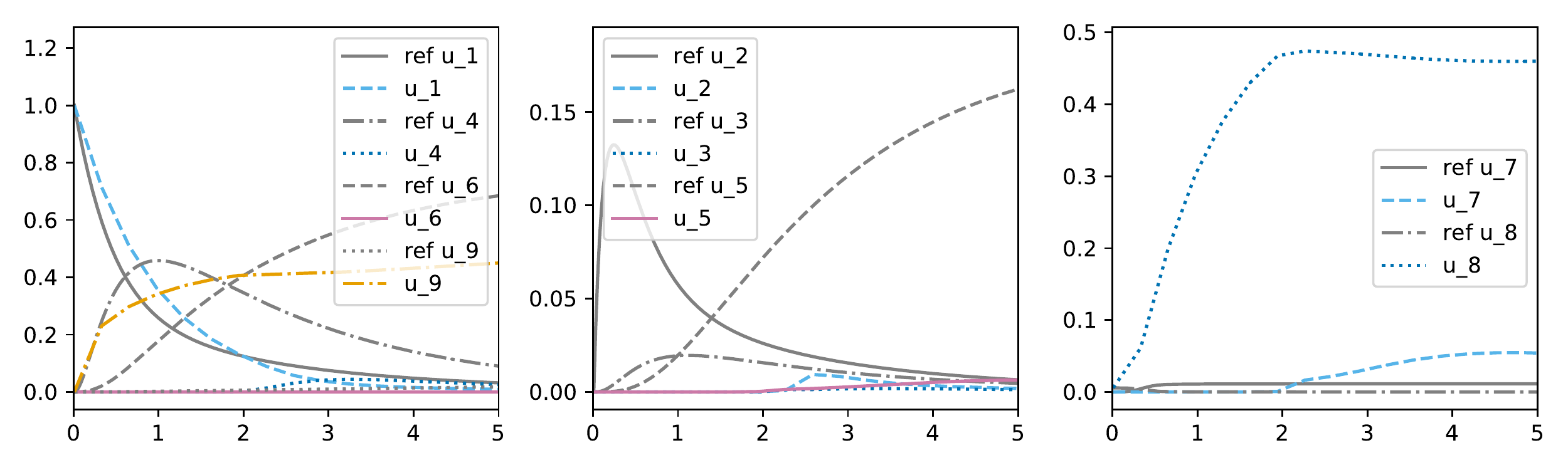}
\caption{Simulations run with \ref{eq:MPRK22-family} with $\alpha=5$ with $N=1000$ timesteps, top logarithmic scale in time, bottom zoom on $t\in [0,5]$}\label{fig:HIRES_MPRK225}
\end{figure}
\begin{figure}
	\includegraphics[width=\textwidth]{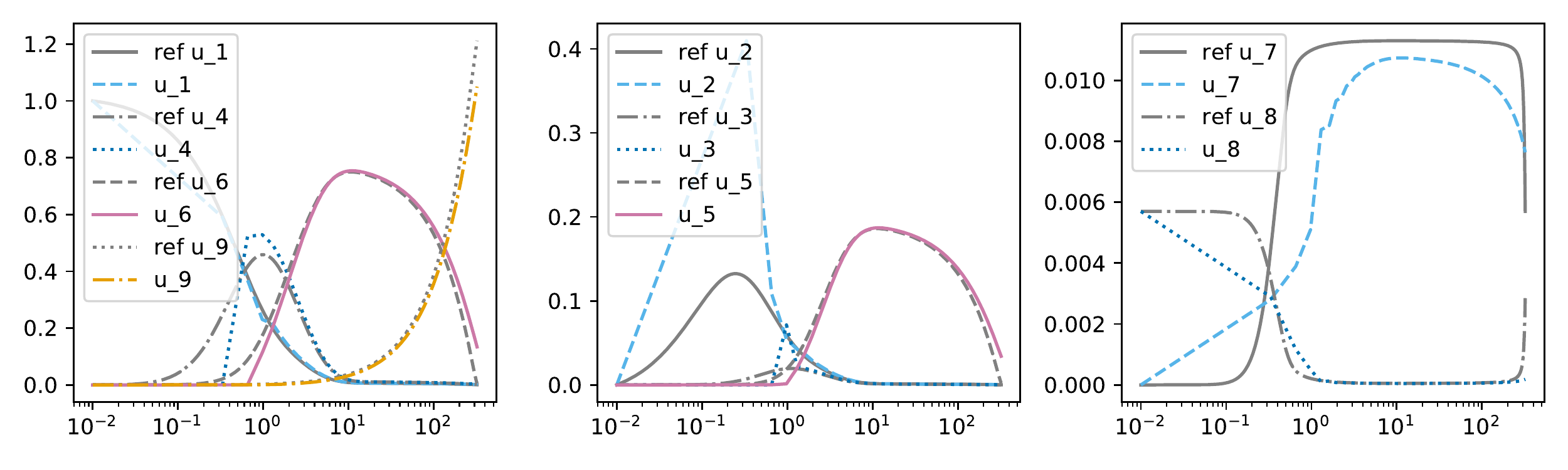}\\
	\includegraphics[width=\textwidth]{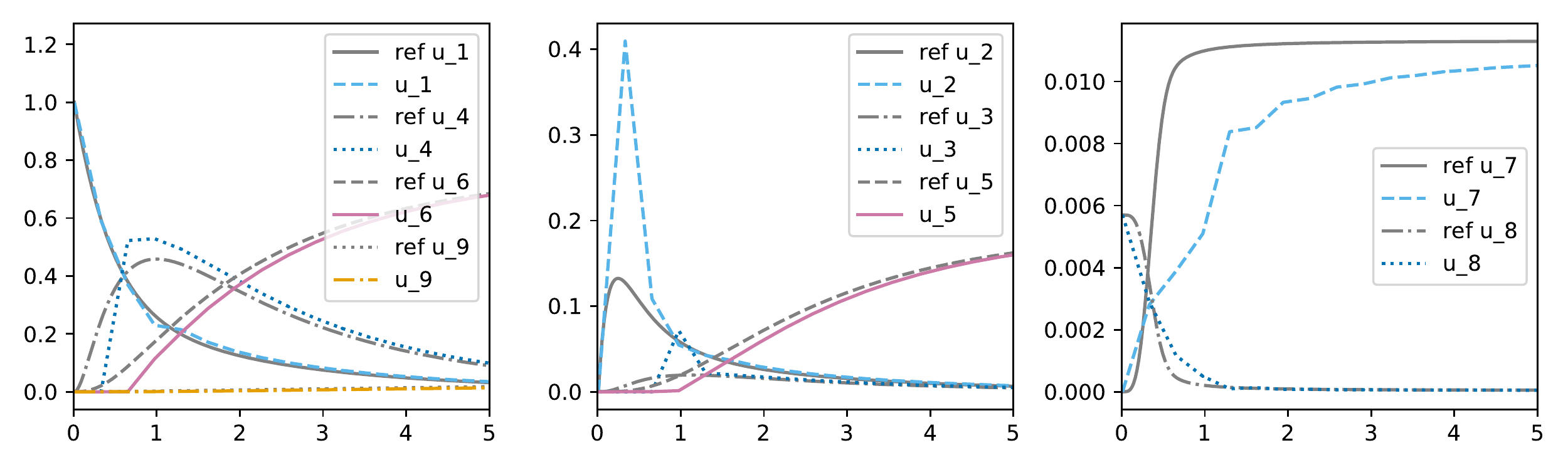}
	\caption{Simulations run with \ref{eq:MPRK22-family} with $\alpha=0.7$ with $N=1000$ timesteps, top logarithmic scale in time, bottom zoom on $t\in [0,5]$}\label{fig:HIRES_MPRK2207}
\end{figure}

\begin{figure}
	\includegraphics[width=\textwidth]{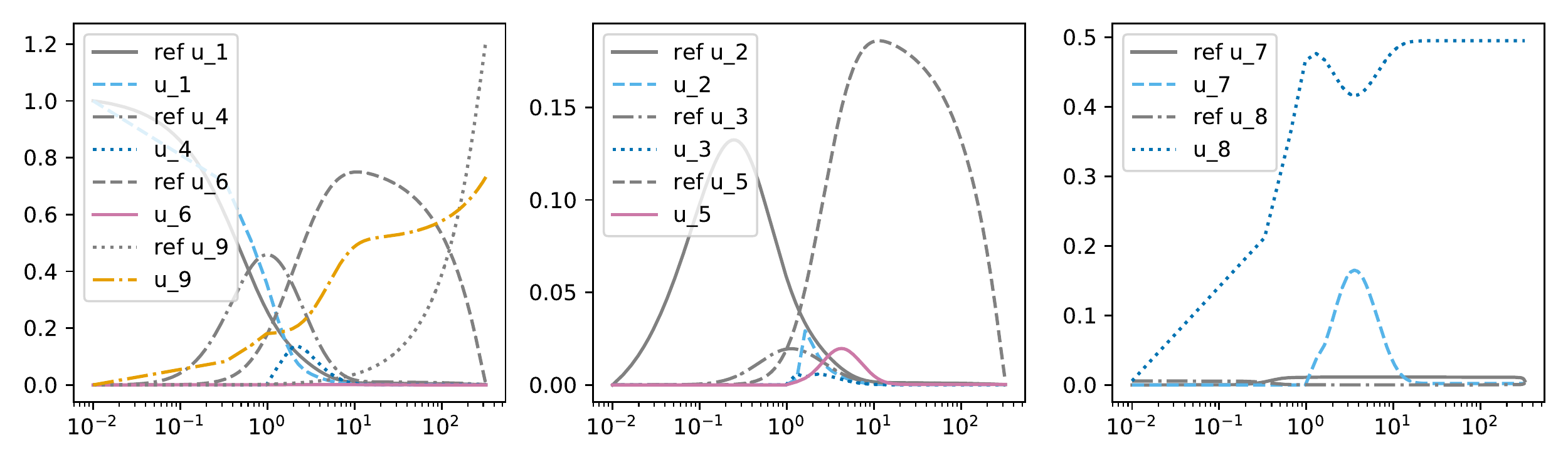}\\
	\includegraphics[width=\textwidth]{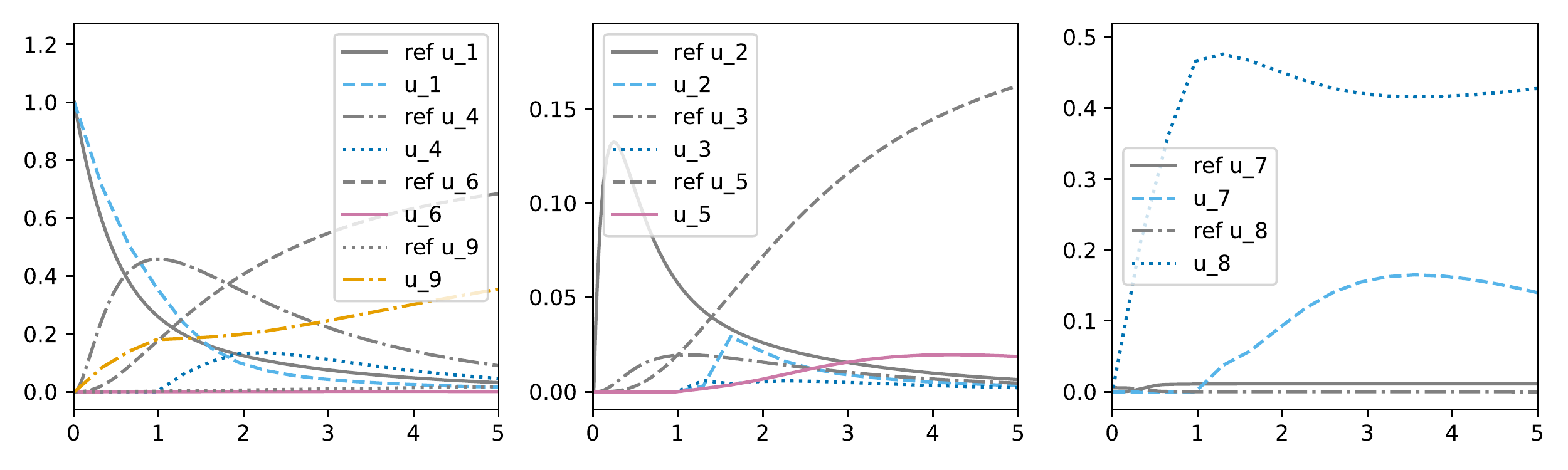}
	\caption{Simulations run with \ref{eq:MPRK43-family} with $\alpha=5$ and $\beta=0.5$ with $N=1000$ timesteps, top logarithmic scale in time, bottom zoom on $t\in [0,5]$}\label{fig:HIRES_MPRK43_5_05}
\end{figure}
\begin{figure}
	\includegraphics[width=\textwidth]{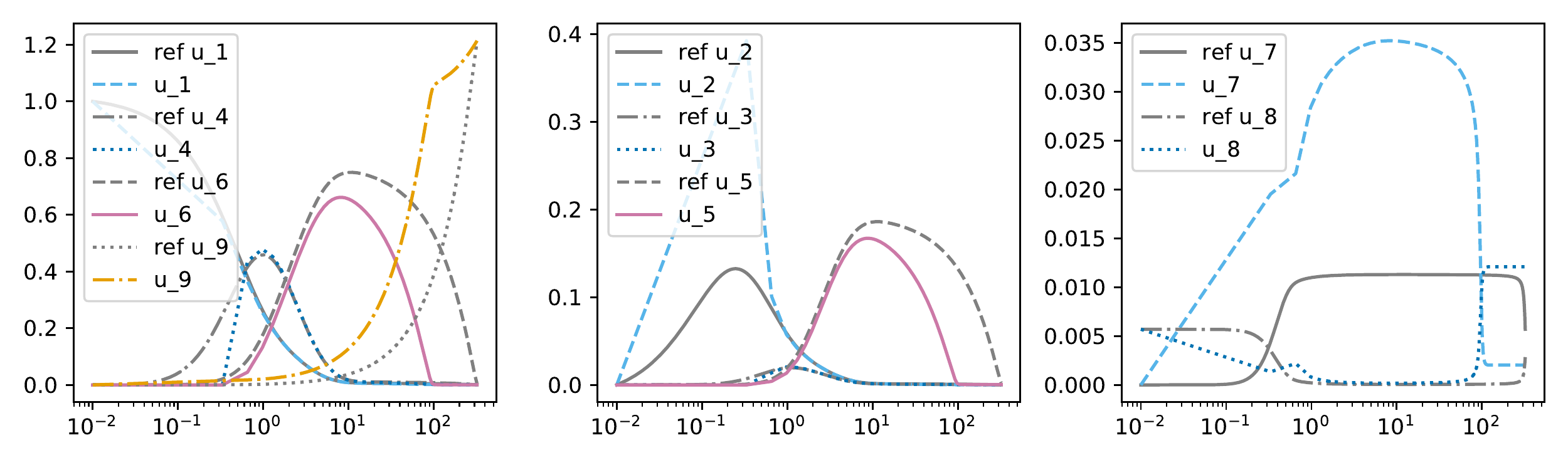}\\
	\includegraphics[width=\textwidth]{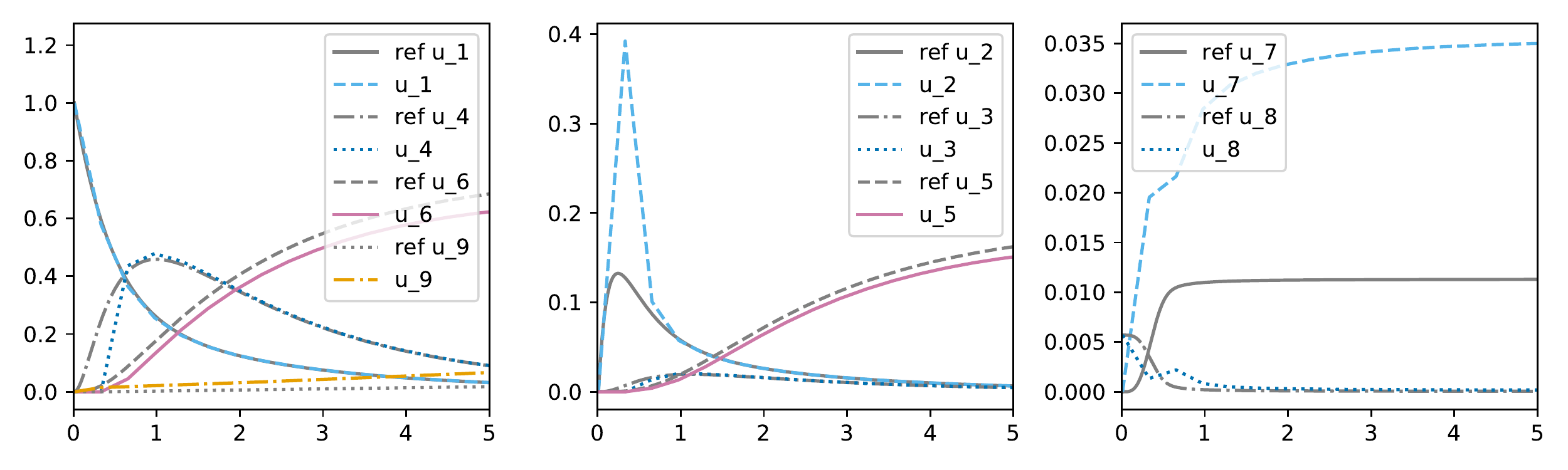}
	\caption{Simulations run with \ref{eq:MPRK43-family} with $\alpha=0.9$ and $\beta=0.6$ with $N=1000$ timesteps, top logarithmic scale in time, bottom zoom on $t\in [0,5]$}\label{fig:HIRES_MPRK43_09_06}
\end{figure}

\begin{figure}
	\includegraphics[width=\textwidth]{figures/HIRES/MPRKSO22_03_2_logx.pdf}\\
	\includegraphics[width=\textwidth]{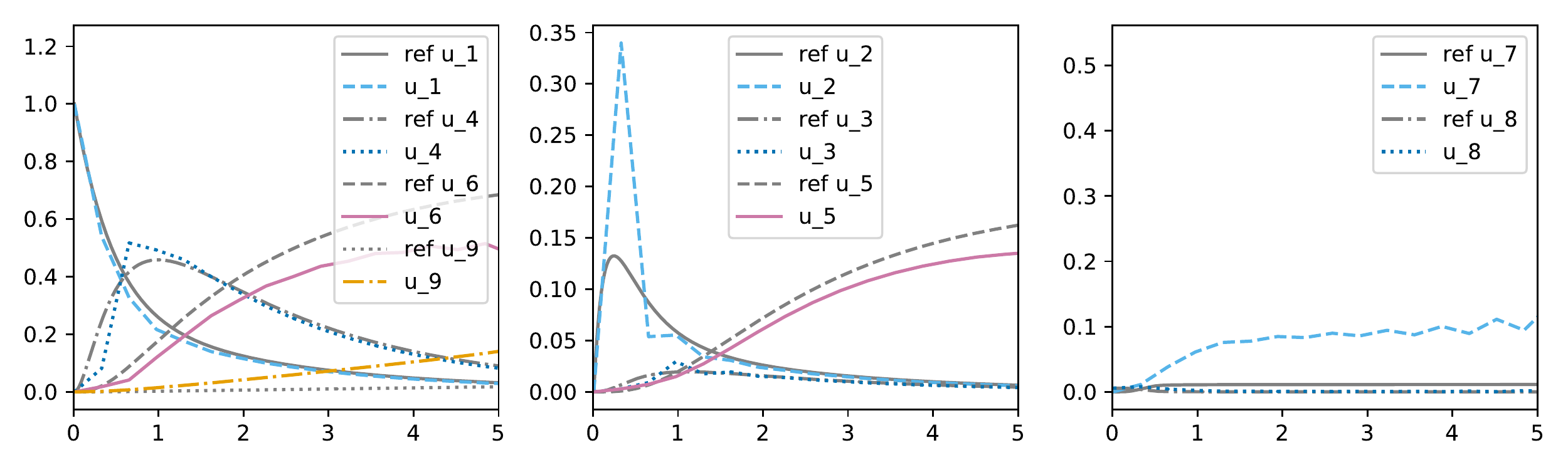}
	\caption{Simulations run with \ref{eq:MPRKSO22-family} with $\alpha=0.3$ and $\beta=2$ with $N=1000$ timesteps, top logarithmic scale in time, bottom zoom on $t\in [0,5]$}\label{fig:HIRES_MPRKSO22_03_2}
\end{figure}

\begin{figure}
	\includegraphics[width=\textwidth]{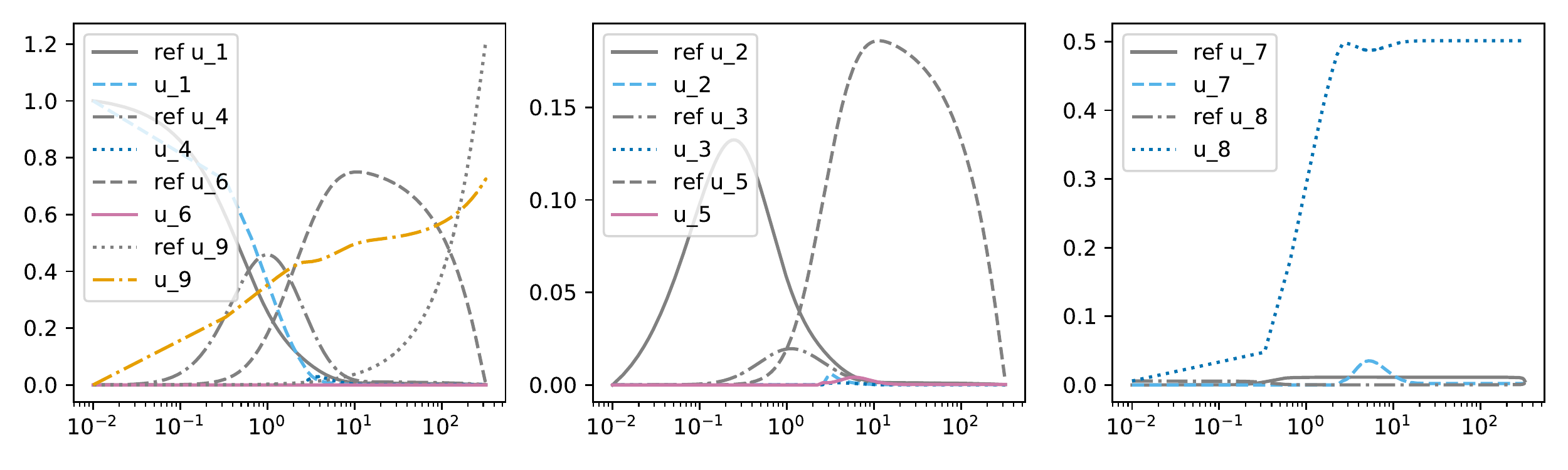}\\
	\includegraphics[width=\textwidth]{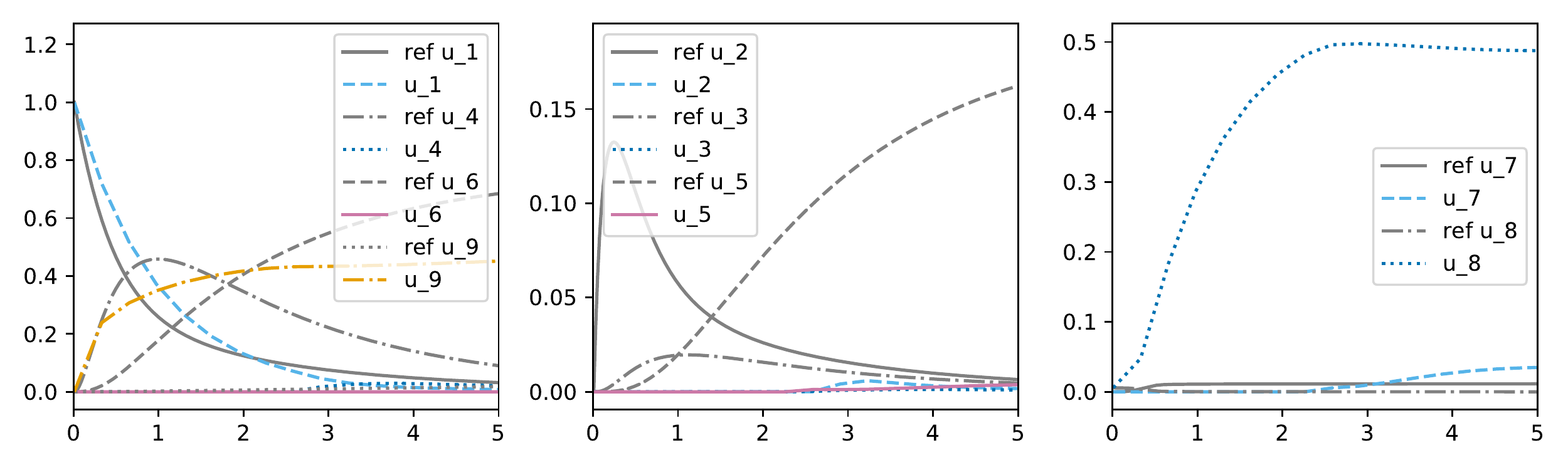}
	\caption{Simulations run with \ref{eq:MPRKSO22-family} with $\alpha=0$ and $\beta=8$ with $N=1000$ timesteps, top logarithmic scale in time, bottom zoom on $t\in [0,5]$}\label{fig:HIRES_MPRKSO22_0_8}
\end{figure}

\begin{figure}
	\includegraphics[width=\textwidth]{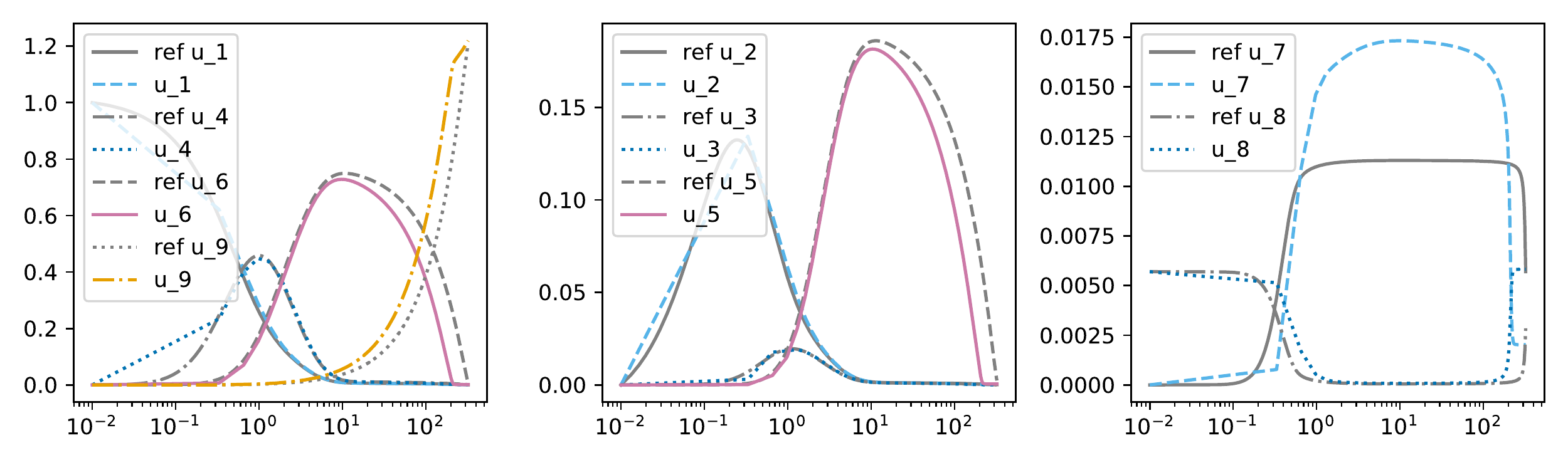}\\
	\includegraphics[width=\textwidth]{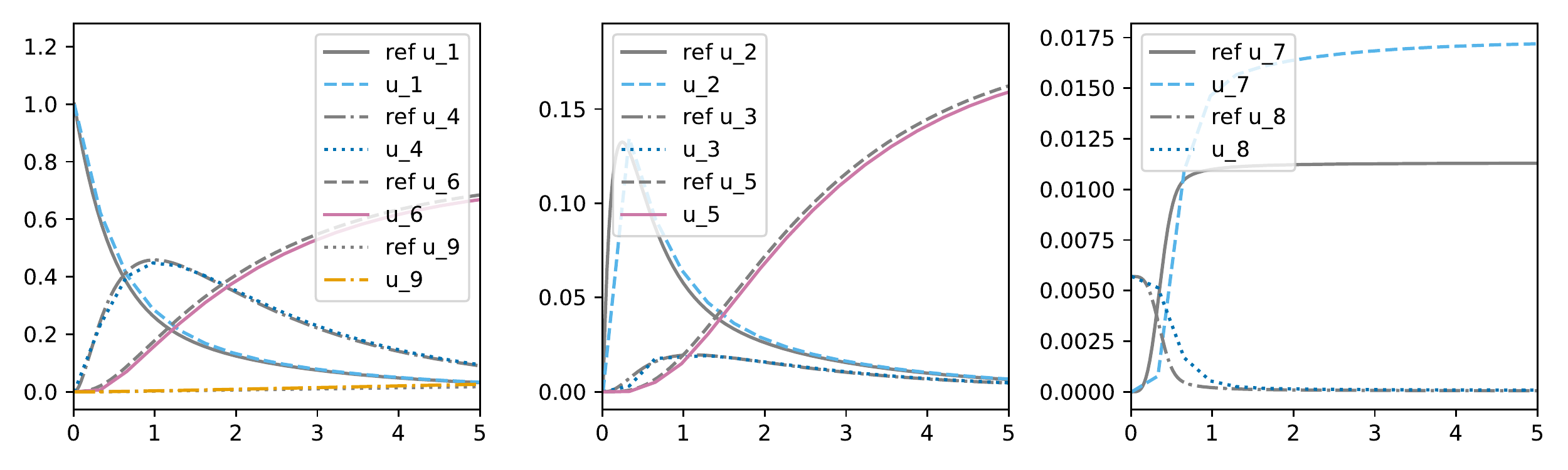}
	\caption{Simulations run with MPRKSO(4,3) with $\alpha=0$ and $\beta=8$ with $N=1000$ timesteps, top logarithmic scale in time, bottom zoom on $t\in [0,5]$}\label{fig:HIRES_MPRKSO43}
\end{figure}

\begin{figure}
	\includegraphics[width=\textwidth]{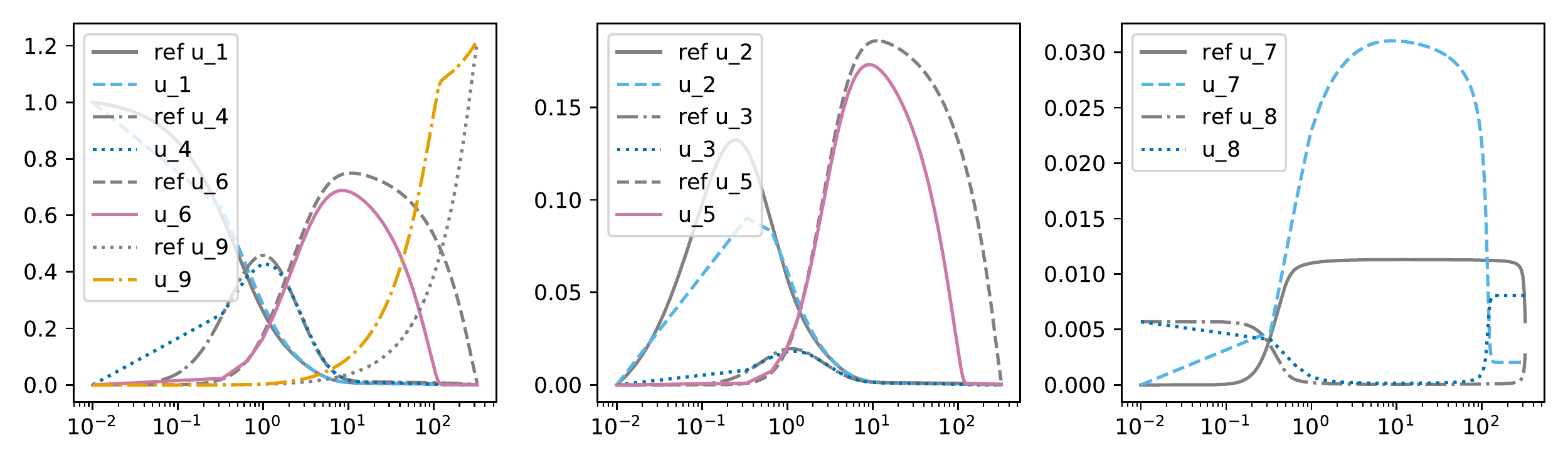}\\
	\includegraphics[width=\textwidth]{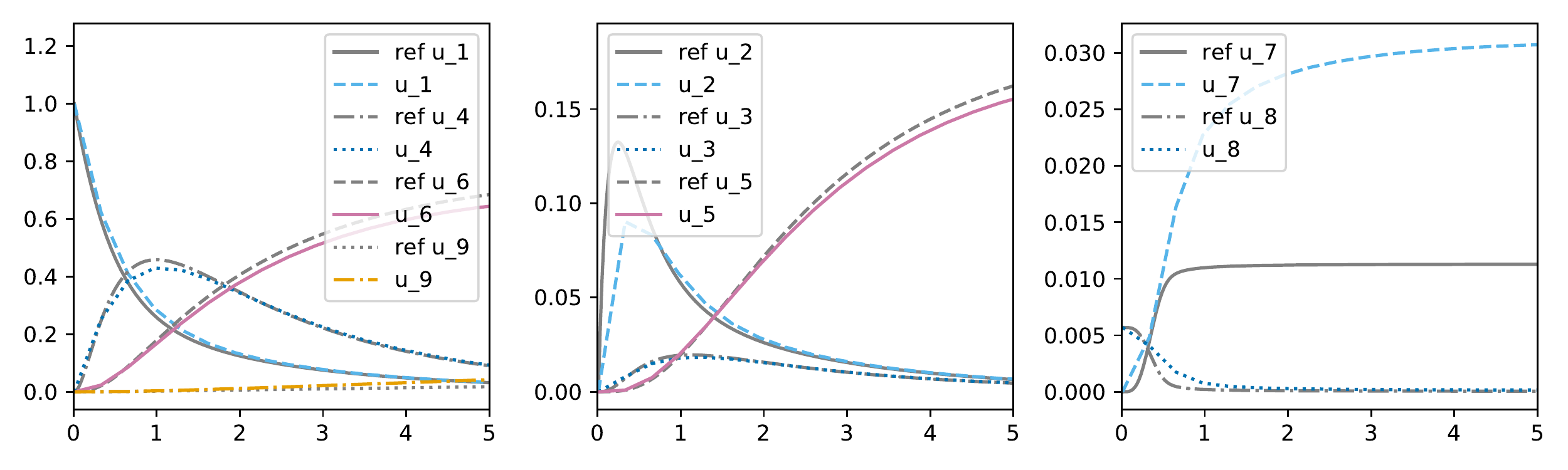}
	\caption{Simulations run with MPRK(3,2) with $N=1000$ timesteps, top logarithmic scale in time, bottom zoom on $t\in [0,5]$}\label{fig:HIRES_MPRK32}
\end{figure}

\begin{figure}
	\includegraphics[width=\textwidth]{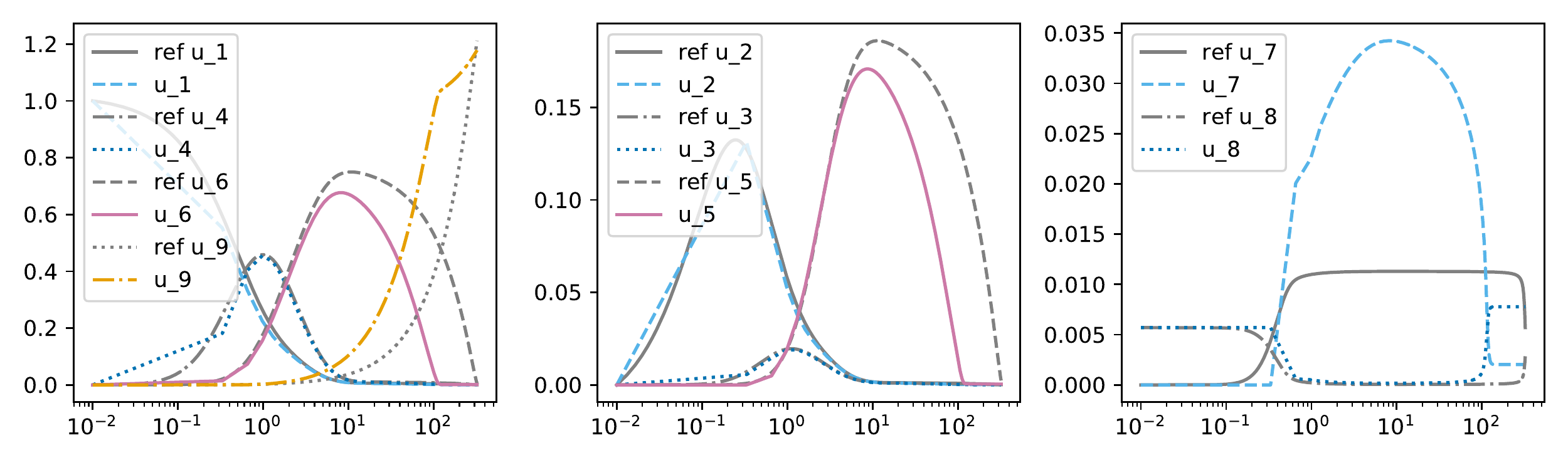}\\
	\includegraphics[width=\textwidth]{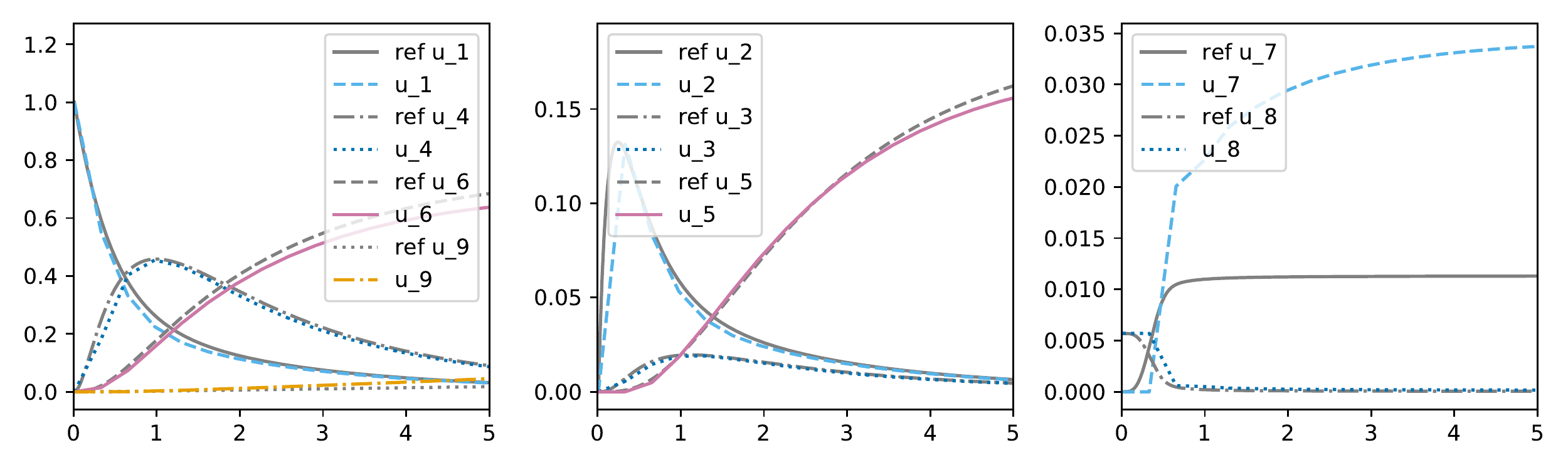}
	\caption{Simulations run with SIRK2 with $N=1000$ timesteps, top logarithmic scale in time, bottom zoom on $t\in [0,5]$}\label{fig:HIRES_SIRK2}
\end{figure}

\begin{figure}
	\includegraphics[width=\textwidth]{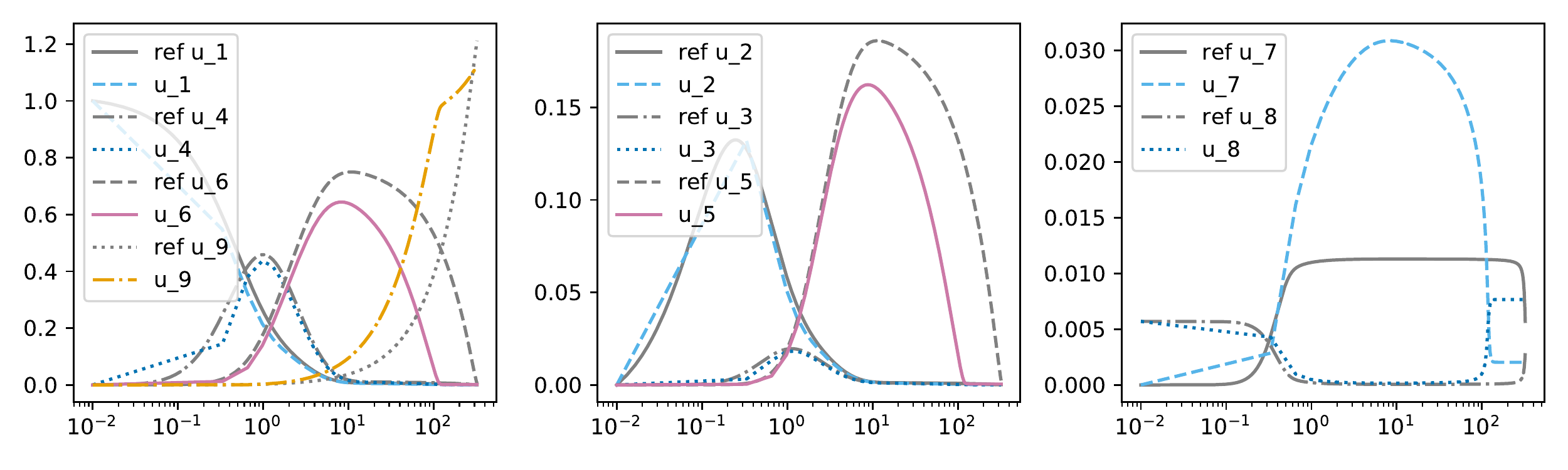}\\
	\includegraphics[width=\textwidth]{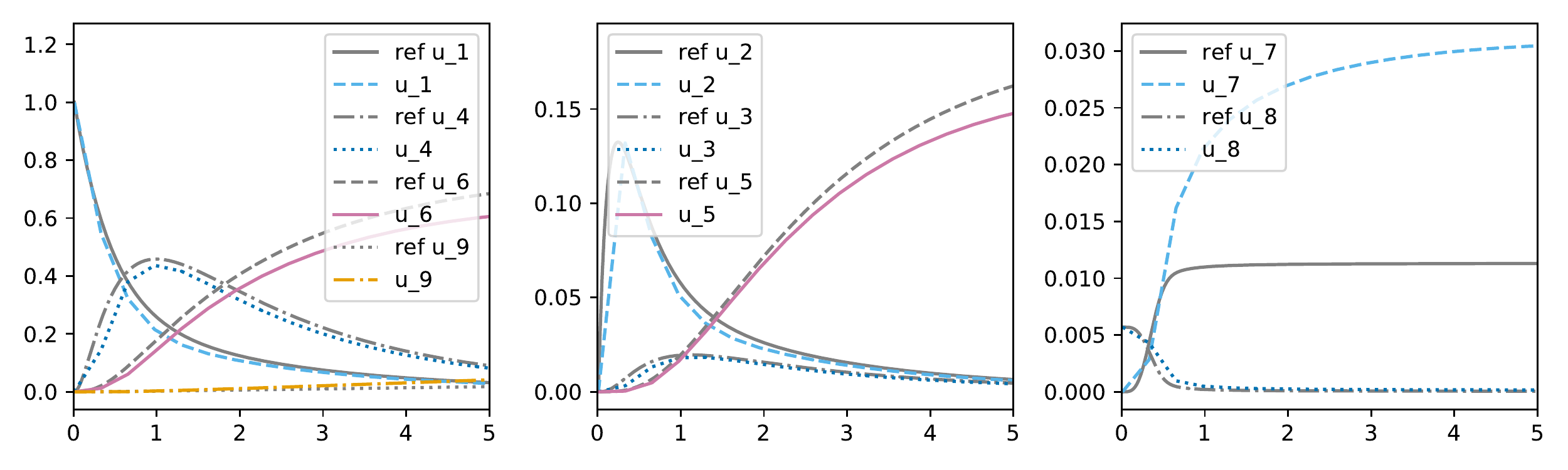}
	\caption{Simulations run with SIRK3 with $N=1000$ timesteps, top logarithmic scale in time, bottom zoom on $t\in [0,5]$}\label{fig:HIRES_SIRK3}
\end{figure}

\begin{figure}
	\includegraphics[width=\textwidth]{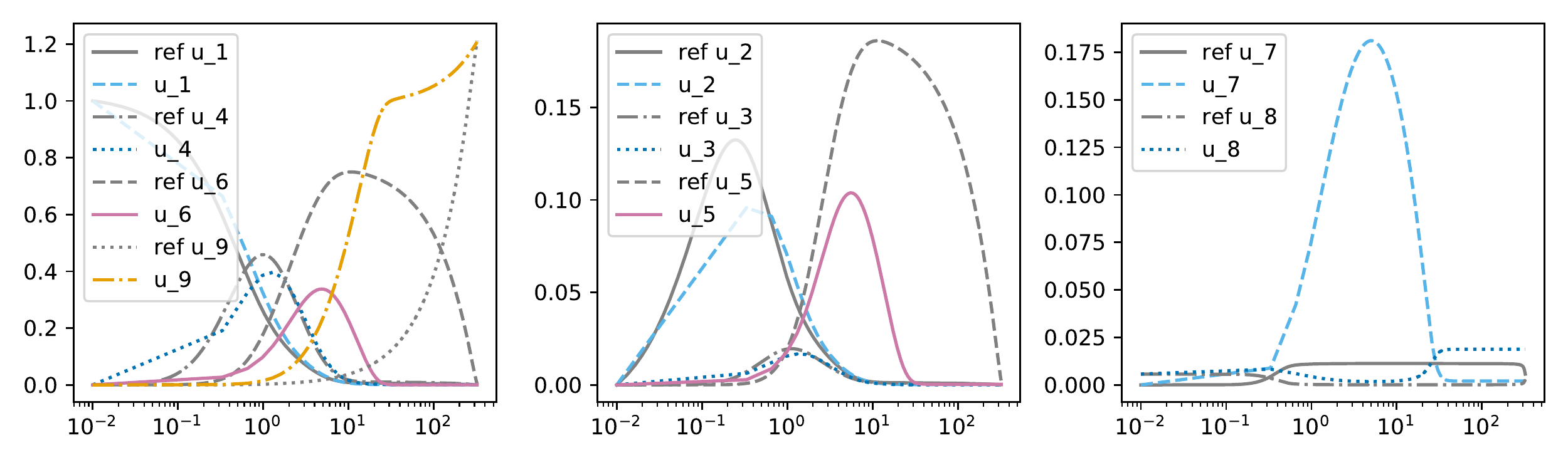}\\
	\includegraphics[width=\textwidth]{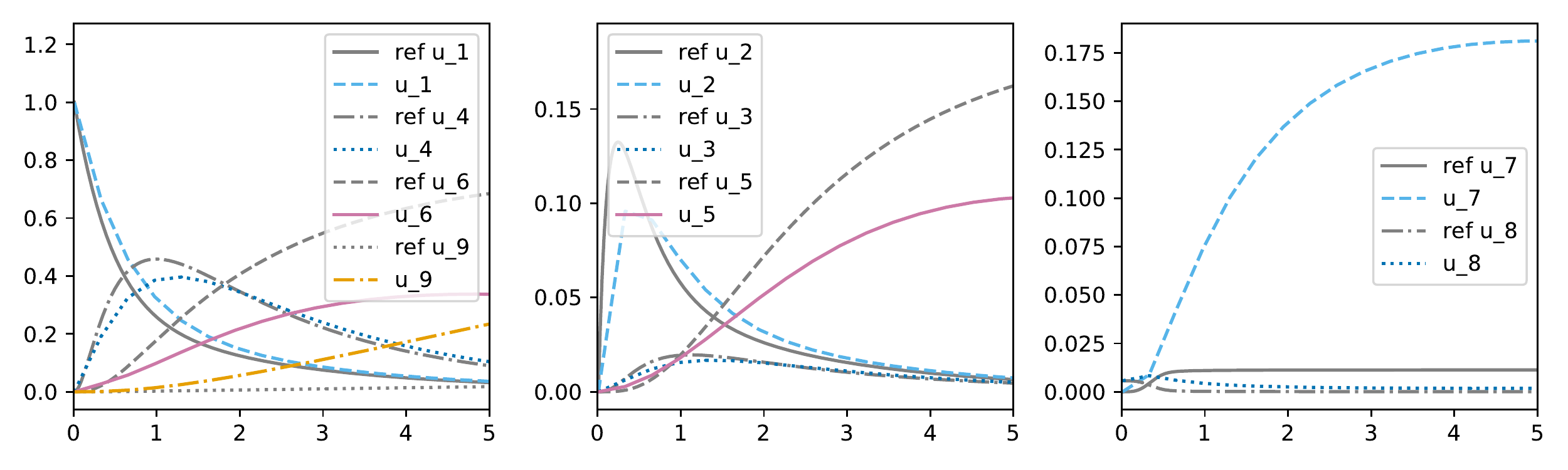}
	\caption{Simulations run with \ref{eq:explicit_dec_correction}1 with Gauss--Lobatto points with $N=1000$ timesteps, top logarithmic scale in time, bottom zoom on $t\in [0,5]$}\label{fig:HIRES_MPDeC1gl}
\end{figure}

\begin{figure}
	\includegraphics[width=\textwidth]{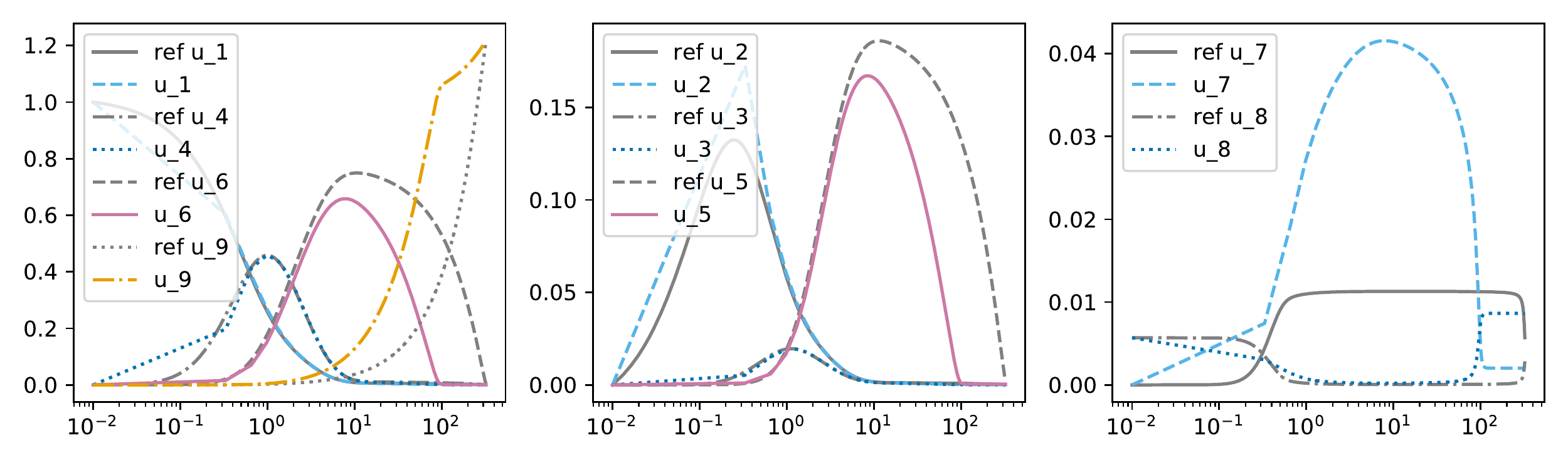}\\
	\includegraphics[width=\textwidth]{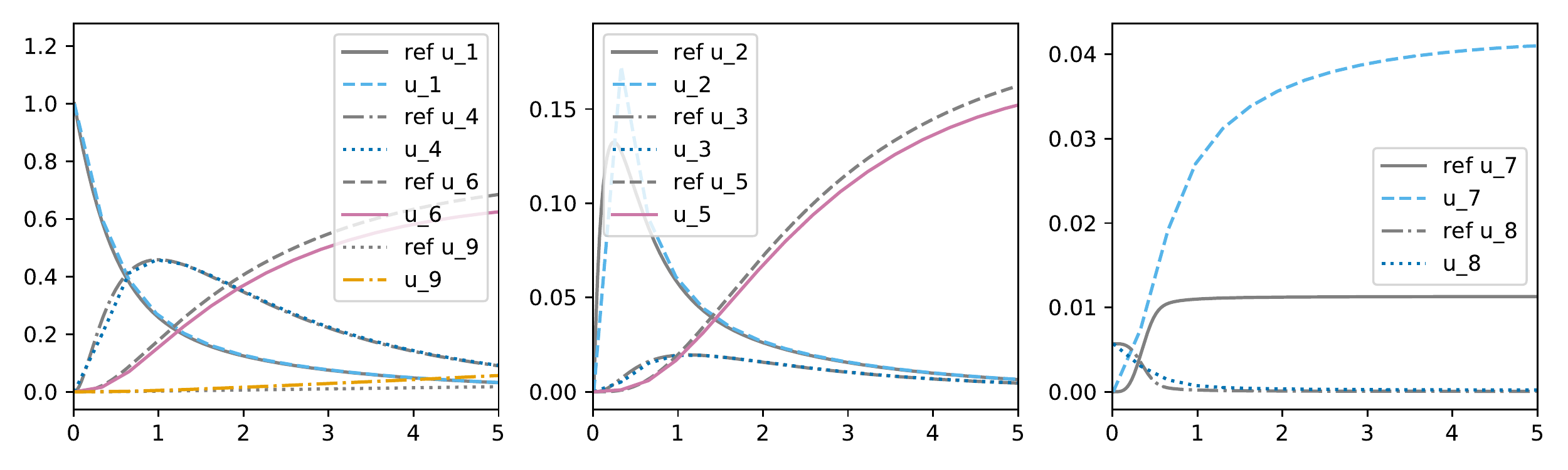}
	\caption{Simulations run with \ref{eq:explicit_dec_correction}2 with Gauss--Lobatto points with $N=1000$ timesteps, top logarithmic scale in time, bottom zoom on $t\in [0,5]$}\label{fig:HIRES_MPDeC2gl}
\end{figure}

\begin{figure}
	\includegraphics[width=\textwidth]{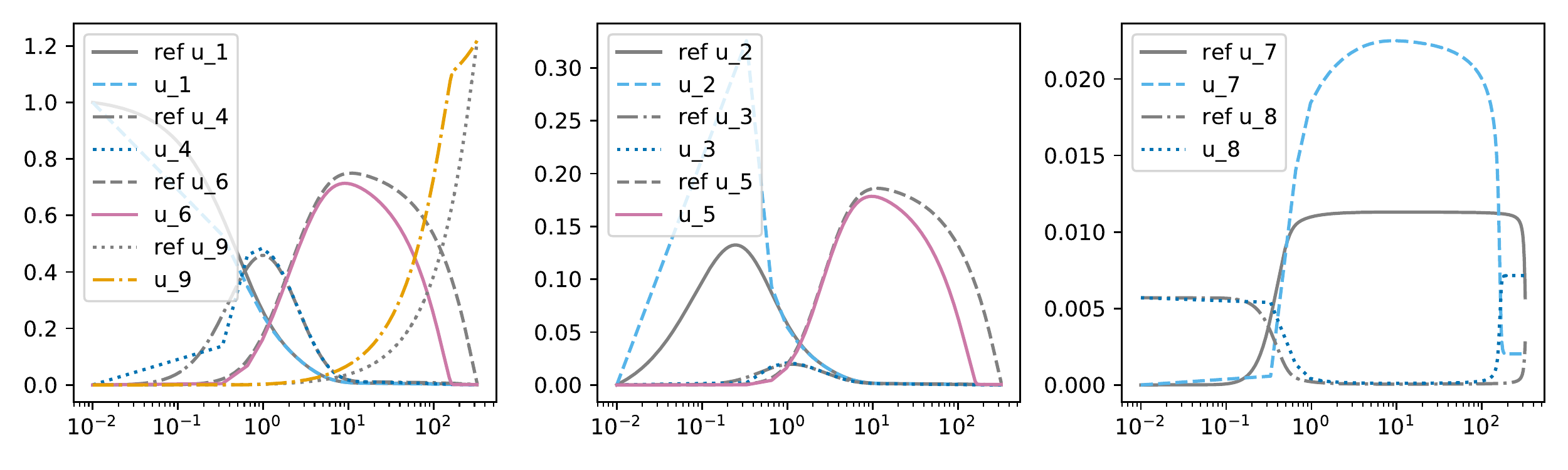}\\
	\includegraphics[width=\textwidth]{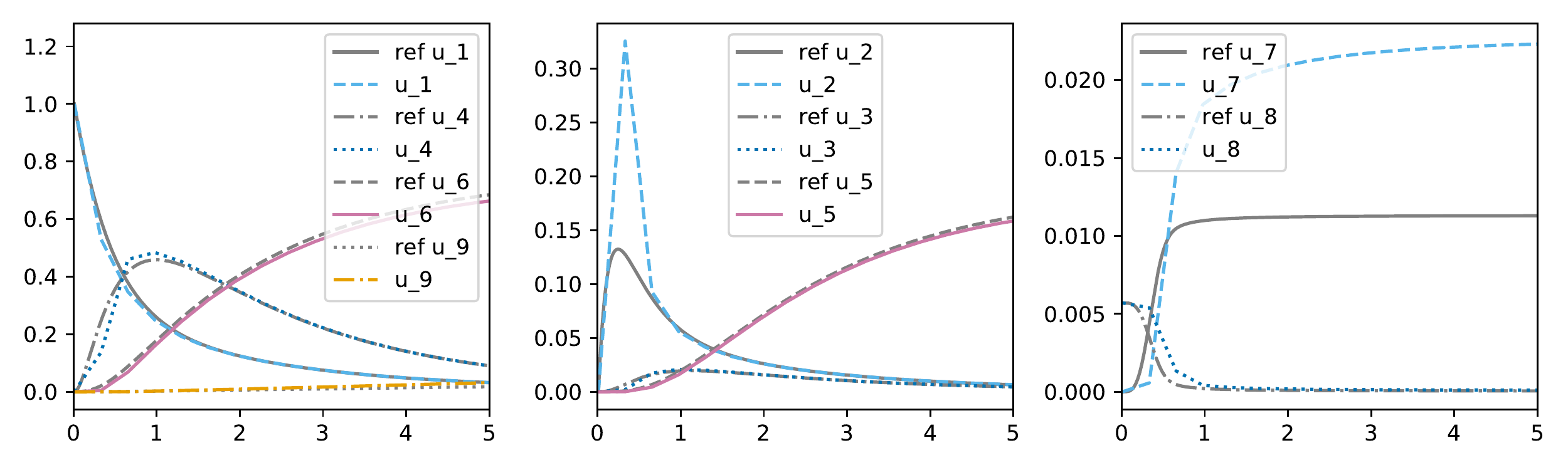}
	\caption{Simulations run with \ref{eq:explicit_dec_correction}3 with Gauss--Lobatto points with $N=1000$ timesteps, top logarithmic scale in time, bottom zoom on $t\in [0,5]$}\label{fig:HIRES_MPDeC3gl}
\end{figure}

\begin{figure}
	\includegraphics[width=\textwidth]{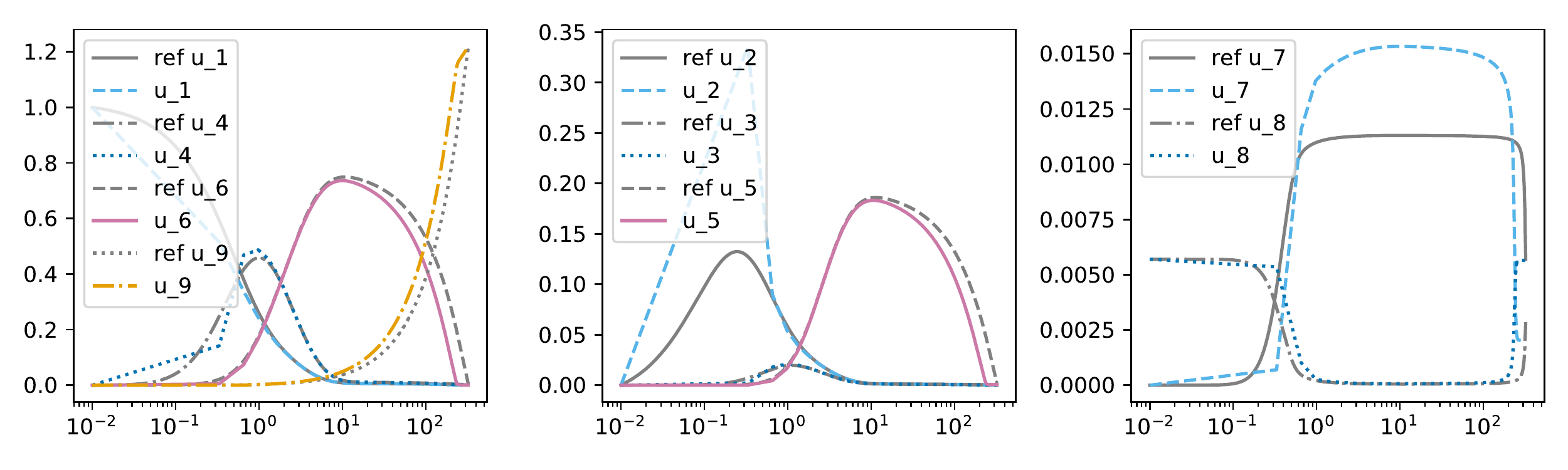}\\
	\includegraphics[width=\textwidth]{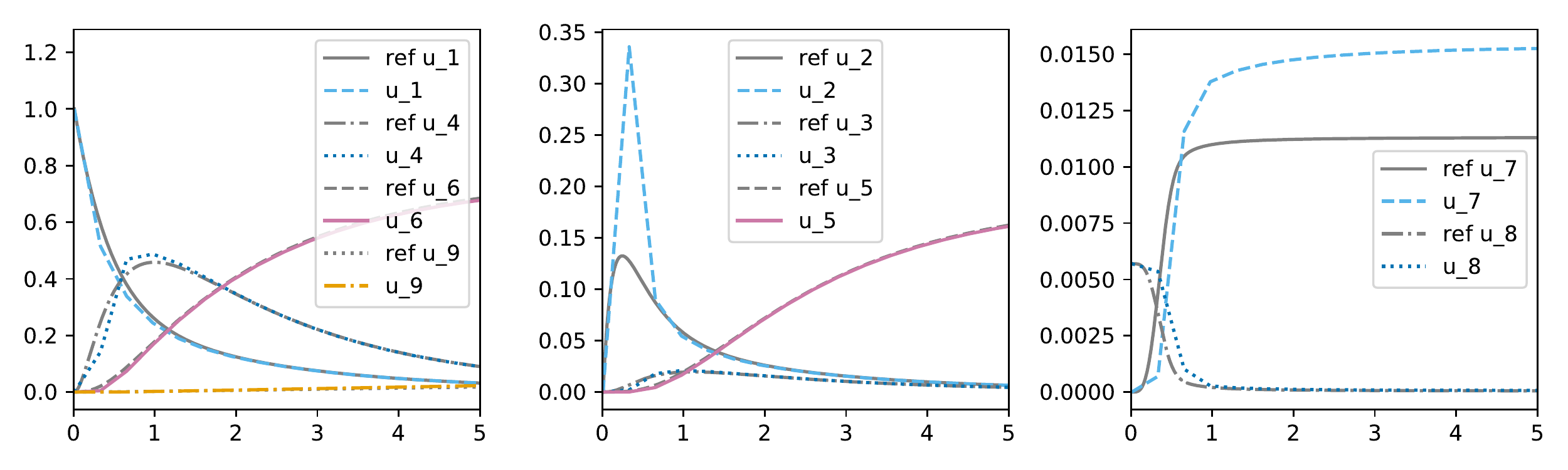}
	\caption{Simulations run with \ref{eq:explicit_dec_correction}4 with Gauss--Lobatto points with $N=1000$ timesteps, top logarithmic scale in time, bottom zoom on $t\in [0,5]$}\label{fig:HIRES_MPDeC4gl}
\end{figure}

\begin{figure}
	\includegraphics[width=\textwidth]{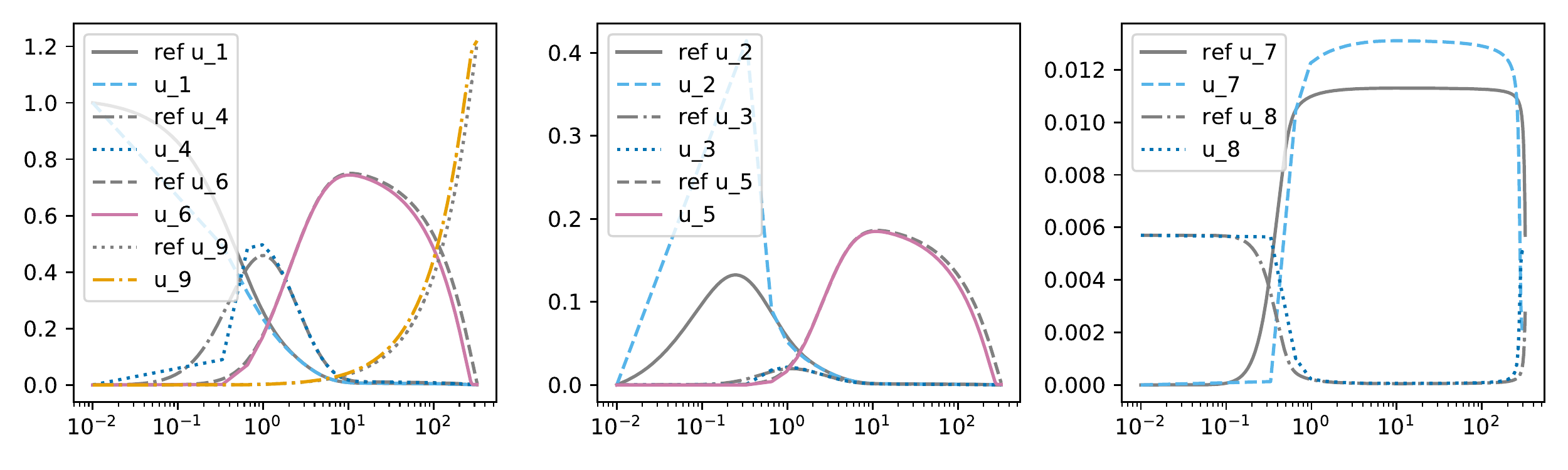}\\
	\includegraphics[width=\textwidth]{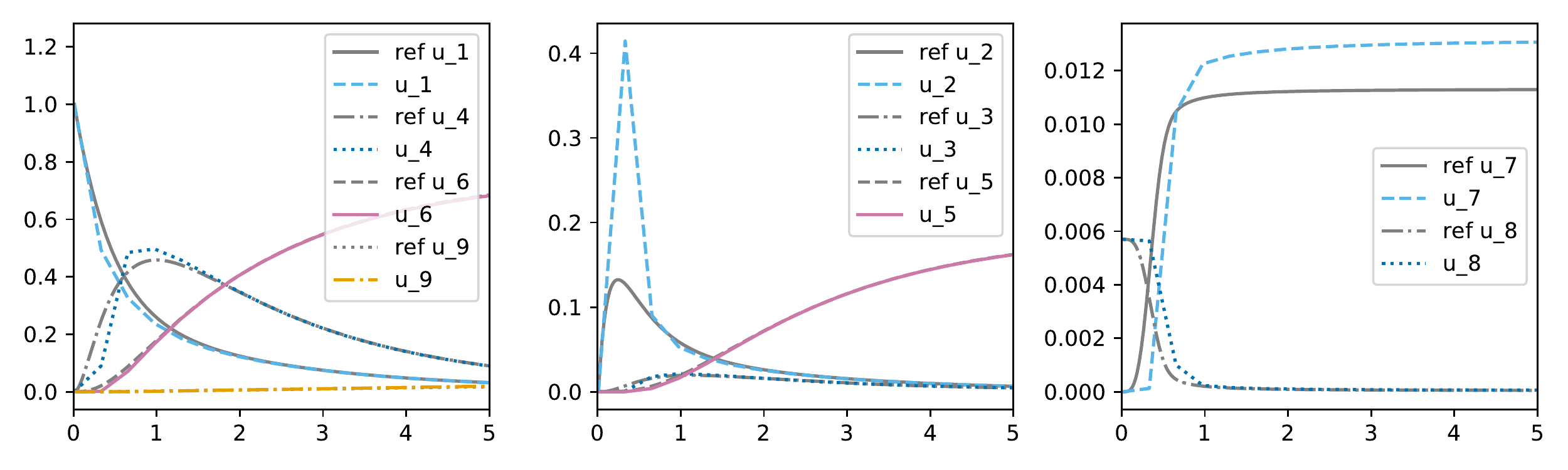}
	\caption{Simulations run with \ref{eq:explicit_dec_correction}5 with Gauss--Lobatto points with $N=1000$ timesteps, top logarithmic scale in time, bottom zoom on $t\in [0,5]$}\label{fig:HIRES_MPDeC5gl}
\end{figure}

\begin{figure}
	\includegraphics[width=\textwidth]{figures/HIRES/MPDeC6gl_logx.pdf}\\
	\includegraphics[width=\textwidth]{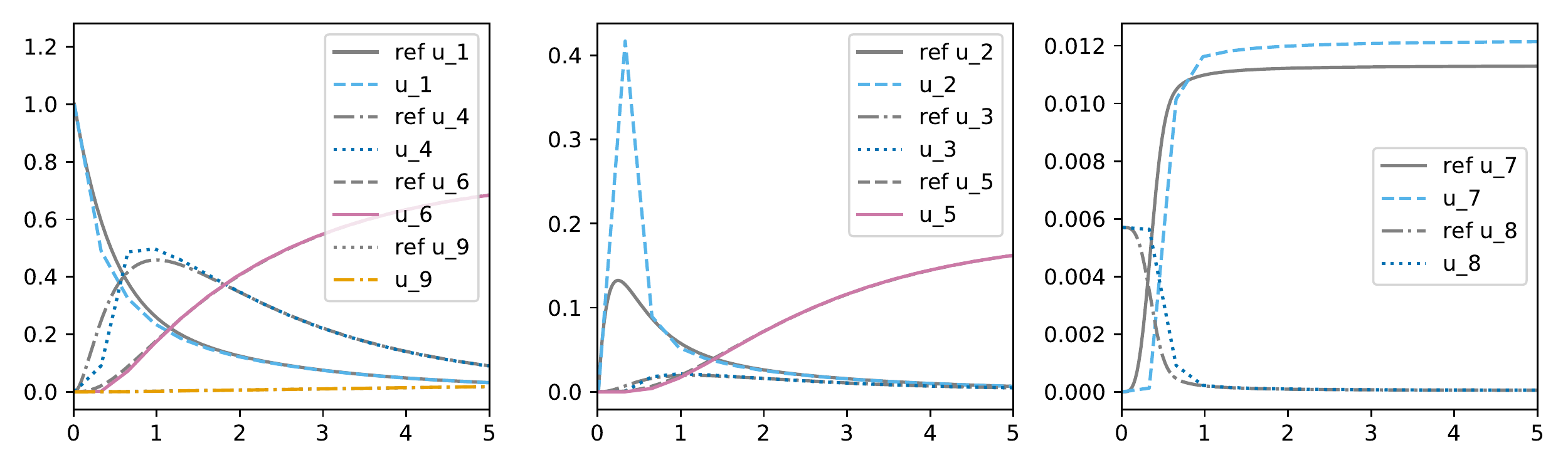}
	\caption{Simulations run with \ref{eq:explicit_dec_correction}6 with Gauss--Lobatto points with $N=1000$ timesteps, top logarithmic scale in time, bottom zoom on $t\in [0,5]$}\label{fig:HIRES_MPDeC6gl}
\end{figure}

\begin{figure}
	\includegraphics[width=\textwidth]{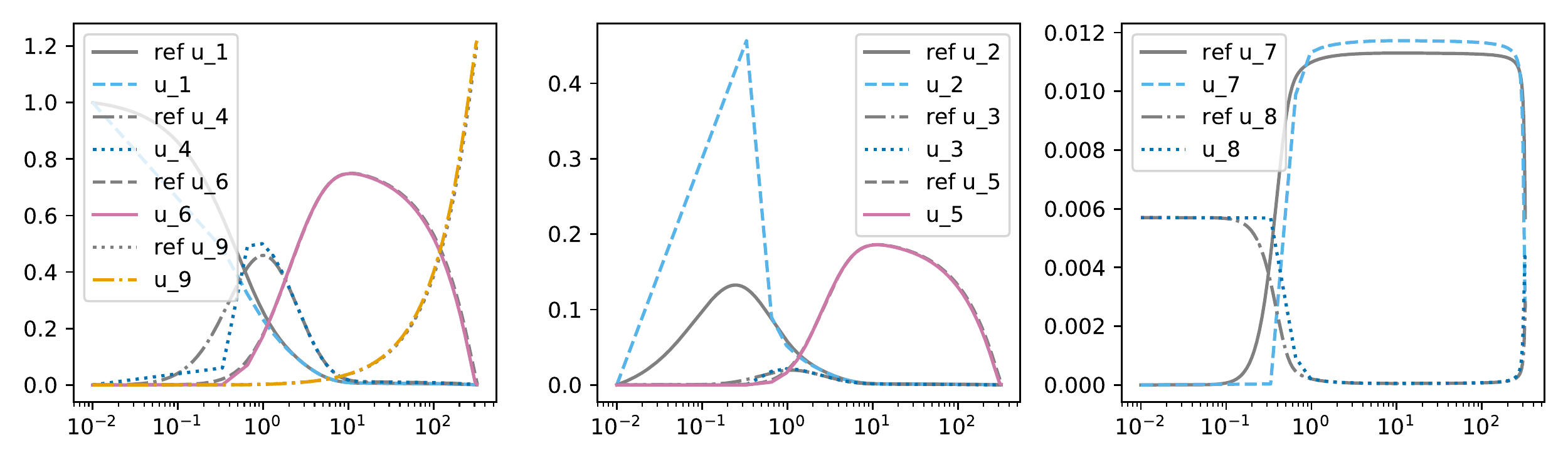}\\
	\includegraphics[width=\textwidth]{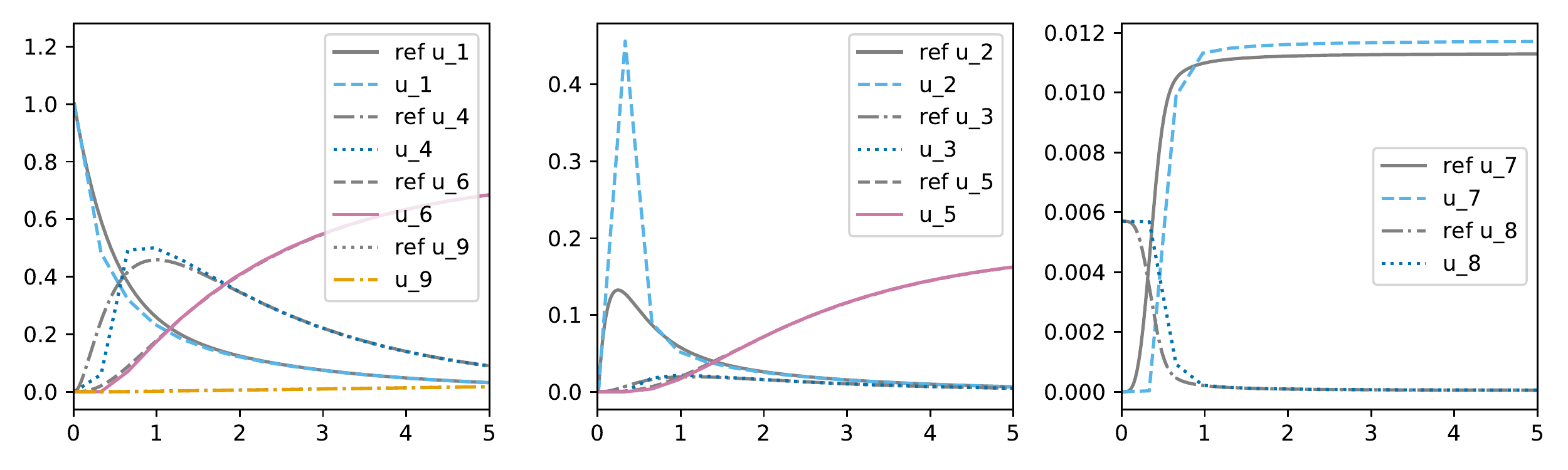}
	\caption{Simulations run with \ref{eq:explicit_dec_correction}7 with Gauss--Lobatto points with $N=1000$ timesteps, top logarithmic scale in time, bottom zoom on $t\in [0,5]$}\label{fig:HIRES_MPDeC7gl}
\end{figure}

\begin{figure}
	\includegraphics[width=\textwidth]{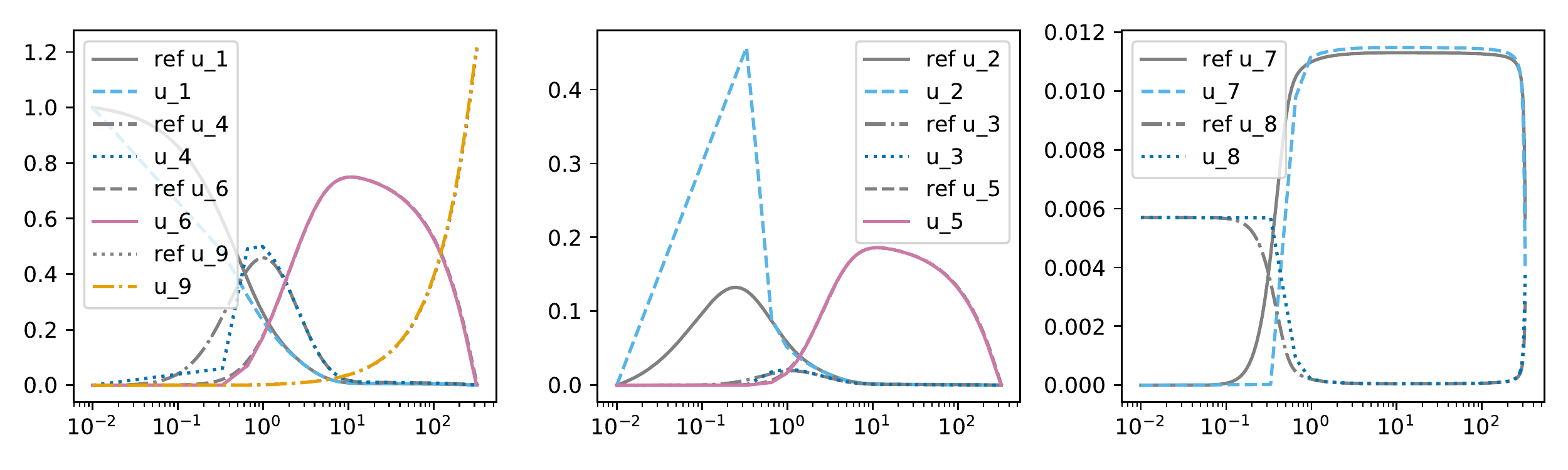}\\
	\includegraphics[width=\textwidth]{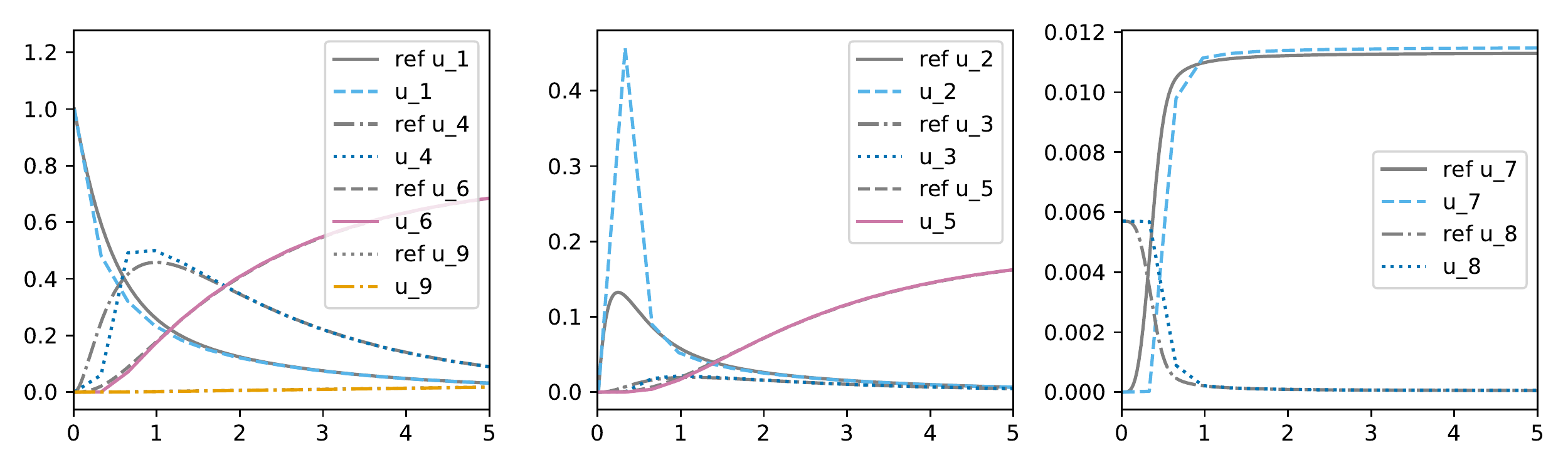}
	\caption{Simulations run with \ref{eq:explicit_dec_correction}8 with Gauss--Lobatto points with $N=1000$ timesteps, top logarithmic scale in time, bottom zoom on $t\in [0,5]$}\label{fig:HIRES_MPDeC8gl}
\end{figure}

\begin{figure}
	\includegraphics[width=\textwidth]{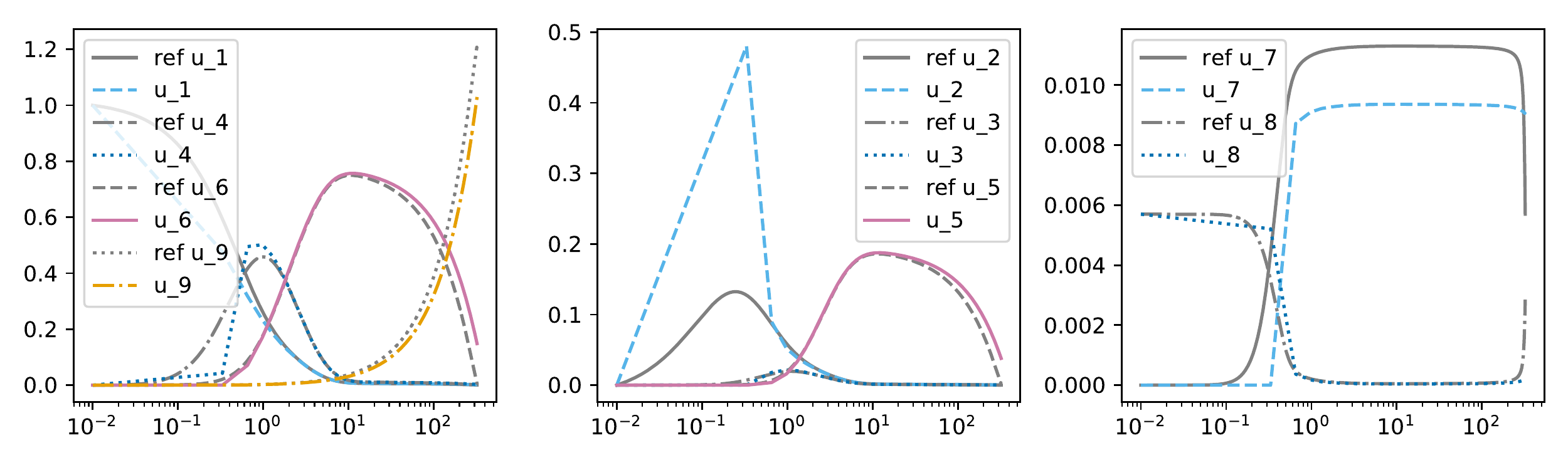}\\
	\includegraphics[width=\textwidth]{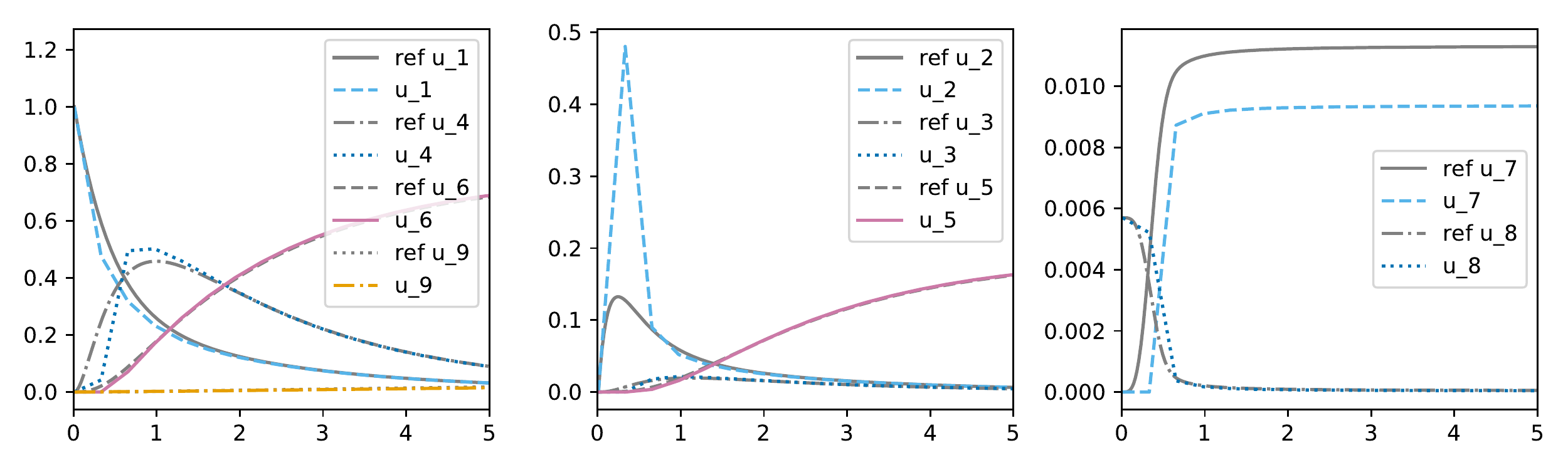}
	\caption{Simulations run with \ref{eq:explicit_dec_correction}9 with Gauss--Lobatto points with $N=1000$ timesteps, top logarithmic scale in time, bottom zoom on $t\in [0,5]$}\label{fig:HIRES_MPDeC9gl}
\end{figure}

\begin{figure}
	\includegraphics[width=\textwidth]{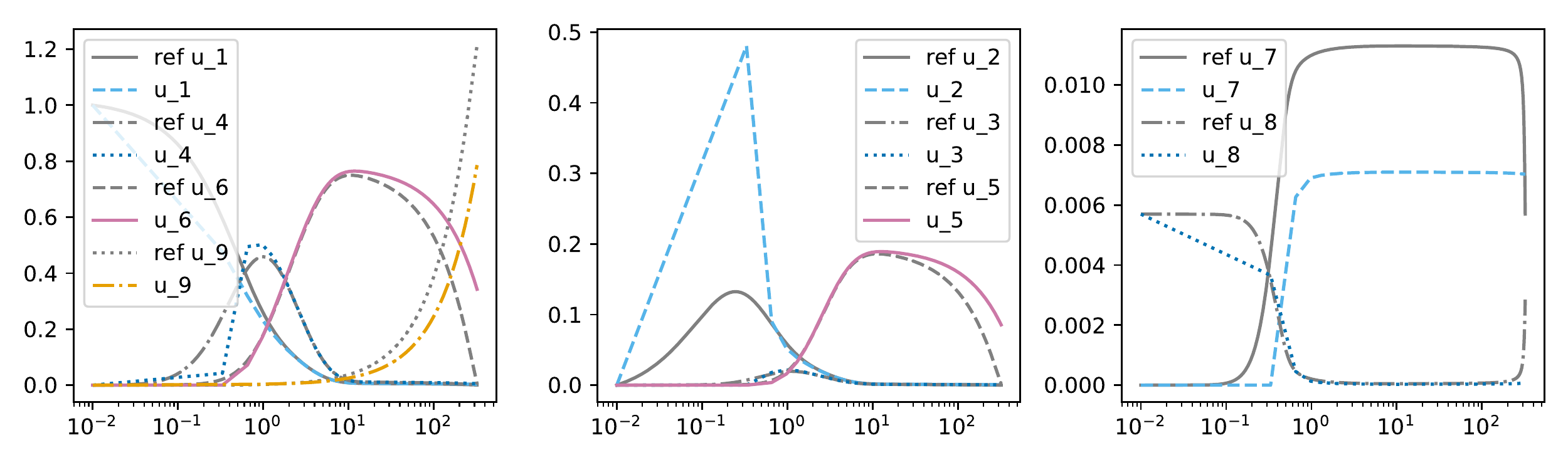}\\
	\includegraphics[width=\textwidth]{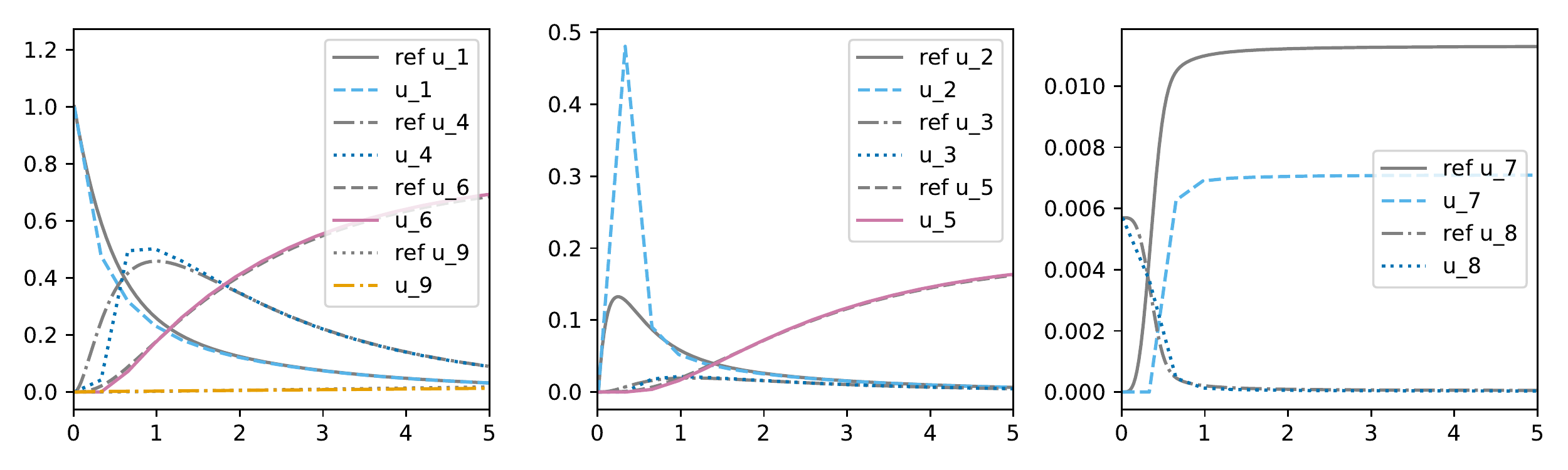}
	\caption{Simulations run with \ref{eq:explicit_dec_correction}10 with Gauss--Lobatto points with $N=1000$ timesteps, top logarithmic scale in time, bottom zoom on $t\in [0,5]$}\label{fig:HIRES_MPDeC10gl}
\end{figure}

\begin{figure}
	\includegraphics[width=\textwidth]{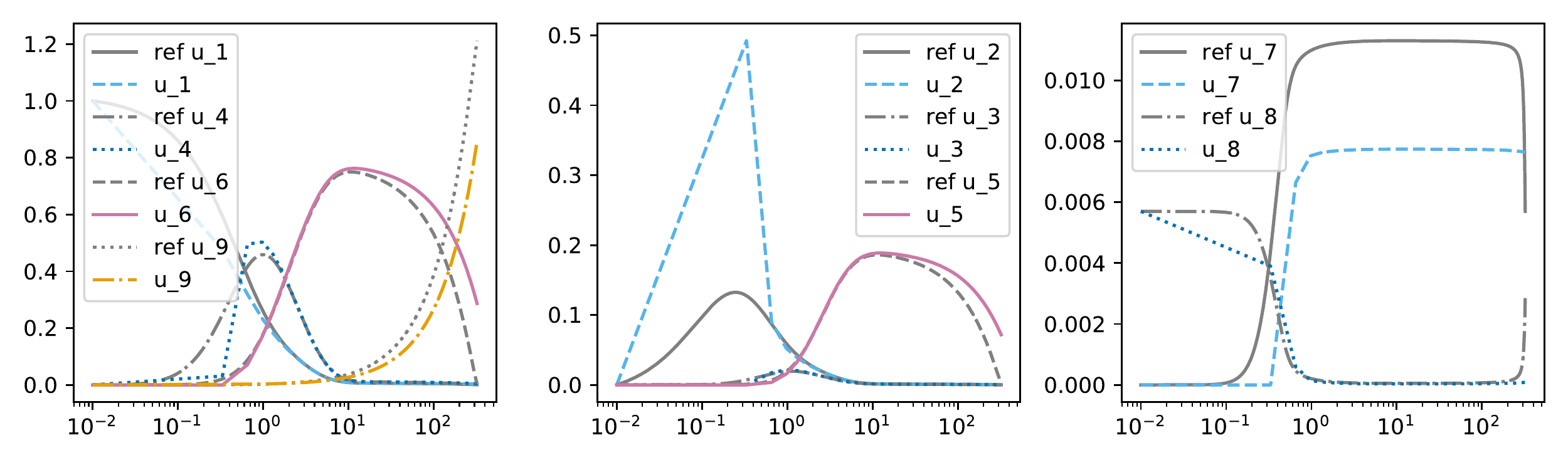}\\
	\includegraphics[width=\textwidth]{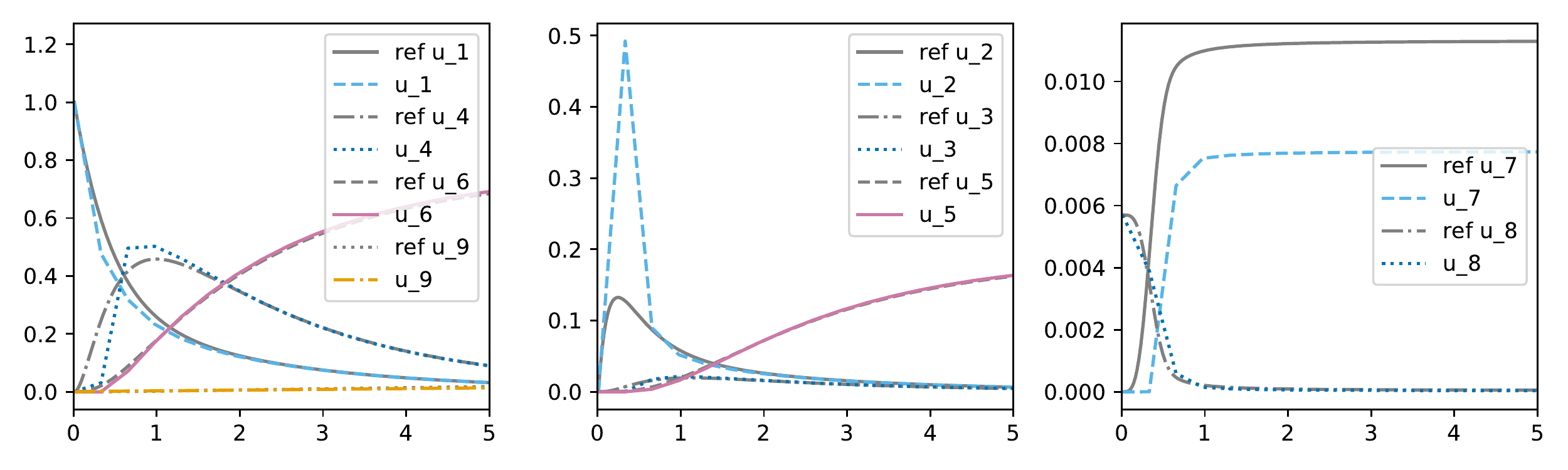}
	\caption{Simulations run with \ref{eq:explicit_dec_correction}11 with Gauss--Lobatto points with $N=1000$ timesteps, top logarithmic scale in time, bottom zoom on $t\in [0,5]$}\label{fig:HIRES_MPDeC11gl}
\end{figure}

\begin{figure}
	\includegraphics[width=\textwidth]{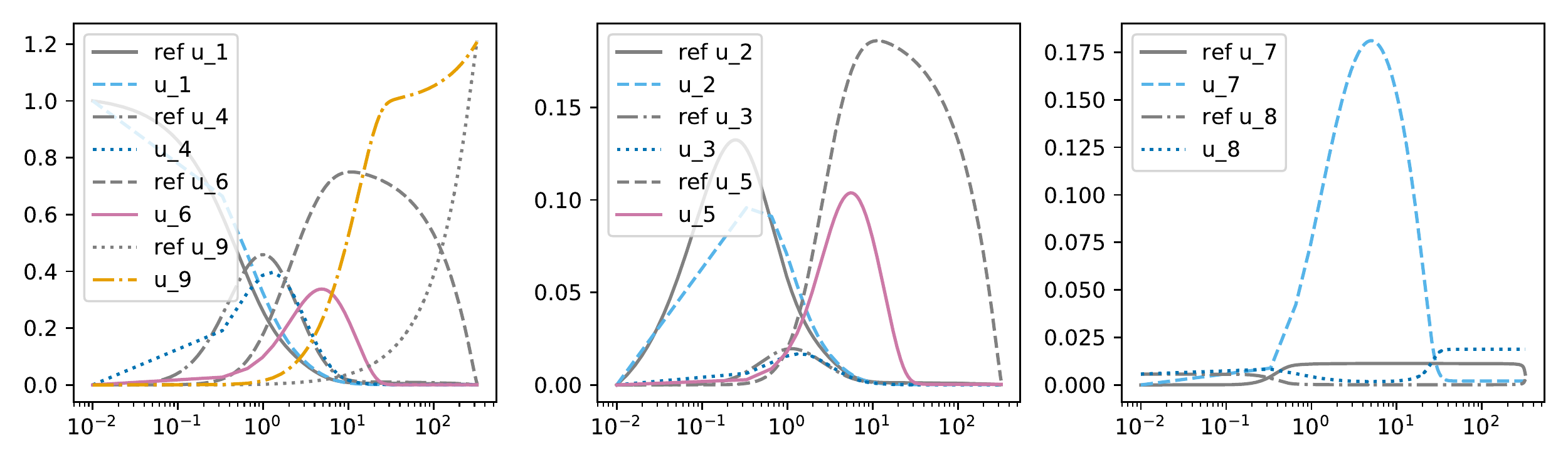}\\
	\includegraphics[width=\textwidth]{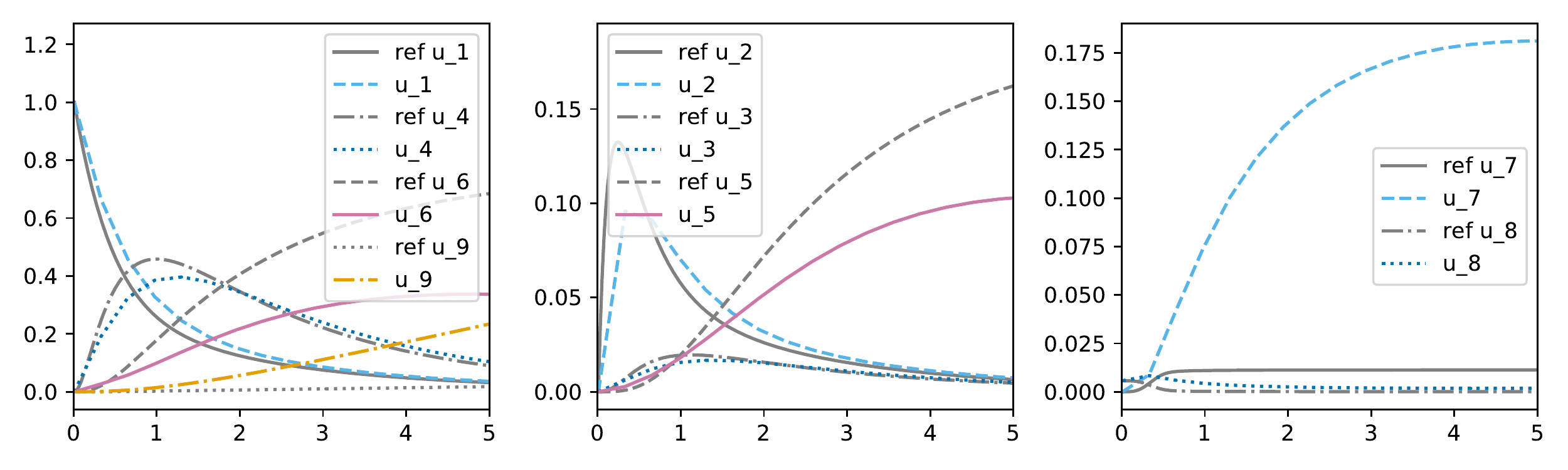}
	\caption{Simulations run with \ref{eq:explicit_dec_correction}1 with equispaced points with $N=1000$ timesteps, top logarithmic scale in time, bottom zoom on $t\in [0,5]$}\label{fig:HIRES_MPDeC1eq}
\end{figure}

\begin{figure}
	\includegraphics[width=\textwidth]{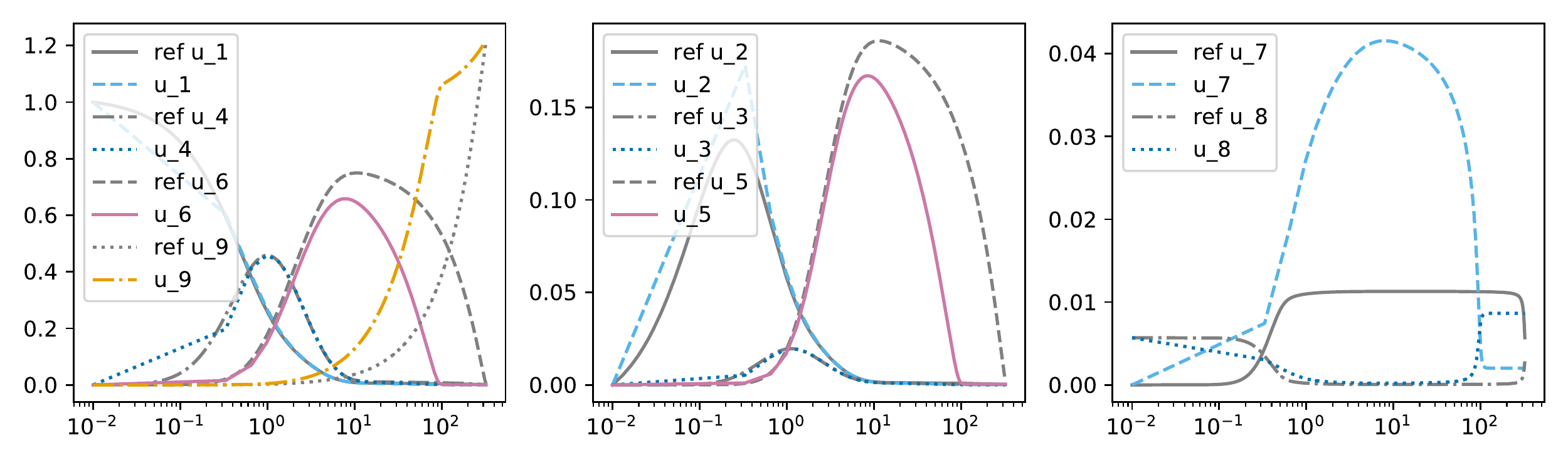}\\
	\includegraphics[width=\textwidth]{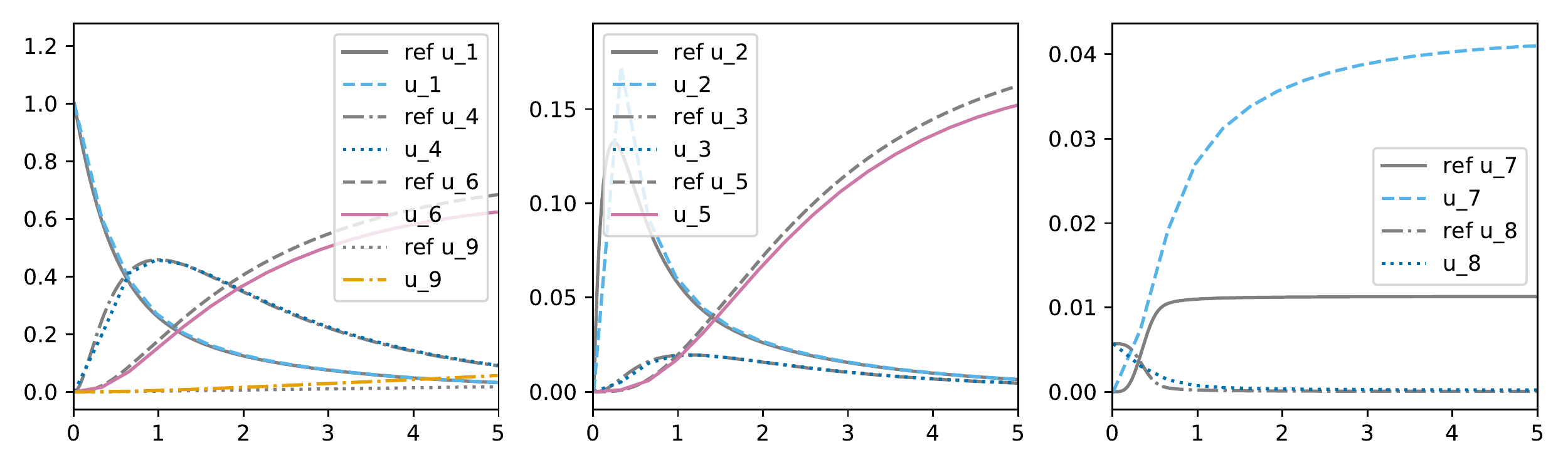}
	\caption{Simulations run with \ref{eq:explicit_dec_correction}2 with equispaced points with $N=1000$ timesteps, top logarithmic scale in time, bottom zoom on $t\in [0,5]$}\label{fig:HIRES_MPDeC2eq}
\end{figure}

\begin{figure}
	\includegraphics[width=\textwidth]{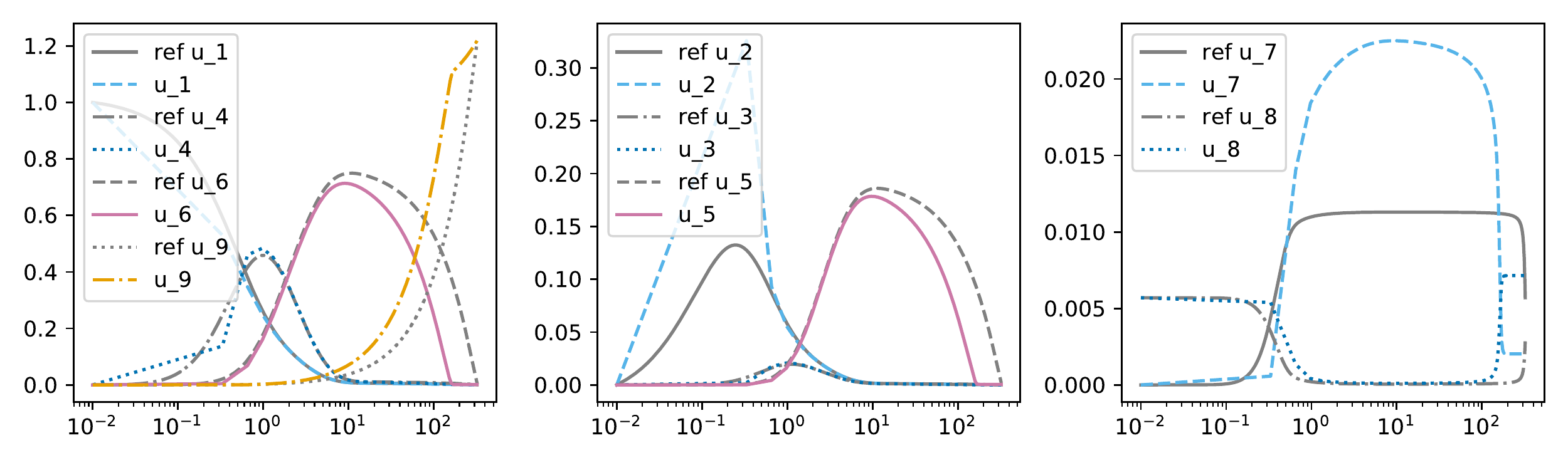}\\
	\includegraphics[width=\textwidth]{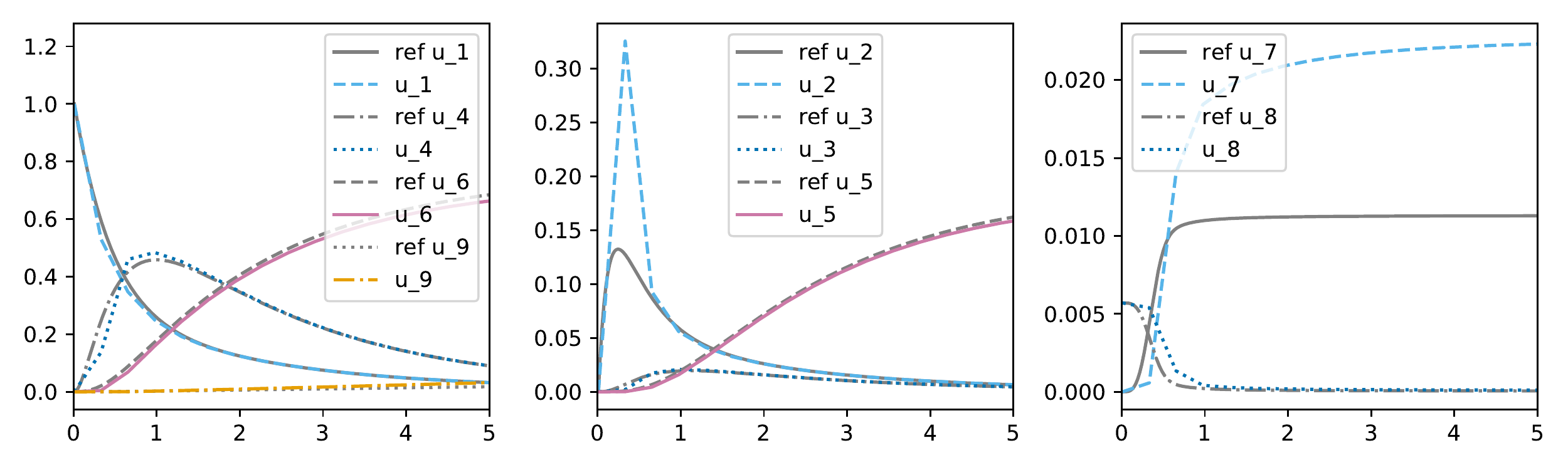}
	\caption{Simulations run with \ref{eq:explicit_dec_correction}3 with equispaced points with $N=1000$ timesteps, top logarithmic scale in time, bottom zoom on $t\in [0,5]$}\label{fig:HIRES_MPDeC3eq}
\end{figure}

\begin{figure}
	\includegraphics[width=\textwidth]{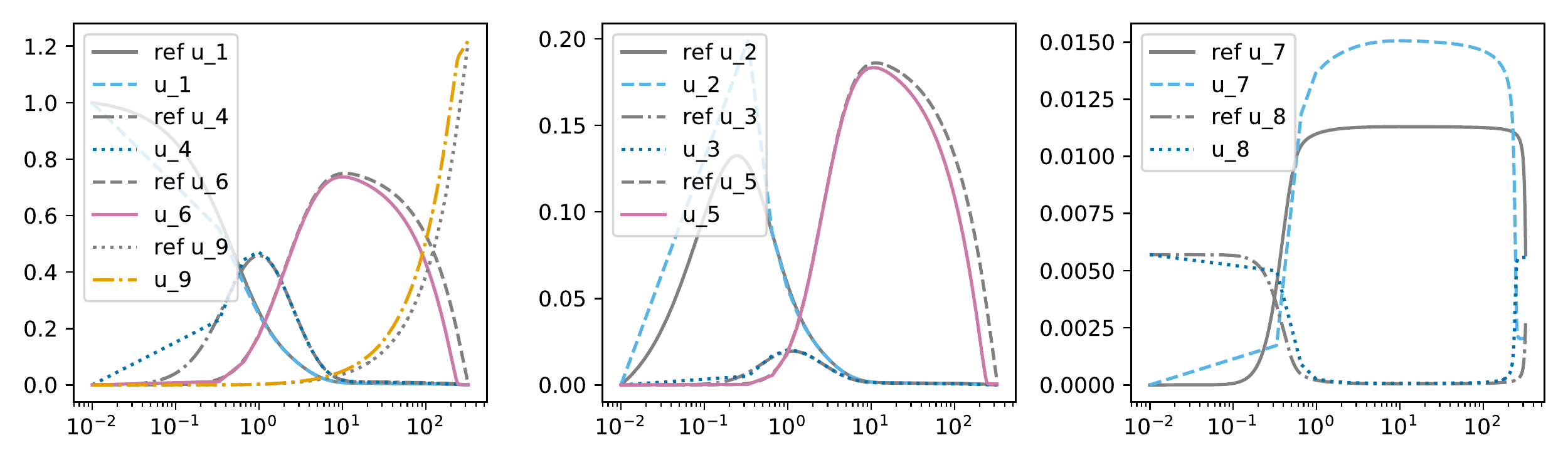}\\
	\includegraphics[width=\textwidth]{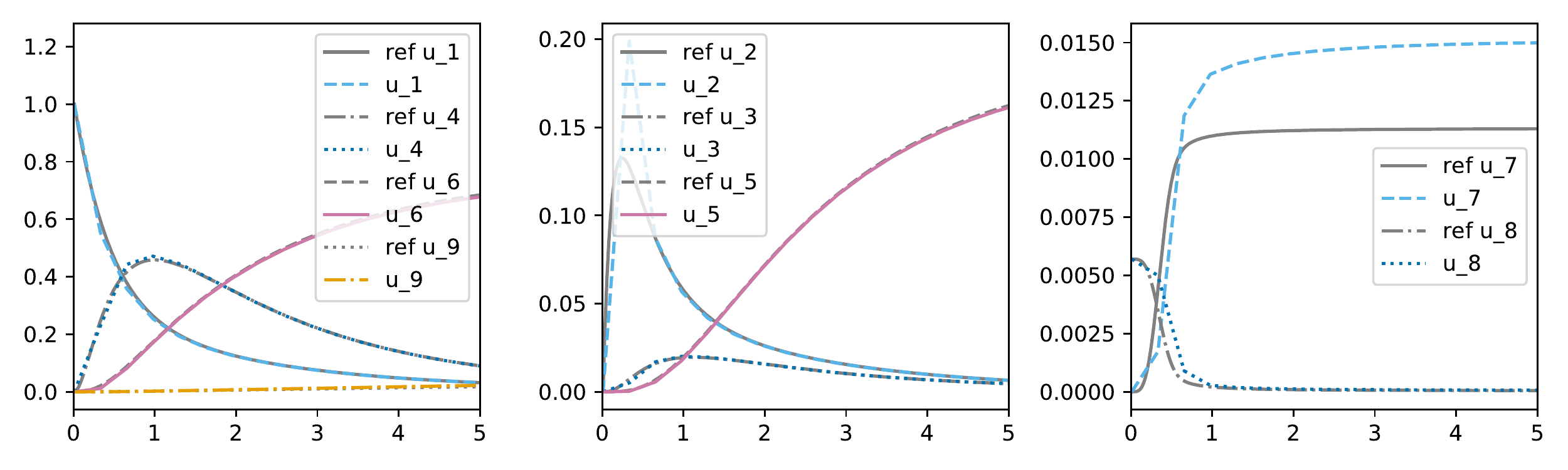}
	\caption{Simulations run with \ref{eq:explicit_dec_correction}4 with equispaced points with $N=1000$ timesteps, top logarithmic scale in time, bottom zoom on $t\in [0,5]$}\label{fig:HIRES_MPDeC4eq}
\end{figure}

\begin{figure}
	\includegraphics[width=\textwidth]{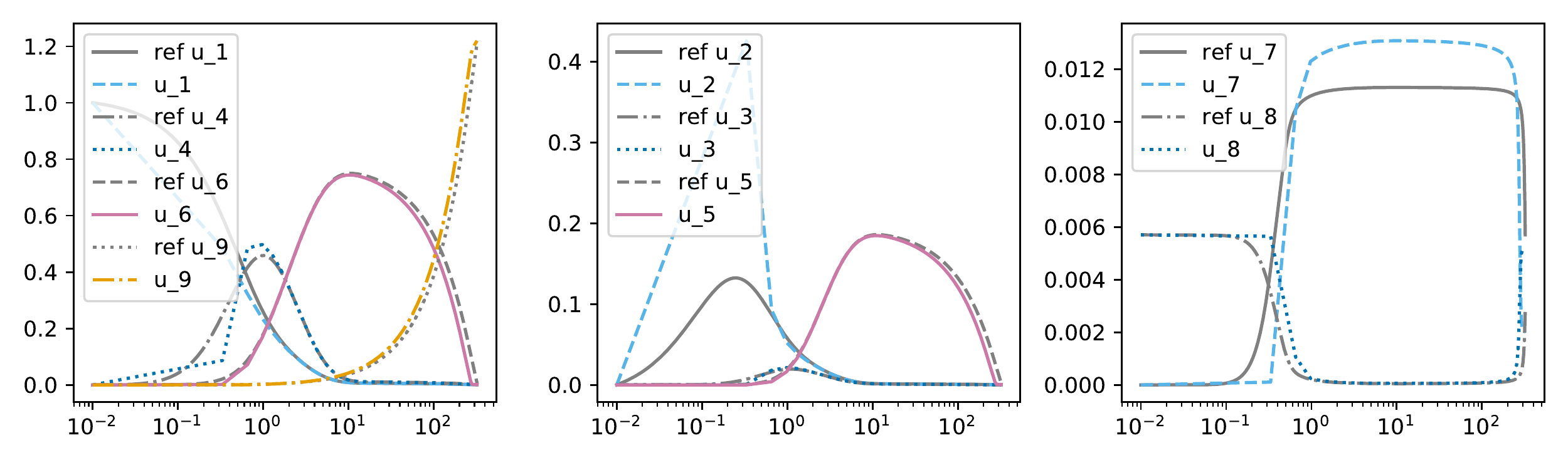}\\
	\includegraphics[width=\textwidth]{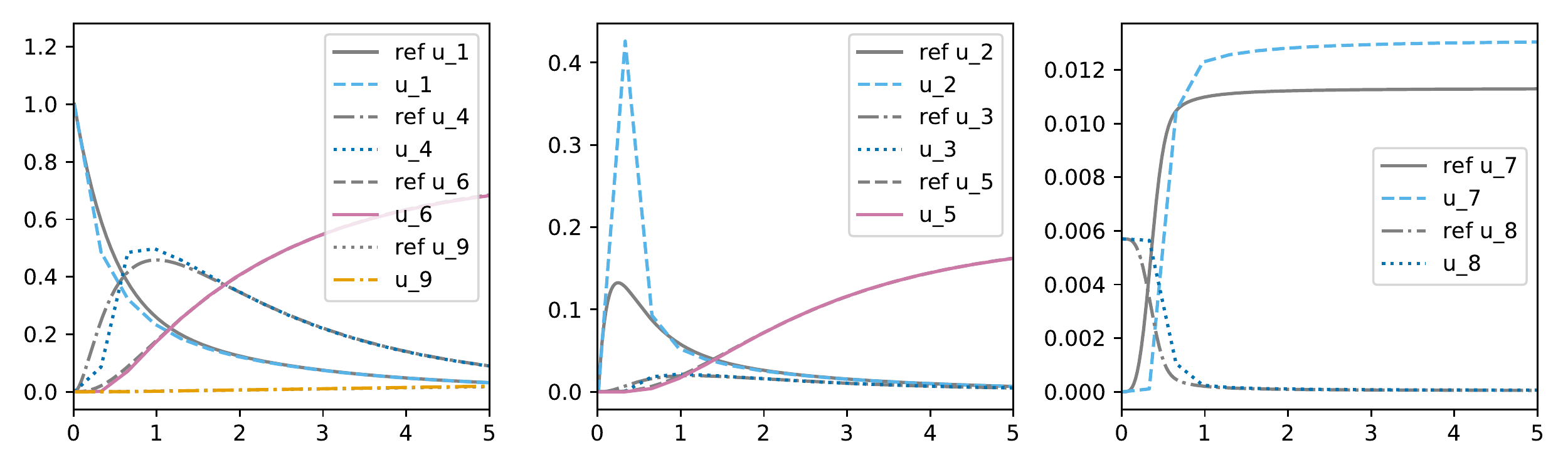}
	\caption{Simulations run with \ref{eq:explicit_dec_correction}5 with equispaced points with $N=1000$ timesteps, top logarithmic scale in time, bottom zoom on $t\in [0,5]$}\label{fig:HIRES_MPDeC5eq}
\end{figure}

\begin{figure}
	\includegraphics[width=\textwidth]{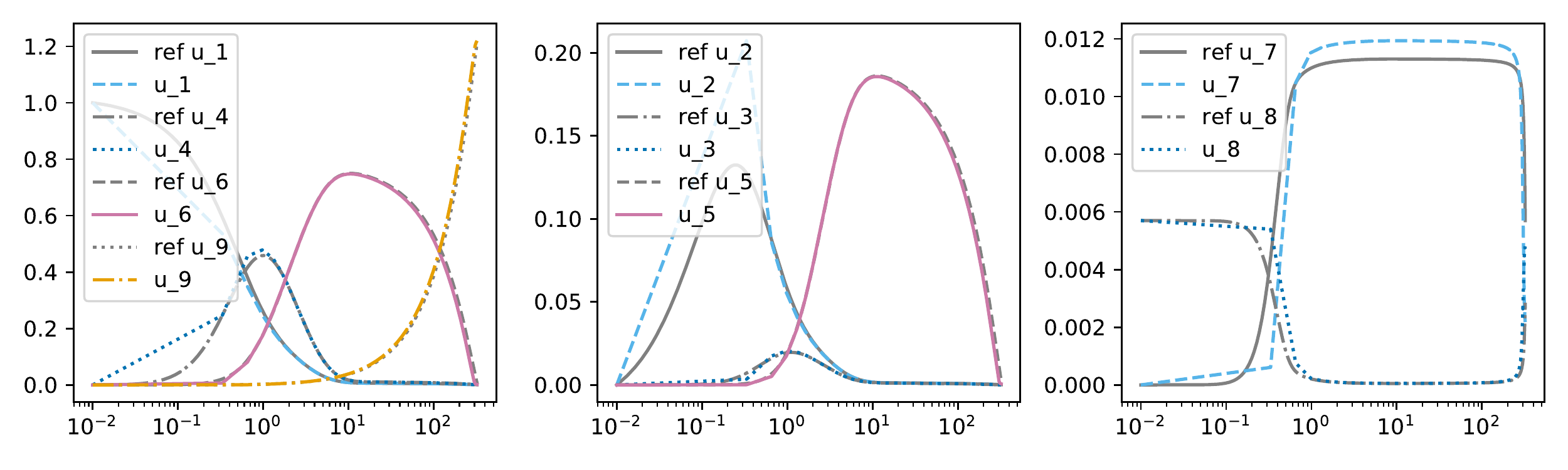}\\
	\includegraphics[width=\textwidth]{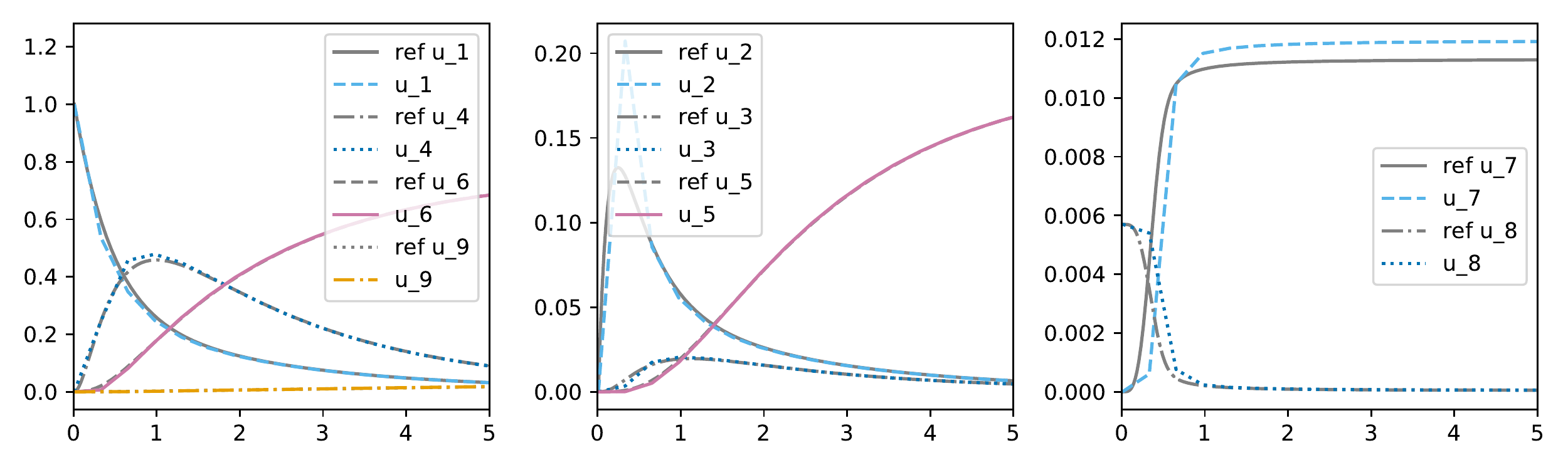}
	\caption{Simulations run with \ref{eq:explicit_dec_correction}6 with equispaced points with $N=1000$ timesteps, top logarithmic scale in time, bottom zoom on $t\in [0,5]$}\label{fig:HIRES_MPDeC6eq}
\end{figure}

\begin{figure}
	\includegraphics[width=\textwidth]{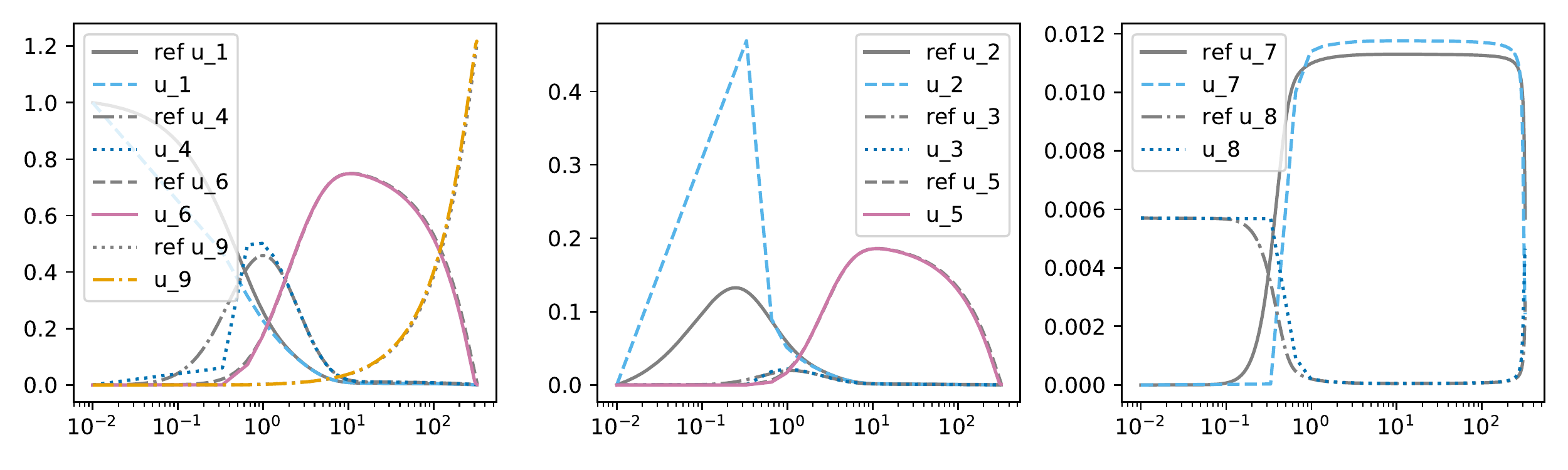}\\
	\includegraphics[width=\textwidth]{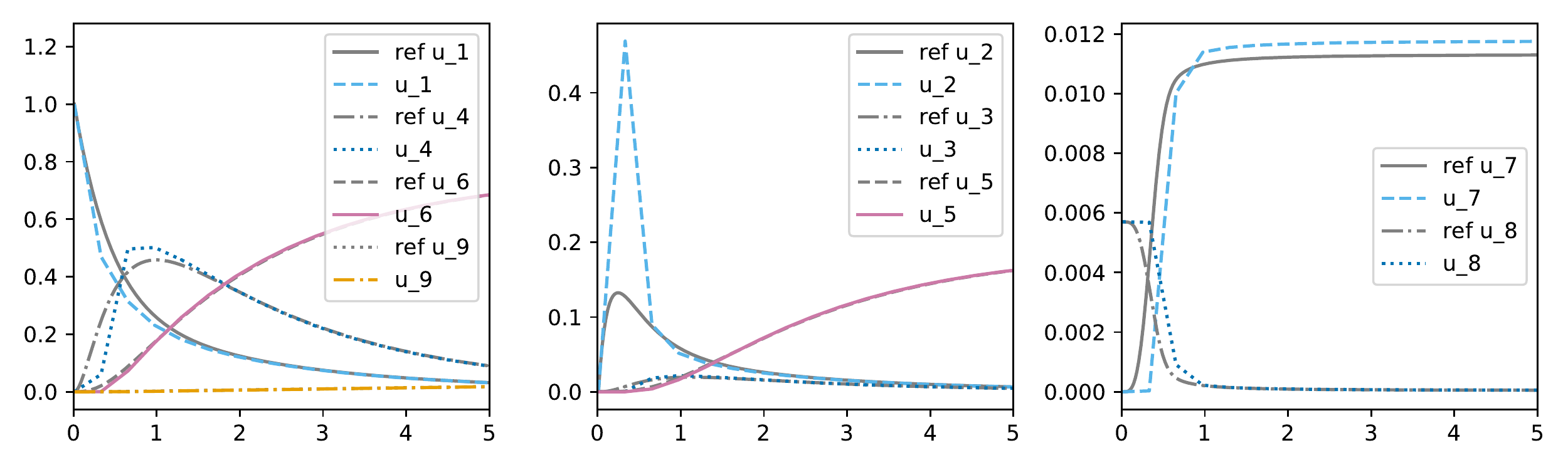}
	\caption{Simulations run with \ref{eq:explicit_dec_correction}7 with equispaced points with $N=1000$ timesteps, top logarithmic scale in time, bottom zoom on $t\in [0,5]$}\label{fig:HIRES_MPDeC7eq}
\end{figure}

\begin{figure}
	\includegraphics[width=\textwidth]{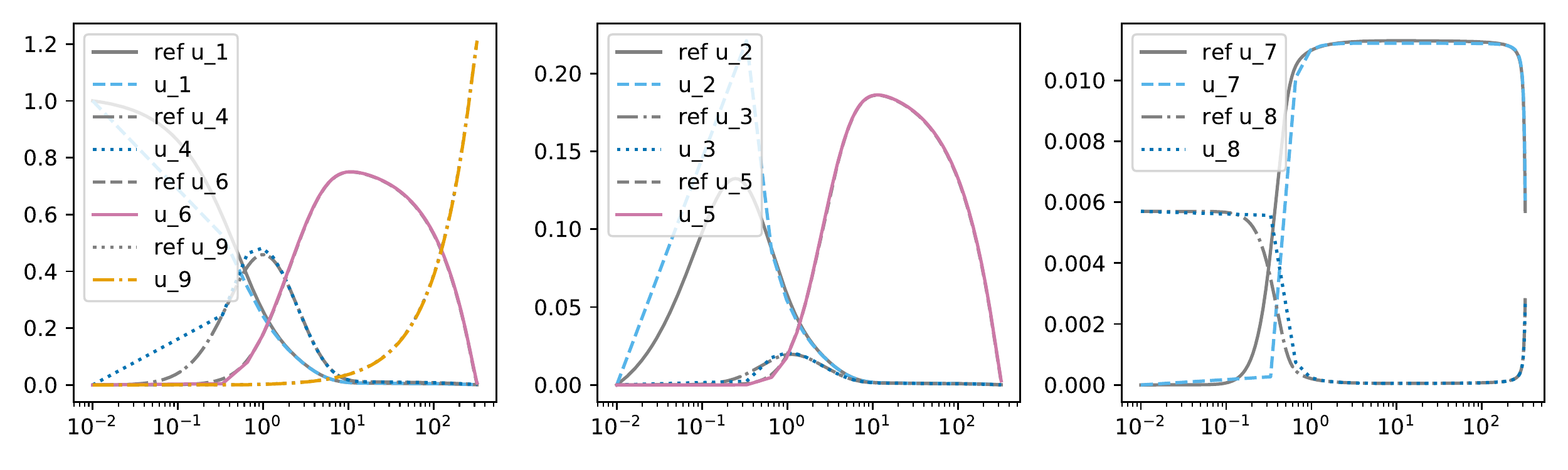}\\
	\includegraphics[width=\textwidth]{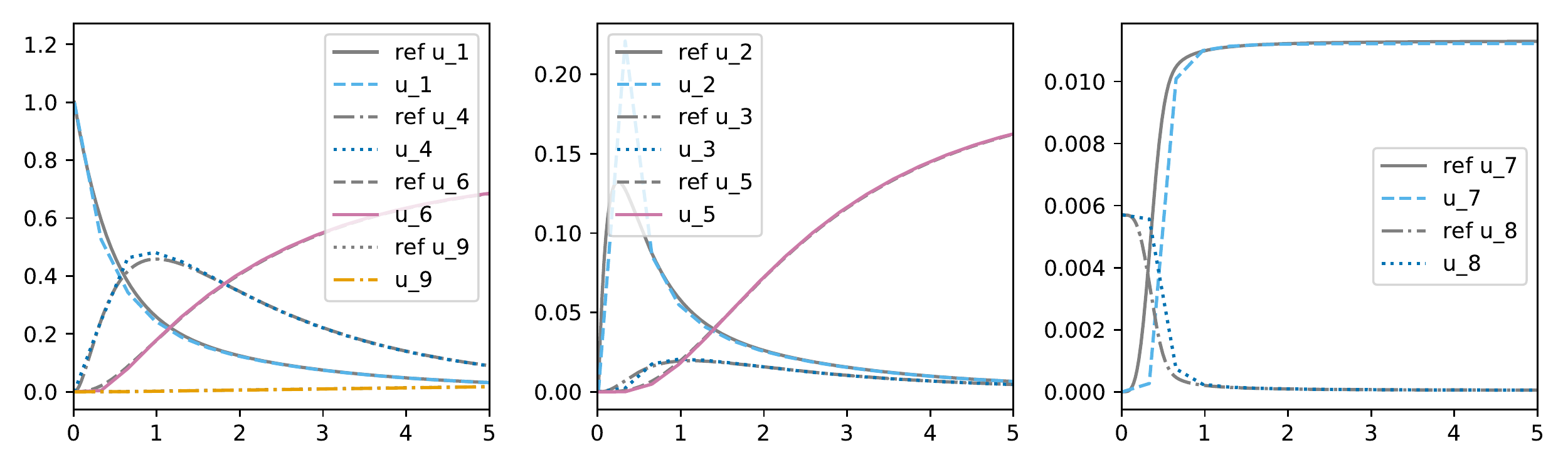}
	\caption{Simulations run with \ref{eq:explicit_dec_correction}8 with equispaced points with $N=1000$ timesteps, top logarithmic scale in time, bottom zoom on $t\in [0,5]$}\label{fig:HIRES_MPDeC8eq}
\end{figure}

\begin{figure}
	\includegraphics[width=\textwidth]{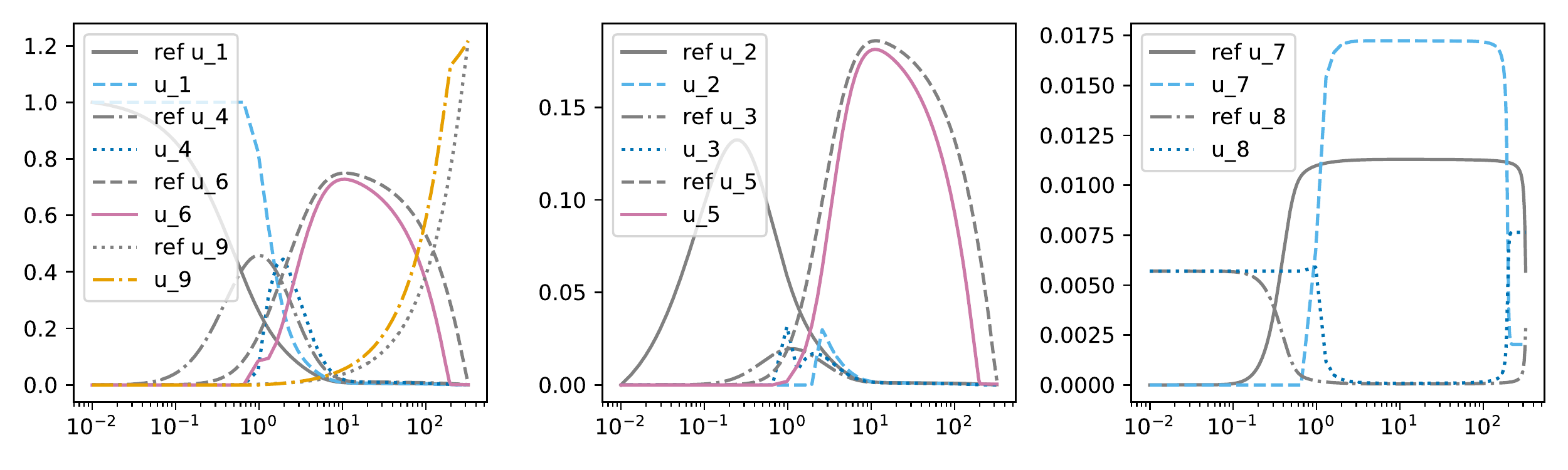}\\
	\includegraphics[width=\textwidth]{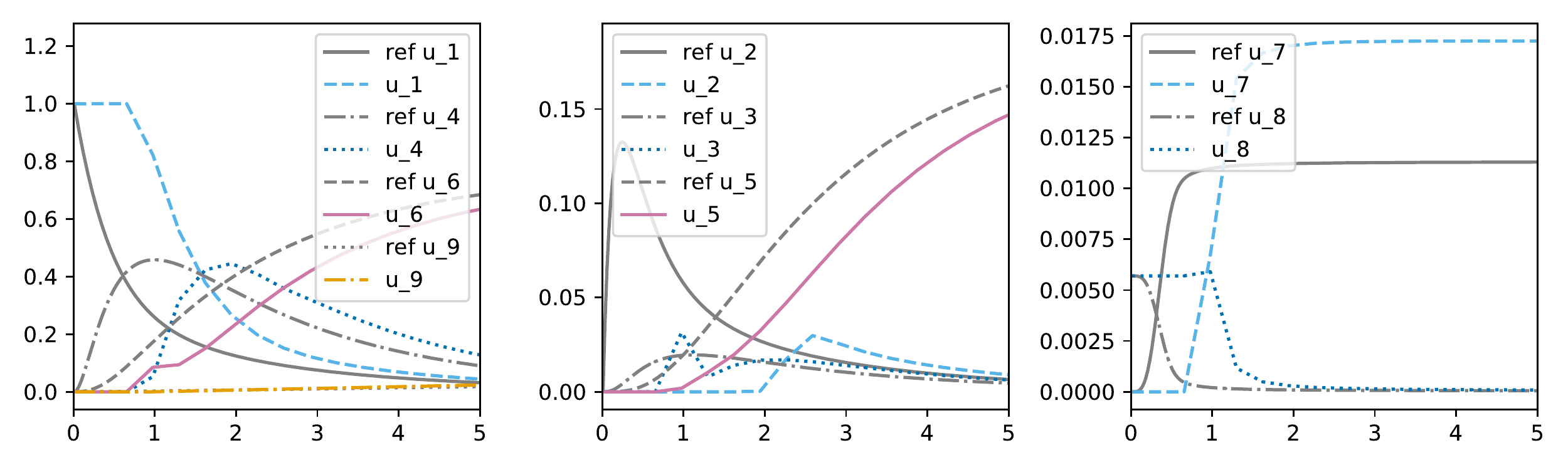}
	\caption{Simulations run with \ref{eq:explicit_dec_correction}9 with equispaced points with $N=1000$ timesteps, top logarithmic scale in time, bottom zoom on $t\in [0,5]$}\label{fig:HIRES_MPDeC9eq}
\end{figure}

\begin{figure}
	\includegraphics[width=\textwidth]{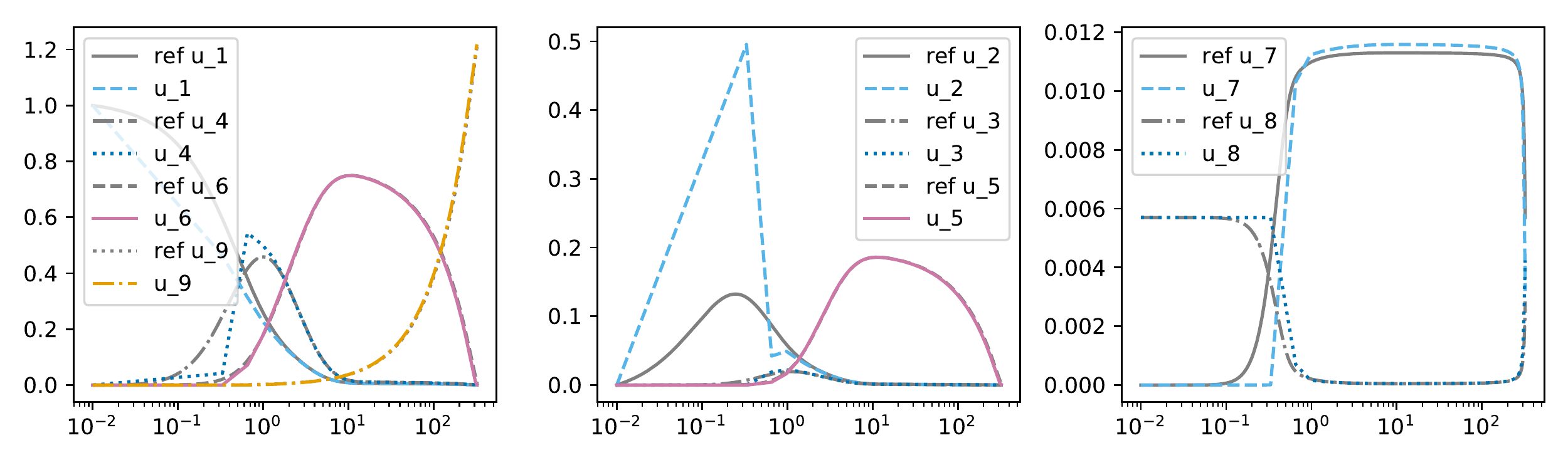}\\
	\includegraphics[width=\textwidth]{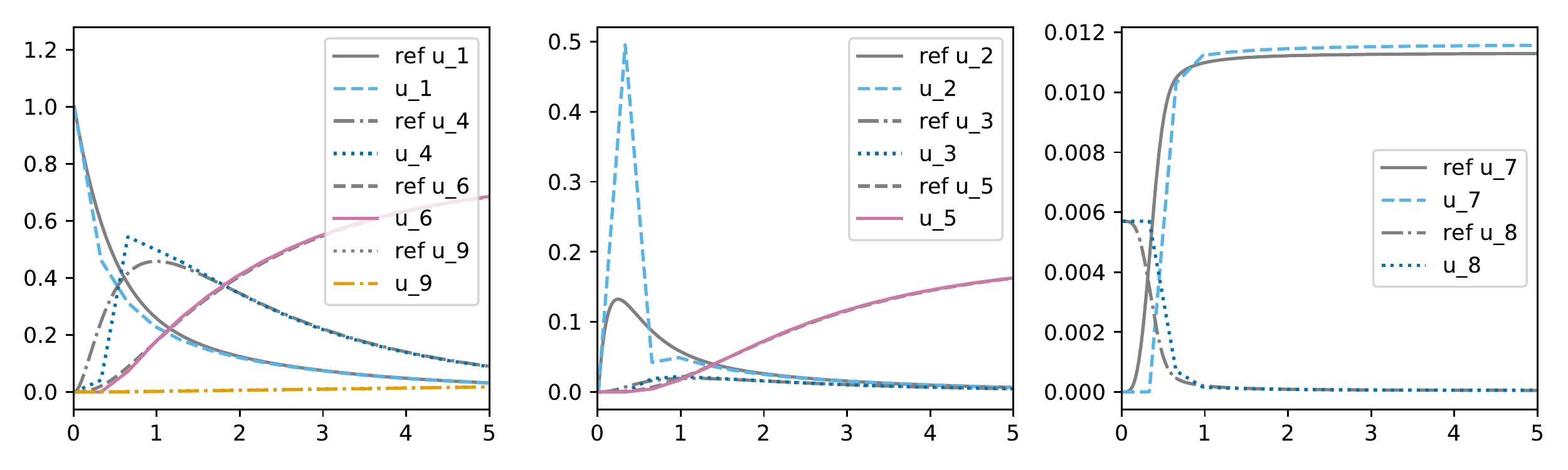}
	\caption{Simulations run with \ref{eq:explicit_dec_correction}10 with equispaced points with $N=1000$ timesteps, top logarithmic scale in time, bottom zoom on $t\in [0,5]$}\label{fig:HIRES_MPDeC10eq}
\end{figure}

\begin{figure}
	\includegraphics[width=\textwidth]{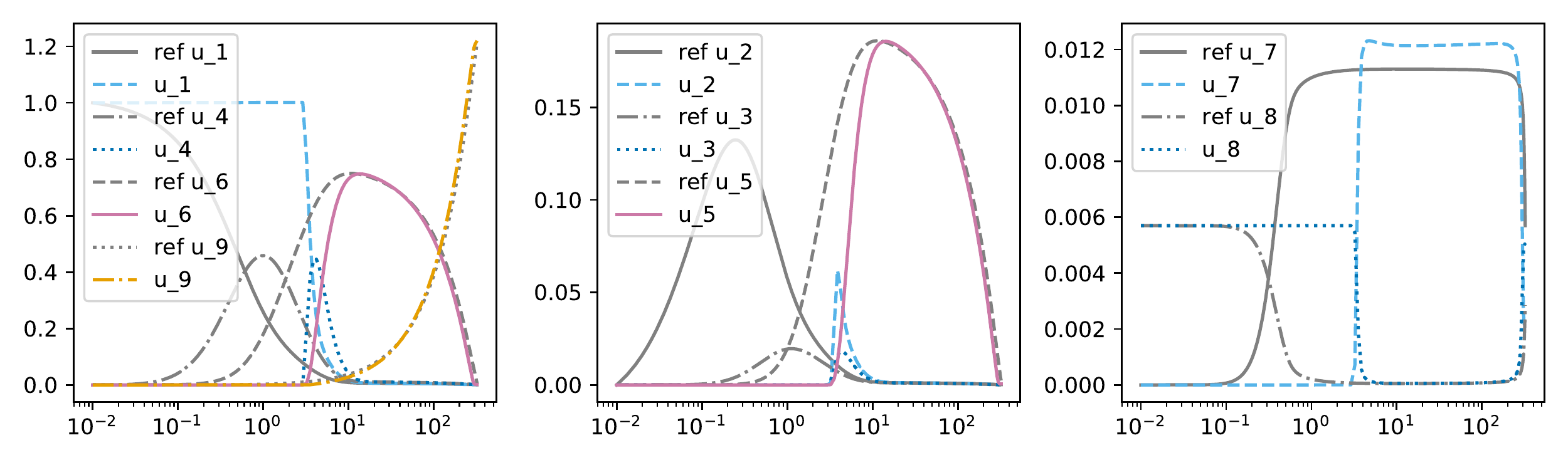}\\
	\includegraphics[width=\textwidth]{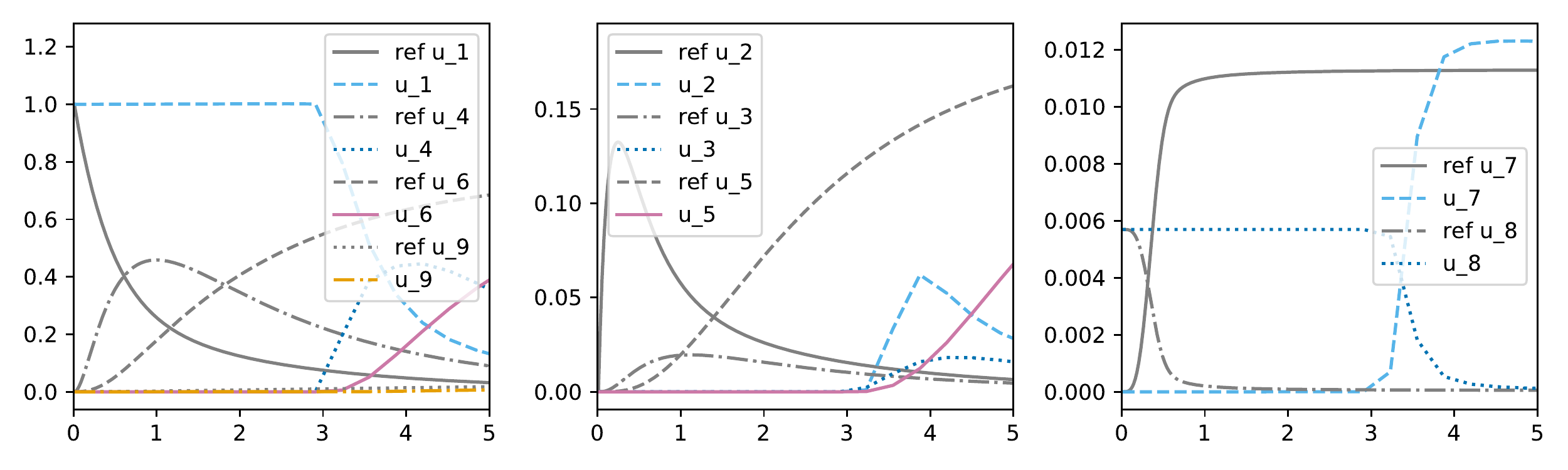}
	\caption{Simulations run with \ref{eq:explicit_dec_correction}11 with equispaced points with $N=1000$ timesteps, top logarithmic scale in time, bottom zoom on $t\in [0,5]$}\label{fig:HIRES_MPDeC11eq}
\end{figure}

Testing with $N=1000$ uniform timesteps, we spot troubles with the methods that become first order accurate.  We test the problem with many schemes presented above.
For the \ref{eq:explicit_dec_correction} we spot the first order of accuracy issue only for equispaced timesteps for high odd orders (9,\,11,\,13 and so on). In Figure~\ref{fig:HIRES_MPDeC6gl} we see the simulation for \ref{eq:explicit_dec_correction}6 with Gauss--Lobatto points. We observe that the high accuracy helps in obtaining a good result at the end of the simulation, when $u_7$ and $u_8$ react. The moment at which this change happens is really hard to catch and only high order methods are able to obtain it with this number of timesteps.

We run the \ref{eq:MPRK22-family} with $\alpha\in [0.7, 1, 5]$. As for the linear case, only for $\alpha>1$ we observe accuracy loss as it is visible in Figure~\ref{fig:HIRES_MPRK225}, where the evolution of some constituents is completely missed, e.g. $u_2, u_3, u_5, u_9$, while in Figure~\ref{fig:HIRES_MPRK221} we obtain accurate results.

We test \ref{eq:MPRKSO22-family} with $\alpha=0.3,\,\beta=2$ and $\alpha=0,\,\beta=8$ and, as expected, the second one shows the loss of accuracy. An oscillatory type behavior can be observed, though, in the first simulation, which is depicted in Figure~\ref{fig:HIRES_MPRKSO22_03_2}. This is probably due to the CFL condition as, refining the time discretization, the oscillations disappear.

For \ref{eq:MPRK43-family} we test $\alpha=0.9,\,\beta= 0.6$ and $\alpha=5,\,\beta=0.5$, observing loss of accuracy only in the second one, according with the linear tests. For \ref{eq:MPRKSO(4,3)}, \ref{eq:MPRK32}, \ref{eq:SI-RK2} and \ref{eq:SI-RK3} we do not observe inaccurate behaviors, as in the linear test.
%
%
%
%\subsubsection{Pollution Problem}
%
%\todo{TODO: Do the same for the pollution problem} %TODO
%\cite[Section~II.2]{mazzia2008test}.
%
%%TODO: We will also need some scaling, since
%% julia> using SymPy
%%
%% julia> r = [symbols("r_$i", real=true) for i in 1:25]
%%
%% julia> f = [-sum(r[[1,10,14,23,24]]) + sum(r[[2,3,9,11,12,22,25]])
%%            -r[2] - r[3] - r[9] - r[12] + r[1] + r[21]
%%            -r[15] + r[1] + r[17] + r[19] + r[22]
%%            -r[2] - r[16] - r[17] - r[23] + r[15]
%%            -r[3] + 2*r[4] + r[6] + r[7] + r[13] + r[20]
%%            -r[6] - r[8] - r[14] - r[20] + r[3] + 2*r[18]
%%            -r[4] - r[5] - r[6] + r[13]
%%            r[4] + r[5] + r[6] + r[7]
%%            -r[7] - r[8]
%%            -r[12] + r[7] + r[9]
%%            -r[9] - r[10] + r[8] + r[11]
%%            r[9]
%%            -r[11] + r[10]
%%            -r[13] + r[12]
%%            r[14]
%%            -r[18] - r[19] + r[16]
%%            -r[20]
%%            r[20]
%%            -r[21] - r[22] - r[24] + r[23] + r[25]
%%            -r[25] + r[24]]
%%
%% julia> sum(f)
%% r₁ - r₁₀ + r₁₁ + r₁₃ - r₁₄ + r₁₈ - r₂ + r₂₂ - r₂₃ - r₂₄ + r₂₅ + 2⋅r₄ + 2⋅r₇ - r₈ + r₉

\printbibliography